\newtheoremstyle{mystyle}{}{}{\slshape}{2pt}{\scshape}{.}{ }{} 
\newtheorem{thm}{Theorem}[section]
\newtheorem{cor}[thm]{Corollary}
\newtheorem{prop}[thm]{Proposition}
\newtheorem{lemme}[thm]{Lemma}
\newtheorem{fait}[thm]{Fact}
\newtheorem{obs}[thm]{Observation}
\theoremstyle{definition}
\newtheorem{defi}[thm]{Definition}
\theoremstyle{mystyle}
\newtheorem{ex}[thm]{Example}
\theoremstyle{remark}
\newtheorem{rem}[thm]{Remark}
\newcommand{\un}{\underline}
\newcommand{\llg}{\langle}
\newcommand{\rrg}{\rangle}
\newcommand{\impl}{\rightarrow}
\newcommand{\monster}{\mathfrak C}
\DeclareMathOperator{\tp}{tp}
\DeclareMathOperator{\val}{val}
\title{Distal and non-distal NIP theories}
\author{Pierre Simon \\ \'Ecole Normale Sup\'erieure, Paris}
\begin{document}
\maketitle
\begin{abstract}
We study one way in which stable phenomena can exist in an $NIP$ theory. We start by defining a notion of `pure instability' that we call `distality' in which no such phenomenon occurs. O-minimal theories and the p-adics for example are distal. Next, we try to understand what happens when distality fails. Given a type $p$ over a sufficiently saturated model, we extract, in some sense, the stable part of $p$ and define a notion of stable-independence which is implied by non-forking and has bounded weight.
\end{abstract}
%
%

\tableofcontents
\section{Introduction}

We study one way in which stability and order can interact in an $NIP$ theory. More precisely, we are interested in the situation where stability and order are intertwined. We start by giving some very simple examples illustrating what we mean.

Consider $M_0\models$ DLO. A type of $S_1(M_0)$ is determined by a cut in $M_0$ and two types corresponding to different cuts are orthogonal. If we take now $M_1$ a model of some o-minimal theory, still a 1-type is determined by a cut, but in general, types that correspond to different cuts are not orthogonal. However this is true over indiscernible sequences in the following sense: assume $\llg a_t:t<\omega+\omega\rrg \subset M_1$ is an indiscernible sequence. By $NIP$, the sequences of types $\llg \tp(a_t/M_1) : t<\omega \rrg$ and $\llg \tp(a_{\omega+t}/M_1): t<\omega\rrg$ converge in $S(M_1)$. Then the two limit types are orthogonal (this follows from dp-minimality, see \ref{omin}). An indiscernible sequence with that property will be called \emph{distal}\footnote{Thanks to Itay Kaplan for suggesting the name.}. A theory is distal if all indiscernible sequences are distal. So any o-minimal theory is distal.

Distality for an indiscernible sequence can be considered as an opposite notion to that of total indiscernibility.

Let now $M_2$ be a model of ACVF (or any other C-minimal structure) and consider an indiscernible sequence $(a_i)_{i<\omega}$ of elements from the valued field sort. Two different behaviors are possible: either the sequence is totally indiscernible, this happens if and only if $\val(a_i-a_j)=\val(a_{i'}-a_{j'})$ for all $i\neq j$, $i'\neq j'$, or the sequence is distal. Again, this will follow from the results in Section 2, but could be proved directly. So $M_2$ is neither stable nor distal; the two phenomena exist but do not interact in a single indiscernible sequence of points.


Consider now a fourth structure (a `colored order') $M_3$ in the language $L_3=\{\leq, E\}$: $M_3$ is totally ordered by $\leq$ and $E$ defines an equivalence relation, each $E$ class being dense co-dense with respect to $\leq$. Now an indiscernible sequence of elements from different $E$ classes is neither totally indiscernible nor distal. Given two limit types $p_x$ and $q_y$ of different cuts in such a sequence, the type $p_x \cup q_y$ is consistent with $xEy$ and with $\neg xEy$. Here it is clear that the `stable part' of a type should be its $E$-class.

The idea behind the work in this paper is that every ordered indiscernible sequence in an $NIP$ theory should look like a colored order: there is an order for which different cuts are orthogonal and something stable on top of it which does not see the order (see Section 3).

\subsubsection{A word about measures}

Keisler measures will be used a little in this work, however the reader not familiar with them can skip all parts referring to measures without harm. For this reason, we will be very brief in recalling some facts about them and refer the reader to \cite{NIP2} and \cite{NIP3}. They however give some understanding of the intuition behind some definitions and results. We explain this now.

A Keisler measure (or simply a measure) is a Borel probability measure on a type space $S_x(A)$. Basic definitions for types (non-forking, invariance, coheir, Morley sequence etc.) generalize naturally to measures (see \cite{NIP2} and \cite{NIP3}). Of interest to us is the notion of \emph{generically stable measure}. A measure is generically stable if it is both definable and finitely satisfiable over some small set. Equivalently, its Morley sequence is totally indiscernible. Such measures are defined and studied by Hrushovski, Pillay and the author in \cite{NIP3}. Furthermore, it is shown in \cite{Finding} that some general constructions give rise to them, and in this sense they are better behaved than the more natural notion of generically stable type.

This paper can be considered as an attempt to understand where generically stable measures come from. What stable phenomena do generically stable measures detect? What does the existence of generically stable measures in some particular theory tell us about types? The first test question was: Can we characterize theories which have non-trivial generically stable measures? Here ``non-trivial" means ``non-smooth": a measure is smooth if it has a unique extension to any bigger set of parameters. This question is answered in Section 2: a theory has a non-smooth generically stable measure if and only if it is not distal.

The main tool at our disposal to link measures to indiscernible sequences is the construction of an average measure of an indiscernible segment (see \cite{NIP3} Lemma 3.4 or \cite{Finding} Section 3 for a more elaborate construction). Such a measure is always generically stable. The intuition we suggest is that the `order' component of the sequence is evened out in the average measure and only the `stable' component remains.

\subsubsection{Organization of the paper and main results}

The paper is organized as follows. The first section contains some basic facts about $NIP$ theories and Keisler measures. We give a number of definitions concerning indiscernible sequences and some basic results illustrating how we can manipulate them. Section 2 studies distal theories. They are defined as theories in which every indiscernible sequence is distal, as explained above. We show that this condition can also be seen through invariant types and generically stable measures. The main results can be summarized by the following theorem.

\begin{thm}
Let $T$ be NIP. Then the following are equivalent:
\begin{itemize}
\item $T$ is distal,
\item Any two invariant types that commute are orthogonal,
\item All generically stable measures are smooth.
\end{itemize}
Furthermore, it is enough to check any one of those conditions in dimension 1.
\end{thm}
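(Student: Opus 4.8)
The plan is to prove the three-way equivalence by dimension-respecting implications together with one genuinely combinatorial step, namely the reduction to dimension $1$. Note first that each of the three conditions, if it holds in all dimensions, trivially holds in dimension $1$ (invariant $1$-types, generically stable measures on $S_1$, and indiscernible sequences of singletons are all special cases). So it is enough to prove: \textbf{(A)} a non-distal indiscernible sequence of singletons produces two commuting invariant $1$-types that are not orthogonal, and also a non-smooth generically stable measure on an $S_1$; \textbf{(B)} two commuting invariant types that are not orthogonal, or a non-smooth generically stable measure, produce a non-distal indiscernible sequence of some arity; and \textbf{(C)} if every indiscernible sequence of singletons is distal, then every indiscernible sequence is distal. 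Granting these, any one of the three conditions in dimension $1$ forces, by the dimension-$1$ contrapositive of \textbf{(A)} or \textbf{(B)}, distality in dimension $1$; by \textbf{(C)} this upgrades to full distality; and by \textbf{(B)} full distality returns all three conditions in every dimension.

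For \textbf{(A)}: fix an $M$-indiscernible sequence $(a_i)_{i\in I}$, a Dedekind cut $I = I_1 + I_2$, and a singleton $b$ such that $(a_i)_{i\in I_1} + b + (a_i)_{i\in I_2}$ is still $M$-indiscernible but $(a_i)_{i\in I}$ is not $Mb$-indiscernible. Let $p$ be the limit type of $(a_i)_{i \in I_1}$ from the right and $q$ that of $(a_i)_{i \in I_2}$ from the left, over a monster; under $NIP$ these are $M$-invariant, and $1$-types when the $a_i$ are singletons. They commute, since $p \otimes q$ and $q \otimes p$ are both, by indiscernibility of $I_1 + I_2$, the type of a pair of new points straddling the cut in left-to-right order. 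They are not orthogonal: the cut is filled both by a generic realization of $p\otimes q$ and by a realization built around $b$, and the latter carries more information — this is precisely the failure of $Mb$-indiscernibility — so $p|_M \cup q|_M$ together with ``fills the cut'' has two completions. For the measure, let $\mu = \operatorname{Av}(J/M)$ be the average measure of a long segment $J$ of the sequence straddling the cut; it is generically stable by the cited construction, and $b$ splits it into two distinct extensions over $Mb$ (the portion of $J$ on which $b$ brings its extra information averages differently in the two readings), so $\mu$ is not smooth.

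For \textbf{(B)}: given commuting invariant $p,q$ that are not orthogonal, take a Morley sequence of $p\otimes q$ and reorganize its coordinates into a single indiscernible sequence whose cuts recover $p$ on the left and $q$ on the right (commutation is what makes the reorganized sequence indiscernible); realizing over a Morley-long piece the extra type permitted by non-orthogonality gives a point that fills the cut but destroys indiscernibility over that piece. Given a non-smooth generically stable $\mu$ over $M$: its Morley sequence is totally indiscernible, and non-smoothness supplies $\phi(x;c)$ and $\varepsilon>0$ admitting no finite $M$-definable $\varepsilon$-sandwich; stretching a long Morley sequence of $\mu$ over $c$ in the standard way yields a totally indiscernible sequence with a point filling a cut that indiscernibility over that point cannot absorb — again a non-distal sequence.

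The main obstacle is \textbf{(C)}, the reduction to dimension $1$, the one place where $NIP$ is used through more than soft type manipulation. The idea is to treat the coordinates of the $n$-tuples one at a time: at each stage a $VC$-type or honest-definition argument isolates a single coordinate across which $Mb$-indiscernibility already fails, after which one passes to a subsequence to restore both the indiscernibility of $I_1 + b + I_2$ and the cut structure in that coordinate. Carrying the two competing requirements — that $b$ still fills the cut and that $b$ still breaks indiscernibility — all the way down to a single coordinate is the delicate part; the translation between generically stable measures and indiscernible sequences in \textbf{(A)}--\textbf{(B)} is the secondary technical point. (If Section~1 already contains a reduction-to-dimension-$1$ lemma for indiscernible sequences, then \textbf{(C)} is immediate and the work is confined to \textbf{(A)}--\textbf{(B)}.)
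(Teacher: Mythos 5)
Your global architecture is the same as the paper's: distality is the hub, each of the other two conditions in dimension $1$ is shown to force distality of singleton sequences, a separate combinatorial theorem upgrades this to full distality, and full distality returns the other two conditions in all dimensions. The problems are in the execution of each step. In \textbf{(A)} your witness of non-distality is simply not one: the configuration ``$I_1+b+I_2$ is $M$-indiscernible but $I$ is not $Mb$-indiscernible'' already occurs in DLO (put $b$ in the cut; then $x<b$ separates $I_1$ from $I_2$), and DLO is distal, so nothing can be extracted from this hypothesis. The correct external form is Lemma \ref{defitrans}: $I_1+I_2$ is $A$-indiscernible and $I_1+b+I_2$ is indiscernible but \emph{not} $A$-indiscernible; equivalently, two points filling two \emph{distinct} Dedekind cuts that are not jointly independent over $I$. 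Accordingly the commuting pair must be $\lim(\mathfrak c_1^-)$ and $\lim(\mathfrak c_2^-)$ for two distinct cuts, not the two polarizations of a single cut as you take them, and to get non-orthogonality over a saturated $M\supseteq I$ you must first apply strong base change (Lemma \ref{limittype}) to replace the filling points by ones realizing these limit types over all of $M$ while preserving their non-independent type over $I$; without that step ``fills the cut'' only controls types over $I$ and says nothing about whether $p|_M\cup q|_M$ is complete. The measure half of (A) has the same gap: you need a formula whose boundary contains a positive-measure set of limit types of cuts, which again comes from sliding/base change, not from ``averaging differently in two readings.''

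In \textbf{(B)}, the paper proves ``distal $\Rightarrow$ commuting types are orthogonal'' directly (Lemma \ref{commute}): with $b\models q$, place a Morley sequence $I$ of $p$ inside $M$ and a Morley sequence $I'$ of $p$ over $Mab$; commutation of $p^{(\omega)}$ with $q$ makes $I+I'$ indiscernible over $Nb$, and Lemma \ref{defitrans} applied to $a$ filling the cut $(I,I')$ pins down $\tp(a,b/N)$. Your ``reorganize the coordinates of a Morley sequence of $p\otimes q$ into a single indiscernible sequence'' does not produce an indiscernible sequence when $p\neq q$, and the measure half (Proposition \ref{gentrans}) is a genuinely delicate argument using a coheir sequence of smooth extensions and measure analogues of the weakly-linked lemmas, which your one sentence does not engage with. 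Finally \textbf{(C)}, the entire content of the ``furthermore'' clause, is not proved. Section 1 contains no such lemma; it is Theorem \ref{dim1}, and its proof does \emph{not} peel off coordinates of the $n$-tuples of the sequence. It first reduces the \emph{external} tuple $b$ to a singleton, then builds a derived sequence and an array $\langle a_i^n\rangle$, $\langle b_n\rangle$ so that the $b_n$'s form an indiscernible sequence of singletons that is weakly linked to, but not mutually indiscernible with, a suitable diagonal of the $a$'s, contradicting Lemma \ref{declemma} if all singleton sequences are distal. The paper explicitly warns that the naive ``one coordinate at a time'' induction seems to require that no type forks over its base; your VC/honest-definition sketch does not address this obstruction, so the heart of the theorem is missing.
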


As a consequence, o-minimal theories and the p-adics are distal as are more generally any dp-minimal theory with no generically stable type.

Section 3 can be read almost independently of the previous one: it contains a study of the intermediate case of an $NIP$ theory that is neither stable nor distal. We deal with the problem of understanding to what extend non-distality is witnessed by stable-like interactions between tuples. If $M$ is a $|T|^+$-saturated model, we define a notion of s-independence denoted $a\downfree^s_M b$ which is sym\-metric, is implied by forking-independence and has bounded weight. We use it to show that two commuting types behave with respect to each other like types in a stable theory (we recover some definability and uniqueness of the non-forking extension). The guiding intuition is that of the colored order where elements have a well defined \emph{stable part} (the image in the quotient) and in that case $a\downfree^s_M b$ means that the stable parts are independent. We do not attempt to give any meaning to the `stable part' of a type in general, and we do not even expect there to be a possible meaning for it. We find that the intuition ``s-independence corresponds to independence of stable parts" is useful in understanding those results. Of course, it may turn out some day to be misleading.

As an application of those ideas, we prove the following `finite-co-finite theorem' (Theorem \ref{whatcanhappen}) and give an application of it to the study of externally definable sets.
\begin{thm}[Finite-co-finite theorem]
Assume that $T$ is NIP. Let $I=I_1+I_2+I_3$ be indiscernible, $I_1$ and $I_3$ being infinite. Assume that $I_1+I_3$ is $A$-indiscernible and take $\phi(x;a)\in L(A)$, then the set $B=\{ b \in I_2 : \models \phi(b;a)\}$ is finite or co-finite.
\end{thm}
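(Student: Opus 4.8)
The plan is to reduce $\phi(x;a)$ to a normal form along $I$ by means of $NIP$, and then to attack the single point at which the hypothesis ``$I_1+I_3$ is $A$-indiscernible'' really does work.

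First, if $I_2$ is finite the statement is trivial, so assume $I_2$ infinite. Since $I_1+I_3$ is $A$-indiscernible, all elements of $I_1\cup I_3$ have the same type over $A$; in particular $\phi(x;a)$ is constant on $I_1\cup I_3$, and after possibly replacing $\phi$ by $\neg\phi$ --- which replaces $B$ by $I_2\setminus B$ and leaves the conclusion unchanged --- we may assume $\phi$ is false on $I_1\cup I_3$. It then suffices to prove that $B$ is finite or co-finite in $I_2$. As $T$ is $NIP$, $\phi(b;a)$ alternates only boundedly often as $b$ runs along the indiscernible sequence $I$, so $B$ is a union of finitely many convex subsets of $I$, each necessarily contained in $I_2$ (since $\phi$ is false at both ends of $I$). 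Also by $NIP$, $I$ is indiscernible over $A$ away from finitely many cuts; since $I_1$ and $I_3$ are themselves $A$-indiscernible, each of those cuts lies in the part of $I$ spanned by $I_2$, so we obtain a decomposition $I=K_0+\cdots+K_n$ into consecutive convex pieces, each $A$-indiscernible, with $I_1\subseteq K_0$, $I_3\subseteq K_n$, $K_1,\dots,K_{n-1}\subseteq I_2$, and $\phi(x;a)$ constant on each $K_\ell$. Hence $B$ is a union of some of the ``true'' pieces among $K_1,\dots,K_{n-1}$, and it is enough to show: if $B$ is infinite (i.e.\ some true $K_j$ is infinite), then every false piece inside $I_2$ --- namely $K_0\cap I_2$, $K_n\cap I_2$, and each false $K_\ell$ with $1\le\ell\le n-1$ --- is finite, so that $B$ is co-finite.

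So suppose towards a contradiction that there are an infinite ``true'' convex piece $D\subseteq I_2$ and an infinite ``false'' convex piece $E\subseteq I_2$, with, say, $E$ below $D$ (the opposite case being handled in the same way). We then have, inside the $\emptyset$-indiscernible sequence $I$, pairwise disjoint infinite convex pieces $I_1<\dots<E<\dots<D<\dots<I_3$, each $A$-indiscernible, with $\phi(x;a)$ false on $I_1,E,I_3$ and true on $D$, and --- this is the crucial extra input --- with $I_1+I_3$ (not merely $I_1$ and $I_3$ separately) $A$-indiscernible. The plan is to contradict $NIP$ by manufacturing from this data a single indiscernible sequence on which some formula alternates more often than $NIP$ permits. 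The $A$-indiscernibility of $I_1+I_3$ should be exactly what makes this possible: over $A$, an increasing tuple taken partly from $I_1$ and partly from $I_3$ is indistinguishable from one taken entirely from $I_1$, so a final part of $I_3$ can be ``recycled'' as a continuation of $I_1$; iterating this and using compactness, one tries to stack many rescaled copies of the segment running from (a part of) $I_1$ up through $D$, re-entering the pattern at the bottom of $I_1$ each time and drawing the connecting material from the $A$-indiscernible sequence $I_1+I_3$ so that $\phi(x;a)$ stays false on it.

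I expect this last construction to be the main obstacle: building a long $\emptyset$-indiscernible sequence is easy, but the $\emptyset$-EM-type does not control the trace of the parametrised formula $\phi(x;a)$, so the $A$-indiscernibility of $I_1+I_3$ must be carried through the whole construction, and it is not obvious how the ``true'' piece $D$ --- which is not itself controlled over $A$ --- is to be reproduced inside the recycled pattern. A more structural repackaging of the same difficulty, in the spirit of the tools highlighted in the introduction, is via average measures: after enlarging $I$ so that $I_1,I_2,I_3$ are segments (harmless, as extending an indiscernible sequence preserves all hypotheses), form the generically stable measure $\lambda=\mathrm{Av}(I_2)$; since $B$ is a finite union of convex blocks, $\lambda\bigl(\phi(x;a)\bigr)$ is just the total length of those blocks, so it is enough to show $\lambda\bigl(\phi(x;a)\bigr)\in\{0,1\}$, and more generally that $\lambda$ is $\{0,1\}$-valued on all $L(A)$-formulas --- i.e.\ that $I_2$, \emph{on average}, behaves like an $A$-indiscernible sequence. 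This should follow by comparing $\lambda$ with $\mathrm{Av}(I_1)$ and $\mathrm{Av}(I_3)$, which \emph{are} concentrated on the common type $\tp(c/A)$ of $c\in I_1$ precisely because $I_1$ and $I_3$ are $A$-indiscernible, using generic stability together with the fact that $I_2$ sits between $I_1$ and $I_3$ inside $I$ and that $I_1,I_3$ together form one $A$-indiscernible sequence; but identifying exactly how this coupling forces $\lambda$ down onto a single type is, once more, the heart of the matter.
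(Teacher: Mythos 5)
Your preliminary reductions are correct: shrinking of indiscernibles (Proposition \ref{shrinking1}) applied to the parameters of $\phi$ does decompose $I$ into finitely many $A$-indiscernible convex blocks, none of which cuts into $I_1$ or $I_3$, and the theorem is indeed equivalent to showing that $I_2$ cannot contain both an infinite ``true'' block and an infinite ``false'' block. But from that point on you do not give a proof: you describe two possible strategies (recycling $I_1+I_3$ to build a long alternating sequence; comparing $\mathrm{Av}(I_2)$ with $\mathrm{Av}(I_1)$ and $\mathrm{Av}(I_3)$) and in both cases you explicitly flag the decisive step as an unresolved obstacle. The difficulty you identify is real and is exactly where the theorem lives: the $\emptyset$-indiscernibility of $I$ gives no control over $\phi(x;a)$, and the $A$-indiscernibility of $I_1+I_3$ says nothing directly about the block $D$ on which $\phi$ is true, so neither sketch closes. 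In particular the measure version is essentially a restatement of the theorem ($\lambda=\mathrm{Av}(I_2)$ being $\{0,1\}$-valued on $L(A)$-formulas is equivalent to the finite-co-finite property for all such formulas), not a reduction of it.

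The missing ingredient is the domination machinery of Section 3. The paper deduces the theorem from Proposition \ref{fcof} by taking $p=\lim(I_3^*)$, so that $I_2^*$ becomes a Morley sequence of $p$ over $A$ together with a Morley sequence of $p$ over $Aa$ (this is where the joint $A$-indiscernibility of $I_1+I_3$ enters). The proof of Proposition \ref{fcof} then runs as follows: for each element $b_i$ of $I_2$ one produces, via Proposition \ref{existssbase}, a small sequence $\bar b_*^i$ filling a cut of the ambient Morley sequence and \emph{strongly dominating} $b_i$; the $\bar b_*^i$ are placed in pairwise distinct cuts, independently over the sequence. Shrinking of indiscernibles then guarantees that after discarding at most $|T|$ of the $\bar b_*^i$, the enlarged sequence is indiscernible over $Aa$; for each surviving index, the external characterization of domination (Proposition \ref{extstablebase}) upgrades this to $b_i\models p|_{Aa}$, i.e.\ $\phi(b_i;a)$ takes the same truth value as on $I_3$. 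Thus all but boundedly many elements of $I_2$ agree with $I_1+I_3$ on $\phi(x;a)$. Without some substitute for this transfer from ``the dominating sequence sits well over $Aa$'' to ``the dominated point realizes $\lim(I_3^*)|_{Aa}$'', the contradiction you are aiming for cannot be extracted from an infinite true block $D$, and the proposal remains incomplete.
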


The last section defines a class of theories \textemdash~ called \emph{sharp} \textemdash~ in which (intuitively) the stable part of types is witnessed by generically stable types. More precisely, over a $|T|^+$-saturated model $M$, every tuple is s-dominated by the realization of a generically stable type. We give a criterion for sharpness which only involves looking at indiscernible sequences of elements (not tuples). In particular, any dp-minimal theory is sharp.
\\

Our Bible concerning $NIP$ theories are Shelah's papers \cite{Sh715}, \cite{Sh783}, \cite{Sh863}, \cite{Sh900} and \cite{Sh950}. We will however use ideas only from the first two. All the basic insights about indiscernible sequences were taken from there (although the important result on shrinking indiscernible sequences originates in \cite{BB}).

In fact, we realized after having done most of this work that the idea of `domination' for indiscernible sequences was already in Shelah's work: in Section 2 of \cite{Sh783} in a slightly different wording and with a very different purpose. The main additional ingredient in Section 3 is the external characterization of domination (\ref{extstablebase}) which allows us to say something about points outside of the indiscernible sequence and then to generalize to the invariant type setting.

An important property of stable theories sometimes referred to as the \emph{Shelah reflection principle} says roughly that non-trivial relationships between a realization of a type $p$ and some other point are reflected inside realizations of $p$. Internal concepts (only considering realizations of $p$) often imply external properties (involving the whole structure). For example regularity implies weight one. There is some evidence now that this principle is already true in $NIP$ theories. See \cite{DepPairs} for an example (weak stable embeddedness).

In this paper we will use this principle for indiscernible sequences: a property involving only the indiscernible sequence itself or extensions of it usually implies properties of the indiscernible sequence with respect to points outside (the same way total indiscernibitily implies that the trace of every definable set is finite or co-finite). See Lemma \ref{defitrans} and Proposition \ref{extstablebase}.

\subsubsection*{Acknowledgment}

I would like to thank Elisabeth Bouscaren for having patiently listened to the expositions of many erroneous proofs of results here and for helping finding mistakes. Thanks to Udi Hrushovski for interesting remarks on earlier versions of this work and for his encouragements to pursue it. Thanks also to Tom Scanlon and the anonymous referee for a number of helpful comments.

\subsection{Preliminaries}

We work with a complete theory $T$, in a language $L$. We let $\monster$ denote a monster model of $T$.

We will often denote sequences of tuples by $I,J,...$. Index sets of families or sequence might be named $\mathcal I,\mathcal J,...$. 

If $I$ is an indiscernible sequence and $A$ a set of parameters, let $\lim(I/A)$ let $I$ be the limit type of $I$ over $A$ defined as follows: if $I=(a_t)_{t\in \mathcal I}$, and $\phi(x;d)\in L(A)$, then $\phi(x;d)\in \lim(I/A)$ if and only if for some $t_0\in \mathcal I$, $\models \phi(a_t;d)$ holds for all $t\geq t_0$. Recall that a theory is NIP if and only if $\lim(I/A)$ is a complete type for every $I$ and $A$. By $\lim(I)$, we mean the global type $\lim(I/\monster)$.

\medskip
\noindent
\textbf{Assumption :} Throughout the paper, we assume that the theory $T$ is NIP.

\medskip
Let $M$ be a $\kappa$-saturated model (for some $\kappa > |T|$). If $A\subseteq M$, $|A|<\kappa$, then a type $p\in S(M)$ is $A$-invariant if for $a\models p$ and any tuples $b,b'\in M$, $b\equiv_A b' \impl ba \equiv b'a$. We will sometimes say simply that $p$ is an invariant type, without specifying $A$. Note that an invariant type has a natural extension to any larger set $B\supset M$ that we will denote by $p|_B$. We use the same notation to denote the restriction of $p$ to $B$, when $B\subset M$.

Let $\mathcal I$ be a linear order. A Morley sequence indexed by $\mathcal I$ of an invariant type $p$ over some $B\supseteq A$ is a sequence $(a_t)_{t \in \mathcal I}$ such that $a_t \models p|_{B\cup a_{<t}}$ for every $t$. All Morley sequences of $p$ over $B$ indexed by $\mathcal I$ are $B$-indiscernible and have the same type over $B$; when $B=M$, we will denote that type by $p^{(\mathcal I)}$.

If $p_x$ and $q_y$ are two types over the $\kappa$-saturated model $M$ and $p$ is invariant, we can define the product $p_x\otimes q_y$ as the element of $S_{xy}(M)$ defined as $\tp(a,b/M)$ where $b\models q_y$ and $a\models p_x|_{Mb}$. If $q$ is also an invariant type, then $p_x\otimes q_y$ is invariant. In this case, we can also build the product $q_y\otimes p_x$. When the two products are equal, we say that $p$ and $q$ commute.

Note that $\otimes$ is associative. In particular if $p$ and $q$ commute with $r$, then $r$ commutes with $p\otimes q$.

\begin{defi}
Two types $p_x,q_y$ over the same domain $A$ are \emph{weakly orthogonal} if $p_x\cup q_y$ defines a complete type in two variables over $A$.

If $p_x,q_y\in S(M)$ are invariant over $A\subset M$ ($M$ is $\kappa$-saturated and $|A|<\kappa$), then we say that $p_x$ and $q_y$ are \emph{orthogonal} if they are weakly orthogonal. This implies that $p|_B$ and $q|_B$ are also weakly orthogonal for any $B\supseteq M$.
\end{defi}

Recall the notion of generically stable type from \cite{Sh715} and \cite{NIP2}: an invariant type $p\in S(M)$ is generically stable if it is both definable and finitely satisfiable in some small model $N\subset M$. Equivalently, its Morley sequence is totally indiscernible.

\subsubsection{Measures}

As we mentioned in the introduction, we will not recall all definitions concerning measures. Instead, we refer the reader to \cite{NIP2} and \cite{NIP3}. The latter paper contains in particular the definition of a generically stable measure. Also the introduction of \cite{Finding} contains a concise account of the definitions and basic results we will need, but without proofs.

We will need to extend the definition of weakly orthogonal for a type and a measure: if $\mu_x$ is a measure over $A$ and $p_y$ a type over the same $A$, we say that they are \emph{weakly orthogonal} if $\mu_x$ has a unique extension to a measure over $Ab$, where $b\models p_y$.

We also recall the following from \cite{Finding}: if $M$ is a model, a measure $\mu \in \mathcal M_x(M)$ is \emph{smooth} if it has a unique extension to any $N\supset M$. For any formula $\phi(x,d)$, $d\in \monster$, let $\partial_M \phi$ denote the closed subset of $S_x(M)$ consisting of types $p$ such that there are $a,a'$ two realizations of $p$ satisfying $\phi(a,d)\wedge \neg \phi(a',d)$.

\begin{fait}[Lemma 4.1 of \cite{Finding}]
The measure $\mu\in \mathcal M_x(M)$ is smooth if and only if $\mu(\partial_M \phi)=0$ for all formulas $\phi(x,d)$, $d\in \monster$.
\end{fait}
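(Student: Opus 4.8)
This is Lemma~4.1 of \cite{Finding}; here is the argument I would give. The plan is to analyze extensions of $\mu$ through the restriction map $\pi\colon S_x(N)\to S_x(M)$ attached to a model $N\supseteq M$: extending $\mu$ to $N$ is the same thing as choosing a Radon probability measure $\nu$ on $S_x(N)$ with $\pi_*\nu=\mu$. First I would fix a formula $\phi(x,d)$ with $d\in N$ and record the partition $S_x(M)=X^+\sqcup X^-\sqcup\partial_M\phi$, where $X^+=S_x(M)\setminus\pi([\neg\phi(x,d)])$ is the open set of types forcing $\phi(x,d)$ and $X^-=S_x(M)\setminus\pi([\phi(x,d)])$ the open set forcing $\neg\phi(x,d)$ (this re-proves in passing that $\partial_M\phi$ is closed). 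The features I would use are: $\pi^{-1}(X^+)\subseteq[\phi(x,d)]$, $\pi^{-1}(X^-)\subseteq[\neg\phi(x,d)]$, over $\partial_M\phi$ both $\phi(x,d)$ and $\neg\phi(x,d)$ stay consistent, and $\mu(X^+)+\mu(X^-)+\mu(\partial_M\phi)=1$.

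For the implication $(\Leftarrow)$ I would argue directly. Assume $\mu(\partial_M\phi)=0$ for every $\phi(x,d)$, $d\in\monster$, and let $\nu_1,\nu_2$ be two extensions of $\mu$ to some $N\supseteq M$. Every clopen of $S_x(N)$ has the form $[\phi(x,d)]$ for a single $L(N)$-formula, and splitting it along $\pi^{-1}(X^+)\sqcup\pi^{-1}(X^-)\sqcup\pi^{-1}(\partial_M\phi)$ gives $\nu_i([\phi(x,d)])=\nu_i(\pi^{-1}(X^+))=\mu(X^+)$ independently of $i$, because the $X^-$-part of $[\phi(x,d)]$ is empty and the $\partial_M\phi$-part has $\nu_i$-measure at most $\mu(\partial_M\phi)=0$. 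So $\nu_1$ and $\nu_2$ agree on every clopen set; since $S_x(N)$ is a Stone space a Radon measure is determined by its values on clopens, so $\nu_1=\nu_2$ and $\mu$ is smooth.

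For $(\Rightarrow)$ I would prove the contrapositive. Suppose $\mu(\partial_M\phi)=\varepsilon>0$ for some $\phi(x,d)$, $d\in\monster$, and pick a model $N\supseteq M$ containing $d$. The one genuine ingredient is that a Radon probability measure lifts through any continuous surjection of compact Hausdorff spaces $f\colon Y\to Z$: the map $f^*\colon C(Z)\to C(Y)$ is an isometric embedding, so the functional $g\mapsto\int g\,d\lambda$ extends by Hahn--Banach from $f^*C(Z)$ to a norm-one functional on $C(Y)$, which is automatically positive (it sends the constant $1$ to $1$), hence a Radon probability measure $\tilde\lambda$ with $f_*\tilde\lambda=\lambda$. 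I would apply this to $\mu|_{\partial_M\phi}$ twice: along $\pi\colon[\phi(x,d)]\cap\pi^{-1}(\partial_M\phi)\to\partial_M\phi$ to get $\rho^+$, and along $\pi\colon[\neg\phi(x,d)]\cap\pi^{-1}(\partial_M\phi)\to\partial_M\phi$ to get $\rho^-$ (both maps are onto by consistency of $\phi(x,d)$, resp.\ $\neg\phi(x,d)$, over $\partial_M\phi$). Fixing any extension $\nu_0$ of $\mu$ to $N$, the measures $\nu_1=\nu_0|_{S_x(N)\setminus\pi^{-1}(\partial_M\phi)}+\rho^+$ and $\nu_2=\nu_0|_{S_x(N)\setminus\pi^{-1}(\partial_M\phi)}+\rho^-$ are both probability measures on $S_x(N)$ projecting to $\mu$, and $\nu_1([\phi(x,d)])-\nu_2([\phi(x,d)])=\varepsilon\neq0$; so $\mu$ has two distinct extensions and is not smooth.

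The only step I expect to require care is the lifting of a Radon measure through a continuous surjection, since $S_x(N)$ need not be metrizable and one cannot simply invoke a Borel section; but the Hahn--Banach argument above sidesteps this. Everything else is bookkeeping with the partition $X^+\sqcup X^-\sqcup\partial_M\phi$, together with the standard fact that a Radon measure on a Stone space is determined by its values on clopen sets.
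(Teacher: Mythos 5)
Your argument is correct. Note that the paper states this only as a \textsc{Fact} imported from Lemma~4.1 of the cited reference, with no proof given here, so there is nothing internal to compare against; but your proof is a sound, self-contained derivation along the standard lines. The decomposition $S_x(M)=X^+\sqcup X^-\sqcup\partial_M\phi$ together with $\pi^{-1}(X^{\pm})\subseteq[\phi^{\pm}]$ is exactly the right bookkeeping for the easy direction, and for the converse the one genuinely non-trivial ingredient --- lifting a Radon measure through a continuous surjection of compact Hausdorff spaces --- is correctly handled by the Hahn--Banach/Riesz argument (the norm-one extension sending $1$ to $1$ is positive), applied after normalizing $\mu|_{\partial_M\phi}$, which has total mass $\varepsilon$ rather than $1$. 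The only identification you lean on implicitly is that a Keisler-measure extension of $\mu$ to $N$ is the same as a regular Borel $\nu$ with $\pi_*\nu=\mu$ on all Borel sets, not just clopens; this follows since both $\pi_*\nu$ and $\mu$ are regular and agree on the clopen basis, and you do flag it at the outset.
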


\subsubsection{Indiscernible sequences and cuts}

The notation $I=I_1+I_2$ means that the sequence $I$ is the concatenation of the sequences $I_1$ and $I_2$: $I_1$ is an initial segment of $I$ and $I_2$ the complementary final segment. This operation is associative, and we will also use it to denote the concatenation of three or more sequences. It may be the case that one of the sequences is finite. In particular, when $b$ is a tuple, we may write $I_1+b+I_2$ to denote $I_1+\langle b\rangle+I_2$ where $\langle b\rangle$ is the sequence of length 1 whose only member is $b$.

If $I=I_1+I_2$, we will say that $(I_1,I_2)$ is a \emph{cut} of $I$.

By the EM-type (over $A$) of an indiscernible sequence $I=\llg a_i : i\in \mathcal I \rrg$, we mean the family $(p_n)_{n<\omega}$, where $p_n\in S_n(A)$ is the type of $(a_{\sigma(k)})_{k<n}$ for $\sigma : n \rightarrow \mathcal I$ any increasing embedding.

We now introduce a number of definitions that will be useful for handling indiscernible sequences.

\begin{defi}[Cuts]
If $J\subset I$ is a convex subsequence, a cut $\mathfrak c=(I_1,I_2)$ is said to be \emph{interior} to $J$ if $I_1 \cap J$ and $I_2 \cap J$ are infinite.

A cut is \emph{Dedekind} if both $I_1$ and $I_2^*$ ($I_2$ with the order reversed) have infinite cofinality.

If $\mathfrak c=(I_1,I_2)$ and $\mathfrak d=(J_1,J_2)$ are two cuts of the same sequence $I$, then we write $\mathfrak c \leq \mathfrak d$ if $I_1 \subseteq J_1$.

We write $(I_1',I_2') \unlhd (I_1,I_2)$ if $I_1'$ is an end segment of $I_1$ and $I_2'$ an initial segment of $I_2$.
A \emph{polarized cut} is a pair $(\mathfrak c,\varepsilon)$ where $\mathfrak c$ is a cut $(I_1,I_2)$ and $\varepsilon \in \{1,2\}$ is such that $I_\varepsilon$ is infinite. We will write the polarized cut $\mathfrak c^{-}$ if $\varepsilon = 1$ and $\mathfrak c^+$ if $\varepsilon=2$.

Given a polarized cut $\mathfrak c^{\bullet}=((I_1,I_2),\varepsilon)$ and a set $A$ of parameters, we can define the \emph{limit type} of $\mathfrak c^{\bullet}$ denoted by $\lim(\mathfrak c^\bullet/A)$ as the limit type of the sequence $I_1$ or $I_2^*$ depending on the value of $\varepsilon$.

If a cut $\mathfrak c$ has a unique polarization, or if we know both polarizations give the same limit type over $A$, we will write simply $\lim(\mathfrak c/A)$.

If $\mathfrak c=(I_1,I_2)$ is a cut, we say that the tuple $b$ \emph{fills} the cut $\mathfrak c$ if $I_1+b+I_2$ is indiscernible. Similarly, if $\bar b$ is a sequence of tuples, we will say that $\bar b$ fills $c$ if the concatenation $I_1+\bar b+I_2$ is indiscernible.
\end{defi}

The following definition is from \cite{Sh715}.
\begin{defi}
Let $\mathfrak c=(I_1,I_2)$ be a Dedekind cut. A set $A$ \emph{weakly respects} $\mathfrak c$ if $\lim(\mathfrak c^+/A)=\lim(\mathfrak c^-/A)$. It \emph{respects} $\mathfrak c$ if for every finite $A_0\subseteq A$, there is $I'_1$ cofinal in $I_1$ and $I'_2$ coinitial in $I_2$ such that $I'_1+I'_2$ is indiscernible over $A_0$.
\end{defi}

Note that $\lim(\mathfrak c^\bullet)=\lim(\mathfrak c^\bullet/\monster)$ is an invariant type, in fact finitely satisfiable over the sequence $I$.

If $\mathfrak c_1$ and $\mathfrak c_2$ are two distinct polarized cuts in an indiscernible sequence $I$ then $\lim(\mathfrak c_1)$ and $\lim(\mathfrak c_2)$ commute: $\lim(\mathfrak c_1)_x\otimes \lim(\mathfrak c_2)_y=\lim(\mathfrak c_2)_y\otimes \lim(\mathfrak c_1)_x$. More precisely $\phi(x,y) \in \lim(\mathfrak c_1)_x\otimes \lim(\mathfrak c_2)_y$ if and only if for some $J_1$ cofinal in $\mathfrak c_1$ and $J_2$ cofinal in $\mathfrak c_2$, $\phi(a,b)$ holds for $(a,b) \in J_1\times J_2$. 

\begin{defi}[Polycut]
A polycut is a sequence $(\mathfrak c_i)_{ i\in \mathcal I}$ of pairwise distinct cuts.

The definitions given for cuts extend naturally to polycuts: a polarized polycut is a family of polarized cuts. If $\mathfrak c=(\mathfrak c_i)_{i\in \mathcal I}$ is a polarized polycut, then we define $\lim(\mathfrak c)=\bigotimes_{i\in \mathcal I} \lim(\mathfrak c_i)$. It is a type in variables $(x_i)_{i\in \mathcal I}$. A tuple $(a_i)_{i\in \mathcal I}$ \emph{fills} $\mathfrak c$ if the sequence $I$ with all the points $a_i$ added in their respective cut is indiscernible. Note that this is stronger than asking that each $a_i$ fills $\mathfrak c_i$.
\end{defi}

\begin{defi}[$I$-independent]\label{defi_Iind}
Let $I$ be a dense indiscernible sequence, $\mathfrak c_1,..,\mathfrak c_n$ pairwise distinct cuts in $I$ and $a_1,..,a_n$ filling those cuts, then $a_1,..,a_n$ are independent over $I$ (or $I$-independent) if the tuple $(a_1,...,a_n)$ fills the polycut $(\mathfrak c_1,...,\mathfrak c_n)$.
\end{defi}

We will use the notation $a \downfree_I b$ to mean that $a$ and $b$ are independent over $I$, {\it i.e.}, that $I\cup\{a\}\cup\{b\}$ remains indiscernible (where $I\cup \{a\} \cup \{b\}$ is ordered so that $a$ and $b$ fall in their respective cuts). Note that this is a symmetric notion.

The proofs in this paper will involve a lot of constructions with indiscernible sequences. We list here the basic results and ideas we will need for that. We tried to encapsulate in lemmas some constructions that we will use often. However, in some cases, the lemmas will not fit exactly our needs. The reader should therefore bear in mind the principles of those constructions more than the statements themselves. The constructions are grouped in three parts: shrinking, expanding and sliding.

\subsection{Shrinking}\label{sec_shrinking}

We start with the very important results concerning shrinking of indiscernibles. We give the statement as in \cite[Section 3]{Sh715}. See also \cite{Adler}.

\begin{defi}
A finite convex equivalence relation on $\mathcal I$ is an equivalence relation $\sim$ on $\mathcal I$ which has finitely many classes, all of which are convex subsets of $\mathcal I$.
\end{defi}

\begin{prop}[Shrinking indiscernibles]\label{shrinking1}
Let $A$ be any set of parameters and $( a_t)_{t\in \mathcal I}$ be an $A$-indiscernible sequence. Let $d$ be any tuple. Let $\phi(x_d;y_0,..,y_{n-1},z)$ be a formula. There is a finite convex equivalence relation $\sim$ on $\mathcal I$ such that given: 

-- $t_0<\ldots<t_{n-1}$ in $\mathcal I$;

-- $s_0<\ldots<s_{n-1}$ in $\mathcal I$ with $t_k \sim s_k$ for all $k$;

-- $b\in A^{|t|}$,

\noindent
we have $\phi(d;a_{t_0},..,a_{t_{n-1}},b) \leftrightarrow \phi(d;a_{s_0},...,a_{s_{n-1}},b)$.

Furthermore, there is a coarsest such equivalence relation.
\end{prop}

Often we will apply this with $A=\emptyset$, in which case $b$ does not appear.

We elaborate a little bit on this statement. We fix some parameter set $A$, sequence $I$, tuple $d$ and formula $\phi(x_d;y_0,...,y_{n-1},z)$ such that $I$ is indiscernible over $A$. Consider the coarsest equivalence relation $\sim$ satisfying the conclusion of Proposition \ref{shrinking1}.

The relation $\sim$ induces a partition of the sequence $\mathcal I$ into convex equivalence classes: $\mathcal I=\mathcal I_1+\ldots+\mathcal I_T$. We define also the corresponding partition of $I$ as $I=I_1+\ldots + I_T$.

The $T-1$ cuts $(I_1+\ldots+I_{k-1},I_{k}+\ldots+I_T)$, for $k<T$, will be called the cuts \emph{induced by }$(d,\phi)$\emph{ on }$I$ (over $A$). For the purpose of this section, we will denote them by $\textsf {cut}_I(d,\phi;0)< \ldots < \textsf {cut}_{I}(d,\phi;T-1)$. Here $A$ is implicit to simplify the notation. Let also $\textsf T_I(d,\phi)=T$ be the number of such cuts.

Let $\mathcal F(n,T)$ be the set of non-decreasing functions from $n$ to $T$. For any $f\in \mathcal F(n,T)$ and $b\in A^{|t|}$, there is a truth value $\varepsilon_{d,\phi;I}(f,b)$ such that $\phi(d;a_{t_0},\ldots,a_{t_{n-1}},b)$ has truth value $\varepsilon_{d,\phi;I}(f,b)$ for any $t_0 < \ldots < t_{n-1}$ with $t_k \in I_{f(k)}$ for all $k<T$.

To summarize, the tuple $d$ the sequence $I$ and the set $A$ being fixed, we have associated, to any formula $\phi(x_d;y_0,\ldots,y_{n_{\phi}-1},z)$ an integer $\textsf T_I(d,\phi)$, cuts $\textsf {cut}_i(d,\phi;I)$ for $i<\textsf T_I(d,\phi)$ and a function $\epsilon_{I}(d,\phi): \mathcal F(n,\textsf T_I(d,\phi))\times A^{|t|} \rightarrow \{\top, \bot\}$. This data completely describes the type of $d$ over $IA$.

\begin{lemme}\label{cofinal}
Let $I=( a_t)_{t\in \mathcal I}$ be $A$-indiscernible with $\mathcal I$ of cofinality at least $|T|^+$, then for any finite tuple $d$, there is an end segment $I'$ of $I$ that is indiscernible over $Ad$. 
\end{lemme}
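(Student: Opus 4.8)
The plan is to apply the shrinking indiscernibles machinery (Proposition \ref{shrinking1}) formula by formula, and then use a counting argument against the high cofinality of $\mathcal I$. Fix the finite tuple $d$. For each $L$-formula $\phi(x_d; y_0, \ldots, y_{n-1})$ (taking $A = \emptyset$ for notational simplicity; the general case is identical with $b$ ranging over $A^{|t|}$), Proposition \ref{shrinking1} gives a coarsest finite convex equivalence relation $\sim_\phi$ on $\mathcal I$ with $\textsf T_I(d,\phi)$ classes, hence $\textsf T_I(d,\phi) - 1$ induced cuts on $I$. The key point is that each such cut, being one of finitely many boundaries of a finite convex partition of $\mathcal I$, is realized somewhere strictly below the top: more precisely, since $\mathcal I$ has cofinality $\geq |T|^+ > \omega$, the final class $\mathcal I_{\textsf T_I(d,\phi)}$ of the partition induced by $\sim_\phi$ is a cofinal end segment of $\mathcal I$, so there is an index $t_\phi \in \mathcal I$ such that the entire end segment $\{t \in \mathcal I : t \geq t_\phi\}$ lies in that last class.

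Next I would collect these indices over all formulas. There are only $|T|$ many formulas $\phi$ (in the finitely many free variables corresponding to $x_d$ and a finite block of $y$'s), so we obtain a set $\{t_\phi : \phi \in L\}$ of size at most $|T|$. Since $\mathcal I$ has cofinality at least $|T|^+$, this set is bounded in $\mathcal I$: there is $t^* \in \mathcal I$ with $t^* \geq t_\phi$ for every $\phi$. Let $I'$ be the end segment of $I$ consisting of the $a_t$ with $t \geq t^*$.

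It remains to check that $I'$ is indiscernible over $d$ (and, in the general statement, over $Ad$). Fix a formula $\phi(x_d; y_0, \ldots, y_{n-1})$ and two increasing tuples $t_0 < \cdots < t_{n-1}$ and $s_0 < \cdots < s_{n-1}$, all from $\{t \in \mathcal I : t \geq t^*\}$. By construction all of $t_0, \ldots, t_{n-1}, s_0, \ldots, s_{n-1}$ lie in the single final $\sim_\phi$-class, so in the notation of the discussion following Proposition \ref{shrinking1} both tuples correspond to the constant function $f \equiv \textsf T_I(d,\phi)$ in $\mathcal F(n, \textsf T_I(d,\phi))$; hence $\phi(d; a_{t_0}, \ldots, a_{t_{n-1}})$ and $\phi(d; a_{s_0}, \ldots, a_{s_{n-1}})$ have the same truth value $\varepsilon_{d,\phi;I}(f)$. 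As $\phi$ was arbitrary, the end segment $I'$ is indiscernible over $d$. (With a nonempty base $A$: since $I$ is $A$-indiscernible to begin with, $I'$ is $A$-indiscernible, and the argument above applied to each $\phi(x_d; y_0,\ldots,y_{n-1}, z)$ with $z$ filled by tuples from $A$ — noting the data $\epsilon_I(d,\phi)$ already records the dependence on $b \in A^{|t|}$ — shows $I'$ is $Ad$-indiscernible.)

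The only real subtlety, and the step I would be most careful about, is the cardinality bookkeeping: one must make sure that the relevant index set — formulas $\phi$ together with the choice of which finite block of $y$-variables and, in the based version, which tuples $b$ from $A$ one plugs in — is of size at most $|T|$ (using $|A| < \kappa$ only to the extent needed, but in fact for indiscernibility over $Ad$ we only need to quantify over the $|T|$ many formulas, since $A$-indiscernibility of $I$ already handles the $b$'s uniformly), so that its image in $\mathcal I$ is bounded by cofinality $\geq |T|^+$. Everything else is a direct unwinding of the shrinking lemma and its attached notation.
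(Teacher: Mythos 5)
Your proof is correct and is exactly the paper's argument: the paper's one-line proof is to take $I'$ to the right of all the cuts $\textsf{cut}_I(d,\phi;i)$, which is precisely your collection of the (at most $|T|$ many) boundary points $t_\phi$ followed by the cofinality argument to bound them. The bookkeeping you flag is handled just as you describe, since Proposition \ref{shrinking1} already treats the parameters $b\in A^{|t|}$ uniformly.
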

\begin{proof}
Simply take $I'$ to be to the right of all the cuts $\textsf {cut}_I(d,\phi;i)$.
\end{proof}

Usually, when we consider the type of a tuple $d$ over an indiscernible sequence $I$, we are not concerned with the exact type, but only with the number of cuts induced by $d$ on $I$ and their relative position with respect to each other. We now define a notion of similarity between types which makes this precise.

Let $d$ be a tuple and $I=(a_t)_{t\in \mathcal I}$ an indiscernible sequence. We define a structure $I_{[d]}$ as follows: its universe is $\{a_t :t\in \mathcal I\}$, the language contains a binary relation $<_I$ interpreted as the order on $I$ and for each formula $\phi(x_d;y_0,\ldots,y_n)\in L$, a $n$-ary predicate $R_\phi(y_0,\ldots,y_{n-1})$ which holds on $(a_{t_0},\ldots,a_{t_{n-1}})$ if and only if $\models \phi(d;a_{t_0},\ldots,a_{t_{n-1}})$.

\begin{defi}
Let $I,J$ be two indiscernible sequence and $d,d'$ two tuples of the same length. We say that $\tp(d/I)$ and $\tp(d'/J)$ are \emph{similar} if $I_{[d]}\equiv J_{[d']}$.

If $I$ and $J$ are indiscernible over $A$, we say that the two types are similar \emph{over $A$} if they are similar, in the expanded language $L(A)$.
\end{defi}

Note that in particular, if $\tp(d/I)$ and $\tp(d'/I')$ are similar over $A$, then $\tp(d/A)=\tp(d'/A)$ and the EM-types of $I$ and $J$ over $A$ are the same.

The structure $I_{[d]}$ is bi-interpretable with the structure having same universe, whose language contains the binary relation $<_I$ and for each cut $\textsf {cut}_i(d,\phi;I)$ a unary predicate interpreted as the left-piece of the cut. When $I$ and $J$ are densely ordered without endpoints (which will almost always be the case), then $\tp(d/I)$ and $\tp(d'/J)$ are similar over $A$ if and only if  for all formula $\phi$ and $\psi$ as above, the following conditions are satisfied:

-- $\textsf T_I(d,\phi)=\textsf T_{J}(d',\phi)$;

-- $\epsilon_I(d,\phi) = \epsilon_{J}(d',\phi)$;

-- for all $i<\textsf T_{I}(d,\phi)$, the cuts $\textsf {cut}_I(d,\phi;i)$ and $\textsf {cut}_{J}(d',\phi;i)$ are either both of infinite cofinality from the left (resp. right) or both of finite cofinality from the left (resp. right);

-- for all $i<\textsf T_{I}(d,\phi)$ and $j<\textsf T_{I}(d,\psi)$, we have $\textsf {cut}_I(d,\phi;i) < \textsf {cut}_{I}(d,\psi;j)$ if and only if $\textsf {cut}_{I'}(d',\phi;i)<\textsf {cut}_{I'}(d',\psi;j)$;

-- there are infinitely many elements in $I$ between the cuts $\textsf {cut}_I(d,\phi;i)$ and $\textsf {cut}_{I}(d',\psi;j)$ if and only if there are infinitely many elements in $J$ between the cuts $\textsf {cut}_{J}(d',\phi;i)$ and $\textsf {cut}_{J}(d',\psi;j)$.

\begin{lemme}\label{shrinking}
Let $I$ be a dense indiscernible sequence over a set $A$, and $d$ a tuple, then there is $I' \subset I$ of size at most $|T|+|d|$ such that $\tp(d/I')$ and $\tp(d/I)$ are similar over $A$.
\end{lemme}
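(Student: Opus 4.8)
The plan is to extract a small subsequence by a compactness/Löwenheim--Skolem argument on the structure $I_{[d]}$ together with the expanded language $L(A)$, and then to argue that the similarity type is preserved. First I would form the structure $\mathcal M = I_{[d]}$ in the language containing $<_I$, the predicates $R_\phi$ for $\phi \in L$, and constants for the elements of $A$ (so really we work in the structure $I_{[d]}$ expanded by naming $A$, as in the definition of similarity over $A$). Since $I$ is a dense indiscernible sequence over $A$, $\mathcal M$ is an infinite linearly ordered structure, and by Proposition \ref{shrinking1} (shrinking indiscernibles) applied to each formula $\phi(x_d;y_0,\dots,y_{n-1},z)$ and each finite $A_0 \subseteq A$, the behaviour of $R_\phi$ on increasing tuples is governed by the finitely many induced cuts $\textsf{cut}_I(d,\phi;i)$; in particular $\mathcal M$ has a \emph{definable} (over $\emptyset$ in the expanded language) family of at most $\textsf T_I(d,\phi) \le$ finitely many ``threshold'' cuts per formula.

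Next I would take an elementary substructure $\mathcal M_0 \preceq \mathcal M$ of cardinality at most $|T| + |d| + |A_{\mathrm{used}}|$; but since we only need $A$ to enter through finite subsets at a time (the relevant data $\epsilon_I(d,\phi)$ takes finite tuples $b \in A^{|t|}$ as input, and similarity over $A$ just means $I_{[d]} \equiv J_{[d']}$ in $L(A)$, which is a statement about all finite tuples from $A$), a cleaner route is: build $I' \subseteq I$ as a union of countably many well-chosen points around each induced cut. Concretely, for every formula $\phi$ and every $i < \textsf T_I(d,\phi)$, the cut $\textsf{cut}_I(d,\phi;i)$ either has a cofinal $\omega$-sequence on its left or is realized/has a maximum on the left (and dually on the right, using that $\mathcal I$ is dense without endpoints, so ``finite cofinality from the left'' here means the left piece has a maximum element, etc.). I would choose, for each such cut, a countable cofinal piece from the left and a countable coinitial piece from the right when the cofinality is infinite, and a single witnessing point when it is finite; also, whenever two induced cuts have infinitely many points of $I$ strictly between them, I put in a countable set of points witnessing this (e.g. an $\omega$-sequence strictly between). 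Taking $I'$ to be the union of all these pieces over all $\le |T|$ choices of $\phi$ (and $\le |T| + |d|$ overall counting the length of $d$), we get $|I'| \le |T| + |d|$, and $I'$ inherits density and no endpoints once we also throw in enough points to make it dense (still only countably many per gap), so $I'$ is a dense indiscernible sequence over $A$ of the required size.

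Then I would verify that $\tp(d/I')$ and $\tp(d/I)$ are similar over $A$ by checking the five bulleted conditions from the characterization just above the lemma (valid since both $I$ and $I'$ are dense without endpoints). For each $\phi$: the induced cuts of $d$ on $I'$ are exactly the traces on $I'$ of the induced cuts on $I$, because $I' \subseteq I$ and the coarsest convex equivalence relation from Proposition \ref{shrinking1} for $I'$ is the restriction of the one for $I$ (one inclusion is immediate; the other uses that $I'$ meets both sides of each induced cut of $I$, which we arranged by the cofinal/coinitial choices, so no two genuinely distinct cuts of $I$ collapse on $I'$, and no new cut appears since the global truth-value pattern $\epsilon_I(d,\phi)$ is already realized by tuples from $I'$). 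This gives $\textsf T_{I'}(d,\phi) = \textsf T_I(d,\phi)$ and $\epsilon_{I'}(d,\phi) = \epsilon_I(d,\phi)$ for all $\phi$. The cofinality condition holds because we chose a cofinal $\omega$-sequence (resp. a maximum point) on each side exactly according to whether the cofinality in $I$ was infinite or finite. The ordering condition on cuts of different formulas is inherited since $I' \subseteq I$ preserves $<$. And the ``infinitely many points between two cuts'' condition holds by the extra $\omega$-sequences we inserted between any two such cuts, together with the observation that if only finitely many points of $I$ lie between two induced cuts then, those cuts being induced, there are in fact zero or we may take $I'$ to omit none of them — more carefully, ``finitely many between'' forces ``zero between'' here because the induced cuts are exactly the boundaries of the convex classes, so consecutive induced cuts with finitely many points between must coincide after refining; thus this case is vacuous.

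The main obstacle I anticipate is the bookkeeping in the last paragraph: making sure that restricting to $I'$ neither merges two distinct induced cuts nor creates a spurious new one, and that the ``finite vs.\ infinite cofinality'' and ``finitely vs.\ infinitely many elements between'' invariants are genuinely preserved rather than merely one-directionally preserved. The key technical point that resolves this is that the induced cuts depend only on the \emph{pattern} $\epsilon_I(d,\phi)$ of truth values on the finitely many convex classes, a pattern that is already fully witnessed by finitely many increasing tuples; so as long as $I'$ contains enough points to realize every such tuple and to witness every relevant cofinality and gap, the similarity type over $A$ is forced. (If one prefers, this whole argument can be packaged as a downward Löwenheim--Skolem argument in the structure $I_{[d]}$ over $L(A)$, noting that its theory is essentially that of a dense order with finitely many ``colors'' per formula, which has small elementary substructures; I would mention this as the conceptually cleanest formulation.)
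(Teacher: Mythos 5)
Your proof is correct and is essentially an expanded version of the paper's argument, which is just the one line ``immediate by L\"owenheim--Skolem'': you extract a subsequence of size $|T|+|d|$ tracing every induced cut together with its cofinality, the truth-value pattern, and the gap data, which is exactly what preserves the similarity class. (The only slip is the parenthetical claim that the case of finitely many points between two induced cuts is vacuous --- that configuration can genuinely occur --- but your stated fallback of simply putting all those finitely many points into $I'$ already handles it, so nothing is lost.)
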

\begin{proof}
This is immediate by L\"owenheim-Skolem.
%
%
%
%
\end{proof}

\subsection{Expanding}

Let $I$ be an indiscernible sequence over some set $A$, and $d$ any tuple. We now study how one can extend $I$ to some bigger sequence $I'$ maintaining the similarity type of $\tp(d/I)$ over $A$.

First, if $I$ is endless, there is a limit type $\lim(I)$ as defined above. If $J$ realizes a Morley sequence of that type over $Ad$, then $I+J^*$ is indiscernible, where $J^*$ is the sequence $J$ with the opposite order. Also $\tp(d/I+J^*)$ is similar to $\tp(d/I)$ over $A$.

Consider now a cut $\mathfrak c=(I_1,I_2)$ of $I$. If $I_1$ is endless, then we can similarly consider $K$ a Morley sequence of $\lim(I_1)$ over $IA$. Then $I_1+K^*+I_2$ is indiscernible and $\tp(d/I_1+K^*+I_2)$ is similar to $\tp(d/I_1+I_2)$. If $I_2$ has no first element, then we can similarly extend by realizing a Morley sequence in $\lim(I_2^*)$. Note that unless the cut $\mathfrak c$ is induced by $(d,\phi)$ on $I$ for some formula $\phi$, then $\lim(I_1/IAd)=\lim(I_2^*/IAd)$.

If we want to extend the sequence $I$ by adding elements in different cuts, we can iterate the above procedure. Note that the order in which we chose the cuts does not matter since the different limit types commute with each other.

We therefore conclude the following lemma.

\begin{lemme}\label{expanding}
Let $I=(a_i)_{i\in \mathcal I}$ be an indiscernible sequence over some set $A$. Assume $\mathcal I$ is dense without endpoints. Let $d$ be any tuple and let $\mathcal J\supset \mathcal I$ be any linearly ordered set extending $\mathcal I$. Then there are tuples $(a_i)_{i\in \mathcal J \setminus \mathcal I}$ such that the sequence $J=(a_i)_{i\in \mathcal J}$ is indiscernible over $A$ and $\tp(d/J)$ is similar to $\tp(d/I)$ over $A$.
\end{lemme}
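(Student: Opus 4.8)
The plan is to build the sequence $J$ one cut at a time, using the expanding constructions described just before the statement, and to handle the passage through all the new cuts by a transfinite/compactness argument. First, observe that the new index points of $\mathcal J\setminus\mathcal I$ fall into cuts of $\mathcal I$; grouping them, we may list the cuts $\mathfrak c_\alpha$ of $\mathcal I$ that receive new points, together with, for each such cut, the linear order $\mathcal K_\alpha$ of new indices to be inserted there. For a single cut $\mathfrak c=(I_1,I_2)$: since $\mathcal I$ is dense without endpoints, $I_1$ is endless (to the right) and $I_2$ has no first element, so both $\lim(I_1/IAd)$ and $\lim(I_2^*/IAd)$ are defined invariant types. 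If $\mathfrak c$ is not one of the cuts induced by $(d,\phi)$ on $I$ for any $\phi$, these two limit types agree over $IAd$, so realizing a Morley sequence of $\lim(I_1/IAd)$ indexed by $\mathcal K_\alpha$ and inserting it into the cut produces an indiscernible sequence over $A$ with $\tp(d/\cdot)$ similar to $\tp(d/I)$ over $A$; if $\mathfrak c$ \emph{is} induced by some $(d,\phi)$, then there are only finitely many such cuts and no new points are inserted in a cut that would change the similarity type — the inserted Morley sequence simply refines the cut without creating a new induced cut, so similarity is preserved. (One should note that, by Lemma \ref{shrinking}, the data of $\tp(d/I)$ up to similarity is captured by countably-plus-$|T|$-many formulas, so ``induced cut'' is a well-defined finite-per-formula notion.)

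The second step is to iterate. Enumerate the cuts to be filled as $(\mathfrak c_\alpha)_{\alpha<\lambda}$ and define an increasing chain of indiscernible sequences $I=I^{(0)}\subseteq I^{(1)}\subseteq\cdots$, where at stage $\alpha+1$ we insert into $\mathfrak c_\alpha$ (now viewed as a cut of $I^{(\alpha)}$, which it still is since earlier insertions were in other cuts) a Morley sequence of the appropriate limit type over $I^{(\alpha)}Ad$, indexed by $\mathcal K_\alpha$; at limit stages take unions. The key points to check are: (i) each $I^{(\alpha)}$ is indiscernible over $A$ — immediate at successors by the one-cut construction and at limits since indiscernibility over $A$ is a property of finite subtuples; and (ii) $\tp(d/I^{(\alpha)})$ remains similar to $\tp(d/I)$ over $A$ throughout. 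For (ii) at limit stages, similarity over $A$ is again determined by the behavior of $d$ against finite tuples from the sequence together with the cofinality data of the finitely-many (per formula) induced cuts, and none of these are disturbed by taking unions of an increasing chain in which they were preserved at every successor — in particular the crucial fact that no \emph{new} induced cut is created, which is exactly the content of using limit types over $I^{(\alpha)}Ad$ (the Morley sequence witnesses $\lim(I_1)=\lim(I_2^*)$ over $Ad$ relative to that cut). Finally, set $J=\bigcup_{\alpha<\lambda}I^{(\alpha)}$; by construction its index set is $\mathcal J$ and it has the desired properties.

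The main obstacle I expect is step (ii): carefully arguing that inserting a Morley sequence of $\lim(I_1/I^{(\alpha)}Ad)$ into a non-induced cut does not, over the long transfinite iteration, accumulate into a new induced cut or otherwise change the similarity type. The clean way to handle this is to fix in advance, via Lemma \ref{shrinking}, a small $I_0\subseteq I$ with $\tp(d/I_0)$ similar to $\tp(d/I)$ over $A$, to work out the finite list of induced cuts for each of the relevant formulas, and then to check that every insertion is into a cut that is \emph{interior} to a class of the shrinking partition — so that the cofinality-from-left/right conditions and the ``infinitely many elements between consecutive induced cuts'' condition in the characterization of similarity are all automatically maintained. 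Once this bookkeeping is set up, the transfinite induction is routine, and one could equivalently phrase the whole thing as a single compactness argument: write down the (finitely satisfiable, by the limit-type descriptions) type over $IAd$ asserting that the new points in each cut form a Morley sequence of the correct limit type and satisfy the same similarity-type constraints, and realize it.
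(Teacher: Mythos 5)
Your proposal is correct and follows essentially the same route as the paper: the paper's ``proof'' is precisely the discussion preceding the statement --- insert into each cut a Morley sequence of the appropriate limit type taken over $IAd$, observe that similarity of $\tp(d/\cdot)$ is preserved at each step, and iterate, the order of insertions being immaterial since the limit types of distinct cuts commute. Your transfinite-induction bookkeeping and the closing compactness reformulation just make explicit what the paper leaves implicit.
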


\subsection{Sliding}

We are now concerned with the situation where we have $A$, $I$ and $d$ as above, and we want to produce some $d'$ with the same similarity type as $d$, but such that the cuts induced by $d'$ are different from those induced by $d$. We see this as \emph{sliding} the point $d$ along the sequence.

We  state the result in a slightly more general form..

\begin{lemme}\label{sliding1}
Let $I$, $J$ be two dense sequences, indiscernible over some set $A$. Assume they have no endpoints and have the same EM-type over $A$. Let $d$ be any tuple. For any formula $\phi$ such that $\textsf {cut}_I(d,\phi;i)$ is well defined, pick a cut $\mathfrak d(\phi;i)$ of $J$ such for any $\phi$, $\psi$, $i$, $j$ for which this makes sense:

-- the cuts $\textsf {cut}_I(d,\phi;i)$ and $\mathfrak d(\phi;i)$ are either both of infinite cofinality from the left (resp. right) or both of finite cofinality from the left (resp. right);

-- we have $\mathfrak d(\phi;i)<\mathfrak d(\psi;j)$ if and only if $\textsf {cut}_I(d,\phi;i) < \textsf {cut}_{I}(d,\psi;j)$;

--there are infinitely many elements in $J$ between the cuts $\mathfrak d(\phi;i)$ and $\mathfrak d(\psi;j)$ if and only if there are infinitely many elements in $I$ between the cuts $\textsf {cut}_I(d,\phi;i)$ and $\textsf {cut}_{I}(d',\psi;j)$.

Then there is a point $e$ such that $\tp(e/J)$ is similar to $\tp(d/I)$ over $A$ and $\textsf {cut}_{J}(e,\phi;i)=\mathfrak d(\phi;i)$ for any $\phi$ and $i$.
\end{lemme}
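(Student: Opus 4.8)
\textbf{Proof plan for Lemma \ref{sliding1}.}

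The plan is to build the point $e$ by compactness, using the similarity-type data of $\tp(d/I)$ together with the chosen cuts $\mathfrak d(\phi;i)$ of $J$ as a recipe. First I would invoke Lemma \ref{shrinking} to replace $I$ by a small subsequence $I_0\subseteq I$, of size at most $|T|+|d|$, with $\tp(d/I_0)$ similar to $\tp(d/I)$ over $A$; this makes the set of formulas $\phi$ for which $\textsf{cut}_{I_0}(d,\phi;i)$ is defined manageable and, more importantly, lets me work with a set of parameters whose size is controlled. Correspondingly I would replace the cuts $\mathfrak d(\phi;i)$ on $J$ by their traces on a small subsequence $J_0$ of $J$, keeping enough of $J$ so that the cofinality conditions (infinite vs. finite cofinality on each side, and infinitely many elements between two cuts) are preserved; since $I$ and $J$ are dense without endpoints and have the same EM-type, $J_0$ can be taken order-isomorphic to $I_0$ via an isomorphism $\theta$ that sends each cut $\textsf{cut}_{I_0}(d,\phi;i)$ to $\mathfrak d(\phi;i)$ — the hypotheses on $\mathfrak d$ are exactly what is needed to guarantee $\theta$ exists.

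Next I would write down the candidate type $r(x)$ of $e$ over $J$. For each formula $\phi(x_d;y_0,\ldots,y_{n-1},z)$ and each $b\in A^{|z|}$, and for each non-decreasing $f\in\mathcal F(n,\textsf T_{I}(d,\phi))$, the shrinking data gives a truth value $\varepsilon_{d,\phi;I}(f,b)$; I put into $r(x)$ the formula $\phi(x;a'_{t_0},\ldots,a'_{t_{n-1}},b)^{\varepsilon}$ (with $\varepsilon=\varepsilon_{d,\phi;I}(f,b)$) for every increasing tuple $t_0<\cdots<t_{n-1}$ in $J$ such that $t_k$ lies in the $f(k)$-th region of $J$ determined by the cuts $\mathfrak d(\phi;\cdot)$. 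Here I use the hypothesis that the cuts $\mathfrak d(\phi;i)$ on $J$ sit in the same relative order and with the same betweenness/cofinality pattern as the $\textsf{cut}_I(d,\phi;i)$, so that this prescription is coherent across different $\phi$: a given increasing tuple from $J$ gets assigned a truth value for $\phi$ that does not conflict with the truth value forced for a longer or overlapping tuple, because both are read off from the single order type transported by $\theta$.

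The main step is then to check that $r(x)$ is consistent. I would do this by compactness: take a finite subset $r_0(x)\subseteq r(x)$, which mentions finitely many formulas $\phi_1,\ldots,\phi_m$, finitely many parameters from $A$, and finitely many elements of $J$; these elements fall into finitely many of the regions cut out by the relevant $\mathfrak d(\phi_\ell;i)$, and because there are infinitely many elements of $I$ in the corresponding regions (by the betweenness hypotheses) I can choose an increasing tuple $\bar a$ from $I$ of the same length, region-by-region matching, plus the same parameters from $A$; then $d$ itself witnesses $r_0$ with $J$-elements replaced by $\bar a$, because by the definition of the shrinking data $\phi_\ell(d;\bar a,b)$ has precisely the truth value $\varepsilon_{d,\phi_\ell;I}$ prescribed. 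Hence $r_0$ is consistent, so $r$ is, and any realization $e$ has $\tp(e/J)$ similar to $\tp(d/I)$ over $A$ with $\textsf{cut}_J(e,\phi;i)=\mathfrak d(\phi;i)$ for all $\phi,i$ — the latter because I have forced, for each induced cut, infinite cofinality from the appropriate side exactly where $\tp(d/I)$ has it, so the coarsest convex equivalence relation for $(e,\phi)$ on $J$ has its cuts precisely at the $\mathfrak d(\phi;i)$ and nowhere else. I expect the only real friction to be bookkeeping: making sure that the truth-value assignment is simultaneously coherent for all formulas $\phi$ at once (not just one at a time), which is why reducing to the small subsequences $I_0,J_0$ and a single order-isomorphism $\theta$ between them is the clean way to organize the argument rather than juggling each $\phi$ separately.
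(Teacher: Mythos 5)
Your proposal is correct and follows essentially the same route as the paper: write down the prescribed type over $AJ$ and verify finite satisfiability by mapping each finite tuple $\bar m\subset J$ order- and region-preservingly into $I$ (the cofinality and betweenness hypotheses guarantee such an injection exists), so that $d$ itself witnesses the finite fragment. The preliminary reduction to small subsequences $I_0,J_0$ is harmless but not needed, since the compactness step already only involves finitely many elements of $J$ at a time.
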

\begin{proof}
This translates into finding $e$ with a prescribed type $p(x)$ over $AJ$. Let $\theta(x;\bar m)\in p(x)$, $\bar m\subset J$. Also we may assume that $\theta(x;\bar m)$ is a conjunction of the form $$\bigwedge_{j} \phi_j^{\epsilon_j}(x;\bar m,b);\quad b\in A,~ \bar m\in J,$$ where $\epsilon_j$ is either 0 or $1$ depending on the position of the points in $\bar m$ with respect to the cuts $\mathfrak d(\phi_j;i)$. We can find an injection $\sigma : \bar m \rightarrow I$ such that:

-- for every $m_0$, $m_1$ in $\bar m$, if $m_0 <_J m_1$, then $\sigma(m_0)<_I \sigma(m_1)$;

-- for every index $j$ and $m_0\in \bar m$, the relative position of $\sigma(m_0)$ and the cut $\textsf {cut}_I(a,\phi_j;i)$ on $I$ is the same as that of $m_0$ and $\mathfrak d(\phi;i)$.

Then $\sigma$ is a partial isomorphism and $a\models \bigwedge_j \phi_j(x;\sigma(\bar m))$. Therefore $\theta(x;\bar m)$ is consistent and by compactness, $p(x)$ is consistent.
\end{proof}

\begin{cor}\label{sliding2}
Let $I$, $J$ be two dense sequences with no endpoints indiscernible over some set $A$ of same EM-type over $A$. Let $a$ and $b$ be tuples of the same length such that $\tp(a/I)$ and $\tp(b/J)$ are similar over $A$. Let $a'$ be any tuple. Then there is an indiscernible sequence $J'\supseteq J$ and a tuple $b'$ such that $\tp(bb'/J')$ is similar to $\tp(aa'/I)$ over $A$.
\end{cor}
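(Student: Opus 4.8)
Looking at Corollary \ref{sliding2}, I need to produce from $aa'/I$ a similar situation $b'b'/J'$ starting from a known similarity $a/I \sim b/J$.

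The plan is to combine the three previously established tools — expanding (Lemma \ref{expanding}), sliding (Lemma \ref{sliding1}), and the shrinking machinery — to reduce the problem to a single application of the sliding lemma applied to the extended tuple $aa'$.

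\textbf{Proof proposal.}

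First I would apply the shrinking lemma (Lemma \ref{shrinking}) to the tuple $aa'$ over $I$: there is a small subsequence $I_0 \subseteq I$, of size at most $|T| + |aa'|$, such that $\tp(aa'/I_0)$ is similar to $\tp(aa'/I)$ over $A$. In particular, $\tp(a/I_0)$ is similar to $\tp(a/I)$, hence to $\tp(b/J)$. The point of passing to $I_0$ is that the tuple $aa'$ induces only finitely many cuts per formula on a dense sequence, and these are controlled by the finitely many cuts $\textsf{cut}_{I_0}(aa',\phi;i)$; this is the data the sliding lemma needs as input. Next, since we are free to enlarge $J$, I would use Lemma \ref{expanding} to extend $J$ to an indiscernible sequence $J_1 \supseteq J$ (over $A$, same EM-type, with $\tp(b/J_1)$ similar to $\tp(b/J)$ over $A$) which is ``rich enough'' — i.e., $\mathcal{J}_1$ contains, in the appropriate relative positions, room for all the new cuts that the coordinate $a'$ adds on top of the cuts of $a$. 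Concretely, for each cut $\mathfrak{c} = \textsf{cut}_I(aa',\phi;i)$ of $I$ I want a corresponding cut $\mathfrak{d}(\phi;i)$ of $J_1$, chosen so that: the cofinality-from-left/right types match, the induced linear ordering among all these cuts matches that of the $\textsf{cut}_I(aa',\phi;i)$, and ``infinitely many points between'' is preserved. Where the cut of $aa'$ coincides with a cut already induced by $b$ on $J$ (equivalently by $a$ on $I$), I take $\mathfrak{d}(\phi;i)$ to be that same cut of $J$; the genuinely new cuts (those created by the $a'$-coordinate) are the ones I must make room for by the expansion — there are only finitely many per formula, so a single application of Lemma \ref{expanding} with a suitable $\mathcal{J}_1$ suffices.

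Having arranged this combinatorial matching of cuts, I would then invoke the sliding lemma (Lemma \ref{sliding1}) with the sequences $I$ and $J_1$ and the tuple $d = aa'$: its three hypotheses are exactly the three matching conditions I have just secured. The conclusion produces a tuple $e$ with $\tp(e/J_1)$ similar to $\tp(aa'/I)$ over $A$ and with $\textsf{cut}_{J_1}(e,\phi;i) = \mathfrak{d}(\phi;i)$ for all $\phi,i$. Write $e = cb'$ with $|c| = |a|$ and $|b'| = |a'|$. Then $\tp(cb'/J_1)$ is similar to $\tp(aa'/I)$, so in particular $\tp(c/J_1)$ is similar to $\tp(a/I)$, hence to $\tp(b/J)$, and moreover — because of the way I chose $\mathfrak{d}(\phi;i)$ on the old cuts — $c$ induces on $J \subseteq J_1$ the same cuts as $b$ does. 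A last small argument using homogeneity of the monster (or an automorphism fixing $AJ$) lets me replace $c$ by $b$ itself: since $\tp(c/J) = \tp(b/J)$ as similarity of types over $A$ implies equality of types (noted after the definition of similarity), there is $\tau \in \mathrm{Aut}(\monster/AJ)$ with $\tau(c) = b$; setting $J' = \tau(J_1)$ and renaming $\tau(b')$ as $b'$ gives $\tp(bb'/J')$ similar to $\tp(aa'/I)$ over $A$, with $J' \supseteq J$.

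\textbf{Main obstacle.} The delicate point is the bookkeeping in the second step: setting up the expansion $J_1$ and the choice of target cuts $\mathfrak{d}(\phi;i)$ so that \emph{all three} matching conditions of Lemma \ref{sliding1} hold \emph{simultaneously} and \emph{coherently across all formulas} $\phi, \psi$ — in particular making sure that where $a'$ does not create a new cut the target cut genuinely coincides with an existing cut of $J$ (so that $c$ and $b$ end up inducing the same cuts on $J$), while where $a'$ does create a new cut there is enough room in $\mathcal{J}_1$, with the correct left/right cofinalities. This is purely a matter of arranging a finite (per formula) linear-order configuration inside a dense order, so it goes through, but it is the step that needs care rather than the invocations of the lemmas themselves, which are then essentially formal.
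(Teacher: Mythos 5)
Your proposal is correct and follows essentially the same route as the paper: expand $J$ to a sequence indexed by a sufficiently saturated dense order, choose target cuts $\mathfrak d(\phi;i)$ for $aa'$ compatibly with the cuts already induced by $b$, apply Lemma \ref{sliding1} to get $cb'$, and then use an automorphism to replace $c$ by $b$ (the initial shrinking step is harmless but unnecessary). One small caveat: your parenthetical justification that ``similarity of types over $A$ implies equality of types'' is not what is noted after the definition (that remark only gives equality of types over $A$); the correct reason $\tp(c/J)=\tp(b/J)$ holds -- which you also supply -- is that the similarity data together with the \emph{identical} induced cuts completely determines the type over the sequence.
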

\begin{proof}
By expanding, we can find a sequence $J'$ extending $J$ such that $\tp(b/J')$ is similar to $\tp(b/J)$ and the sequence $J'$ is indexed by a $|T|^+$-saturated dense linear order. It it then easy to find cuts $\mathfrak d(\phi;i)$ in $J'$ as in the previous lemma corresponding to the cuts $\textsf {cut}_I(aa',\phi;i)$ in a way compatible with the cuts $\textsf {cut}_{J'}(b,\phi;i)$ over $J'$. Lemma \ref{sliding1} gives us a tuple $b_0b_0'$ of same length as $aa'$ such that $\tp(b_0b_0'/J')$ is similar to $\tp(aa'/I)$. By assumption on the cuts $\mathfrak d(\phi;i)$, we have $\tp(b_0/J')=\tp(b/J')$ so by composing by an automorphism over $J'$, we obtain some $b'$ as required.
\end{proof}

\begin{cor}\label{sliding3}
Let $I$, $J$ be two dense sequences with no endpoints indiscernible over $A$ and of same EM-type over $A$. Let $a$ and $b$ be tuples of the same length such that $\tp(a/I)$ and $\tp(b/J)$ are similar over $A$. Let $I'\supseteq I$ be indiscernible and let $a'$ be any tuple. Then there is an indiscernible sequence $J'\supseteq J$ and a tuple $b'$ such that $\tp(bb'/J')$ is similar to $\tp(aa'/I')$ over $A$.
\end{cor}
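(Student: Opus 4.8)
The plan is to reduce to Corollary \ref{sliding2} after replacing $J$ by a suitable extension. The main step will be to produce a sequence $J''\supseteq J$, indiscernible over $A$, dense without endpoints, such that $\tp(b/J'')$ is similar to $\tp(a/I')$ over $A$. Granting such a $J''$, the sequences $I'$ and $J''$ are dense without endpoints, indiscernible over $A$, and have the same EM-type over $A$ (every sequence occurring has the same EM-type over $A$ as $I$), and $\tp(a/I')$ and $\tp(b/J'')$ are similar over $A$; so Corollary \ref{sliding2} applied to $I',J'',a,b$ and the tuple $a'$ yields $J'\supseteq J''\supseteq J$ and $b'$ with $\tp(bb'/J')$ similar to $\tp(aa'/I')$ over $A$, which is what we want.

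To build $J''$, first note that we may assume $I'$ is dense without endpoints: using the constructions of the Expanding subsection (adding Morley sequences of limit types over $AI'aa'$ at the ends and in the jumps of $I'$, keeping $I\subseteq I'$) one extends $I'$ to such a sequence without changing the similarity type of $\tp(aa'/\,\cdot\,)$ over $A$. Next, apply Lemma \ref{expanding} to $J$ and $b$ to extend $J$ to a sequence $J_1\supseteq J$, dense without endpoints, indexed by a $|T|^+$-saturated dense linear order, with $\tp(b/J_1)$ similar to $\tp(b/J)$ over $A$. Now, exactly as in the proof of Corollary \ref{sliding2}, use the saturation of the index set of $J_1$ together with the similarity $\tp(a/I)\sim\tp(b/J)$ to choose, for each formula $\phi$ and each $i<\textsf T_{I'}(a,\phi)$, a cut $\mathfrak d(\phi;i)$ of $J_1$ such that the $\mathfrak d(\phi;i)$ match the cuts $\textsf{cut}_{I'}(a,\phi;i)$ (in order, cofinality behaviour and whether there are infinitely many points between them), and such that moreover the restrictions $\mathfrak d(\phi;i)\cap J$ are exactly the cuts $\textsf{cut}_J(b,\phi;i)$, with the same cofinality behaviour. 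This is possible because $I\subseteq I'$, so the cuts of $a$ on $I'$ restrict to the cuts of $a$ on $I$, which correspond to the cuts of $b$ on $J$ by similarity, and $J_1$ is rich enough to realize the required configuration. Lemma \ref{sliding1} applied to $I'$, $J_1$ and $a$ with these cuts gives a tuple $e$ with $\tp(e/J_1)$ similar to $\tp(a/I')$ over $A$ and $\textsf{cut}_{J_1}(e,\phi;i)=\mathfrak d(\phi;i)$ for all $\phi,i$. By the choice of the cuts, $\tp(e/AJ)=\tp(b/AJ)$; composing with an automorphism over $AJ$ sending $e$ to $b$, the image $J''$ of $J_1$ is an indiscernible sequence over $A$ extending $J$, dense without endpoints, with $\tp(b/J'')$ similar to $\tp(a/I')$ over $A$.

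The delicate point — and the step I expect to be the main obstacle — is the equality $\tp(e/AJ)=\tp(b/AJ)$. Since the type of a tuple over $J$ together with $A$ is completely determined by the induced cuts and the associated functions $\epsilon_J(\,\cdot\,,\phi)$ (the discussion following Proposition \ref{shrinking1}), this splits into: (i) $e$ and $b$ induce the same cuts on $J$, which holds because $\textsf{cut}_J(e,\phi;i)=\textsf{cut}_{J_1}(e,\phi;i)\cap J=\mathfrak d(\phi;i)\cap J=\textsf{cut}_J(b,\phi;i)$ by construction; and (ii) $\epsilon_J(e,\phi)=\epsilon_J(b,\phi)$, obtained by tracing: $\epsilon_J(e,\phi)$ is the pullback to $J$ of $\epsilon_{J_1}(e,\phi)=\epsilon_{I'}(a,\phi)$ (similarity), this pullback is $\epsilon_I(a,\phi)$ since $I\subseteq I'$ and the pattern of surviving/identified cuts matches by our choice of the $\mathfrak d(\phi;i)$, and $\epsilon_I(a,\phi)=\epsilon_J(b,\phi)$ by the given similarity. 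One also checks the remaining hypotheses of Lemma \ref{sliding1} and Corollary \ref{sliding2} (same EM-type over $A$, density), which all hold since every sequence occurring has the same EM-type over $A$ as $I$. The genuinely non-routine ingredient is organizing the choice of the cuts $\mathfrak d(\phi;i)$ in $J_1$ so that both matchings — with $a$ on $I'$ and with $b$ on $J$ — hold at once; the saturation of the index set of $J_1$ is exactly what makes this available.
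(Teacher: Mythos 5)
Your proof is correct, but it takes a genuinely longer road than the paper's, which disposes of the corollary in one line: apply Corollary \ref{sliding2} to $I$, $J$, $a$, $b$ with the auxiliary tuple taken to be $a'\cup(I'\setminus I)$, i.e.\ absorb the new elements of the extended sequence into the external tuple, and then read the resulting similarity of $\tp(b\,b'\,\bar c/J'')$ with $\tp(a\,a'\,(I'\setminus I)/I)$ over $A$ back as the similarity of $\tp(bb'/J')$ with $\tp(aa'/I')$ over $A$, where $J'$ is $J''$ with $\bar c$ inserted. You instead first transport the extension $I\subseteq I'$ across to an extension $J\subseteq J''$ with $\tp(b/J'')$ similar to $\tp(a/I')$, by re-running the cut-placement argument of Lemma \ref{sliding1} inside a saturated expansion $J_1$ of $J$ under the extra constraint that the traces of the chosen cuts on $J$ reproduce the configuration of $b$'s cuts, and only then invoke Corollary \ref{sliding2}; this is essentially a second proof of a variant of \ref{sliding2}, but it does go through: the $|T|^+$-saturation of the index set of $J_1$ gives room to meet both matching conditions at once (inside any fixed cut of $J$ there are sub-cuts of $J_1$ of every cofinality and betweenness pattern, and only $|T|+|aa'|$ cuts need placing), and your check that $\tp(e/AJ)=\tp(b/AJ)$ via cuts plus $\epsilon$-functions is the right one. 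Two points to tighten: (i) the restriction map from the cuts induced by $a$ on $I'$ to cuts of $I$ is neither injective nor onto the cuts induced by $a$ on $I$ (several cuts of $I'$ may collapse to one cut of $I$, or to a degenerate cut such as $(I,\emptyset)$), so the requirement should be that the trace of $\mathfrak d(\phi;i)$ on $J$ is the image under the similarity correspondence of the trace of $\textsf{cut}_{I'}(a,\phi;i)$ on $I$, rather than a bijection with the cuts of $b$ on $J$; (ii) using $I'$ as a base sequence in Lemma \ref{sliding1} requires $I'$ to be indiscernible over $A$, which the hypothesis states only over $\emptyset$ --- this reading is needed anyway for ``similar over $A$'' to make sense for $\tp(aa'/I')$, but note that the paper's reduction never uses $I'$ as a base sequence and so sidesteps the point. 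What your route buys is an explicit intermediate statement (extensions of $I$ can be matched by extensions of $J$ while fixing $\tp(b/AJ)$) that the paper leaves implicit in the similarity formalism; what it costs is length.
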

\begin{proof}
Simply apply the previous corollary with $a'$ there equal to $a'\cup (I'\setminus I)$ here.
\end{proof}


\subsection{Weight and dp-minimality}

Let $(I_i)_{i<\alpha}$ be a family of indiscernible sequences and $A$ a set of parameters. We say that the sequences $(I_i)_{i<\alpha}$ are \emph{mutually indiscernible} over $A$ if for every $i<\alpha$, the sequence $I_i$ is indiscernible over $A\cup \{I_j : j<\alpha, j\neq i\}$.

The following observations are from \cite{Sh715}.

\begin{prop}\label{weight}
Let $(I_i)_{i<|T|^+}$ be mutually indiscernible sequences (over some set $A$) and let $d$ be a tuple of size at most $|T|$. Then there is some $i<|T|^+$ such that $I_i$ is indiscernible over $Ad$.
\end{prop}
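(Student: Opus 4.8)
The plan is to argue by contradiction and to exhibit the independence property for a single formula. Suppose that for every $i<|T|^+$ the sequence $I_i$ fails to be indiscernible over $Ad$. For each such $i$ fix a witness: a formula $\psi_i(x;y_0,\dots,y_{n_i-1},z)\in L$, a tuple $\bar b_i$ from $A$, and two increasing $n_i$-tuples from $I_i$ on which $\psi_i(d;\,\cdot\,,\bar b_i)$ takes opposite truth values. Applying shrinking of indiscernibles (Proposition~\ref{shrinking1}) to the $A$-indiscernible sequence $I_i$ and the data $(d,\psi_i)$, the coarsest finite convex equivalence relation it produces is nontrivial; so $d$ induces over $A$ at least one cut on $I_i$ relative to $\psi_i$, giving a convex partition $I_i=I^i_1+\dots+I^i_{T_i}$ with $T_i\ge 2$ and a non-constant function $\epsilon_i\colon\mathcal F(n_i,T_i)\to\{\top,\bot\}$ recording the truth value of $\psi_i(d;\,\cdot\,,\bar b_i)$ on an increasing tuple as a function of the blocks its entries occupy.

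Next come the bookkeeping reductions. Since $|L|\le|T|<|T|^+$ and $|T|^+$ is regular, one formula serves as $\psi_i$ for $|T|^+$ many $i$; discarding the others and keeping only countably many, I may assume $\psi_i=\psi$ (of arity $n$) for all $i<\omega$. As $\epsilon_i$ is non-constant on the lattice $\mathcal F(n,T_i)$, there are $f_i$ and $f_i'$, with $f_i'$ obtained from $f_i$ by increasing one coordinate $k_i$ by $1$, such that $\epsilon_i(f_i)\neq\epsilon_i(f_i')$. The coordinate $k_i\in\{0,\dots,n-1\}$, the value $\epsilon_i(f_i)$, and the side of the cut $\mathfrak c_i$ (between block $f_i(k_i)$ and the next one) on which each of the remaining $n-1$ coordinates sits all range over finite sets not depending on $i$, so by a further pigeon-holing I may assume they are uniform: there is $k<n$, and writing $\mathfrak c_i$ for the resulting cut of $I_i$, the value $\psi(d;\bar a,\bar b_i)$ for an increasing tuple $\bar a$ in the prescribed blocks is $\top$ when its $k$-th entry lies just below $\mathfrak c_i$ and $\bot$ when it lies just above, the other entries being fixed on fixed sides of $\mathfrak c_i$. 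Finally, by a standard lengthening argument (expanding each $I_i$ in turn inside its non-$d$-cuts over $A\cup d\cup\bigcup_{j\neq i}I_j$, in the spirit of the Expanding subsection) I may assume that every $I_i$ is densely ordered without endpoints, hence that every block $I^i_m$ is infinite; this preserves mutual indiscernibility over $A$ and the similarity type of $\tp(d/AI_i)$, so all of the above data survives.

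For each $i$ I now fix an increasing $n$-tuple $\bar a^i$ from $I_i$ realising $f_i$ — so $\models\psi(d;\bar a^i,\bar b_i)$ — chosen (using density) so that its $k$-th entry is not the supremum of its block and, if $k<n-1$, its $(k{+}1)$-st entry is not the infimum of its block. Fix $w\subseteq\omega$. For $i\notin w$, density of $I_i$ lets me pick an order-preserving injection $\tau_i\colon I_i\to I_i$ fixing every entry of $\bar a^i$ except the $k$-th and sending the $k$-th to an element of the block immediately above $\mathfrak c_i$ that still lies below the $(k{+}1)$-st entry of $\bar a^i$; thus $\tau_i(\bar a^i)$ realises $f_i'$. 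Since $I_i$ is indiscernible over $A\cup\bigcup_{j\neq i}I_j$, each $\tau_i$ is elementary over that set, and by mutual indiscernibility the map $\sigma_w$ equal to the identity on $A$ and on $I_i$ for $i\in w$, and to $\tau_i$ on $I_i$ for $i\notin w$, is a partial elementary map; extend it to $\hat\sigma_w\in\mathrm{Aut}(\monster)$ and set $d_w:=\hat\sigma_w^{-1}(d)$. Then $\hat\sigma_w(\bar a^i,\bar b_i)=(\bar a^i,\bar b_i)$ for $i\in w$ and $\hat\sigma_w(\bar a^i,\bar b_i)=(\tau_i(\bar a^i),\bar b_i)$ for $i\notin w$, whence
\[\models\psi(d_w;\bar a^i,\bar b_i)\quad\Longleftrightarrow\quad\models\psi(d;\hat\sigma_w(\bar a^i,\bar b_i))\quad\Longleftrightarrow\quad i\in w.\]
So $\psi$ has the independence property, witnessed by the parameter tuples $(\bar a^i\bar b_i)_{i<\omega}$ and the test elements $(d_w)_{w\subseteq\omega}$, contradicting that $T$ is NIP.

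The conceptual skeleton is short: non-indiscernibility of $I_i$ over $Ad$ is concentrated at cuts (shrinking), there are only $|T|$ formulas to distribute among $|T|^+$ sequences, and ``pushing a cut of $I_i$ to its other side'' is realised by an automorphism fixing $A$ and all the other sequences. I expect the only real work to be the bookkeeping — iterating the pigeon-hole reductions until a single ``cut shape'' is isolated — together with the density reduction, whose sole purpose is to ensure enough room inside each $I_i$ for the local maps $\tau_i$ to exist and to patch together into a global automorphism.
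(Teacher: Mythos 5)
Your overall strategy is the paper's: extract a witness of non-indiscernibility over $Ad$ for each $i$, pigeonhole the $|T|^+$ witnesses down to a single $L$-formula, and then use mutual indiscernibility to realize every subset pattern by an automorphism fixing $A$ and all but the ``flipped'' sequences, contradicting NIP. The final automorphism step is correct, including the careful point (which the paper glosses over) that one pigeonholes only on the $L$-formula $\psi_i$ and carries the $A$-parameters $\bar b_i$ along as part of the IP witness.

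However, there is a concrete gap in the reduction you interpose between the witness and the swap. You claim that after expanding each $I_i$ to a dense order, ``every block $I^i_m$ is infinite.'' This is false, and the expansion procedure cannot repair it: a block that is a singleton $\{a_t\}$ has both adjacent cuts induced by $(d,\psi)$, so any point added by realizing a limit type of a cut lands in a \emph{neighbouring} block, never in $\{a_t\}$. Singleton blocks genuinely occur even for densely indexed sequences (in DLO, take $I=(a_t)_{t\in\mathbb Q}$, $d=(d_1,d_2)$ filling the two cuts immediately around $a_0$, and $\psi(d;y)=d_1<y<d_2$: the middle block is $\{a_0\}$). Your subsequent choices — the $k$-th entry of $\bar a^i$ not the supremum of its block, the $(k+1)$-st not the infimum of its block, and the existence of $a_k'$ in block $f_i(k_i)+1$ strictly below the $(k+1)$-st entry — all presuppose infinite blocks and can fail (e.g.\ when $f_i(k_i+1)=f_i(k_i)+1$ and that block is a singleton); relatedly, your appeal to non-constancy of $\epsilon_i$ along single-increment edges quietly assumes that all functions in $\mathcal F(n,T_i)$ are realized by actual increasing tuples, which also fails for small blocks. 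The good news is that this entire detour through cuts, blocks and single-coordinate moves is unnecessary: the original witness already hands you two increasing $n$-tuples $\bar a_i,\bar a_i'$ from $I_i$ with $\models\psi(d;\bar a_i,\bar b_i)\wedge\neg\psi(d;\bar a_i',\bar b_i)$, and since any two increasing tuples of the same length from $I_i$ have the same type over $A\cup\bigcup_{j\neq i}I_j$, your patched map $\sigma_w$ can simply send $\bar a_i\mapsto\bar a_i'$ wholesale for $i\notin w$. That is exactly the paper's proof, and it needs no density, no shrinking, and no analysis of where the cuts fall.
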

\begin{proof}
Assume not, then for every $i<|T|^+$, we can find two tuples $\bar a_i$ and $\bar b_i$ of increasing elements from $I_i$ and a formula $\phi_i(x,\bar y)$ such that $d\models \phi_i(x,\bar a_i) \wedge \neg \phi_i(x,\bar b_i)$. Removing some sequences from the family, we may assume that $\phi_i=\phi$ does not depend on $i$. By mutual indiscernibility, we have $\tp(a_i/\{I_j : j\neq i\})=\tp(b_i/\{I_j:j\neq i\})$ for all $i<|T|^+$. It follows that for every $A\subseteq |T|^+$, we can find a tuple $d_A$ such that for all $i<|T|^+$, $d_A \models \phi(x,\bar a_i)$ if and only if $i\in A$. This contradicts NIP.
\end{proof}

\begin{cor}\label{weight2}
Let $M$ be some $\kappa$-saturated model, and let $(p_i)_{i<|T|^+}$ be a family of pairwise commuting invariant types over $M$. Let $p=\bigotimes_{i<|T|^+} p_i$ and $(a_i)_{i<|T|^+}\models p$. Let also $q\in S(M)$ be any type and $d\models q$. Then there is $i<|T|^+$ such that $(a_i,d)\models p_i \otimes q$.
\end{cor}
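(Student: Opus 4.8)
The plan is to argue by contradiction, imitating the proof of Proposition \ref{weight}: assume the conclusion fails for every $i<|T|^+$ and manufacture a formula with the independence property. (It does not seem possible to simply quote Proposition \ref{weight} as a black box, because expanding each $a_i$ into a Morley sequence $I_i$ of $p_i$ and making the $I_i$ mutually indiscernible over $M$ is not enough: $\lim(I_i/Md)$ need not equal $p_i|_{Md}$, so "$I_i$ indiscernible over $Md$" would not give "$(a_i,d)\models p_i\otimes q$". So I would re-run the argument of \ref{weight} instead.)

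First I would unwind the hypotheses. Since $d\models q$ we have $\tp(d/M)=q$, hence $(a_i,d)\models p_i\otimes q$ is equivalent to $\tp(a_i/Md)=p_i|_{Md}$. As the $p_i$ pairwise commute, the product $\bigotimes_i p_i$ is independent of the order of its factors (checked on finite subtuples, using associativity of $\otimes$ and the remark that whatever commutes with $p$ and $q$ commutes with $p\otimes q$); in particular $a_i\models p_i|_{M\cup\{a_j:j\neq i\}}$ for every $i$. Now suppose $(a_i,d)\not\models p_i\otimes q$ for all $i<|T|^+$. Then for each $i$ there are a formula $\phi_i(x;y,z)\in L$ and a tuple $c_i\in M$ with $\models\phi_i(a_i;c_i,d)$ but $\neg\phi_i(x;c_i,d)\in p_i|_{Md}$. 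Pruning to $|T|^+$ indices and using pigeonhole — here I use that $q$ is a type in at most $|T|$ variables, so each $\phi_i$ involves only a (fixed) finite set of coordinates of $d$ — I may assume $\phi_i=\phi$ and that all $c_i$ have the same length; after pruning to a common $\phi$ the tuples $a_i$ are all of the sort of $x$.

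The heart of the argument is the following construction. Realize a single tuple $(a'_i)_{i<|T|^+}\models\bigl(\bigotimes_i p_i\bigr)|_{M\cup\{a_j:j<|T|^+\}\cup\{d\}}$, so that each $a'_i$ realizes $p_i$ over $M$, over all the $a_j$, over all $a'_k$ with $k\neq i$, and over $d$. Then $a'_i\models p_i|_{Md}$, so $\models\neg\phi(a'_i;c_i,d)$, while $\models\phi(a_i;c_i,d)$. For $X\subseteq|T|^+$ put $a^X_i:=a_i$ if $i\in X$ and $a^X_i:=a'_i$ otherwise. The step I expect to be the main obstacle is checking that $(a^X_i)_i\equiv_M(a_i)_i$ for every $X$; this is exactly where commutativity of the $p_i$ enters. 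It is enough to compare types on each finite subtuple, and there $(a^X_i)_{i\in s}$ realizes $\bigl[\bigotimes_{i\in s\setminus X}p_i\bigr]\otimes\bigl[\bigotimes_{i\in s\cap X}p_i\bigr]$ over $M$ (realize the $a_j$, $j\in s\cap X$, first, then the $a'_k$, which are generic over them), and this product rearranges to $\bigotimes_{i\in s}p_i=\tp((a_i)_{i\in s}/M)$ by finitely many transpositions of commuting factors.

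Granting this, for each $X$ pick $\sigma_X\in\mathrm{Aut}(\monster/M)$ with $\sigma_X(a_i)=a^X_i$ for all $i$, and set $d_X:=\sigma_X^{-1}(d)$. By the previous paragraph $\models\phi(a^X_i;c_i,d)\leftrightarrow i\in X$; applying $\sigma_X^{-1}$ (which fixes $c_i\in M$ and sends $d$ to $d_X$) gives $\models\phi(a_i;c_i,d_X)\leftrightarrow i\in X$ for all $i<|T|^+$ and all $X\subseteq|T|^+$. Finally set $b_i:=(a_i,c_i)$ and $\phi^*(x,y;z):=\phi(x;y,z)$; then $\models\phi^*(b_i;d_X)\leftrightarrow i\in X$, so already for $i<\omega$ the family $\{d_X:X\subseteq\omega\}$ shatters $\{b_i:i<\omega\}$ with respect to $\phi^*$, i.e. $\phi^*$ has the independence property, contradicting NIP of $T$. (One may also phrase this last step exactly as in the proof of Proposition \ref{weight}, building a single $d_A$ for each $A\subseteq|T|^+$.)
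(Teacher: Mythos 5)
Your proof is correct, but it takes a more self-contained route than the paper, and your opening claim that Proposition \ref{weight} cannot be quoted as a black box is not quite right. The paper does exactly that, circumventing the obstacle you raise by a small twist: instead of expanding each $a_i$ separately into a Morley sequence of $p_i$ over $M$, it builds a single Morley sequence $\llg (a_i^k)_{i<|T|^+} : 0<k<\omega\rrg$ of the product $p=\bigotimes_i p_i$ \emph{over everything} (in particular over $d$), with $(a_i^0)_i=(a_i)_i$ as the zeroth row. Commutativity makes the columns $(a_i^k)_{k<\omega}$ mutually indiscernible over $M$; Proposition \ref{weight} yields a column indiscernible over $Md$; and since $a_i^1\models p_i|_{Md}$ by construction (the later rows were realized over $d$), indiscernibility over $Md$ transfers this to $a_i^0=a_i$. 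So the issue that $\lim(I_i/Md)$ need not equal $p_i|_{Md}$ is dissolved by choosing the tail to realize $p_i$ over $Md$ in the first place. What you do instead is essentially inline the proof of Proposition \ref{weight} in this setting: the swapping construction $a_i\mapsto a^X_i$, with the verification that $(a^X_i)_i\equiv_M(a_i)_i$ by rearranging a finite product of pairwise commuting types, is exactly where commutativity enters, and the resulting shattering of $\{(a_i,c_i)\}$ by the conjugates $d_X$ correctly contradicts NIP. Each step checks out (the restriction of an ordered product to a sub-family of variables is the corresponding sub-product, the $a'_i$ are generic over $Md$ and over the $a_j$, and the automorphisms over $M$ fix the $c_i$). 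Both arguments implicitly need $q$ to live in at most $|T|$ variables for the pigeonhole, matching the hypothesis of Proposition \ref{weight}; you were right to flag this. The net effect is that your proof and the paper's are the same argument at different levels of encapsulation: the paper reuses the mutual-indiscernibility machinery, yours re-derives the IP contradiction by hand.
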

\begin{proof}
Build a Morley sequence $\llg (a_i^k)_{i<|T|^+} : 0<k<\omega \rrg$ of $p$ over everything and set $a_i^0=a_i$ for each $i$. Commutativity implies that the sequences $(a_{i}^k)_{k<\omega}$, $i<|T|^+$ are mutually indiscernible. The result then follows by Proposition \ref{weight}.
\end{proof}

Observe in particular that if $q$ is an invariant type, taking $b\models q|\{a_i : i<|T|^+\}$, we obtain that there is $i<|T|^+$ such that $p_i$ and $q$ commute.

\vspace{3pt} 
We will occasionally mention dp-minimal theories. They are theories for which the notion of weight suggested by Proposition \ref{weight} is equal to 1 on 1-types. This notion was introduced by Shelah in \cite{Sh863}.

\begin{defi}[Dp-minimal]
A theory $T$ is dp-minimal if it is $NIP$ and if for every indiscernible sequence $I$ and element $d$ of the home sort, there is a subdivision $I=I_1+I_2+I_3$ into convex sets, where $I_2$ is either reduced to a point or empty and $I_1$ and $I_3$ are both indiscernible over $d$.

Equivalently, for every two mutually indiscernible sequences $I$ and $J$ and element $d$, one of $I$ or $J$ is indiscernible over $d$.
\end{defi}

See \cite{dpmin} for the proof of the equivalence and \cite{dpbasic} for additional information.

Examples of dp-minimal theories include o-minimal and C-minimal theories and the p-adics.

\section{Distal theories}

\subsection{Indiscernible sequences}

We now state the main definition of this paper.

\begin{defi}[Distal]
An indiscernible sequence $I$ is distal if for any dense sequence $J$ of same EM-type as $I$, and any distinct Dedekind cuts $\mathfrak c_1$ and $\mathfrak c_2$ of $J$, if $a$ fills $\mathfrak c_1$ and $b$ fills $\mathfrak c_2$, then $a \downfree_J b$.

An $NIP$ theory $T$ is \emph{distal} if all indiscernible sequences are distal.
\end{defi}

\begin{rem}
Equivalently the two types $\lim(\mathfrak c_1/J)$ and $\lim(\mathfrak c_2/J)$ are weakly orthogonal.
\end{rem}

\begin{lemme}\label{distalsimpl}
If $I$ is dense and has two distinct Dedekind cuts $\mathfrak c_1$ and $\mathfrak c_2$, then it is distal if and only if $\lim(\mathfrak c_1/I)$ and $\lim(\mathfrak c_2/I)$ are weakly orthogonal ({\it i.e.}, there is no need for $J$ in the definition).
\end{lemme}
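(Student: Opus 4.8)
The plan is to show that distality, which a priori requires checking *all* dense sequences $J$ of the same EM-type as $I$, follows from the single condition that $\lim(\mathfrak c_1/I)$ and $\lim(\mathfrak c_2/I)$ are weakly orthogonal for two fixed distinct Dedekind cuts of $I$. One direction is trivial: if $I$ is distal then in particular we may take $J=I$ in the definition, and the remark already records that filling two distinct Dedekind cuts independently is the same as weak orthogonality of the corresponding limit types. So the content is the converse.

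**The main argument.** Assume $\lim(\mathfrak c_1/I)$ and $\lim(\mathfrak c_2/I)$ are weakly orthogonal, and let $J$ be an arbitrary dense sequence of the same EM-type as $I$, with distinct Dedekind cuts $\mathfrak d_1 < \mathfrak d_2$ (after relabeling), and suppose $a$ fills $\mathfrak d_1$, $b$ fills $\mathfrak d_2$. I want $a \downfree_J b$, i.e. $J$ with $a$ and $b$ inserted in their cuts is indiscernible. The idea is to transport the situation over $J$ back to a situation over $I$ where weak orthogonality applies. First, using the expanding and sliding machinery (Lemmas \ref{expanding}, \ref{sliding1} and Corollary \ref{sliding2}), observe that since $\mathfrak d_1, \mathfrak d_2$ are Dedekind cuts and $I$ has two distinct Dedekind cuts, we can move to a common extended sequence so that, after possibly enlarging $I$ and $J$ to densely ordered sequences indexed by a sufficiently saturated linear order with the same EM-type, the pair of cuts $(\mathfrak d_1,\mathfrak d_2)$ in $J$ corresponds to a pair of distinct Dedekind cuts $(\mathfrak c_1',\mathfrak c_2')$ in (an extension of) $I$; weak orthogonality of $\lim(\mathfrak c_1/I)$, $\lim(\mathfrak c_2/I)$ is inherited by any pair of distinct Dedekind cuts of an extension since $\lim(\mathfrak c_i)$ is invariant and finitely satisfiable in $I$, and the EM-type pins down that all Dedekind cuts have the same limit-type behavior. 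Concretely: $\tp(a/J)$ and $\tp(b/J)$ are (respectively) similar to $\tp(a'/I')$, $\tp(b'/I')$ for appropriate $a', b'$ filling distinct Dedekind cuts of a common extension $I'$ of $I$; since $a',b'$ are $I'$-independent precisely when $\lim$ of their cuts are weakly orthogonal, and this follows from the hypothesis, we get $a' \downfree_{I'} b'$; then Corollary \ref{sliding2}/\ref{sliding3} transports this back to conclude $a \downfree_J b$.

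**The key point to nail down.** The real obstacle is the claim that weak orthogonality of $\lim(\mathfrak c_1/I)$ and $\lim(\mathfrak c_2/I)$ for *one* pair of distinct Dedekind cuts propagates to *every* pair of distinct Dedekind cuts of every EM-equivalent dense sequence. This should come from the fact that the limit type $\lim(\mathfrak c^\bullet)$ depends only on the EM-type of $I$ together with the polarization: two Dedekind cuts in sequences of the same EM-type give limit types that are "the same" up to the ambient automorphism group, and weak orthogonality $\phi(x,y)\in \lim(\mathfrak c_1)_x\otimes\lim(\mathfrak c_2)_y \Leftrightarrow \neg\phi \notin \lim(\mathfrak c_1)_x\otimes\lim(\mathfrak c_2)_y$ is a statement about the EM-type (via the description $\phi(x,y)\in\lim(\mathfrak c_1)_x\otimes\lim(\mathfrak c_2)_y$ iff $\phi(a,b)$ holds for $(a,b)$ in cofinal pieces), hence transfers. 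I would isolate this as the one non-formal step and verify it directly from the cofinal-pieces characterization of $\otimes$ given just before the Polycut definition, using shrinking (Lemma \ref{shrinking}) to reduce to a small piece of the sequence where the EM-type controls everything. Once that transfer is in hand, the rest is a routine application of expanding/sliding.
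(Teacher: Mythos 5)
Your proposal is correct and is essentially the paper's own argument: both amount to transporting a configuration (two points filling two distinct Dedekind cuts, together with a finite tuple of parameters from the sequence) between EM-equivalent dense sequences via the shrinking/expanding machinery, using that any two distinct Dedekind cuts of a dense sequence have the same combinatorial shape. The paper runs the contrapositive --- a failure of distality in $J$ is witnessed by a single formula with finitely many parameters, so it shrinks $J$ to a countable subsequence, expands it to match the order type of $I$ with the two cuts aligned on $\mathfrak c_1$ and $\mathfrak c_2$, and pushes the witness through an automorphism onto $I$ --- which lands the witness directly at the given cuts with parameters in $I$ itself and thereby avoids the one extra step you rightly isolate, namely propagating weak orthogonality from $(\mathfrak c_1,\mathfrak c_2)$ over $I$ to the corresponding cuts of an extension $I'$.
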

\begin{proof}
Left to right is obvious. We show the converse. If $I$ is not distal, then there is some dense sequence $J$ of the same EM-type, two distinct Dedekind cuts $\mathfrak d_1$ and $\mathfrak d_2$ of $J$, some $a_1$ filling $\mathfrak d_1$ and $a_2$ filling $\mathfrak d_2$ such that $a_1 \ndownfree_J a_2$. Let $\phi(a_1,a_2,\bar m)$ be a formula witnessing that, with $\bar m \in I$. Take a countable $J'\subseteq J$ containing $\bar m$ such that $a_1$ and $a_2$ fill Dedekind cuts of $J'$. Replacing $J$ by $J'$, we may assume that $J$ is countable.

Then by expanding, we can find some $J_0 \supseteq J$ and an automorphism $\sigma$ mapping $J_0$ onto $I$ and such that the cut $\mathfrak d_1$ (resp. $\mathfrak d_2$) is mapped to $\mathfrak c_1$ (resp. $\mathfrak c_2$) and the types $\tp(a_1,a_2/J)$ and $\tp(a_1,a_2/J_0)$ are similar. Then, the points $\sigma(a_1)$ and $\sigma(a_2)$ fill respectively the cuts $\mathfrak c_1$ and $\mathfrak c_2$ and $\phi(\sigma(a_1),\sigma(a_2),\sigma(\bar m))$ holds. Therefore $\sigma(a_1) \ndownfree_{I} \sigma(a_2)$ and it follows that the two limit types $\lim(\mathfrak c_1/I)$ and $\lim(\mathfrak c_2/I)$ are not weakly orthogonal.
\end{proof}

Actually, it will follow from Lemma \ref{defitrans} that the hypothesis that $I$ is dense can be removed.

\begin{ex}
Assume $I$ is an indiscernible sequence, $f$ a definable function such that $f(I)$ is totally indiscernible (non constant), then $I$ is not distal. To see this, take $a$ and $b$ in the definition such that $f(a) = f(b)$. See \ref{orth} for a more general result.
\end{ex}

\begin{ex}
In DLO, any two 1-types concentrating on different cuts are weakly orthogonal. It is easy then to check that it is a distal theory. We will see (Corollary \ref{omin}) that in fact any o-minimal theory is distal.
\end{ex}

\begin{lemme}\label{twoton}
Assume $I$ is a dense indiscernible distal sequence, and $\mathfrak c_0,...,\mathfrak c_{n-1}$ are pairwise distinct Dedekind cuts. If for each $i<n$, $a_i$ fills $\mathfrak c_i$ then the family $(a_i)_{i<n}$ is $I$-independent.
\end{lemme}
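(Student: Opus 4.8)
The plan is an induction on $n$; the main idea is to first insert the ``middle'' fillers $a_1,\dots,a_{n-2}$ into $I$, so as to reduce the general case to the two-filler case, which is simply the definition of distality.

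For $n\le 1$ there is nothing to prove, and the case $n=2$ is exactly the definition of distality applied with the witness sequence $J:=I$ (or Lemma~\ref{distalsimpl}), so I would start the induction there. For the inductive step, with $n\ge 3$, I would reorder so that $\mathfrak c_0<\cdots<\mathfrak c_{n-1}$ and let $J$ be the sequence obtained from $I$ by inserting $a_i$ into $\mathfrak c_i$ for each $1\le i\le n-2$. The induction hypothesis applied to the $n-2$ cuts $\mathfrak c_1,\dots,\mathfrak c_{n-2}$ tells us $(a_i)_{1\le i\le n-2}$ is $I$-independent, so $J$ is an indiscernible sequence; since $I$ is an infinite subsequence of $J$, the two have the same EM-type, and $J$ is again dense. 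Crucially, distality of an indiscernible sequence depends only on its EM-type, so $J$ is distal, and applying the definition of distality to $J$ with the witness sequence $J$ itself gives that any two fillers of distinct Dedekind cuts of $J$ are $J$-independent.

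It then remains to put $a_0$ and $a_{n-1}$ back in. Applying the induction hypothesis to $\mathfrak c_0,\dots,\mathfrak c_{n-2}$ shows $I\cup\{a_0,\dots,a_{n-2}\}$ is indiscernible, and this sequence is precisely $J$ with $a_0$ inserted into the cut $\mathfrak c_0^J$ of $J$ induced by $\mathfrak c_0$; hence $a_0$ fills $\mathfrak c_0^J$. Symmetrically, the induction hypothesis applied to $\mathfrak c_1,\dots,\mathfrak c_{n-1}$ gives that $a_{n-1}$ fills the cut $\mathfrak c_{n-1}^J$ induced by $\mathfrak c_{n-1}$. These two cuts of $J$ are distinct and Dedekind, since the points $a_1,\dots,a_{n-2}$ lie strictly between $\mathfrak c_0$ and $\mathfrak c_{n-1}$ and are bounded away from both (there are points of $I$ between $\mathfrak c_0$ and each $\mathfrak c_i$, and between each $\mathfrak c_i$ and $\mathfrak c_{n-1}$), so all the relevant cofinalities are inherited from $I$. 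Distality of $J$ now yields $a_0\downfree_J a_{n-1}$, i.e.\ $J\cup\{a_0,a_{n-1}\}$ is indiscernible; but this is nothing other than $I$ with each $a_i$ ($0\le i\le n-1$) reinserted into its cut, which says exactly that $(a_i)_{i<n}$ fills the polycut $(\mathfrak c_0,\dots,\mathfrak c_{n-1})$, that is, $(a_i)_{i<n}$ is $I$-independent.

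I expect the only delicate part to be the bookkeeping with cuts in the last paragraph: one must check carefully that inserting the finitely many middle points neither merges $\mathfrak c_0^J$ with $\mathfrak c_{n-1}^J$ nor spoils their Dedekind property, and that ``$a_0$ fills $\mathfrak c_0^J$'' is literally the statement ``$I\cup\{a_0,\dots,a_{n-2}\}$ is indiscernible''. Everything else is a routine unwinding of the definitions of filling, $I$-independence and distality, once one notes that distality is a property of the EM-type alone.
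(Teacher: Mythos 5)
Your proof is correct, and it rests on exactly the same ideas as the paper's: induct on $n$, insert some of the fillers into $I$ to get a longer sequence $J$ which (having the same EM-type) is still dense and distal, and then invoke the two-cut case for the remaining points. The paper merely organizes the induction in the opposite order --- it inserts $a_0$ alone to form $I'$, uses the $n=2$ case to check that each remaining $a_i$ still fills the induced cut of $I'$, and then applies the induction hypothesis to $(a_i)_{0<i<n}$ over $I'$ --- so the two arguments are essentially identical.
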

\begin{proof}
We prove it by induction on $n$. for $n=2$, it is Lemma \ref{distalsimpl}. Assume it holds for $n$ and consider a family $(\mathfrak c_i)_{i<n+1}$ and $(a_i)_{i<n+1}$ as in the hypothesis. Let $I'=I \cup \{a_0\}$ (where $a_0$ is inserted in the cut $\mathfrak c_0$). Each cut $\mathfrak c_i$ naturally induces a cut $\mathfrak c'_i$ of $I'$. By the case $n=2$, for each $0<i<n+1$, $a_i$ fills $\mathfrak c_i'$. The sequence $I'$ is also distal, so by induction $(a_i)_{0<i<n+1}$ is $I'$-independent. Therefore $(a_i)_{i<n+1}$ is $I$-independent.
\end{proof}

\begin{lemme}[External characterization of distality]\label{defitrans}
A sequence $I$ is distal if and only if the following property holds:
For every set $A$, tuple $b$ and $A$-indiscernible sequence $I'=I_1+I_2$ ($I_1$ and $I_2$ without endpoints, EM-$\tp(I')$=EM-$\tp(I)$), if $I_1+b+I_2$ is indiscernible, it is $A$-indiscernible.
\end{lemme}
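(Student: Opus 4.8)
The plan is to prove both implications, the right-to-left one being essentially a reformulation exercise using the sliding and expanding machinery, and the left-to-right one being the one that requires real work. For the easy direction, suppose $I$ is not distal. By Lemma \ref{distalsimpl} (or rather by the definition), there is a dense sequence $J$ of the same EM-type, distinct Dedekind cuts $\mathfrak c_1 < \mathfrak c_2$, and $a$ filling $\mathfrak c_1$, $b$ filling $\mathfrak c_2$ with $a \ndownfree_J b$, witnessed by a formula $\phi(x,y,\bar m)$, $\bar m \in J$. Write $J = J_1 + J_2$ by splitting at the cut $\mathfrak c_1$; then set $A = \{b\} \cup \bar m$ (more precisely, take $\bar m$ small and $J_1, J_2$ cofinal/coinitial pieces so that $\bar m$ lies in them, and pass to those pieces so that $J_1 + J_2$ is $A$-indiscernible — we need $J_1, J_2$ endless, which is fine since $\mathfrak c_1$ is Dedekind and we may shrink). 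Then $J_1 + a + J_2$ is indiscernible (as $a$ fills $\mathfrak c_1$) but not $A$-indiscernible, since $\phi(a,b,\bar m)$ distinguishes the position of $a$ relative to the rest — one has to check that $A$-indiscernibility of $J_1+a+J_2$ would force the $\phi$-type of $(a,b)$ to be determined by the cuts, contradicting $a \ndownfree_J b$. So the external property fails. (A small technical point: the external property as stated puts $b$ in the cut between $I_1$ and $I_2$; here $b$ fills a \emph{different} cut $\mathfrak c_2$, so one should absorb $b$ into the parameter set $A$ rather than into the sequence, which is exactly what I did.)

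For the hard direction, assume $I$ is distal and let $A$, $b$, $I' = I_1 + I_2$ be as in the external property, with $I_1 + b + I_2$ indiscernible; I want $A$-indiscernibility. Suppose not: there is a formula $\psi \in L(A)$ and increasing tuples from $I_1 + b + I_2$ on which $\psi$ fails to be constant. By shrinking of indiscernibles (Proposition \ref{shrinking1}) applied to the finitely many parameters from $A$ actually appearing, together with the element $b$, the failure of $A$-indiscernibility of $I_1 + b + I_2$ must be ``caused'' by $b$ sitting in a cut that $A$ (or $Ab$-data) sees — more precisely, the non-constancy must involve $b$. The idea is then to manufacture a \emph{second} element $b'$ filling a different Dedekind cut of (an expansion of) $I'$ whose interaction with $b$ over the sequence is non-trivial, contradicting distality via Lemma \ref{distalsimpl}/\ref{twoton}. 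Concretely: some parameter $a_0 \in A$ (a finite tuple) witnesses the non-$A$-indiscernibility; using shrinking, $a_0$ induces finitely many cuts on $I_1 + b + I_2$, and the fact that $I_1 + I_2$ \emph{is} $A$-indiscernible while $I_1 + b + I_2$ is not means $b$ falls into one of the cuts induced by $a_0$ in a way that is detected. Now I would use sliding (Corollary \ref{sliding2} or \ref{sliding3}) and expanding to move a copy of $a_0$ — call it $b'$ — into a \emph{Dedekind} cut of a dense expansion $J$ of $I'$, different from the cut filled by $b$, while preserving the relevant $\phi$-type linking it to $b$ and to the sequence; then $b \ndownfree_J b'$, contradicting distality of $I$ (note $J$ has the same EM-type as $I$, and we may take $b$ to fill a Dedekind cut of $J$ after expanding since the cut $(I_1, I_2)$ can be arranged Dedekind).

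The main obstacle, and the step deserving the most care, is the passage from ``$I_1 + b + I_2$ is not $A$-indiscernible'' to ``there is an element $b'$ of the right similarity type sitting in a distinct Dedekind cut and non-independent from $b$ over the sequence.'' The subtlety is that the parameter $a_0 \in A$ need not itself fill any single cut — it is an arbitrary tuple inducing several cuts on $I'$ — so one cannot directly play $a_0$ against $b$ in the definition of distality, which is phrased for elements filling cuts. The resolution is to work one cut at a time: among the (finitely many, by shrinking) cuts induced by $a_0$ on $I_1 + b + I_2$, isolate one, say $\mathfrak c$, such that the corresponding left-segment predicate has \emph{different} values depending on whether $b$ is included, i.e. $b$ lies strictly inside $\mathfrak c$'s class when we pass from $I_1+I_2$ to $I_1+b+I_2$; equivalently $\lim(\mathfrak c^-/A) \neq \lim(\mathfrak c^+/A)$ in the expanded sequence but $A$ sees this only through $b$. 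Then, after a sliding/expanding argument to make $\mathfrak c$ Dedekind in a dense expansion $J$ and to put an element $b'$ realizing the limit type of $\mathfrak c$ on one side, the formula $\psi$ witnesses $b \ndownfree_J b'$. One must also double-check the endpoint hypotheses ($I_1, I_2$ endless, cuts Dedekind) survive the expansions — these are exactly the hypotheses that Lemmas \ref{shrinking}, \ref{expanding}, \ref{sliding1} are designed to accommodate, so this is bookkeeping rather than a genuine difficulty — and that $b$ can be taken to fill a Dedekind cut of the final sequence, which is why we need $I_1$ and $I_2$ without endpoints in the statement.
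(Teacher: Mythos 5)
Your right-to-left direction (not distal $\Rightarrow$ the external property fails) is essentially correct: absorbing $b$ and the tail beyond its cut into the parameter set $A$ and splitting $J$ at the cut of $a$ is the right move, and the paper indeed dismisses this direction as easy. (Minor point: you must take $J_1,J_2$ to be convex pieces around $\mathfrak c_1$ \emph{avoiding} $\bar m$ and not crossing $\mathfrak c_2$, so that $J_1+J_2$ is a convex subsequence of the indiscernible sequence $J+b$ and hence indiscernible over $A$; as written, ``so that $\bar m$ lies in them'' says the opposite of what you need.)

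The hard direction, however, has a genuine gap, and it is exactly at the step you flag as the main obstacle. Your plan is to manufacture a single element $b'$ filling a second Dedekind cut with $b\ndownfree_J b'$ and contradict distality via Lemma \ref{distalsimpl}. This cannot work in either of the two forms you propose. If $b'$ is ``a copy of $a_0$'' produced by sliding, it does not \emph{fill} a cut: sliding prescribes the cuts a tuple induces on the sequence, which is far weaker than the requirement $J+b'$ indiscernible, so the pair $(b,b')$ is not a configuration to which the definition of distality applies. If instead $b'$ realizes the limit type of some cut $\mathfrak c$ (so that it genuinely fills $\mathfrak c$), then the only formula you possess, $\psi$, has $a_0$ as a parameter; a witness to $b\ndownfree_J b'$ must be a formula with parameters in $J$ alone, and two elements each filling a Dedekind cut and each satisfying (or failing) $\psi(x;a_0)$ can perfectly well be $J$-independent — indiscernibility of $J+b+b'$ says nothing about formulas over $a_0$. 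The correct argument does not reduce the external failure to a single non-independent pair. Instead: after reducing to a formula $\phi(x)$ over $A$ with $\phi(b)$ true and $\neg\phi(a')$ for $a'\in I'$, one uses the $A$-indiscernibility of $I'$ to transport the configuration and obtain, for \emph{every} cut of a dense expansion, a point filling it on which $\phi$ holds; one then fills $\omega$ many cuts by such points $b_k$, invokes Lemma \ref{twoton} (the $n$-point consequence of distality) to see that the sequence with all the $b_k$ inserted is still indiscernible, and concludes that $\phi$ has infinite alternation number on an indiscernible sequence, contradicting NIP. The contradiction is with NIP, not with distality applied to one pair; distality's only role is to let you insert all the $b_k$ simultaneously. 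This replication-plus-alternation mechanism is absent from your proposal and is the essential content of the proof.
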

\begin{proof}
Assume that $I$ is distal, but the conclusion does not hold. Then there is some $I'=I_1+I_2$ and formula $\phi(x)$ with parameters from $A\cup I_1 \cup I_2$ which witnesses it. This means $\phi(b)$ holds and there is $(I_1',I_2') \unlhd (I_1,I_2)$ such that $\neg \phi(a)$ holds for $a\in I_1'\cup I_2'$. Restricting even more if necessary, we may assume that $I_1'+I_2'$ is indiscernible over the parameters of $\phi$. So replacing $I'$ by that latter sequence, we may assume that all the parameters are from $A$. Then, we may freely enlarge $I'$, so assume that it is dense.

As $I'$ is $A$-indiscernible, for every cut $\mathfrak c$ of $I'$, there is $b'$ filling it such that $\phi(b')$ holds. Fix an increasing sequence $(\mathfrak c_k)_{k<\omega}$ of such cuts. For every $k<\omega$, let $b_k$ fill $\mathfrak c_k$ such that $\phi(b_k)$ holds.
The sequence $I'$ is distal (because $I'$ and $I$ have same EM-type) so by Lemma \ref{twoton}, the sequence formed by adding all those points to $I'$ is still indiscernible. Therefore $\phi(x)$ has infinite alternation number, contradicting $NIP$.

The converse is easy.
\end{proof}

The following technical lemma will be used repeatedly.

\begin{lemme}[Strong base change]\label{limittype}
Let $I$ be an indiscernible sequence and $A \supseteq I$ a set of parameters. Let $(\mathfrak c_i)_{i<\alpha}$ be a sequence of pairwise distinct polarized Dedekind cuts in $I$. For each $i<\alpha$ let $d_i$ fill the cut $\mathfrak c_i$. Then there exist $(d_i')_{i<\alpha}$ such that $\tp((d_i')_{i<\alpha}/I)=\tp((d_i)_{i<\alpha}/I)$ and for each $i<\alpha$, $\tp(d_i'/A) = \lim(\mathfrak c_i/A)$.
\end{lemme}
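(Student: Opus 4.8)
The plan is to handle the cuts one at a time by transfinite recursion, at each step replacing $d_i$ by a new filler $d_i'$ of the cut $\mathfrak c_i$ whose type over $A$ is exactly $\lim(\mathfrak c_i/A)$, while not disturbing the types over $I$ of the tuples already treated. So I would build an increasing chain of partial solutions $(d_j')_{j<i}$ together with the tails $(d_j)_{j\geq i}$, maintaining the invariant that $\tp\big((d_j')_{j<i}\,\widehat{\ }\,(d_j)_{j\geq i}/I\big)=\tp\big((d_j)_{j<\alpha}/I\big)$ and that $\tp(d_j'/A)=\lim(\mathfrak c_j/A)$ for all $j<i$. The recursion step is the heart of the matter; the limit stages are immediate by compactness, since the conditions imposed are closed.

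For the successor step, I am given a sequence $I$, a set $A\supseteq I$ containing the already-chosen $d_j'$ for $j<i$ as well as the still-untouched $d_j$ for $j>i$, and one distinguished tuple $d:=d_i$ filling the polarized Dedekind cut $\mathfrak c:=\mathfrak c_i$; I must produce $d'$ with $\tp(d'/I)=\tp(d/I)$, $\tp(d'/A)=\lim(\mathfrak c/A)$, and — crucially — $\tp(d'\,\widehat{\ }\,(d_j')_{j<i}\,\widehat{\ }\,(d_j)_{j>i}/I)=\tp(d\,\widehat{\ }\,\cdots/I)$, i.e. $d'$ realizes $\lim(\mathfrak c/A)$ while keeping its type over all of $I$ (including the other fillers) unchanged. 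The key point is that, since $\mathfrak c$ is a \emph{Dedekind} cut of $I$, the limit type $\lim(\mathfrak c/A)$ is finitely satisfiable in $I$: any formula $\phi(x;\bar a)$ with $\bar a\in A$ in it is satisfied by cofinally many elements of the appropriate side $I_\varepsilon$ of the cut. Writing $\mathfrak c=(I_1,I_2)$, I want to realize the partial type $\tp(d/I)\cup\lim(\mathfrak c/A)$; by compactness it suffices to show every finite subset is consistent. A finite fragment consists of finitely many formulas $\psi(x;\bar m)$ (with $\bar m$ from $I$) true of $d$, plus finitely many $\chi(x;\bar a)$ ($\bar a\in A$) that hold on a cofinal (resp. coinitial) piece of the relevant side of $\mathfrak c$. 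Because $d$ fills the cut, $I_1+d+I_2$ is indiscernible, so for \emph{any} element $c$ of $I$ sufficiently close to the cut on the $\varepsilon$ side, $c$ and $d$ have the same type over the finitely many parameters $\bar m$ appearing; and by finite satisfiability such a $c$ can simultaneously be taken to satisfy all the $\chi(x;\bar a)$. Hence $c$ witnesses consistency of the finite fragment, so the full type is consistent and $d'$ exists realizing $\tp(d/I)\cup\lim(\mathfrak c/A)$. Composing with an automorphism fixing $I$ pointwise that sends this realization to one also agreeing with $d$ over the rest of $A$'s $I$-part — actually, since $\tp(d/I)$ already pins down the type of $d'$ over $I$ and hence over all the other fillers $(d_j')_{j<i},(d_j)_{j>i}$ which lie in $I$ only insofar as... — here I must be careful: the $d_j$ are not in $I$, so I do \emph{not} get their joint type for free from $\tp(d'/I)$.

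To fix this last gap, I would not put the other fillers into the base set $A$ and then forget them, but rather phrase the recursion correctly: at step $i$, take $A_i:=I\cup\{d_j':j<i\}$ and realize $\tp(d_i/A_i)\cup\lim(\mathfrak c_i/A)$, noting $A_i\supseteq I$ and $A\supseteq I$, so that $\lim(\mathfrak c_i/A)$ still makes sense and the finite-satisfiability argument above goes through verbatim with $\tp(d_i/A_i)$ in place of $\tp(d_i/I)$ (the elements $c\in I$ close to the cut have the right type over the finite parameters because the relevant sequence $I_1+d_i+I_2$ is indiscernible \emph{over} $\{d_j':j<i\}\cup(\text{untreated }d_j)$ — which is exactly the invariant we are maintaining). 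The untreated $d_j$, $j>i$, come along because $\tp(d_i/A_i)$ does not constrain them and we simply apply an automorphism over $A_i$; then the invariant for step $i+1$ holds. Thus the main obstacle is bookkeeping: isolating precisely which parameters must be kept fixed at each stage so that (a) $\lim(\mathfrak c_i/A)$ remains well-defined and finitely satisfiable in $I$, and (b) the indiscernibility of $I_1+d_i+I_2$ survives over the growing set of already-placed tuples. Both are guaranteed by the hypothesis that the $\mathfrak c_i$ are pairwise distinct polarized Dedekind cuts and that the original joint configuration $I+(d_i)_{i<\alpha}$ is as given; no distality is needed here, only $NIP$ and basic finite-satisfiability of limit types at Dedekind cuts.
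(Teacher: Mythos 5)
There is a genuine gap, and it sits exactly where the content of the lemma lies: the interaction between \emph{several} fillers. Your successor step needs the consistency of $\tp(d_i/A_i)\cup\lim(\mathfrak c_i/A)$ where $A_i$ contains the other fillers $d_j'$ ($j<i$) and the untreated $d_j$ ($j>i$), and you justify this by finite satisfiability in $I$ near the cut, which in turn rests on the claim that $I_1+d_i+I_2$ is indiscernible \emph{over the other fillers}. That claim is not part of the hypothesis and is false in general: the lemma only assumes that each $d_j$ fills its cut individually, i.e.\ $I\cup\{d_j\}$ is indiscernible; nothing is assumed about $I\cup\{d_i\}$ being indiscernible over $d_j$, and indeed the whole point of non-distality is that it need not be. Concretely, in the colored order take $I$ with all elements in pairwise distinct $E$-classes and two fillers $d_0,d_1$ of distinct cuts with $d_0Ed_1$: then $xEd_0\in\tp(d_1/Id_0)$, but no element of $I$ near $\mathfrak c_1$ satisfies $xEd_0$, so the partial type $\tp(d_1/Id_0)\cup\lim(\mathfrak c_1/A)$ is \emph{not} finitely satisfiable in $I$ (it happens to be consistent, but your witness $c\in I$ does not exist). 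Your argument is fine for $\alpha=1$ — there the type really is finitely satisfiable near the cut, and no NIP is needed — but the multi-cut step is precisely where NIP must enter, and your proof never uses it in an essential way; that is a structural warning sign, since the statement should fail in IP theories.

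Even the corrected formulation of your successor step (``find $d_i'$ realizing $\lim(\mathfrak c_i/A)$ with the same type as $d_i$ over $I$ \emph{together with all the other fillers}'') is essentially as strong as the lemma itself, so the recursion does not reduce the problem. The paper proceeds differently: it assumes the conclusion fails, uses compactness to reduce to finitely many cuts, a single formula $\phi(x_0,\ldots,x_{n-1})\in\tp((d_i)/I)$ and finitely many $\psi_i(x_i)\in\lim(\mathfrak c_i/m)$ whose conjunction is inconsistent, and then exploits the mutual indiscernibility of small segments $J_i$ around the cuts (on which $\psi_i$ holds) to repeatedly produce fillers $e_0,e_1$ satisfying $\phi$ but violating some $\psi_i$; iterating and pigeonholing yields infinite alternation of one $\psi_i$ along an indiscernible sequence, contradicting NIP. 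Some argument of this alternation type is unavoidable here.
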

\begin{proof}

Assume the result does not hold. Then by compactness, we may assume that $\alpha=n$ is finite and that there is a formula $\phi(x_0,..,x_{n-1}) \in tp((d_i)_{i<n}/I)$ and formulas $\psi_i(x_i) \in \lim(\mathfrak c_i/m)$ for some finite $m \in A^k$ such that $\phi(x_0,..,x_{n-1})\wedge \bigwedge_i \psi_i(x_i)$ is inconsistent. Let $I_0$ denote the parameters of $\phi$, and assume $I_0 \subseteq m$.  

Assume for simplicity that $n=2$ (the proof for $n>2$ is the same) and without loss each $\mathfrak c_i$ is polarized as $\mathfrak c_i^-$. For $i=0,1$  take $(J_i,J_i') \unlhd \mathfrak c_i$ such that $\psi_i$ holds on all elements of $J_i$ and $J_i \cup J_i'$ contains no element of $I_0$. Then $J_0+J_0'$ and $J_1+J_1'$ are mutually indiscernible over $I_0$. So for every two cuts $\mathfrak d_0$ and $\mathfrak d_1$ respectively from $J_0+J_0'$ and $J_1+J_1'$, we can find points $e_0$ and $e_1$ filling those cuts (even seen as cuts of $I$) such that $\phi(e_0,e_1)$ holds.

Take two cuts $\mathfrak d_0$ and $\mathfrak d_1$ of $I$ such that they are respectively interior to $J_0$ and $J_1$. Fill $\mathfrak d_0$ by $e_0$ and $\mathfrak d_1$ by $e_1$ such that $\phi(e_0,e_1)$ holds. By hypothesis, either $\neg \psi_0(e_0)$ or $\neg \psi_1(e_1)$ holds. Assume $\neg \psi_1(e_1)$ holds. Now forget about $e_0$ and set $I'=I \cup \{e_1\}$. Then $I'$ is indiscernible and we take it as our new $I$. Set $J_0' = J_0$ and let $J_1'$ be an initial segment of $J_1$ not containing $\mathfrak d_1$ and make the same construction. We obtain new points $(e_0^1,e_1^1)$ that fill the cuts $\mathfrak d_0^1,\mathfrak d_1^1$ of $J_0'$ and $J_1'$ such that $\neg \psi_0(e_0^1) \vee \neg \psi_1(e_1^1)$ holds. Without loss (as we will iterate infinitely many times) again $\neg \psi_1(e_1^1)$ holds.

Iterate this $\omega$ time to obtain a sequence of points $e_1^k$ and cuts $\mathfrak d_1^k$ in $J_1$ such that $I$ with all the points $e_1^k$ added in the cuts $\mathfrak d_1^k$ is indiscernible and $\neg \psi_1(e_1^k)$ holds for all $n$. But $\psi_1(x)$ holds for all $x \in J_1$ so $\psi_1$ has infinite alternation rank, contradicting $NIP$.
\end{proof}

\begin{cor}[Base change]\label{basechange}
The notion of being distal is stable both ways under base change: If $I$ is $A$-indiscernible, then $I$ is distal in $T(A)$ if and only if it is distal in $T$.
\end{cor}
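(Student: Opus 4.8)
The plan is to prove both directions using the external characterization of distality (Lemma \ref{defitrans}), since it phrases distality purely in terms of a parameter set $A'$, a tuple $b$, and an $A'$-indiscernible sequence — and the key feature is that this formulation is "closed under enlarging the base set," which is exactly what we need to toggle between $T$ and $T(A)$.

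For the easy direction, suppose $I$ is distal in $T$; we want $I$ distal in $T(A)$. Apply the criterion of Lemma \ref{defitrans} in $T(A)$: we are given a set $A'$ (of parameters in the language $L(A)$, i.e.\ really an $L$-parameter set containing $A$), a tuple $b$, and an $A'$-indiscernible sequence $I'=I_1+I_2$ with the same EM-type over $A$ as $I$, such that $I_1+b+I_2$ is indiscernible over $A$. We must show it is $A'$-indiscernible. But "indiscernible over $A$" in $T$ is the same as "indiscernible" in $T(A)$, so $I'$ has the same EM-type as $I$ in the language $L$ as well (this uses that $I$ is $A$-indiscernible, so its $L$-EM-type is determined). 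Now $I'$ is $A'$-indiscernible in $T$, $I_1+b+I_2$ is $L$-indiscernible (since it's $A$-indiscernible and contains $A$... — more carefully, we absorb $A$ into the parameter set and observe $I_1+b+I_2$ is $A$-indiscernible hence we may treat it in $T$ with $A'\supseteq A$). Applying the criterion for $I$ distal in $T$ gives that $I_1+b+I_2$ is $A'$-indiscernible, as desired.

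For the converse, suppose $I$ is distal in $T(A)$; we want $I$ distal in $T$. Again use Lemma \ref{defitrans}, now in $T$: given $A'$ (a set of $L$-parameters), $b$, and an $A'$-indiscernible $I'=I_1+I_2$ with EM-$\tp(I')=$ EM-$\tp(I)$ in $L$, with $I_1+b+I_2$ indiscernible, we must show $I_1+b+I_2$ is $A'$-indiscernible. The issue is that $I'$ need not be $A$-indiscernible, so we cannot directly feed it into the $T(A)$ criterion. Here I would use the expanding/sliding machinery: by Lemma \ref{expanding} and an automorphism argument, find a sequence $J'$ with the same EM-type over $A$ as $I$ (hence distal in $T(A)$) together with parameters $b'$ and an image of $A'$ — more precisely, apply an automorphism of $\monster$ fixing nothing in particular but note that since $I$ and $I'$ have the same $L$-EM-type, after possibly extending $I'$ we can realize a copy of the configuration $(A', b, I')$ sitting over a sequence that is $A$-indiscernible. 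Actually the cleanest route: enlarge $A'$ to $A'' = A' \cup A$ and observe it suffices to prove $A''$-indiscernibility of $I_1+b+I_2$; then replace $I'$ by a subsequence/extension that is $A''$-indiscernible is not automatic either. I would instead shrink $A'$ by Lemma \ref{shrinking} to a small set, move the whole picture by an automorphism so that the underlying sequence becomes a Morley-type sequence over $A$, invoke distality in $T(A)$ there, and transfer back.

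The main obstacle is precisely this asymmetry in the converse: in the criterion of Lemma \ref{defitrans} the sequence $I'$ is only assumed $A'$-indiscernible, not $A$-indiscernible, so one cannot naively apply the $T(A)$-distality of $I$. The work is in using Lemmas \ref{shrinking}, \ref{expanding} and \ref{sliding1}/\ref{sliding2} to transport an arbitrary such configuration to one sitting over an $A$-indiscernible (e.g.\ Morley) copy of $I$, where $T(A)$-distality applies, and then pulling the conclusion back by an automorphism. Once the configuration is correctly positioned, the conclusion is immediate, so the entire difficulty is this transport step; everything else is bookkeeping about EM-types and the equivalence "$A$-indiscernible in $T$" $=$ "indiscernible in $T(A)$".
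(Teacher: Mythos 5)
Your first direction is exactly the paper's: the external characterization of Lemma \ref{defitrans} is stable under naming parameters, so distality in $T$ passes to $T(A)$. That part is fine.

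For the converse there is a genuine gap, and it sits precisely at the step you flag as ``the transport step'' and then leave unresolved. Working contrapositively, a failure of distality of $I$ in $T$ gives a dense sequence $J$ of the same EM-type as $I$ over $\emptyset$, distinct Dedekind cuts $\mathfrak c_1,\mathfrak c_2$, and points $a,b$ filling them with $a\ndownfree_J b$. Since distality in $T$ depends only on the EM-type over $\emptyset$, you may indeed take $J$ to be a large $A$-indiscernible extension $J_1+J_2+J_3$ of $I$ itself (so no automorphism gymnastics are needed to position the sequence). But the real obstruction is that $a$ and $b$ fill their cuts only in the sense of $T$: $J_1+a+J_2+J_3$ is indiscernible, not necessarily $A$-indiscernible, so this configuration is not yet a counterexample to distality in $T(A)$, and $T(A)$-distality cannot be invoked on it. None of the tools you cite (Lemmas \ref{shrinking}, \ref{expanding}, \ref{sliding1}, \ref{sliding2}) fixes this: they preserve similarity of types over the sequence but give no control over types over the extra parameter set $A$, and an automorphism carrying $J$ onto an $A$-indiscernible copy carries $A'$ and $b$ along without making $b$ fill its cut over $A$. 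The missing ingredient is the Strong Base Change Lemma \ref{limittype}: it lets you replace $(a,b)$ by $(a',b')$ with the \emph{same type over the sequence} (so the formula witnessing $a\ndownfree_J b$ survives) but with $\tp(a'/AJ)=\lim(\mathfrak c_1/AJ)$ and $\tp(b'/AJ)=\lim(\mathfrak c_2/AJ)$. Then $J_1+a'+J_2+J_3$ and $J_1+J_2+b'+J_3$ are $A$-indiscernible while $J_1+a'+J_2+b'+J_3$ is not, which is the desired counterexample to distality in $T(A)$. Without an appeal to \ref{limittype} (or an equivalent argument), the converse direction is not proved.
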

\begin{proof}
Assume $I$ is distal in $T$. Notice that the property stated in Lemma \ref{defitrans} is preserved under naming parameters (because we can incorporate them in the set $A$). This implies that $I$ is distal in $T(A)$.

Conversely, assume $I$ is not distal in $T$. Increase $I$ to some large $A$-indiscernible sequence $J_1+J_2+J_3$ and take $a,b$ such that $J_1+a+J_2+J_3$ and $J_1+J_2+b+J_3$ are indiscernible, but $J_1+a+J_2+b+J_3$ is not. By strong base change, we may assume that $a$ and $b$ realize the limit types over $A$ of the cuts they define. Then $J_1+a+J_2+J_3$ and $J_1+J_2+b+J_3$ are $A$-indiscernible, giving a counter-example to distality in $T(A)$.
\end{proof}

\begin{lemme}\label{dpminlemme}
If $T$ is dp-minimal and $I$ is an indiscernible sequence of elements of the home sort which is not totally-indiscernible, then $I$ is distal.
\end{lemme}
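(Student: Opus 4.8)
The plan is to reduce via Lemma \ref{distalsimpl} to a statement about two Dedekind cuts and then run a depth-$2$ dp-minimality argument on the two sides of the cuts. Since distality depends only on the EM-type of $I$, I would first replace $I$ by a dense sequence of the same EM-type (still not totally indiscernible). By Lemma \ref{distalsimpl} it then suffices to fix two distinct Dedekind cuts $\mathfrak c_1<\mathfrak c_2$ of $I$ and show that their limit types over $I$ are weakly orthogonal; equivalently, that any $b_1$ filling $\mathfrak c_1$ and any $b_2$ filling $\mathfrak c_2$ satisfy $b_1\downfree_I b_2$. Suppose this fails. Write $I=A+B+C$, where $A,B,C$ are the convex pieces of $I$ lying strictly left of $\mathfrak c_1$, strictly between the two cuts, and strictly right of $\mathfrak c_2$ (all infinite, since the cuts are distinct and Dedekind). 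Then $A+b_1+B+C$ and $A+B+b_2+C$ are indiscernible while $A+b_1+B+b_2+C$ is not.

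Now I would invoke dp-minimality. From indiscernibility of $A+b_1+B+C$, the convex pieces $A$ and $C$ are mutually indiscernible over $B\cup\{b_1\}$ (each is indiscernible over its complement in the sequence). Applying dp-minimality (which is preserved on passing to $T$ with parameters named) to the pair of sequences $A,C$ and the single home-sort element $b_2$, one of $A$, $C$ is indiscernible over $B\cup\{b_1,b_2\}$. The whole configuration is invariant under reversing the order of $I$ (which swaps $\mathfrak c_1\leftrightarrow\mathfrak c_2$, $A\leftrightarrow C$, $b_1\leftrightarrow b_2$ and all hypotheses), so I may assume $A$ is indiscernible over $B\cup\{b_1,b_2\}$. (Running the argument also with $b_1$ and $b_2$ interchanged, using indiscernibility of $A+B+b_2+C$, gives in the worst case that $C$ is as well, so both sides can be controlled if needed.)

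It remains to paste this indiscernibility statement together with the two given indiscernibilities to contradict the failure of indiscernibility of $A+b_1+B+b_2+C$; this is the technical heart, and the step I expect to be the main obstacle, since "indiscernible over $X$" together with "indiscernible over $Y$" does not by itself give "indiscernible over $X\cup Y$". My plan would be to first use the expanding lemma (Lemma \ref{expanding}) to arrange $A,B,C$ to be indexed by $|T|^+$-saturated dense orders while keeping the similarity type of $\tp(b_1b_2/I)$ (hence the failure of $b_1\downfree_I b_2$), then localise a hypothetical defect of $A+b_1+B+b_2+C$ to a finite sub-configuration and either re-apply dp-minimality to it or repackage it as an inp-pattern of depth $2$ carried by the two sides $A$ and $C$ (which are order-separated by $B$). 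This is exactly where the hypothesis that $I$ is not totally indiscernible is needed: for a totally indiscernible sequence the conclusion is false — in $(\mathbb Q,=)$ two elements filling distinct cuts may be equal, so $b_1\not\downfree_I b_2$ — and dp-minimality alone cannot rule this out. Non-total-indiscernibility supplies the missing rigidity (for a Dedekind cut the two polarizations yield distinct global limit types), which is what should force $\tp(b_2/A\cup\{b_1\}\cup B\cup C)$ to be the canonical polycut-filling type once $A$ is indiscernible over $B\cup\{b_1,b_2\}$, the desired contradiction.
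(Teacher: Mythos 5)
Your reduction (pass to a dense sequence, invoke Lemma \ref{distalsimpl}, write $I=A+B+C$ with $b_1,b_2$ filling the two cuts) matches the paper's setup, and dp-minimality is indeed the right tool. But the way you apply it leaves a genuine gap, which you yourself flag as ``the technical heart''. Applying dp-minimality to the pair $A$, $C$ with $b_2$ as the external element can only tell you that one of $A$, $C$ is indiscernible over $B\cup\{b_1,b_2\}$; this says nothing about how $b_1$ and $b_2$ relate to \emph{each other} over $B$. The failure of indiscernibility of $A+b_1+B+b_2+C$ may be witnessed by a formula $\psi(b_1,b_2;\bar m)$ whose parameters $\bar m$ lie entirely in $B$ (compare the colored order, where the witness $b_1 E b_2$ has no parameters at all), and no amount of indiscernibility of $A$ or $C$ over sets containing $b_1b_2$ addresses that. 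Your proposed repairs (an inp-pattern of depth $2$ carried by $A$ and $C$, or ``rigidity'' from the two polarizations of a cut having distinct limit types) are not arguments; in particular the latter is a statement about limit types over $I$ alone and cannot by itself force $\tp(b_2/Ab_1BC)$ to be the filling type.

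The paper's proof closes exactly this gap by a different placement of the dp-minimality application: it puts $b$ \emph{inside} the sequence and keeps $a$ outside. Since $I$ is not totally indiscernible, after naming a suitable base one has an ordering formula $\phi(x,y)$ with $\phi(d_i,d_j)\iff i<j$. One absorbs the $I$-parameters $\bar d_1,\bar d_3$ of the witnessing formula $\psi$ into each term and forms the single indiscernible sequence $J=\llg d_i\hat{~}\bar d_1\hat{~}\bar d_3 : i\in\mathcal I_1'+\mathcal I_2\rrg + \llg b\hat{~}\bar d_1\hat{~}\bar d_3\rrg + \llg d_i\hat{~}\bar d_1\hat{~}\bar d_3 : i\in\mathcal I_3'\rrg$, then applies dp-minimality to $J$ and the element $a$: $J=J_1+J_2+J_3$ with $J_2$ at most a point and $J_1,J_3$ indiscernible over $a$. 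The formula $\phi(x,a)$ changes truth value exactly at $a$'s cut, so $J_1$ must be the part of $J$ left of that cut, $J_2=\emptyset$, and $J_3$ is everything to the right --- in particular $J_3$ contains both $b$ and infinitely many $d_i$ with $i\in\mathcal I_3'$, together with all parameters of $\psi$. Indiscernibility of $J_3$ over $a$ then contradicts $\psi(a,b)\wedge\neg\psi(a,d_i)$. So the two uses you make of the hypotheses should be swapped and sharpened: non-total-indiscernibility is used concretely to pin down where the dp-minimal decomposition breaks, and the point whose interaction with $b$ you need to control must be the \emph{external} element of the dp-minimality application, with $b$ a term of the sequence.
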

\begin{proof}
Write $I=(d_i)_{i\in \mathcal I}$ and assume that it is not totally indiscernible. Working over some base $A$ if necessary, we may assume that there is a formula $\phi(x,y)\in L(A)$ which orders the sequence $I$ and such that $I$ is indiscernible over $A$. So we have $\phi(d_i,d_j) \iff i<j$. (Extend the sequence $I$ to some $J_1+I+J_2$ and take $A=J_1+J_2$.)

Without loss $\mathcal I$ is a dense order and can be written as $\mathcal I_1+\mathcal I_2+\mathcal I_3$, the three pieces being infinite without end points. Write $I=I_1+I_2+I_3$ in the obvious way. Let $a$ fill the cut $\mathfrak c_a=(I_1,I_2+I_3)$ and $b$ fill $\mathfrak c_b=(I_1+I_2,I_3)$. Assume that $a$ and $b$ contradict distality of $I$. So there is a formula $\psi(x,y)\in L(AI)$ such that $\psi(a,b)$ holds and witnesses $a \ndownfree_I b$. Let $\bar d=(d_{i_1},...,d_{i_n})$ be the parameters of $\psi$ coming from $I$ with $i_1<\ldots<i_n$. Let $s$ be such that exactly $i_1,...,i_s$ are from $\mathcal I_1$ and $t$ such that exactly $i_{s+1},...,i_t$ are from $\mathcal I_2$. Let $\mathcal I_1'$ be an end segment of $\mathcal I_1$ above $i_s$ and $\mathcal I_3'$ an initial segment of $\mathcal I_3$ below $i_{t+1}$.

Let $\bar d_1=(d_{i_1},...,d_{i_s})$ and $\bar d_3=(d_{i_{t+1}},...,d_{i_n})$. Consider the sequence $J= \llg d_i\hat{~}\bar d_1\hat{~}\bar d_3 : i\in \mathcal I_1'+\mathcal I_2\rrg + \llg b\hat{~}\bar d_1\hat{~}\bar d_3\rrg +  \llg d_i\hat{~}\bar d_1\hat{~}\bar d_3 : i\in \mathcal I_3'\rrg$. It is an indiscernible sequence. By dp-minimality applied to $J$ and $a$, we know that $J$ breaks into $J_1+J_2+J_3$, $J_2$  having at most one element, and such that $J_1$ and $J_3$ are indiscernible over $a$. Considering the formula $\phi(x,a)$, we know that $J_1$ must be equal to $\llg d_i\hat{~}\bar d_1\hat{~}\bar d_3 : i\in \mathcal I_1'\rrg$. And then $J_2$ is empty and $J_3$ is the rest of the sequence. In particular the tuple $b\hat{~}\bar d_1\hat{~}\bar d_3$ lies inside $J_3$ as do all the parameters of $\psi(x,y)$. As $\psi(a,b)$ holds but $\neg \psi(a,d_i)$ holds for $i\in \mathcal I_3$, we get a contradiction to the indiscernibility of $J_3$ over $a$.
\end{proof}

\begin{lemme}
Let $T$ be distal, $I$ and $J$ are two mutually indiscernible sequences. Let $\mathfrak c$ (resp. $\mathfrak d$) be a cut in the interior of $I$ (resp. $J$). Then $\lim(\mathfrak c/IJ)$ and $\lim(\mathfrak d/IJ)$ are weakly orthogonal.
\end{lemme}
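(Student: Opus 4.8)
The plan is to deduce weak orthogonality from the external characterisation of distality (Lemma~\ref{defitrans}), applied to the sequence $I$ over the parameter set $J\cup\{b\}$ for a generic realization $b$ of $\lim(\mathfrak d)$. The first step is a reduction: I claim it is enough to show that for every $a\models\lim(\mathfrak c/IJ)$ and every $b\models\lim(\mathfrak d/IJ)$ one has $a\models\lim(\mathfrak c/IJb)$. Granting this, if $(a,b)$ is any realization of $\lim(\mathfrak c/IJ)_x\cup\lim(\mathfrak d/IJ)_y$, then $\tp(b/IJ)$ is the fixed type $\lim(\mathfrak d/IJ)$ and, by the claim, $\tp(a/IJb)=\lim(\mathfrak c)|_{IJb}$; since $\lim(\mathfrak c)$ is a global $I$-invariant type and any two realizations of $\lim(\mathfrak d/IJ)$ are conjugate over $IJ$, the type $\tp(ab/IJ)$ does not depend on the choice of $(a,b)$, so $\lim(\mathfrak c/IJ)_x\cup\lim(\mathfrak d/IJ)_y$ is complete, which is the desired weak orthogonality. (By symmetry one will also get $b\models\lim(\mathfrak d/IJa)$, but this is not needed.)

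Next I would normalise the situation. Using Lemma~\ref{expanding} I will enlarge $I$ over $J$ and $J$ over $I$ — adjoining Morley sequences of the relevant limit types, which are invariant and finitely satisfiable in the sequence being extended, so mutual indiscernibility is preserved — so as to assume that $I$ and $J$ are dense without endpoints and that $\mathfrak c=(I_1,I_2)$, $\mathfrak d=(J_1,J_2)$ are Dedekind cuts; I will fix polarizations, the argument being uniform in them. To prove the claim I fix $b\models\lim(\mathfrak d/IJ)$: since $\lim(\mathfrak d)$ is a global $J$-invariant type and $I$ is indiscernible over $J$, a routine invariance computation shows that $I$ stays indiscernible over $A:=J\cup\{b\}$. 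Then I fix $a\models\lim(\mathfrak c/IJ)$; as $a$ realizes the limit type of $I_1$ (resp.\ of $I_2^\ast$) over $IJ$, the sequence $I_1+a+I_2$ is indiscernible. Now $I$ is distal (every indiscernible sequence of $T$ is), it is $A$-indiscernible, and $I_1,I_2$ are endless with EM-type that of $I$, so Lemma~\ref{defitrans} applies with this $A$, the sequence $I'=I$, and the tuple $a$, and yields that $I_1+a+I_2$ is $A$-indiscernible, i.e.\ indiscernible over $J\cup\{b\}$.

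From this it will be quick to read off $a\models\lim(\mathfrak c/IJb)$: given $\phi(x;\bar m,\bar\jmath,b)$ with $\bar m\in I$, $\bar\jmath\in J$, replacing $a$ by an element $d\in I_1$ beyond $\bar m\cap I_1$ and cofinal in $I_1$ (possible since $\mathfrak c$ is Dedekind) produces an order-isomorphic subtuple of $I_1+a+I_2$, so $\phi(a;\bar m,\bar\jmath,b)\leftrightarrow\phi(d;\bar m,\bar\jmath,b)$, and the right-hand side is exactly the truth value prescribed by $\lim(\mathfrak c/IJb)$. That completes the claim and hence the proof.

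The step I expect to be the main obstacle is precisely the passage ``$a$ fills $\mathfrak c$ generically over $IJ$'' $\Rightarrow$ ``$a$ fills $\mathfrak c$ generically over $IJb$'': one must rule out that adjoining a generic point of $J$ creates a new definable fluctuation at the cut $\mathfrak c$ of $I$. This is exactly what distality of $I$ buys, once packaged through Lemma~\ref{defitrans}; the only genuine input needed beforehand is the observation that a generic $b$ can be adjoined to $J$ without destroying indiscernibility of $I$ over it, and everything else is bookkeeping with limit types.
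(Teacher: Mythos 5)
Your proof is correct, but it takes a genuinely different route from the paper's. The paper zips the two sequences together: it chooses countable dense suborders of $I$ and $J$ on which $\mathfrak c$ and $\mathfrak d$ induce two distinct Dedekind cuts, forms the single indiscernible sequence of pairs $K=\langle a_{\tau_1(t)}\hat{~}b_{\tau_2(t)}:t\in\mathcal K\rangle$ (this is where mutual indiscernibility enters), completes $a$ and $b$ to pairs $a\hat{~}b_*$ and $a_*\hat{~}b$ filling the two cuts of $K$, and then applies distality of $K$ itself to get $a\hat{~}b_*\downfree_K a_*\hat{~}b$, which pins down $\tp(a,b/IJ)$. You keep the sequences separate: mutual indiscernibility together with the $J$-invariance (indeed finite satisfiability in $J$) of $\lim(\mathfrak d)$ lets you absorb a generic $b$ into the base, and then the external characterization of distality (Lemma \ref{defitrans}), applied to $I$ over $A=J\cup\{b\}$, does the rest. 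Your version avoids the zipping and the auxiliary fillers $a_*,b_*$, and it yields the slightly sharper conclusion that \emph{every} $a\models\lim(\mathfrak c/IJ)$ realizes $\lim(\mathfrak c)|_{IJb}$ for \emph{every} $b\models\lim(\mathfrak d/IJ)$; the cost is reliance on Lemma \ref{defitrans}, itself a nontrivial consequence of distality, where the paper works essentially from the definition. The only step needing care is your normalization: you enlarge $I$ and $J$ before fixing $a$ and $b$, and weak orthogonality over the enlarged configuration does not formally pull back to the original one. This is harmless here, because once the cuts are read as Dedekind (which the unpolarized notation $\lim(\mathfrak c/IJ)$ effectively presupposes, since otherwise one polarized limit type is realized and the two polarizations disagree), $I_1$ has no last element and $I_2$ no first element, so Lemma \ref{defitrans} applies to the original sequence with no enlargement at the cut; at most one trims the outer endpoints into the base set, which changes nothing.
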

\begin{proof}
Write $I=(a_i)_{i\in \mathcal I}$ and $J=(b_j)_{j\in \mathcal J}$. Assume the conclusion does not hold. Then there are $a\models \lim(\mathfrak c/IJ)$ and $b\models \lim(\mathfrak d/IJ)$ and a formula $\phi(x,y)\in L(IJ)$ such that $\phi(a,b)$ holds, but $\lim(\mathfrak c)\otimes \lim(\mathfrak d) \vdash \neg \phi(x,y)$. Let $\mathcal K$ be a countable dense linear order without end points. Pick embedding $\tau_1: \mathcal K \rightarrow \mathcal I$ and $\tau_2:\mathcal K \rightarrow \mathcal J$ such that:\\
-- $\mathfrak c$ induces a Dedekind cut on $\tau_1(\mathcal K)$ and $\mathcal d$ induces a Dedekind cut on $\tau_2(\mathcal K)$;\\
-- identifying $\tau_1(\mathcal K)$ and $\tau_2(\mathcal K)$, those two Dedekind cuts are distinct;\\
-- the parameters of $\phi(x,y)$ belong to $\{a_i : i\in \tau_1(\mathcal K)\}\cup \{b_j : j\in \tau_2(\mathcal K)\}$.

Let $K$ be the sequence $\langle a_{\tau_1(t)}\hat{~}b_{\tau_2(t)} : t\in \mathcal K\rangle$. Let $\mathfrak c'$ and $\mathfrak d'$ denote the two cuts naturally induced by $\mathfrak c$ and $\mathfrak d$ on $K$. There are tuples $b_*$ and $a_*$ such that $a\hat{~}b_*$ fills $\mathfrak c'$ and $a_*\hat{~}b$ fill $\mathfrak d'$. By distality of $K$, $a\hat{~}b_* \downfree_K a_*\hat{~}b$ and $\phi(a,b)$ holds. This contradicts the assumption.
\end{proof}

\begin{defi}[Weakly linked]\label{weaklylinked}
Let $\llg (a_i,b_i): i\in \mathcal I \rrg$ be an indiscernible sequence of pairs. We say that $(a_i)_{i\in \mathcal I}$ and $( b_i)_{i\in \mathcal I}$ are weakly linked if for all disjoint subsets $\mathcal I_1$ and $\mathcal I_2$ of $\mathcal I$, $( a_i)_{i\in\mathcal I_1}$ and $( b_i)_{i\in\mathcal I_2}$ are mutually indiscernible.
\end{defi}

\begin{obs}\label{basicobs}
\begin{enumerate}
\item If $\llg (a_i,b_i):i\in \mathcal I\rrg$ is $A$-indiscernible and $( a_i)_{i\in \mathcal I}$ and $( b_i)_{i \in \mathcal I}$ are mutually indiscernible, then they are mutually indiscernible over $A$.
\item If $\llg (a_i,b_i): i\in \mathcal I\rrg$ is $A$-indiscernible and $( a_i)_{i\in \mathcal I}$ and $( b_i)_{i\in \mathcal I}$ are weakly linked, then they are weakly linked over $A$.
\end{enumerate}
\end{obs}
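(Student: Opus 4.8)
I will treat part (1) in detail; part (2) will follow by the same method (see the last paragraph). So assume $\langle(a_i,b_i):i\in\mathcal I\rangle$ is $A$-indiscernible and $(a_i)_{i\in\mathcal I}$, $(b_i)_{i\in\mathcal I}$ are mutually indiscernible; I must show $(a_i)_{i\in\mathcal I}$ is indiscernible over $A\cup\{b_j:j\in\mathcal I\}$ (the statement for $(b_i)$ is symmetric). First reduce the problem. Indiscernibility over a set has finite character, so it is enough to fix a formula $\phi(\bar x;\bar y,\bar z)$, a finite $\bar c$ from $A$, a finite tuple $\bar b_{\bar r}$ from $\{b_j:j\in\mathcal I\}$ (with $\bar r$ increasing), and two increasing tuples $\bar t,\bar t'$ of $\mathcal I$ of the same length, and to prove $\phi(\bar a_{\bar t};\bar b_{\bar r},\bar c)\leftrightarrow\phi(\bar a_{\bar t'};\bar b_{\bar r},\bar c)$. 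I may freely enlarge $\mathcal I$ to be dense without endpoints: this preserves $A$-indiscernibility of the pair-sequence (realize its EM-type over $A$), and it preserves mutual indiscernibility of the halves, since ``the halves of $\langle(a_i,b_i)\rangle$ are mutually indiscernible'' is a property of the EM-type of the pair-sequence over $\emptyset$ (it says exactly that the truth value of $\psi(\bar a_{\bar u};\bar b_{\bar v})$ depends only on $|\bar u|,|\bar v|$). Using density and homogeneity I may also assume $\bar t,\bar t',\bar r$ are pairwise disjoint.

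Next, use $A$-indiscernibility of the pairs to make the statement combinatorial. Since $\langle(a_i,b_i)\rangle$ is $A$-indiscernible, $\tp(\bar a_{\bar t}\bar b_{\bar r}/A)$ depends only on the (labelled) order type of $\bar t\cup\bar r$, hence on the tuple $\bar\gamma=(\gamma_0,\dots,\gamma_m)$, where $\gamma_p=\#\{\ell: r_p<t_\ell<r_{p+1}\}$ (with the obvious conventions $r_0=-\infty$, $r_{m+1}=+\infty$). Thus there is a function $F$ on the set of such tuples $\bar\gamma$ with $\phi(\bar a_{\bar t};\bar b_{\bar r},\bar c)$ having truth value $F(\bar\gamma)$ whenever $\bar t$ interleaves $\bar r$ according to $\bar\gamma$. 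The goal is: $F$ is constant. Note that if $\bar c$ were empty this is immediate from mutual indiscernibility: $(a_i)$ is indiscernible over $\{b_j:j\}$, so the value is independent of $\bar t$, hence of $\bar\gamma$ in the ``horizontal'' direction, and independence of the placement of $\bar r$ follows from the other half. The content is that $F$ remains constant once the parameters $\bar c\in A$ are present.

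Suppose $F$ is not constant. The set of interleaving tuples $\bar\gamma$ is connected under ``move one unit of $\bar t$ across a single $r_s$'', so there are neighbouring patterns on which $F$ disagrees, differing across some $r_s$. Freeze $\bar r$ except for $r_s$, freeze all of $\bar t$ except one element, and restrict attention to an open interval $R$ of $\mathcal I$ in which the remaining freedom lives, arranged so that \emph{every frozen sequence-element has index outside $R$}; a routine use of $A$-indiscernibility of the pairs together with mutual indiscernibility and density then produces a formula $\theta(x';y';\bar f)$ — where $\bar f$ is a finite tuple consisting of $\bar c$ together with sequence-elements indexed \emph{outside} $R$ — such that for all $u<j$ in $R$ one has $\models\theta(a_u;b_j;\bar f)$ and for all $u>j$ in $R$ one has $\models\neg\theta(a_u;b_j;\bar f)$. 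At this point one must derive a contradiction with $NIP$.

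\textbf{Main obstacle.} The last step is the delicate one, and it is the only place where $NIP$ is genuinely used (everything above is combinatorial bookkeeping). The subtlety is that the bare relation ``$\theta(a_u;b_j;\bar f)\leftrightarrow u<j$'' is \emph{not} by itself a contradiction — it is just a linear order, which is $NIP$; a naive attempt to amplify it (e.g.\ by XOR-ing many instances of $\theta$ along the sequence) only produces formulas of unbounded but \emph{finite} VC dimension, which is consistent with $NIP$. What is genuinely impossible is the \emph{conjunction} of this relation with: $(a_u)_{u\in R}$ and $(b_j)_{j\in R}$ being mutually indiscernible, and $\bar f$ having all its sequence-parameters outside $R$. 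The plan is to exploit all of these at once: iterate the ``defect'' against the two mutually indiscernible sequences and the fixed base $\bar f$ so as to build, from $\phi$ and a bounded amount of extra structure, a single formula witnessing the independence property (equivalently, an indiscernible sequence on which a fixed formula alternates infinitely often), contradicting that $T$ is $NIP$. I expect this amplification — turning a localized ``$u<j$'' defect into a bona fide $IP$-configuration, using mutual indiscernibility crucially to keep the auxiliary parameters inert — to be the real work of the proof. Finally, part (2) follows by exactly the same argument applied to the pair ``$(a_i)_{i\in\mathcal I_1}$, $(b_i)_{i\in\mathcal I_2}$'' for disjoint $\mathcal I_1,\mathcal I_2$: weak linkedness guarantees these are mutually indiscernible over $\emptyset$ (so joint types of $a$-tuples with $b$-tuples over $\emptyset$ depend only on lengths), $A$-indiscernibility of the original pair-sequence still gives that the truth value of any $\phi(\bar a_{\bar t};\bar b_{\bar s},\bar c)$ is a function of the interleaving of $\bar t$ with $\bar s$, and the same reduction-and-$NIP$-amplification shows that function is constant.
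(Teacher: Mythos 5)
Your reduction is sound and well organized: passing to a dense index set, using $A$-indiscernibility of the pair-sequence to make the truth value a function $F$ of the interleaving pattern, and locating a crossing defect $\theta(a_u;b_j;\bar f)\leftrightarrow u<j$ on an interval $R$ with all sequence-parameters of $\bar f$ indexed outside $R$. (One small caveat: you cannot simply ``assume $\bar t,\bar t',\bar r$ pairwise disjoint''; diagonal patterns $t_\ell=r_p$ must be kept among the patterns, though they succumb to the same final argument.) The paper states this Observation without proof, so the only question is whether your argument closes. It does not: the step you yourself flag as ``the real work'' is left as a plan, and that step is genuinely where the theorem lives — in the random $3$-hypergraph one can arrange $R(a_i,b_j,c)\leftrightarrow i<j$ with the halves mutually indiscernible over $\emptyset$ and the pairs $c$-indiscernible, so the statement fails without NIP and no amount of combinatorial bookkeeping can finish it.

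Moreover the route you sketch (amplifying the defect to an IP configuration by iterating it against the two sequences) is not clearly viable: the defect is entirely compatible with the pair-sequence $\langle(a_i,b_i):i\in R\rangle$ being indiscernible over $\bar c$ — that is your hypothesis — so chopping $R$ into mutually indiscernible blocks and running the shuffle from the proof of Proposition \ref{weight} on the pair-sequence yields nothing. The missing ingredient is Proposition \ref{shrinking1} applied to the sequence $(a_u)_{u\in R}$ with base set $A'=\{b_j:j\in\mathcal I\}\cup\{a_t:t\notin R\}$ — over which $(a_u)_{u\in R}$ is indiscernible precisely by mutual indiscernibility and convexity of $R$ — and external tuple $d=\bar c$. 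The crucial feature of that proposition is that the finite convex equivalence relation $\sim$ is uniform in the parameter drawn from the base $A'$. Some $\sim$-class contains an infinite convex piece of $R$, so by density pick $u\sim u'$ in $R$ and $j$ with $u<j<u'$; applying the conclusion with parameter $b_j\bar b_{\bar r^0}\bar a_{\bar t^0}\in A'$ gives $\theta(a_u;b_j;\bar f)\leftrightarrow\theta(a_{u'};b_j;\bar f)$, contradicting the defect. Put differently: the cut induced by $(\bar c,\theta)$ on $(a_u)_{u\in R}$ follows $j$, while shrinking permits only finitely many such cuts uniformly in the base parameter. This is where NIP enters (inside the proof of \ref{shrinking1}); the same argument with base $\{b_j:j\in\mathcal I_2\}$ gives part (2).
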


\begin{lemme}\label{declemma}
Let $\llg (a_i,b_i):i\in \mathcal I\rrg$ be indiscernible.
\begin{enumerate}
\item If $( a_i)_{i\in \mathcal I}$ and $( b_i)_{i\in \mathcal I}$ are weakly linked and $( a_i)_{i\in \mathcal I}$ is distal, then $( a_i)_{i\in \mathcal I}$ and $( b_i)_{i\in \mathcal I}$ are mutually indiscernible.
\item If $( b_i)_{i\in \mathcal I}$ is totally indiscernible, then $( a_i)_{i\in \mathcal I}$ and $( b_i)_{i\in \mathcal I}$ are weakly linked.
\end{enumerate}
\end{lemme}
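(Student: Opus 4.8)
The plan is to treat the two parts separately, part~(1) being the substantive one. For part~(1), I first reduce to the case that $\mathcal I$ is dense without endpoints: extending the indiscernible sequence of pairs preserves the EM-type (so $(a_i)_{i\in\mathcal I}$ stays distal) and weak-linkedness, and the conclusion for the extension restricts back down. Then there are two things to show: that $(a_i)_{i\in\mathcal I}$ is indiscernible over $B:=\{b_j:j\in\mathcal I\}$, and that $(b_j)_{j\in\mathcal I}$ is indiscernible over $A:=\{a_i:i\in\mathcal I\}$. For the first, by finite character it suffices to fix a finite $F\subseteq\mathcal I$ and show $(a_i)_{i\in\mathcal I}$ is indiscernible over $\bar b_F:=(b_j)_{j\in F}$. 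By weak-linkedness $(a_i)_{i\in\mathcal I\setminus F}$ is already $\bar b_F$-indiscernible; now re-insert the points $a_f$ ($f\in F$) one at a time. At a stage where $(a_i)_{i\in\mathcal I\setminus F'}$ ($F'\subseteq F$) is $\bar b_F$-indiscernible, pick $f\in F'$ and split this sequence at the cut determined by $f$ as $I_1+I_2$; the halves have no endpoints since $\mathcal I$ is dense and $F'$ finite. The sequence $I_1+a_f+I_2$ is a subsequence of the indiscernible sequence $(a_i)_{i\in\mathcal I}$, hence indiscernible; since $(a_i)_{i\in\mathcal I}$ is distal and $I_1+I_2$ has its EM-type and is $\bar b_F$-indiscernible, the external characterisation of distality (Lemma~\ref{defitrans}, applied with base $\bar b_F$) gives that $I_1+a_f+I_2$ is $\bar b_F$-indiscernible. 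After $|F|$ steps $(a_i)_{i\in\mathcal I}$ is $\bar b_F$-indiscernible, as wanted.

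For the second statement of part~(1), distality is no longer directly available, but the first statement is. Suppose $(b_j)_{j\in\mathcal I}$ is not indiscernible over $A$, and fix a witness $\phi(b_{\bar\jmath};a_{\bar\imath})\wedge\neg\phi(b_{\bar\jmath'};a_{\bar\imath})$ with $\bar\imath,\bar\jmath,\bar\jmath'$ increasing finite tuples from $\mathcal I$. By the first statement $(a_i)_{i\in\mathcal I}$ is indiscernible over $B$, which contains $b_{\bar\jmath}$ and $b_{\bar\jmath'}$; hence we may replace $\bar\imath$ by \emph{any} increasing tuple $\bar\imath^{*}$ of the same length without changing either conjunct. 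Choosing $\bar\imath^{*}$ disjoint from $\bar\jmath\cup\bar\jmath'$ (possible since $\mathcal I$ is dense and infinite), weak-linkedness makes $(b_j)_{j\in\mathcal I\setminus\bar\imath^{*}}$ indiscernible over $a_{\bar\imath^{*}}$, so $\phi(b_{\bar\jmath};a_{\bar\imath^{*}})\leftrightarrow\phi(b_{\bar\jmath'};a_{\bar\imath^{*}})$ --- a contradiction.

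For part~(2), by definition I must show that any two disjoint subsequences $(a_i)_{i\in\mathcal I_1}$, $(b_j)_{j\in\mathcal I_2}$ are mutually indiscernible. If this fails, one of the two required indiscernibilities fails already over finitely many parameters, and reading off a witness one may take $\mathcal I_1$ and $\mathcal I_2$ both finite; put $\bar a=(a_i)_{i\in\mathcal I_1}$, $\bar b=(b_j)_{j\in\mathcal I_2}$. By indiscernibility of the sequence of pairs, the type over $\bar a$ of an increasing tuple of $b$'s depends only on how its indices interleave with $\mathcal I_1$, and symmetrically. The additional ingredient --- and this is where NIP is used, and what fails for the half-graph --- is a generic-stability phenomenon: a totally indiscernible sequence has, for each formula, only finitely many indices on which it is ``exceptional'' over a given finite tuple (a multivariable form of the ``finite or co-finite trace'' property; see \cite{NIP2}, \cite{NIP3}). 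Thus there is a finite $\mathcal J_0\subseteq\mathcal I$ off which $(b_j)$ behaves generically over the finitely many $a$'s occurring in the supposed witness; since any interleaving pattern with a finite subset of $\mathcal I$ is realised by index tuples avoiding $\mathcal J_0$, one transports the witness to one supported off $\mathcal J_0$, where the generic --- hence order-blind --- behaviour of those $b$'s over those $a$'s identifies the two configurations, a contradiction. The symmetric requirement is treated the same way, interchanging the roles of the two sequences in the bookkeeping.

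The hard part is part~(2): isolating and then systematically avoiding the finite exceptional set $\mathcal J_0$ while keeping all relevant index sets disjoint and juggling several interleaving patterns, and invoking the correct form of the generic-stability fact for totally indiscernible sequences in NIP. In part~(1) the only delicate point is noticing that distality is needed only for the $(a_i)$-side, the $(b_j)$-side then following formally from it together with weak-linkedness.
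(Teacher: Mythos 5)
Your part (1) is correct and is essentially the paper's argument for the substantive half: reduce to a dense index set without endpoints, use weak-linkedness to get $(a_i)_{i\notin F}$ indiscernible over $\bar b_F$, and re-insert the finitely many missing points one at a time via the external characterization of distality (Lemma \ref{defitrans}). The paper dismisses the remaining half of mutual indiscernibility with ``this is enough''; your explicit argument for it (move the $a$-parameters to a tuple $\bar i^{*}$ disjoint from $\bar j\cup\bar j'$ using the half just proved, then invoke weak-linkedness over $a_{\bar i^{*}}$) is correct.

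Part (2) follows the paper's mechanism (total indiscernibility of $(b_i)$ plus NIP gives a finite exceptional set per formula; pair-indiscernibility transports a witness off it), and your treatment of the direction ``$(b_j)_{j\in\mathcal I_2}$ is indiscernible over $(a_i)_{i\in\mathcal I_1}$'' is fine. The gap is your last sentence: the symmetric requirement, that $(a_i)_{i\in\mathcal I_1}$ be indiscernible over $(b_j)_{j\in\mathcal I_2}$, cannot be ``treated the same way, interchanging the roles,'' because the finite-exceptional-set input exists only for the totally indiscernible sequence $(b_i)$. The sequence $(a_i)$ is arbitrary, and the assertion that $(a_i)$ has a finite exceptional set over a tuple of $b$'s is (a reformulation of) the conclusion, not a tool. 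That direction is genuinely asymmetric and must be derived from the one you already proved: given a witness $\phi(a_{\bar i};\bar b_G)\wedge\neg\phi(a_{\bar i'};\bar b_G)$ with $G$ disjoint from $\bar i\cup\bar i'$, use the established direction over the parameters $a_{\bar i}a_{\bar i'}$ to replace $G$ by any increasing $G^{*}$ avoiding $\bar i\cup\bar i'$ without changing either truth value; then choose $G^{*}$ and $G^{**}$ so that $(\bar i,G^{*})$ and $(\bar i',G^{**})$ have the same order type, so that pair-indiscernibility gives $\phi(a_{\bar i};\bar b_{G^{*}})\leftrightarrow\phi(a_{\bar i'};\bar b_{G^{**}})$, and move $G^{**}$ back to $G^{*}$ by the established direction again; this contradicts the transported witness. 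A minor further point: the claim that any interleaving pattern with a finite set is realised off $\mathcal J_0$ requires $\mathcal I$ dense (or with infinite gaps), so the reduction to a dense index set performed in part (1) should be repeated in part (2).
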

\begin{proof}
(1). Without loss, we may assume that $\mathcal I$ is dense. Pick some finite $\mathcal I_2 \subset \mathcal I$. Then $( a_i)_{i \notin \mathcal I_2}$ is indiscernible over $B=( b_i)_{i\in \mathcal I_2}$. By applying repeatedly Lemma \ref{defitrans}, we obtain that $( a_i)_{i \in \mathcal I}$ is indiscernible over $B$. This is enough.

(2). Assume $\mathcal I$ is dense and big enough, take $\mathcal I_1 \subset \mathcal I$ finite and let $A=( a_i)_{i\in \mathcal I_1}$. By shrinking of indiscernibles and using total indiscernibility of $( b_i)_{i\in \mathcal I}$, there is $\mathcal I_2\subset \mathcal I$ of size at most $|T|$ such that $(b_i)_{i\in \mathcal I \setminus\mathcal I_2}$ is indiscernible over $A$. By indiscernibility of $\llg (a_i,b_i):i\in \mathcal I \rrg$, we may take $\mathcal I_2=\mathcal I_1$. Therefore $( a_i)_{i\in \mathcal I}$ and $( b_i)_{i\in \mathcal I}$ are weakly linked.
\end{proof}

\begin{cor}\label{orth}
Let $\llg (a_i,b_i): i\in \mathcal I\rrg$ be an indiscernible sequence. Assume $( a_i)_{i\in \mathcal I}$ is totally indiscernible and $( b_i)_{i\in \mathcal I}$ is distal, then $( a_i)_{i\in \mathcal I}$ and $( b_i)_{i\in \mathcal I}$ are mutually indiscernible.
\end{cor}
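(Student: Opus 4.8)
The plan is to deduce this immediately from Lemma~\ref{declemma}, after observing that both of the relevant notions — ``mutually indiscernible'' and ``weakly linked'' — are symmetric in the two component sequences of an indiscernible sequence of pairs. So the only preliminary step I would record is this symmetry: mutual indiscernibility of two sequences is symmetric in the two sequences by definition, and therefore, running over all ordered pairs of disjoint subsets $\mathcal I_1,\mathcal I_2\subseteq\mathcal I$ in Definition~\ref{weaklylinked}, the statement ``$(a_i)_{i\in\mathcal I}$ and $(b_i)_{i\in\mathcal I}$ are weakly linked'' is equivalent to ``$(b_i)_{i\in\mathcal I}$ and $(a_i)_{i\in\mathcal I}$ are weakly linked''. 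Likewise ``$(a_i)$ and $(b_i)$ are mutually indiscernible'' is the same statement as ``$(b_i)$ and $(a_i)$ are mutually indiscernible''.

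Granting that, I would first apply Lemma~\ref{declemma}(2) to the indiscernible sequence of pairs $\langle (b_i,a_i):i\in\mathcal I\rangle$. Since $(a_i)_{i\in\mathcal I}$ is totally indiscernible, part~(2) yields that $(b_i)_{i\in\mathcal I}$ and $(a_i)_{i\in\mathcal I}$ are weakly linked, hence $(a_i)$ and $(b_i)$ are weakly linked.

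Then I would apply Lemma~\ref{declemma}(1) to the same sequence $\langle (b_i,a_i):i\in\mathcal I\rangle$: the sequences $(b_i)$ and $(a_i)$ are weakly linked (just established) and $(b_i)_{i\in\mathcal I}$ is distal by hypothesis, so part~(1) gives that $(b_i)_{i\in\mathcal I}$ and $(a_i)_{i\in\mathcal I}$ are mutually indiscernible, i.e.\ $(a_i)_{i\in\mathcal I}$ and $(b_i)_{i\in\mathcal I}$ are mutually indiscernible, as desired.

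I do not expect any real obstacle here: this is a short formal corollary of the preceding lemma, the only genuinely needed observation being the symmetry of ``weakly linked'', which is immediate from Definition~\ref{weaklylinked}. (One could also phrase the whole argument without invoking symmetry by simply re-checking the two clauses of Lemma~\ref{declemma} with the roles of $a$ and $b$ interchanged, but the symmetry remark makes this unnecessary.)
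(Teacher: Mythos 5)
Your proof is correct and is exactly the deduction the paper intends: the corollary is stated without proof precisely because it follows by applying Lemma~\ref{declemma}(2) and then \ref{declemma}(1) to the pair sequence $\llg (b_i,a_i) : i\in\mathcal I\rrg$, using the (immediate) symmetry of ``weakly linked'' and ``mutually indiscernible''. Nothing is missing.
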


\subsection{Invariant types}

We prove here a characterization of distality in terms of invariant types.

If $M$ is a $\kappa$-saturated model, by an invariant type over $M$, we mean a type $p\in S(M)$ invariant over some $A\subset M$, $|A|<\kappa$. If $p$ and $q$ are two invariant types over $M$, then we can define the products $p_x\otimes q_y$ and $q_y\otimes p_x$ as explained in the introduction. The types $p$ and $q$ \emph{commute} if those two products are equal.

\begin{lemme}\label{commute}
Assume $T$ is distal. Let $M$ be $\kappa$-saturated and let $p,q\in S(M)$ be invariant types. If $p_x \otimes q_y = q_y \otimes p_x$, then $p$ and $q$ are orthogonal.
\end{lemme}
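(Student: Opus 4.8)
The plan is to reduce weak orthogonality of $p$ and $q$ to distality of a single indiscernible sequence, which we build from the Morley product $p^{(\omega)}\otimes q^{(\omega^*)}$ or, more precisely, from Morley sequences of $p$ and $q$ arranged so as to become a cut in one indiscernible sequence. Suppose toward a contradiction that $p_x$ and $q_y$ are not weakly orthogonal: there is a formula $\phi(x,y)\in L(M)$ and realizations $a\models p$, $b\models q|_{Ma}$ with $\phi(a,b)$, while $p_x\otimes q_y\vdash\neg\phi(x,y)$ (after possibly swapping, using $p_x\otimes q_y=q_y\otimes p_x$ we may put things in whichever order is convenient). First I would fix $A\subset M$, $|A|<\kappa$, with both $p$ and $q$ invariant over $A$, and build a Morley sequence $I=\llg a_t:t<\omega\rrg$ of $p$ over $M$ followed by a reversed Morley sequence $\llg b_s:s<\omega\rrg^*$ of $q$ over $Ma_{<\omega}$, so that $I'=\llg a_t:t<\omega\rrg + \llg b_s : s<\omega\rrg^*$ is $M$-indiscernible; the point where the $a$'s meet the $b$'s is a Dedekind cut $\mathfrak c$, and $\lim(\mathfrak c^-/M)$ computes (a conjugate of) $p$ while $\lim(\mathfrak c^+/M)$ computes $q$ — here is where commutativity is used, to guarantee that $I'$ really is indiscernible and that both one-sided limit types are the invariant types we want rather than getting tangled.

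Next I would introduce a second cut. Take another copy: extend $I'$ on the right (or insert interiorly) by a further reversed Morley sequence of $q$, or equivalently work with $\llg a_t : t<\omega\rrg + \llg b^1_s\rrg^* + \llg b^2_s\rrg^*$ arranged as an indiscernible sequence with two Dedekind cuts $\mathfrak c_1<\mathfrak c_2$ whose negative limit type over $M$ is $p$ and $q$ respectively — the cleanest version is probably to take an indiscernible sequence of the form (Morley sequence of $q$)$^*$ $+$ (Morley sequence of $p$) $+$ (Morley sequence of $q$), giving cuts $\mathfrak c_1$ (a $q/p$ cut) and $\mathfrak c_2$ (a $p/q$ cut), and to choose the filling points so that one fills $\mathfrak c_1$ realizing $p$ and one fills $\mathfrak c_2$ realizing $q$, matching the data from the failure of weak orthogonality. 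Now apply distality: by Lemma~\ref{distalsimpl} (or directly the definition plus Lemma~\ref{twoton}), the fillings of two distinct Dedekind cuts of this sequence are $I$-independent, hence the limit types of the two cuts over the sequence are weakly orthogonal. Since $p$ and $q$ (up to automorphism fixing the relevant parameters) are exactly these one-sided limit types, and since invariance lets us transfer weak orthogonality over the sequence to weak orthogonality over $M$ via the strong base change lemma (Lemma~\ref{limittype}, which lets us replace the fillings by ones realizing the limit types over all of $M$), we conclude $p_x\cup q_y$ is complete over $M$, contradicting the choice of $\phi$.

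The main obstacle I anticipate is bookkeeping about which one-sided limit type of which cut equals $p$ versus $q$, and making sure the two cuts can be realized simultaneously by a $p$-point and a $q$-point in a way faithful to the assumed counterexample $\phi(a,b)$: a priori distality talks about fillings of cuts of an indiscernible sequence, whereas $p,q$ are arbitrary invariant types, so the bridge is precisely the observation — using commutativity — that a Morley sequence of $p$ concatenated (in the right orientation) with a Morley sequence of $q$ is indiscernible and that its cut recovers $p$ and $q$ as one-sided limits. Once that is set up, the transfer back from "over the sequence" to "over $M$" is handled by Lemma~\ref{limittype} together with the fact that the limit type of a Dedekind cut is finitely satisfiable in the sequence, hence its restriction to $M$ is controlled. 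I would also need the remark preceding Definition~\ref{weaklylinked}-area, namely that distinct polarized cuts give commuting limit types, to keep everything consistent, but no genuinely new idea beyond the reduction to a single distal sequence.
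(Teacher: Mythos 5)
There is a genuine gap at the very first step of your construction. You claim that a Morley sequence $\llg a_t : t<\omega\rrg$ of $p$ followed by a reversed Morley sequence $\llg b_s : s<\omega\rrg^*$ of $q$ is $M$-indiscernible, and more generally that realizations of $p$ and of $q$ can be arranged into a single indiscernible sequence with two Dedekind cuts whose limit types are $p$ and $q$. This is false in general: the $a_t$ realize $p|_M$ and the $b_s$ realize $q|_M$, so already the one-element subtuples of your sequence have different types over $M$ unless $p=q$ (and the variables $x$ and $y$ need not even have the same length or live in the same sort, in which case the concatenation does not make sense at all). Commutativity of $p$ and $q$ does give that each of the two Morley sequences realizes its Morley power over $M$ together with the other one, i.e.\ a form of mutual indiscernibility, but it does not merge them into one indiscernible sequence; and without that merged sequence there are no ``two distinct Dedekind cuts $\mathfrak c_1<\mathfrak c_2$ of a single sequence'' to which Lemma \ref{distalsimpl} or Lemma \ref{twoton} could be applied. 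Since your entire reduction rests on this merged sequence, the argument does not go through.

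The fix --- and the paper's actual proof --- is to use distality in its external form (Lemma \ref{defitrans}) rather than the two-cut form, so that only Morley sequences of $p$ are ever built and the realization of $q$ enters purely as an external parameter. Concretely: take $N\prec M$ small with $p,q$ both $N$-invariant, $I\subset M$ a Morley sequence of $p$ over $N$, $a\models p$, $b\models q$, and $I'$ a Morley sequence of $p$ over $Mab$. Then $I+a+I'$ is indiscernible, and commutativity (applied to $p^{(\omega)}$ and $q$ via associativity of $\otimes$) shows $b\models q|_{MI'}$, whence $I+I'$ is indiscernible over $Nb$. Lemma \ref{defitrans} with $A=Nb$ then forces $I+a+I'$ to be indiscernible over $Nb$, which pins down $\tp(a/Nb)$ as the limit type of $I$; letting $N$ vary gives orthogonality. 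Your closing paragraph correctly identifies the bridge between arbitrary invariant types and indiscernible sequences as the crux of the lemma, but the bridge you propose (concatenating Morley sequences of two distinct types into one indiscernible sequence) is not available.
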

\begin{proof}
Let $b\models q$ and let $N \prec M$ a model of size $<\kappa$ such that $p$ and $q$ are $N$-invariant. Let $I\subset M$ be a Morley sequence of $p$ over $N$. Let $a$ realize $p$, and build $I'$ a Morley sequence of $p$ over $Mab$. The hypothesis implies that $p^{(\omega)}$ and $q$ commute (as $\otimes$ is associative). Thus $b\models q|_{MI'}$ and in particular, $I+I'$ is indiscernible over $Nb$. By distality, $I+a+I'$ is also $Nb$-indiscernible. This proves that $tp(a,b/N)$ is determined.

As this is true for any small $N$ over which $p$ and $q$ are invariant, the types $p$ and $q$ are orthogonal.
\end{proof}

\begin{prop}
The theory $T$ is distal if and only if any two global invariant types $p$ and $q$ that commute are orthogonal.
\end{prop}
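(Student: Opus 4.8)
The left-to-right implication is already contained in Lemma \ref{commute}, applied over an arbitrary $\kappa$-saturated model. So the work is entirely in the converse, which I would prove contrapositively: assuming $T$ is not distal, I would produce two commuting global invariant types that are not orthogonal.

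Assume $T$ is not distal. Using Lemma \ref{distalsimpl} together with the reduction carried out in its proof, I would fix a \emph{small} dense indiscernible sequence $I$ and two distinct Dedekind cuts $\mathfrak c_1$ and $\mathfrak c_2$ of $I$ such that $\lim(\mathfrak c_1/I)$ and $\lim(\mathfrak c_2/I)$ are not weakly orthogonal. Set $p := \lim(\mathfrak c_1)$ and $q := \lim(\mathfrak c_2)$, the corresponding global limit types. They are finitely satisfiable in the small set $I$, hence invariant over $I$, and, being limit types of two distinct polarized cuts of a single indiscernible sequence, they commute (as recorded in the preliminaries). It therefore suffices to show that $p$ and $q$ are not orthogonal.

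Suppose towards a contradiction that they are. Fix a $\kappa$-saturated model $M \supseteq I$; then $p|_M$ and $q|_M$ are weakly orthogonal over $M$. On the other hand, since $\lim(\mathfrak c_1/I)(x) \cup \lim(\mathfrak c_2/I)(y)$ does not generate a complete type over $I$, there is a formula $\phi(x,y;\bar m)$ with $\bar m \in I$ such that each of $\lim(\mathfrak c_1/I)(x) \cup \lim(\mathfrak c_2/I)(y) \cup \{\phi(x,y;\bar m)\}$ and $\lim(\mathfrak c_1/I)(x) \cup \lim(\mathfrak c_2/I)(y) \cup \{\neg\phi(x,y;\bar m)\}$ is consistent. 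I would pick $a,b$ realizing the first; since $I$ is dense and the cuts are Dedekind, $a$ fills $\mathfrak c_1$ and $b$ fills $\mathfrak c_2$ (the remark following the definition of distality). Then I would apply strong base change (Lemma \ref{limittype}) to the sequence $I$, the parameter set $M \supseteq I$, the cuts $\mathfrak c_1,\mathfrak c_2$ and the fillers $a,b$ — noting that nothing forces $(a,b)$ to fill the polycut $(\mathfrak c_1,\mathfrak c_2)$, but the lemma only requires each of $a,b$ to fill its own cut. This produces tuples $a^{*},b^{*}$ with $\tp(a^{*}b^{*}/I) = \tp(ab/I)$ and $\tp(a^{*}/M) = \lim(\mathfrak c_1/M) = p|_M$, $\tp(b^{*}/M) = \lim(\mathfrak c_2/M) = q|_M$. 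Since $\bar m \in I$ we still have $\models \phi(a^{*},b^{*};\bar m)$, so $p|_M(x) \cup q|_M(y) \cup \{\phi(x,y;\bar m)\}$ is consistent; running the same argument on the other witness shows $p|_M(x) \cup q|_M(y) \cup \{\neg\phi(x,y;\bar m)\}$ is consistent as well. Hence $p|_M \cup q|_M$ is not a complete type over $M$, contradicting weak orthogonality.

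The one genuinely delicate point is the base-set mismatch in the last step: non-distality is handed to us as a failure of weak orthogonality over the \emph{small} sequence $I$, whereas orthogonality of invariant types is a statement over the \emph{large} model $M$, and a priori this does not transfer upward, since realizations of $\lim(\mathfrak c_i/I)$ need not extend to realizations of $\lim(\mathfrak c_i/M)$. Strong base change (Lemma \ref{limittype}) is precisely the tool that bridges the gap: it moves the cut-fillers $a,b$ to realizations of the limit types over $M$ while preserving their type over $I$, hence the truth of $\phi(x,y;\bar m)$. Everything else is bookkeeping — commutativity of $p,q$ and the identity $p|_M = \lim(\mathfrak c_1/M)$ are immediate, invariance comes from finite satisfiability in $I$, the passage from ``$I$ not distal'' to ``$\lim(\mathfrak c_1/I)$ and $\lim(\mathfrak c_2/I)$ are not weakly orthogonal'' is Lemma \ref{distalsimpl}, and ``filling a Dedekind cut of a dense sequence $=$ realizing its limit type'' is the remark following the definition of distality.
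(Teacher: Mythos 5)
Your proposal is correct and follows essentially the same route as the paper: the forward direction is Lemma \ref{commute}, and the converse takes a witness of non-distality (a failure of weak orthogonality of two cut limit types over the small sequence $I$, via Lemma \ref{distalsimpl}) and upgrades it with strong base change (Lemma \ref{limittype}) to a failure of orthogonality of the commuting global types $\lim(\mathfrak c_1)$ and $\lim(\mathfrak c_2)$ over a saturated model. Your writeup merely spells out the two-witness bookkeeping that the paper compresses into ``we may assume $a\models\lim(\mathfrak c_1^-/M)$, $b\models\lim(\mathfrak c_2^-/M)$.''
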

\begin{proof}
Lemma \ref{commute} gives one implication. Conversely, assume that $T$ is not distal. Then there is a dense indiscernible sequence $I$, two distinct Dedekind cuts $\mathfrak c_1$ and $\mathfrak c_2$ and $a$ and $b$ filling them such that $a \ndownfree_I b$. By Lemma \ref{limittype} (strong base change), we may assume that $I \subset M$, for $M$ a large saturated model, and $a\models \lim(\mathfrak c_1^-/M)$, $b\models \lim(\mathfrak c_2^-/M)$. Then the types $p=\lim(\mathfrak c_1^{-}/M)$ and $q=\lim(\mathfrak c_2^-/M)$ have the required property.
\end{proof}

Consider $p,q\in S(M)$ and assume only that $p$ is invariant. Then $p_x\otimes q_y$ is well defined, but $q_y \otimes p_x$ does not make sense a priori. We show now how to define $q_y\otimes p_x$.

Let $M$ be $\kappa$-saturated and $p\in S(M)$ an $A$-invariant type for some $A\subset M$ of size $<\kappa$. We define an $M$-invariant type $p'\in S(\monster)$ as follows: Fix a formula $\phi(x;b)\in L(\monster)$ and a maximal Morley sequence $(a_1,\ldots,a_n)$ of $p$ over $A$ such that $\neg (\phi(a_i;b)\leftrightarrow \phi(a_{i+1};b))$ holds for all $i<n$ and each $a_i$ is in $M$. Set $\phi(x;b)\in p'$ if and only if $\models \phi(a_n;b)$. We will call $p'$ the \emph{inverse} of $p$ over $M$.

Now if $q_y\in S(M)$ is any type, then we define $q_y \otimes p_x$ to the be $p'_x \otimes q_y\in S(M)$. Notice that if $q$ was invariant to begin with, then the two definitions of $q_y \otimes p_x$ coincide. Note also that the associativity relation: $p_x \otimes (q_y \otimes r_z)=(p_x \otimes q_y) \otimes r_z$ holds in all possible cases (each product is well defined if and only if at least two of $p,q,r$ are invariant).

\smallskip
The following generalizes Lemma \ref{commute}, the proof is the same, using Lemma \ref{stationnaire} to build the Morley sequence $I$ of $p$ inside $M$.

\begin{lemme}\label{commute2}
Assume $T$ is distal. Let $M$ be $\kappa$-saturated ($\kappa\geq |T|^+$), $p\in S(M)$ be $A$-invariant for some $A$ of size $<\kappa$ and $q\in S(M)$ be any type. If $p_x \otimes q_y = q_y \otimes p_x$, then $p$ and $q$ are orthogonal.
\end{lemme}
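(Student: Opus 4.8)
The plan is to mimic the proof of Lemma \ref{commute}, replacing the use of a Morley sequence of $p$ inside $M$ (which was available there because $p\in S(M)$ was genuinely invariant over a small $N\prec M$) with the construction of such a sequence provided by Lemma \ref{stationnaire}. Concretely, let $b\models q$ and let $N\prec M$ be a model of size $<\kappa$ such that $p$ is $N$-invariant; we want to show that $\tp(a,b/N)$ is determined for every $a\models p$, and since this holds for all such small $N$, orthogonality of $p$ and $q$ follows exactly as before. So the real content is to produce, inside $M$, a Morley sequence $I$ of $p$ over $N$ that remains a Morley sequence after adjoining $a$ and $b$; here $a$ realizes $p$ over $M$ and $b\models q$, but $q$ need not be invariant, so one cannot simply take $b\models q|_{MI'}$ as in the earlier argument — this is where the hypothesis $p_x\otimes q_y=q_y\otimes p_x$ together with the definition of $q_y\otimes p_x$ via the inverse type $p'$ must be used.

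First I would unwind what the commutation hypothesis says in terms of the inverse type: $q_y\otimes p_x$ is by definition $p'_x\otimes q_y$, so the equality $p_x\otimes q_y=p'_x\otimes q_y$ says that $p$ and $p'$ induce the same type over $M$ when combined with $b$ on the right, i.e.\ for $b\models q$, a realization of $p|_{Mb}$ and a realization of $p'|_{Mb}$ have the same type over $Mb$. More generally, associativity of $\otimes$ (which the excerpt asserts holds whenever at least two of the factors are invariant) gives that $p^{(\omega)}$ and $q$ commute, hence the analogous statement holds for a whole Morley sequence: taking $I'$ a Morley sequence of $p$ over $M$ and $a\models p|_{MI'}$, the sequence $I'$ together with $a$, on one side, and $b$ on the other, assemble into a type over $N$ that does not depend on whether we read $p$ or its inverse. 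The upshot I expect is: one can find $I\subset M$ a Morley sequence of $p$ over $N$, and then $a\models p$, and then build $I'$ a Morley sequence of $p$ over $Mab$, so that $b\models q|_{MI'}$ in the relevant sense, making $I+I'$ indiscernible over $Nb$.

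Once $I+I'$ is $Nb$-indiscernible (with $I$ a Morley sequence below the relevant cut, $I'$ its reverse continuation, and $a$ sitting in the cut between them realizing $p$), distality applies verbatim: $I+a+I'$ is $Nb$-indiscernible, which pins down $\tp(a/I\cup I'\cup Nb)$ and in particular $\tp(a,b/N)$. Since $N$ was an arbitrary small model over which $p$ is invariant, $p\cup q$ determines a complete type in two variables over every such $N$, hence $p$ and $q$ are weakly orthogonal over every small enough $N$, which is exactly orthogonality of the invariant types $p$ and $q$.

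The main obstacle I anticipate is the bookkeeping around the inverse type: making precise, using only associativity of $\otimes$ and Lemma \ref{stationnaire}, that one really gets a Morley sequence $I$ of $p$ lying inside $M$ with the property that appending $a\models p$ and $b\models q$ keeps $I+I'$ indiscernible over $Nb$. In the original Lemma \ref{commute} this was transparent because $b\models q|_{MI'}$ literally made sense; here $q$ is an arbitrary type, so one must route everything through $p'$ and the definition $q\otimes p:=p'\otimes q$, checking that the indiscernibility of $I+I'$ over $Nb$ that comes out is the honest one needed to feed into distality. Everything after that is a routine repetition of the proof of Lemma \ref{commute}, which is precisely why the excerpt says ``the proof is the same.''
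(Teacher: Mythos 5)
Your proposal matches the paper's intended argument: the paper gives no separate proof of Lemma \ref{commute2}, stating only that the proof is the same as that of Lemma \ref{commute} with Lemma \ref{stationnaire} used to build the Morley sequence $I$ of $p$ inside $M$, and you have correctly identified both this and the role of the inverse type $p'$ (via the commutation hypothesis $p\otimes q = p'\otimes q$) in making $I+I'$ indiscernible over $Nb$ before applying distality. The only slip is calling $I'$ a ``reverse continuation'' --- no reversal is involved, since $I+a+I'$ is a forward Morley sequence of $p$ over $N$ --- but this does not affect the argument.
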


We record the following lemma for future needs.

\begin{lemme}\label{stationnaire}
Let $M$ be $\kappa$-saturated, $\kappa\geq |T|^+$. Let $p,q \in S(M)$, $p$ being $A$-invariant for some $|A|<\kappa$. Then there is some $B\subset M$, $|B|<\kappa$, such that $A \subseteq B$ and for $b\models q$ and any $a,a' \in M$ such that $a,a'\models p|_B$, we have $\tp(a,b/A)=\tp(a',b/A)$.
\end{lemme}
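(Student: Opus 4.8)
The plan is to handle one formula at a time and to use NIP to bound how many parameters of $M$ must be adjoined to $A$. The key preliminary observation is that, because $a\in M$ and $b\models q$, the type $\tp(ab/A)$ does not depend on the choice of $b$: since $a\in M$ we have $\tp(b/Ma)=\tp(b/M)=q$, so for any formula $\phi(x;y)\in L(A)$ the truth value of $\phi(a;b)$ is just $[\,\phi(a;y)\in q\,]$. This is also the point that makes the statement \emph{true} despite $a$ ranging over $M$ rather than over realizations of $p$: one should not try to force $\phi(a;b)$ to agree with its value in the canonical extension $p|_{Mb}$ (that would be false, e.g. in DLO with $p$ the type at $+\infty$), but only to force $\phi(a;b)$ and $\phi(a';b)$ to agree with each other, and this agreement is governed by $q$. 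Granting this reduction, it suffices to find, for each $\phi(x;y)\in L(A)$, a set $B_\phi$ with $A\subseteq B_\phi\subseteq M$ and $|B_\phi|\le |A|+|x|+\aleph_0$ such that any two $a,a'\in M$ realizing $p|_{B_\phi}$ satisfy $\phi(a;y)\in q\iff\phi(a';y)\in q$; then $B:=\bigcup_{\phi\in L(A)}B_\phi$ is as required, with $|B|<\kappa$ by elementary cardinal arithmetic (using $|T|<\kappa$, $|A|<\kappa$, and $|x|<\kappa$ — if $|x|\ge\kappa$ the type $p|_B$ is not realized in $M$ and there is nothing to prove), because then for $a,a'\in M$ realizing $p|_B$ every formula of $L(A)$ takes the same value on $(a,b)$ and on $(a',b)$.

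To build $B_\phi$ I would proceed by contradiction. Fix $\phi$ and fix $b\models q$. Starting from $B_0=A$, I try to construct recursively elements $a_k,a_k'\in M$ realizing $p|_{B_k}$, where $B_k=A\cup\{a_j,a_j':j<k\}$, such that (after renaming) $\models\phi(a_k;b)$ and $\models\neg\phi(a_k';b)$; if at some finite stage $k$ no such $a_k,a_k'$ exist then $B_\phi:=B_k$ works and we are done, so suppose the recursion runs through all of $\omega$. The crucial observation is that $a_k$ and $a_k'$ \emph{both} realize $p|_{B_k}$, and $B_k$ contains $A$ together with all elements chosen before stage $k$; hence for any $\varepsilon\in 2^\omega$ the sequence whose $k$-th term is $a_k$ when $\varepsilon(k)=0$ and $a_k'$ when $\varepsilon(k)=1$ is a Morley sequence of $p$ over $A$, hence $A$-indiscernible. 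Choosing $\varepsilon=(0,1,0,1,\dots)$ produces an $A$-indiscernible sequence along which $\phi(x;b)$ alternates infinitely often, contradicting NIP. This yields $B_\phi$, and assembling the $B_\phi$'s gives $B$; finally, running through all $\phi\in L(A)$ gives $\tp(ab/A)=\tp(a'b/A)$ for $a,a'\in M$ realizing $p|_B$ and any $b\models q$, since a two-variable type over $A$ is determined by the truth values of the formulas $\phi(x;y)\in L(A)$.

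The genuinely delicate step is conceptual rather than technical: recognizing that, with $a$ lying in $M$, it is $q$ and not $p$ that governs $\phi(a;b)$, so that the correct target is ``$\phi(a;b)\leftrightarrow\phi(a';b)$'' rather than ``$\phi(a;b)$ holds iff $\phi(x;b)\in p|_{Mb}$''. Once the statement is phrased this way, the recursion and the cardinal bookkeeping are routine, and this is precisely the form of the lemma needed to carry out the Morley-sequence argument of Lemma \ref{commute2} inside $M$.
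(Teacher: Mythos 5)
Your proof is correct and follows essentially the same route as the paper: for each formula you build a maximal alternating Morley sequence of $p$ over $A$ inside $M$ (the paper takes a maximal sequence $(a_1,\ldots,a_n)$ with $\neg(\phi(a_i,b)\leftrightarrow\phi(a_{i+1},b))$ directly; your pairwise recursion is an equivalent packaging), invoke NIP to terminate it, and let $B$ collect the witnesses over all formulas. The preliminary observation that $\tp(b/Ma)=q$ since $a\in M$, so that only agreement of $\phi(a;b)$ with $\phi(a';b)$ is at stake, is exactly the right reading of the statement.
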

\begin{proof}
Fix a formula $\phi(x,y;c)\in L(A)$ and take $(a_1,\ldots,a_n)$ in $M$ a maximal Morley sequence of $p$ over $A$ such that $\neg (\phi(a_i,b;c)\leftrightarrow \phi(a_{i+1},b;c))$ holds for all $i<n$. Then for each $a\in M$, $a\models p|_{Aa_1..a_n}$ we have $\models \phi(a,b;c) \leftrightarrow \phi(a_n,b;c)$.

Take $B$ to contain all the $a_i$'s obtain by letting $\phi(x,y;c)$ range in $L(A)$.
%
%
%
%
\end{proof}

\subsection{Generically stable measures}

We prove in this section that distal theories are exactly those theories in which generically stable measures are smooth. We consider this as a justification that distality is a meaningful notion. It was proved in \cite{Finding} that o-minimal theories and the p-adics have this property. This latter result will be generalized in the next section, where we prove that distality can be checked in dimension 1.

We have two tools at our disposal to link indiscernible sequences of tuples to measures. In one direction, starting with an indiscernible sequence of tuples, we can form the average measure. This construction is defined in \cite{NIP3}, extended in \cite{Finding} and recalled below. In the opposite direction, starting with a generically stable measure $\mu$ (or in fact any invariant measure), we can consider the product $\mu^{(\omega)}$ in variables $x_1,x_2,\ldots$. We then want to realize it in some way. We do this by taking smooth extensions; see the proof of Proposition \ref{gentrans}.
\\

Let $I=(a_t)_{t\in [0,1]}$ be an indiscernible sequence. We can define the \emph{average measure} $\mu$ of $I$ as the global measure defined by $\mu(\phi(x))=\lambda_0(\{t\in [0,1]: a_t \models \phi(x)\})$, where $\lambda_0$ is the Lebesgue measure. That measure is generically stable (in fact definable and finitely satisfiable over $I$).

The \emph{support} of a measure $\mu \in \mathcal M(A)$ is the set of weakly-random types for $\mu$, namely the set of types $p\in S(A)$ such that $p\vdash \neg \phi(x)$ for every formula $\phi(x)\in L(A)$ such that $\mu(\phi(x))=0$. We will denote it by $S(\mu)$.

\begin{lemme}
Let $\mu$ be the average measure of the indiscernible sequence $I=(a_t)_{t\in [0,1]}$ (over $\monster$). Then the support $S(\mu)$ of $\mu$ is exactly the set of limit types of cuts of $I$.
\end{lemme}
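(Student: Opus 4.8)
The plan is to prove the two inclusions separately. For the easy direction, suppose $p = \lim(\mathfrak c/\monster)$ is the limit type of some polarized cut $\mathfrak c^\bullet$ of $I$; I want to show $p \in S(\mu)$, i.e.\ that $p$ is weakly random for $\mu$. So let $\phi(x)\in L(\monster)$ with $\mu(\phi(x))=0$; I must show $p \vdash \neg\phi(x)$. By definition of the average measure, $\mu(\phi(x))=\lambda_0(\{t : a_t\models\phi(x)\})=0$, so in particular the set $\{t : a_t \models \phi(x)\}$ cannot contain a final segment (of positive length) of any cut; hence it is not the case that $a_t\models\phi(x)$ for all $t$ beyond some point approaching $\mathfrak c$ from the polarized side. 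By $NIP$, $\phi(x)$ has finite alternation number along $I$, so this forces $a_t\models\neg\phi(x)$ for all $t$ sufficiently close to $\mathfrak c$ from that side, which is exactly $\neg\phi(x)\in\lim(\mathfrak c^\bullet/\monster)=p$. So every limit type of a cut lies in $S(\mu)$.

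For the converse, suppose $p\in S(\mu)$; I want to produce a polarized cut $\mathfrak c^\bullet$ of $I$ with $p=\lim(\mathfrak c^\bullet/\monster)$. The idea is to reconstruct the cut from the type. Given any formula $\phi(x;d)\in L(\monster)$, apply shrinking of indiscernibles (Proposition \ref{shrinking1}) to $I$ and the parameter $d$: this partitions $I=I^\phi_1+\dots+I^\phi_T$ into finitely many convex pieces on each of which the truth value of $\phi(a_t;d)$ is constant. Since $\mu(\phi(x;d))$ and $\mu(\neg\phi(x;d))$ are measured by Lebesgue measure of the corresponding union of pieces, and $p$ is weakly random, $p$ must decide $\phi(x;d)$ in accordance with the piece(s) of full relative measure — more precisely, for each $\phi$, the set $X_\phi=\{t : a_t\models\phi(x;d)\}$ is a finite union of convex pieces, and $p$ contains $\phi$ iff $\mu(\neg\phi)=0$ forces it, otherwise... here one needs to be a little careful: $p$ need only avoid the $\mu$-null formulas, so a priori $p$ could decide a formula of positive-but-not-full measure either way. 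The key point is that the possible completions compatible with avoiding null sets correspond exactly to choosing, coherently across all formulas, a cut of $I$ (equivalently a point of the "completion" of the index order) together with a side. Concretely: for each $\phi$ the convex pieces $I^\phi_k$ of positive measure are determined, and $p$, being a complete type avoiding null formulas, selects for each such configuration a consistent family of truth values; the set of such consistent families is in bijection with the polarized cuts of the (densification of) $I$ that are limits of pieces of positive length. I would make this precise by showing that the data "$p$ restricted to all formulas" determines, via the $\textsf{cut}_I(d,\phi;i)$ machinery, a single polarized Dedekind-style cut $\mathfrak c^\bullet$, and then that $p$ agrees with $\lim(\mathfrak c^\bullet/\monster)$ formula by formula.

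The main obstacle is precisely this converse: ruling out that a weakly-random type could "split" across several positive-measure pieces in a way not realized by a single cut, or fail to be a limit type because it sits "inside" a piece of positive measure rather than at its boundary. I would handle the first by invoking $NIP$ (finite alternation number) to show that a consistent coherent choice of truth values across all $\phi$ is forced to come from one cut: if two formulas' pieces "disagreed" about which side $p$ is on, one could build an indiscernible sequence witnessing infinite alternation, contradicting $NIP$. For the second issue — a type concentrating on the interior of a positive-measure convex block — note that on such a block $I$ restricted there is itself an indiscernible sequence, and $p$ restricted to formulas over $\monster$ still avoids null sets of the sub-average-measure; iterating / using saturation of the index order (as in the Expanding subsection) one sees that $p$ is still a limit type, namely the limit type of some cut interior to that block, which is again a cut of $I$ (since all cuts of $I$ give the same EM-type, and by Lemma \ref{limittype} we may move parameters onto limit types). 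Assembling these observations gives $p=\lim(\mathfrak c^\bullet/\monster)$, completing the proof.
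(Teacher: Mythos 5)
Your first inclusion (limit types lie in the support) is correct and is essentially the paper's argument in contrapositive form: a formula in $\lim(\mathfrak c^\bullet)$ holds on a cofinal subsegment, hence on a set of positive Lebesgue measure.

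The converse, however, is where the content of the lemma lies, and your proposal does not actually prove it: you correctly identify the two obstacles (a weakly random type ``splitting'' across several positive-measure blocks, or sitting in the interior of one block), but the resolutions you offer are only gestures. The claim that ``the set of consistent families of truth values is in bijection with the polarized cuts'' is precisely the statement to be proved, and the proposed handling of the interior-of-a-block case (``iterating\,/\,using saturation\dots one sees that $p$ is still a limit type'') does not terminate in a single cut and presupposes the conclusion. The missing idea is a compactness argument on the index set: since $p$ is closed under conjunction and every $\phi\in p$ has $\mu(\phi)>0$, the closed sets $\overline{\{t\in[0,1]:\ \models\phi(a_t)\}}$ for $\phi\in p$ have the finite intersection property, so by compactness of $[0,1]$ there is a single point $r$ in all of them. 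One then disposes of the totally indiscernible case separately, and otherwise uses a formula $\psi(x,y)$ ordering $I$ to decide which of $\psi(x,a_r)$, $\psi(a_r,x)$ lies in $p$; this fixes the polarized cut at $r$, and finite alternation (NIP) then shows that every $\phi\in p$ holds on a final segment approaching $r$ from that side, so $p$ equals that limit type. Without the compactness step your ``cut reconstructed from the $\textsf{cut}_I(d,\phi;i)$ machinery'' is never actually produced, so the converse direction remains open in your write-up.
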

\begin{proof}
First, if $\phi(x)$ is satisfied by some $\lim(\mathfrak c)$, $\mathfrak c$ a cut in $I$, then $\phi(x)$ holds on a subsequence, cofinal in $\mathfrak c$, and therefore has positive measure. Conversely, let $p(x)\in S(\mu)$ (a global type). For each $\phi(x)\in p$, the set $\{t\in[0,1] : \models \phi(a_t)\}$ is infinite. By compactness of $[0,1]$, there is $r\in [0,1]$ which is in the closure of all of those sets as $\phi(x)$ varies in $L(\monster)$. If $I$ is totally indiscernible, then $\mu$ is the unique limit type of $I$, so assume that this is not the case. Then $I$ is ordered by some formula $\psi(x,y)\in L(\monster)$. The type $p$ must satisfy either $\psi(x,a_r)$ or $\psi(a_r,x)$. In the first case, $p$ is equal to the limit type to the left of $a_r$ and in the second case, to the limit type to the right of $a_r$.

%
\end{proof}

\begin{prop}[Smooth measures imply distality]
Let $I$ be an indiscernible sequence indexed by $[0,1]$, and $\mu$ be the average measure of $I$ over some model $M$. Then $\mu$ is smooth if and only if $I$ is distal.
\end{prop}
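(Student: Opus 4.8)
The plan is to prove both directions by relating the smoothness criterion (Fact: $\mu$ smooth iff $\mu(\partial_M\phi)=0$ for all $\phi(x,d)$, $d\in\monster$) to the combinatorics of cuts of $I$, using the previous lemma which identifies $S(\mu)$ with the set of limit types of cuts of $I$. The key observation is that a type $p$ belongs to $\partial_M\phi$ precisely when there are $a,a'\models p$ with $\phi(a,d)\wedge\neg\phi(a',d)$; when $p=\lim(\mathfrak c/M)$ is the limit type of a Dedekind cut, this says exactly that the cut $\mathfrak c$ is \emph{not} weakly respected by $Md$ (in the sense of the definition from \cite{Sh715}), i.e.\ $d$ induces a cut of $I$ refining $\mathfrak c$. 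So $\partial_M\phi \cap S(\mu)$ consists of (limit types of) those cuts of $I$ that are ``split'' by $\phi(x,d)$, and we must show this set has $\mu$-measure zero iff $I$ is distal.

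\textbf{($\Leftarrow$) Distal implies smooth.} Suppose $I$ is distal; I want $\mu(\partial_M\phi)=0$ for every $\phi(x,d)$. Since $\mu$ is supported on limit types of cuts, it suffices to bound the set of cuts $\mathfrak c$ of $I$ whose limit type lies in $\partial_M\phi$. By Proposition~\ref{shrinking1} (shrinking indiscernibles) applied to the formula $\phi$ and the tuple $d$, only finitely many cuts of $I$ are induced by $(d,\phi)$; I claim that if $\lim(\mathfrak c/M)\in\partial_M\phi$ then $\mathfrak c$ must be one of these finitely many induced cuts, together with possibly the endpoints — actually the point is subtler: a Dedekind cut $\mathfrak c$ whose limit type splits under $\phi(x,d)$ gives, via strong base change (Lemma~\ref{limittype}) and distality, a way to insert a new point realizing $\lim(\mathfrak c/M)$ while keeping indiscernibility over $Md$, contradicting that $d$ splits near $\mathfrak c$ — unless $\mathfrak c$ is already one of the $\textsf{cut}_I(d,\phi;i)$. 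Since there are only finitely many such, and a single cut (not interior to any nondegenerate subinterval supporting positive Lebesgue mass in a suitable sense) has $\mu$-measure zero, we get $\mu(\partial_M\phi)=0$. I must be careful here: a cut $\mathfrak c$ of $I$ (indexed by $[0,1]$) corresponds to a point or a Dedekind cut of $[0,1]$, and any single such contributes Lebesgue measure $0$; distality is what rules out $\phi(x,d)$ splitting a \emph{positive-measure} family of cuts.

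\textbf{($\Rightarrow$) Non-distal implies non-smooth.} Conversely suppose $I$ is not distal. By Lemma~\ref{distalsimpl} (and after passing to a dense sequence of the same EM-type, which does not affect the average measure being smooth or not — or rather, working with $I$ itself indexed by $[0,1]$, which is already dense) there are two distinct Dedekind cuts and points filling them witnessing failure of weak orthogonality of the limit types; I want to leverage this into a formula $\phi(x,d)$ splitting a positive-measure set of cuts. Using distality's external characterization (Lemma~\ref{defitrans}) in its negated form: there is a set $A$, a tuple $b$, and an $A$-indiscernible $I_1+I_2$ with $I_1+b+I_2$ indiscernible but not $A$-indiscernible; the formula and parameter from $A\cup I$ witnessing the failure, combined with $A$-indiscernibility, lets me slide the ``defect'' along a whole subinterval — concretely, for every cut $\mathfrak c$ interior to a fixed subinterval $J$ of $I$, the limit type $\lim(\mathfrak c/Md)$ (for an appropriate $d$ coding the relevant parameters) splits. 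That subinterval, being a nondegenerate interval in $[0,1]$, has positive Lebesgue measure, so $\mu(\partial_M\phi)>0$ and $\mu$ is not smooth. \textbf{The main obstacle} I anticipate is the bookkeeping in this last step: correctly choosing the tuple $d$ (assembled from the finitely many parameters witnessing the non-distal behavior, via Lemma~\ref{limittype} so that the relevant points realize honest limit types over the base) so that a \emph{single} formula $\phi(x;d)$ simultaneously splits every cut in a positive-measure family, rather than needing a different formula for each cut — this is where the uniformity furnished by $A$-indiscernibility and the sliding lemmas (Lemma~\ref{sliding1}, Corollary~\ref{sliding2}) must be used carefully.
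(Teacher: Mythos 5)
Your left-to-right direction (distal implies smooth) is essentially the paper's argument in different packaging. The paper picks $\omega$ many cuts whose limit types lie in $\partial\phi$, fills them all simultaneously with points where $\phi(x,d)$ fails (Lemma \ref{twoton}), and gets infinite alternation; you argue directly that a non-induced Dedekind cut cannot lie in $\partial_M\phi$, because by Lemma \ref{defitrans} any realization of its limit type fills the cut and therefore takes the truth value of the neighbouring elements of $I$. These come to the same thing. One point you gesture at but do not settle is why the finitely many remaining cuts carry no mass, i.e.\ why a single limit type has $\mu$-measure $0$: this uses that a distal non-constant sequence is not totally indiscernible, hence is ordered by some formula $\psi$, so the limit type at position $r$ contains $\psi(a_{r-\epsilon},x)\wedge\psi(x,a_{r+\epsilon})$, of measure $2\epsilon$ (the constant case being trivial). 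The paper's version needs the same observation to extract infinitely many cuts from a positive-measure border, so this is a shared and minor elision.

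The genuine gap is in the converse, and it sits exactly where you flag your ``main obstacle'': you never produce the single formula $\phi(x;d)$ nor verify that it splits every limit type in a positive-measure family of cuts, and without that the direction is not proved. The paper's resolution is concrete and worth internalizing. From a failure of distality witnessed inside $I$ itself (Lemma \ref{distalsimpl}), write $I=I_1+I_2+I_3$ with $b_1$ filling $(I_1,I_2+I_3)$, $b_2$ filling $(I_1+I_2,I_3)$, and $b_1\ndownfree_I b_2$. Apply strong base change (Lemma \ref{limittype}) to the \emph{pair} $(b_1,b_2)$, so that $b_1\models\lim(I_1/M)$ and $b_2\models\lim(I_2/M)$ while $\tp(b_1b_2/I)$ is preserved. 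The formula is then $\phi(i_1,x,i_2,b_2,i_3)$ with $b_2$ frozen as a parameter: $\phi(i_1,b_1,i_2,b_2,i_3)$ holds, whereas any $b_1'\models\lim(I_1/Mb_2)$ satisfies $\neg\phi(i_1,b_1',i_2,b_2,i_3)$, and both $b_1$ and $b_1'$ realize the \emph{same} type $\lim(I_1/M)$ over $M$ --- so that type lies in $\partial\phi$; sliding the configuration along $I$ shows the same for the limit type of every cut between $i_1$ and $i_2$. That family of cuts fills a nondegenerate subinterval of $[0,1]$, whence $\mu(\partial\phi)>0$. The step your sketch is missing is precisely the observation that strong base change can be performed on $b_1$ and $b_2$ jointly, so that $b_2$ becomes an honest realization of a limit type over $M$ and can then serve as the fixed parameter $d$ of a single splitting formula.
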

\begin{proof}
Assume $\mu$ is not smooth and $I$ is distal. Then there exists a formula $\phi(x,a)\in L(\monster)$ such that the set of $p\in S(M)$ such that $p$ neither implies $\phi(x,a)$ nor its negation has positive measure (in other words, $p\in \partial\phi$). We know that the support of $\mu$ is exactly the limit types of cuts in $I$. Therefore, one can find $\omega$ such cuts $(\mathfrak c_i)_{i<\omega}$ in $\partial\phi$. Remove countably many points from $I$ (thus not affecting any limit types) so that the cuts $\mathfrak c_i$ become Dedekind.

Restricting to some sub-interval of $[0,1]$, we may assume that $\phi(x,a)$ has constant truth value on $I$. Without loss, it holds on all members of $I$. For each index $i$, as $\lim(\mathfrak c_i)\in \partial \phi$, there is $b_i$ filling the cut $\mathfrak c_i$ over $I$ such that $\neg \phi(b_i,a)$ holds. As $I$ is distal, the sequence formed by adding all the $b_i$ to $I$ is still indiscernible. But then the formula $\phi(x,a)$ has infinite alternation number.

Conversely, assume that $I$ is not distal. If $J$ is an indiscernible sequence, we write $J'$ for the sequence $J$ with the endpoints removed. We can find a partition $I=I_1+I_2+I_3$ and points $b_1, b_2$ such that $I_1'+b_1+I_2'+I_3'$ and $I_1'+I_2'+b_2+I_3'$ are indiscernible, but $I_1'+b_1+I_2'+b_2+I_3'$ is not. Without loss, assume that $I_1$ and $I_2$ have no last element. By strong base change, we may assume that the types of $b_1$ and $b_2$ over $M$ are respectively $\lim(I_1)$ and $\lim(I_2)$. There is a formula $\phi$, parameters $i_k \subset I_k$ and $b_1'$ realizing the same type as $b_1$ over $M$ such that $\phi(i_1,b_1,i_2,b_2,i_3) \wedge \neg \phi(i_1,b_1',i_2,b_2,i_3)$ holds. Then the border $\partial \phi$ of $\phi(i_1,x,i_2,b_2,i_3)$ contains all limit types of cuts between $i_1$ and $i_2$ and has non zero measure. This proves that $\mu$ is not smooth.
\end{proof}

\begin{cor}\label{cor}
If all generically stable measures are smooth, then $T$ is distal.
\end{cor}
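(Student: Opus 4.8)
The plan is to deduce this immediately from the previous proposition (``Smooth measures imply distality''), the only extra ingredient being a harmless change of index set. Observe first that whether an indiscernible sequence is distal depends only on its EM-type, since the definition of distality quantifies solely over dense sequences of that EM-type. Hence it suffices to show that every indiscernible sequence indexed by $[0,1]$ is distal.

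So let $I$ be an indiscernible sequence indexed by $[0,1]$ and let $M$ be a small model with $I \subseteq M$. Let $\mu \in \mathcal M_x(M)$ be the average measure of $I$ over $M$. As recalled above, $\mu$ is definable and finitely satisfiable over $I$, hence generically stable. By the hypothesis of the corollary, $\mu$ is smooth, so the previous proposition applies and gives that $I$ is distal. Since every EM-type of an indiscernible sequence is realised by a sequence indexed by $[0,1]$ (by compactness), it follows that every indiscernible sequence is distal, i.e.\ $T$ is distal.

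I do not expect any genuine obstacle here: the corollary is essentially a repackaging of the proposition. The only point worth noting is that distality is an invariant of the EM-type, so that it may be tested on a sequence conveniently indexed by $[0,1]$, to which the average-measure construction applies and which is small enough for the hypothesis on generically stable measures to be invoked.
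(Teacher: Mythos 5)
Your proof is correct and matches the paper's (implicit) argument: the corollary is stated without proof precisely because it follows immediately from the preceding proposition once one notes, as you do, that distality depends only on the EM-type and hence can be tested on a sequence indexed by $[0,1]$.
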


Before proving the converse, we generalize some earlier lemmas from types to measures. Recall the following fact (which follows for example from Proposition 3.3 of \cite{NIP1}).

\begin{fait}\label{fact1}
Let $\mu_x$ be a measure and $(a_i)_{i\in \mathcal I}$ an indiscernible sequence. Let $\phi(x;y)$ be a formula and $\epsilon>0$. Then, for some $N$, there do not exist $i_1<\cdots<i_N$ such that $|\mu(\phi(x;a_{i_k}))-\mu(\phi(x;a_{i_{k+1}}))|\geq \epsilon$ for all $j=1\ldots N-1$.
\end{fait}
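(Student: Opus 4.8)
I would first reduce to a finitely supported measure. The point is that Proposition 3.3 of \cite{NIP1} (the VC-type uniform approximation of a Keisler measure) gives, for the single NIP formula $\phi(x;y)$ and the given $\epsilon>0$, finitely many tuples $c_1,\dots,c_m\in\monster$ such that the averaged measure $\nu=\frac1m\sum_{j\le m}\delta_{c_j}$ satisfies $|\mu(\phi(x;b))-\nu(\phi(x;b))|<\epsilon/3$ for every $b$. So it suffices to bound the oscillation of $i\mapsto\nu(\phi(x;a_i))$ along the indiscernible sequence: any jump $|\mu(\phi(x;a_{i_k}))-\mu(\phi(x;a_{i_{k+1}}))|\ge\epsilon$ forces $|\nu(\phi(x;a_{i_k}))-\nu(\phi(x;a_{i_{k+1}}))|\ge\epsilon/3>0$, i.e.\ a strict change of value.

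Next I would use $NIP$ on each parameter $c_j$ separately. Since $\phi(x;y)$ is $NIP$ it has finite alternation number, so for each $j$ the set $B_j=\{i\in\mathcal I:\models\phi(c_j;a_i)\}$ is a union of at most $K=K(\phi)$ convex pieces of $\mathcal I$ (the alternation bound applies precisely because $(a_i)_{i\in\mathcal I}$ is indiscernible in the $y$-sort). The at most $2Km$ endpoints of the $B_j$'s partition $\mathcal I$ into at most $L:=2Km+1$ convex intervals, and on each such interval every truth value $\mathbf 1[\models\phi(c_j;a_i)]$ is constant, hence $\nu(\phi(x;a_i))$ is constant there.

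Finally, suppose $i_1<\cdots<i_N$ with $|\mu(\phi(x;a_{i_k}))-\mu(\phi(x;a_{i_{k+1}}))|\ge\epsilon$ for all $k<N$. By the above, consecutive $i_k$ lie in distinct intervals of the partition; and since the intervals are convex and the $i_k$ are increasing, no interval can contain two of them (a convex interval containing $i_k$ and $i_{k'}$ with $k'>k$ would contain $i_k,i_{k+1}$, contradicting the strict change of $\nu$-value). Hence $N\le L$, so $N=L+1$ witnesses the statement. The only delicate point is to invoke the uniform approximation with the correct dependence — it must hold for \emph{all} $b$ with a single finite set $c_1,\dots,c_m$ depending only on $\phi$ and $\epsilon$ — which is exactly what Proposition 3.3 of \cite{NIP1} provides; everything else is the elementary combinatorics of step functions on a linear order. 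Alternatively, one can simply quote that proposition, from which the Fact is immediate.
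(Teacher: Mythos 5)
Your argument is correct. The paper itself gives no proof of this Fact --- it is stated as a quoted result, "which follows for example from Proposition 3.3 of \cite{NIP1}" --- and your derivation (uniform VC-approximation of $\mu$ by a finite average $\nu=\frac1m\sum\delta_{c_j}$ for the single NIP formula $\phi$, then the finite alternation bound for each $\phi(c_j;y)$ on the indiscernible sequence, making $i\mapsto\nu(\phi(x;a_i))$ a step function with boundedly many pieces) is exactly the standard way to obtain it, and every step checks out. The only cosmetic point is that the approximation theorem produces types $p_1,\dots,p_m\in S_x(M)$ rather than points; realizing them in $\monster$ as you do is harmless, since the alternation bound applies equally to realizations.
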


By a measure $\mu_{x_1,x_2,...}$ being indiscernible, we mean that for any formula $\phi(x_1,\ldots,x_n)$ and any increasing map $\tau:\omega \to \omega$, we have $\mu(\phi(x_1,\ldots,x_n))=\mu(\phi(x_{\tau(1)},\ldots,x_{\tau(n)}))$. We now state the analogue of the previous fact with an indiscernible sequence of measures, which is Corollary 2.12 of \cite{NIP3}. 

\begin{fait}\label{fact2}
Let $\mu_{x_1,x_2,...}$ be indiscernible, and $b$ any tuple. Fix some formula $\phi(x;y)$ and $\epsilon>0$, then for some $N$, there do not exist $i_1<\cdots<i_N$ such that $|\mu(\phi(x_{i_j};b))-\mu(\phi(x_{i_{j+1}};b))|\geq \epsilon$ for $j=1\ldots N-1$.
\end{fait}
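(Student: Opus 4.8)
The statement to prove is Fact~\ref{fact2}, attributed to Corollary 2.12 of \cite{NIP3}. Since this is cited as a known result, a proof proposal should explain how one would derive it, most naturally by reducing it to Fact~\ref{fact1} (the analogous statement for an indiscernible sequence of \emph{points} rather than measures). Let me sketch the plan.

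\medskip

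The plan is to reduce Fact~\ref{fact2} to Fact~\ref{fact1} by a standard trick: an indiscernible sequence of measures can be ``realized'' by an indiscernible sequence of types (or tuples) in a suitable expansion, or alternatively one works directly with the combinatorics. First I would fix $\phi(x;y)$ and $\epsilon>0$ and suppose, toward a contradiction, that for every $N$ there are $i_1<\cdots<i_N$ with $|\mu(\phi(x_{i_j};b))-\mu(\phi(x_{i_{j+1}};b))|\geq\epsilon$. By indiscernibility of $\mu_{x_1,x_2,\ldots}$, the function $j\mapsto\mu(\phi(x_j;b))$ is ``the same'' along the sequence in the sense that any finite sub-pattern of values is realized by an increasing tuple of indices; so from arbitrarily long alternating-by-$\epsilon$ patterns and compactness one gets, after passing to a subsequence and relabelling, an actual infinite sequence of indices along which consecutive values of $\mu(\phi(x_j;b))$ differ by at least $\epsilon/2$, say. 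Splitting into two subsequences according to whether the value is ``high'' or ``low'', one obtains two infinite sets $S,T\subseteq\omega$ and two reals $r<s$ with $s-r\geq\epsilon/2$ such that $\mu(\phi(x_j;b))\leq r$ for $j\in S$ and $\mu(\phi(x_j;b))\geq s$ for $j\in T$.

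\medskip

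The second step is to convert this into an IP-type configuration. For each finite $S_0\subseteq S$ and $T_0\subseteq T$, since $\mu$ is finitely additive and the $x_j$ are distinct variables, one can estimate $\mu\big(\bigwedge_{j\in T_0}\phi(x_j;b)\wedge\bigwedge_{j\in S_0}\neg\phi(x_j;b)\big)$ from below: a union-bound argument shows $\mu\big(\bigvee_{j\in S_0}\phi(x_j;b)\vee\bigvee_{j\in T_0}\neg\phi(x_j;b)\big)\leq |S_0|\,r+|T_0|(1-s)$, which is not quite enough directly — so instead one passes to a sub-sequence where the values $\mu(\phi(x_j;b))$ are close enough to their infimum/supremum that a genuine averaging argument works, exactly as in the proof that a measure with $\mu(\phi(x;a_i))$ oscillating forces IP for the formula $\phi$. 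Concretely, the cleanest route is: by Fact~\ref{fact1}-style reasoning applied to $\mu$ itself (a measure cannot have an externally definable set $\phi(x;a_i)$ whose measure alternates around a threshold infinitely often, because that would let one build, via products $\mu^{(n)}$ and the law-of-large-numbers / VC estimate used for Fact~\ref{fact1}, a failure of NIP), we already know such oscillation is bounded; the content of Fact~\ref{fact2} is that the \emph{same} bound survives when we index by a \emph{sequence of measures} and plug in a single parameter $b$. So I would instead directly mimic the proof of Fact~\ref{fact1}: form the product measure $\mu^{(n)}$ in variables $x_{i_1},\ldots,x_{i_n}$ (well-defined since $\mu_{x_1,x_2,\ldots}$ is itself a measure, so its restrictions to finite tuples are consistent and we just use the given measure on those coordinates), and apply the VC/averaging lemma (Proposition 3.3 of \cite{NIP1}, already invoked for Fact~\ref{fact1}) to the formula $\phi(x;y)$ and the $n$ coordinate ``points'': a long $\epsilon$-alternating pattern of the one-dimensional marginals $\mu(\phi(x_{i_j};b))$ together with NIP of $\phi$ forces, for $n$ large, a positive-measure set of $n$-tuples whose trace on $\phi(\cdot;b)$ realizes an alternating pattern of length $>\mathrm{VC}^*(\phi)$, contradiction.

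\medskip

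The main obstacle is the bookkeeping in that last step: making precise in what sense ``the marginals oscillate'' yields, after integrating against $\mu^{(n)}$, an honest tuple $(a_{i_1},\ldots,a_{i_n})$ in the support on which $\phi(\cdot\,;b)$ alternates more than the dual VC dimension allows. This is where one must be careful that the product measure of an indiscernible sequence of measures is again indiscernible (so that the one-dimensional marginal $\mu(\phi(x_{i_j};b))$ genuinely depends only on $j$'s position, and consecutive large jumps accumulate), and that the averaging estimate is uniform in $b$. Given Fact~\ref{fact1} and the cited Proposition 3.3 of \cite{NIP1}, everything else is routine; since the result is quoted verbatim as Corollary 2.12 of \cite{NIP3}, I would in the paper simply cite it, but the above is the argument behind it.
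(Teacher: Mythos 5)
The paper offers no proof of this statement: it is quoted as Corollary 2.12 of \cite{NIP3}, so simply citing it (as you say you would) is exactly what the paper does. Since you nevertheless sketch an argument, it has to be assessed on its own, and it has a genuine gap at the decisive step. You conclude by saying that a long $\epsilon$-alternating pattern of the marginals $\mu(\phi(x_{i_j};b))$ yields, after averaging against the measure, ``an honest tuple $(a_{i_1},\dots,a_{i_n})$ \dots on which $\phi(\cdot;b)$ alternates more than the dual VC dimension allows, contradiction.'' There is no contradiction there. NIP bounds the alternation of $\phi(\,\cdot\,;b)$ only along an \emph{indiscernible} sequence of points (dually, it forbids shattering, which requires varying the parameter); for a single fixed $b$ and an arbitrary finite tuple of points, every truth-value pattern is realizable in every theory. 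The averaging/symmetric-difference argument (the correct first move, and the one behind Fact \ref{fact1}) does produce a type $p\in S_{x_1,x_2,\dots}$ lying in many of the sets $[\phi(x_{i_j};b)\triangle\phi(x_{i_{j+1}};b)]$, hence a realization $(a_i)$ along which $\phi(\,\cdot\,;b)$ alternates a lot; but that realization is not indiscernible, so nothing follows yet. This is exactly the asymmetry between Fact \ref{fact1} and Fact \ref{fact2}: in Fact \ref{fact1} the alternation lands on the \emph{given} indiscernible sequence and the measure only supplies the external point, whereas here the roles are reversed, and the entire content of the statement is how to exploit the indiscernibility of $\mu$. You have mislocated the difficulty by calling it ``bookkeeping.''

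Two secondary problems feed into this. First, indiscernibility of $\mu_{x_1,x_2,\dots}$ concerns only formulas in the variables $x_i$ over the base; it says nothing about formulas involving the external tuple $b$. So your step-1 claim that ``any finite sub-pattern of values is realized by an increasing tuple of indices'' by indiscernibility is unjustified (were indiscernibility applicable to $\phi(x_i;b)$, the value $\mu(\phi(x_i;b))$ would be constant in $i$ and the statement trivial with $N=2$); the extraction of an infinite $\epsilon/2$-oscillating subsequence for a fixed $b$ is still achievable, but by pigeonhole on the real values, not by indiscernibility. Second, the restriction of an indiscernible measure $\mu_{x_1,x_2,\dots}$ to $n$ of its variables is in general \emph{not} the Morley product $\mu^{(n)}$ of its one-variable marginal, so the law-of-large-numbers/VC estimates for product measures are not available in the form you invoke. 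The missing idea---how to convert oscillation of the marginals into an actual failure of NIP using the indiscernibility of $\mu$---is the substance of \cite{NIP3}, Corollary 2.12, and the sketch does not supply it.
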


In particular, if $\mu_{x_1,x_2,...}$ is totally indiscernible, {\it i.e.}, remains indiscernible when we permute the variables, then given $\phi(x;y)$, $\epsilon>0$ and $b$, for some $N$ there do not exist $i_1,\ldots,i_N$  such that $|\mu(\phi(x_{i_j};b))-\mu(\phi(x_{i_{j+1}};b))|\geq \epsilon$ for $j=1\ldots N-1$.

If $I=(a_i)_{i\in \mathcal I}$ and $\mu_x$ is a measure over $\{a_i:i\in \mathcal I\}$, we say that $I$ is \emph{$\mu$-indiscernible} if for all $\phi(x;y_1,\ldots,y_n)$, for all $t_1<\ldots <t_n$ and $s_1<\ldots<s_n$ in $\mathcal I$ we have $\mu(\phi(x;a_{t_1},\ldots,a_{t_n}))=\mu(\phi(x;a_{s_1},\ldots,a_{s_n}))$.

\begin{lemme}\label{lem_meas1}
Let $I_1+a+I_2$ be an indiscernible distal sequence, $I_1$ and $I_2$ without endpoints. If $\mu_x$ is a measure such that $I_1+I_2$ is $\mu$-indiscernible, then $I_1+a+I_2$ is also $\mu$-indiscernible.
\end{lemme}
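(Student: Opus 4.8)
The plan is to mimic, at the level of measures, the proof of the external characterization of distality (Lemma~\ref{defitrans}), but using the measure-analogue of the shrinking/alternation facts (Facts~\ref{fact1} and~\ref{fact2}) in place of pure indiscernibility. First I would reduce to the case where the parameters over which we must check $\mu$-indiscernibility are witnessed by a single formula: suppose $I_1+a+I_2$ is not $\mu$-indiscernible. Then there is a formula $\phi(x;y_1,\dots,y_n)$ and two increasing tuples from $I_1+a+I_2$ (both of which must involve $a$, since $I_1+I_2$ is already $\mu$-indiscernible) on which $\mu(\phi)$ takes two distinct values; fix $\epsilon>0$ smaller than the gap between these two values. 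Since $I_1$ and $I_2$ have no endpoints and $I_1+I_2$ is $\mu$-indiscernible, the value $\mu(\phi(x;\bar c))$ for an increasing tuple $\bar c$ drawn from $I_1+I_2$ depends only on the pattern of which coordinates come before and which after the cut determined by $a$; so I can encode the situation as: there is a cut $\mathfrak c=(I_1,I_2)$, a formula $\psi(x;y)$ (absorbing the coordinates of $\phi$ other than the one slot that $a$ fills, using fixed parameters $\bar m\subset I_1+I_2$), such that $\mu(\psi(x;a))$ differs by at least $\epsilon$ from the common value $\mu(\psi(x;d))$ for $d$ ranging over $I_1+I_2$ near the cut.

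Next, I would produce many points filling cuts near $\mathfrak c$ on which $\psi$ has the `wrong' $\mu$-value, and then use distality to keep them all in one indiscernible sequence, contradicting Fact~\ref{fact1}. Concretely: after enlarging $I_1+I_2$ to a dense sequence $J_1+J_2$ of the same EM-type (legitimate, since $\mu$-indiscernibility and distality are properties of the EM-type together with $\mu$ restricted suitably — here I would be careful to keep $\mu$ a measure over the enlarged sequence, which is possible because $\mu$ is a global measure and $J_1+J_2$ can be chosen inside $\monster$ with $J_1+J_2$ still $\mu$-indiscernible using Fact~\ref{fact1} to see the enlargement does not create new $\mu$-values), choose an increasing $\omega$-sequence of Dedekind cuts $(\mathfrak c_k)_{k<\omega}$ all near $\mathfrak c$ (say all interior to a small convex piece where $\psi$ is constant on $J$), and for each $k$ pick $b_k$ filling $\mathfrak c_k$ with $|\mu(\psi(x;b_k))-\mu(\psi(x;d))|\geq\epsilon$, $d$ a point of $J$ in that piece. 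Such $b_k$ exist because $\mathfrak c$ and $\mathfrak c_k$ have the same limit type over $\monster$ (both are interior to the same convex block of $J$, which is $\mu$-indiscernible hence in particular indiscernible), so the value realized by $a$ at $\mathfrak c$ is realized by some $b_k$ at $\mathfrak c_k$ — more carefully, by the homogeneity of the block, any $\mu$-value attained by a point filling $\mathfrak c$ is attained by a point filling $\mathfrak c_k$. By Lemma~\ref{twoton} (distality of $J$, which has the same EM-type as $I$), the sequence $J$ with all the $b_k$ added in their cuts is indiscernible. But then along that indiscernible sequence the quantity $\mu(\psi(x;-))$ alternates between a value within the `$d$-block' and a value at distance $\geq\epsilon$ infinitely often — interleaving the $b_k$'s with points of $J$ — contradicting Fact~\ref{fact1}.

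The main obstacle I expect is the bookkeeping in the first paragraph: making precise the claim that, because $I_1+I_2$ is $\mu$-indiscernible and endpointless, the failure of $\mu$-indiscernibility of $I_1+a+I_2$ is genuinely `localized at the cut filled by $a$' and can be reduced to a single relevant slot, so that what remains is exactly a statement about $\mu(\psi(x;b))$ for $b$ filling cuts near $\mathfrak c$. One has to handle the case where $a$ occupies several of the $y_i$-slots of $\phi$ — but $a$ is a single tuple and fills a single cut, so in an increasing enumeration $a$ occupies one consecutive block of slots (in fact, after splitting $a$ into its coordinates, possibly several adjacent slots); grouping those into one `variable' via a product-of-variables trick reduces to the one-slot case. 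A secondary point to check is that enlarging $I_1+I_2$ to a dense $\mu$-indiscernible sequence is harmless: this follows by a compactness/continuity argument from Fact~\ref{fact1}, exactly as endless indiscernible sequences can be densified. Once these reductions are in place, the contradiction with Fact~\ref{fact1} via Lemma~\ref{twoton} is immediate, paralleling the end of the proof of Lemma~\ref{defitrans}.
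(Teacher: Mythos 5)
Your overall architecture is the one the paper intends (its proof of this lemma is literally ``same as Lemma~\ref{defitrans}, using Fact~\ref{fact1}''): reduce to a single formula $\psi(x;y)$ and a single slot by absorbing the other coordinates of $\phi$ into parameters from $I_1+I_2$, produce points $b_k$ filling infinitely many distinct Dedekind cuts with $|\mu(\psi(x;b_k))-v|\geq\epsilon$, invoke distality via Lemma~\ref{twoton} to keep $J\cup\{b_k\}$ indiscernible, and contradict Fact~\ref{fact1} by interleaving. The reduction and the endgame are fine. The problem is the middle step, which is exactly where the measure case differs from the type case, and your justification for it does not work.

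You claim the $b_k$ exist ``because $\mathfrak c$ and $\mathfrak c_k$ have the same limit type over $\monster$'' and ``by the homogeneity of the block, any $\mu$-value attained by a point filling $\mathfrak c$ is attained by a point filling $\mathfrak c_k$.'' Both assertions are false or inapplicable. Two distinct cuts of an indiscernible sequence have the same limit type only over sets over which the sequence is indiscernible; over $\monster$ (or over the domain of $\mu$) their limit types are distinct invariant types, and in any case $a$ merely fills $\mathfrak c$ --- it need not realize any limit type over the domain of $\mu$. As for homogeneity: the automorphisms witnessing it fix $J$ and the finitely many absorbed parameters, and they do carry $a$ to some $b_k$ filling $\mathfrak c_k$, but they do not preserve $\mu$, so $\mu(\psi(x;b_k))$ bears no relation to $\mu(\psi(x;a))$. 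Compare with the proof of Lemma~\ref{defitrans}: there the deviant property of $b$ is ``$\models\phi(b)$'' for a formula $\phi$ over the base $A$ over which the sequence is indiscernible, so it transfers across cuts by $A$-indiscernibility. The property ``$|\mu(\psi(x;b))-v|\geq\epsilon$'' is not of this form, and $\mu$-indiscernibility of $I_1+I_2$ supplies no group of $\mu$-preserving maps. Since replicating the deviant point at infinitely many cuts is the entire content of the lemma (everything after it is immediate from Lemma~\ref{twoton} and Fact~\ref{fact1}), this is a genuine gap, not a bookkeeping issue. To close it you need a genuinely measure-theoretic input --- for instance, approximating $\mu$ on instances of $\psi$ by an average of finitely many types (possible in $NIP$) so as to reduce the transfer to finitely many applications of the type-level argument, together with shrinking with respect to the realizations of those types; as written, your proposal does not supply any such device.
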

\begin{proof}
The proof is the same as that of Lemma \ref{defitrans} using Fact \ref{fact1}.
\end{proof}

\begin{lemme}\label{lem_meas2}
Let $\mu_{x_1,x_2,...}$ be totally indiscernible and let $(b_i)_{i<\omega}$ be a distal indiscernible sequence. Assume that the measure $\eta_{(x_1,y_1),(x_2,y_2),...}$ is indiscernible, where $\eta$ is defined by $\eta(\phi(x_1,x_2,...;y_1,y_2,...))=\mu(\phi(x_1,x_2,...;b_1,b_2,...))$. Then the sequence $(b_i)_{i<\omega}$ is $\mu$-indiscernible.
\end{lemme}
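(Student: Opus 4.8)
The plan is to transpose the proof of Lemma \ref{lem_meas1} (hence of Lemma \ref{defitrans}) into the setting where both the base sequence $(b_i)_{i<\omega}$ and the family of measures $\mu$ are replaced by their ``measure'' analogues, using Fact \ref{fact2} in place of Fact \ref{fact1}. Concretely, suppose for contradiction that $(b_i)_{i<\omega}$ is not $\mu$-indiscernible. Then there is a formula $\phi(x_1,\ldots,x_n;z)$ (with $z$ a finite tuple of extra variables, possibly empty), parameters from among the $b_i$, and increasing tuples $\bar\imath$, $\bar\jmath$ from $\omega$ with $\mu(\phi(x_{k_1},\ldots,x_{k_n};\bar b)) \neq \mu(\phi(x_{l_1},\ldots,x_{l_n};\bar b))$ where $\bar b$ is the parameter tuple; absorbing coordinates, one reduces as in Lemma \ref{defitrans} to witnessing a single ``cut'' of the sequence $(b_i)$ that is not respected by $\mu$, i.e. an element $b_m$ filling a Dedekind cut of an infinite subsequence $(b_i)_{i\in C}$ of $(b_i)$ such that $(b_i)_{i\in C}$ is $\mu'$-indiscernible (for $\mu'$ the appropriate restriction/specialization of $\mu$) but $(b_i)_{i\in C}$ with $b_m$ inserted is not.

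The key step is then: since $(b_i)_{i<\omega}$ is distal, by Lemma \ref{twoton} (or directly the definition of distal) one can insert into a dense sequence of the same EM-type as $(b_i)$ infinitely many points $c_0, c_1, \ldots$, each filling a distinct Dedekind cut in a fixed increasing $\omega$-chain of cuts, so that the whole expanded sequence remains indiscernible; moreover, by sliding (Corollary \ref{sliding2}) and the fact that $\eta$ is indiscernible, each $c_k$ can be chosen so that its type over the rest of the sequence is similar to that of $b_m$, and the analogous non-respecting-of-the-cut phenomenon for $\mu$ persists at each $c_k$ by total indiscernibility of $\mu_{x_1,x_2,\ldots}$ (this is where total indiscernibility — not just indiscernibility — of $\mu$ is needed, exactly as in the ``in particular'' consequence drawn after Fact \ref{fact2}). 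This produces increasing indices $i_1 < \cdots < i_N$ (for every $N$) along the expanded indiscernible sequence with $|\mu(\phi(\ldots;d_{i_j}))-\mu(\phi(\ldots;d_{i_{j+1}}))| \geq \epsilon$ for some fixed $\epsilon>0$ and all $j$, where the $d$'s enumerate the expanded sequence. This directly contradicts Fact \ref{fact2} (applied to the totally indiscernible measure $\mu$ and the formula $\phi$), since that fact bounds the number of such $\epsilon$-jumps.

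I expect the main obstacle to be the bookkeeping in setting up the expanded sequence correctly: one must simultaneously (i) keep the base sequence indiscernible after inserting the new points $c_k$ — which is where distality of $(b_i)_{i<\omega}$ and Lemma \ref{twoton} enter — and (ii) arrange that the measure $\mu$, evaluated at the formula $\phi$ with the inserted points as parameters, still exhibits the jump, which requires tracking that the hypothesis ``$\eta$ indiscernible'' together with ``$\mu$ totally indiscernible'' is genuinely enough to transport the $\mu$-defect along the inserted cuts. The cleanest route is probably to first reduce, exactly as in Lemma \ref{defitrans}, to the case where all parameters other than the inserted point lie in a base set $A$ over which everything is $A$-indiscernible, so that $\eta$-indiscernibility of the pair sequence lets one apply Observation \ref{basicobs}-style absorption; then the argument becomes a near-verbatim copy of the proof of Lemma \ref{defitrans} with Fact \ref{fact1} replaced by Fact \ref{fact2}. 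I would state the reduction explicitly and then write ``the rest is as in the proof of Lemma \ref{defitrans}, using Fact \ref{fact2} in place of Fact \ref{fact1}.''
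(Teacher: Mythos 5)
Your overall instinct --- that this is the measure analogue of Corollary \ref{orth}, to be proved by transposing Lemmas \ref{declemma} and \ref{defitrans} with Facts \ref{fact1} and \ref{fact2} doing the work of NIP --- matches the paper, but as written the proposal has a genuine gap at its first reduction and it misallocates the hypotheses. The reduction to ``an infinite subsequence $(b_i)_{i\in C}$ which is $\mu'$-indiscernible, broken by inserting one point'' is not bookkeeping: it is precisely the weak-linkage step (the analogue of Lemma \ref{declemma}(2)), and it is where both total indiscernibility of $\mu$ and indiscernibility of $\eta$ must actually be used. In Lemma \ref{defitrans} the corresponding statement ($I_1+I_2$ is $A$-indiscernible) is a \emph{hypothesis}; here you must prove that, after deleting the finitely many variable indices occurring in the witnessing formula, the remaining $b_i$'s are indiscernible for the restricted measure. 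The paper does this by showing, via the remark following Fact \ref{fact2} (which is exactly where total indiscernibility of $\mu$ enters) combined with indiscernibility of $\eta$, that $\eta(\phi(x_{t_1},\ldots,b_{t_1},\ldots;x_s))$ is constant as $s$ ranges over indices disjoint from $t_1,\ldots,t_n$. You instead invoke total indiscernibility later, to claim that the ``$\mu$-defect persists at each $c_k$'' where the $c_k$ are \emph{new} points produced by sliding. That step fails: total indiscernibility of $\mu$ concerns permutations of the variables $x_i$, and indiscernibility of $\eta$ only constrains $\mu$-values at the original parameters $b_i$; neither says anything about $\mu(\phi(\ldots;c_k,\ldots))$ for a new point $c_k$, since similarity of $\tp(c_k/\llg b_i\rrg)$ to $\tp(b_m/\llg b_i\rrg)$ does not determine the value the external measure $\mu$ assigns to instances with parameter $c_k$.

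Relatedly, the contradiction you aim for --- $\epsilon$-jumps of $\mu(\phi(\ldots;d_{i_j}))$ as the \emph{parameter} $d_{i_j}$ runs along an indiscernible sequence --- is the situation of Fact \ref{fact1}, not Fact \ref{fact2}; Fact \ref{fact2} bounds jumps in the \emph{variable} index for a fixed parameter, and its role in this lemma is confined to the weak-linkage step. The paper's proof avoids your problematic propagation: once weak linkage is established, it re-inserts the finitely many deleted indices one at a time exactly as in Lemma \ref{declemma}(1), each insertion handled by Lemma \ref{lem_meas1} (the Fact \ref{fact1} version of Lemma \ref{defitrans}), so that the points whose $\mu$-behaviour must be controlled are original members of the sequence, governed by $\eta$. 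I would restructure your argument into these two steps --- first weak linkage, then the \ref{declemma}(1)-style conclusion --- rather than running a single alternation argument.
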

\begin{proof}
This is the analogue of Corollary \ref{orth}. The same proof goes through. Namely, we first use indiscernibility to increase the index set from $\omega$ to a dense order $\mathcal I$. Next, let $\phi(x_{t_1},...,x_{t_n},b_{t_1},...,b_{t_n};x_s)$ be a formula, $t_1,\ldots,t_n\in \mathcal I$ are fixed and $\mathcal J\subset \mathcal I$ is disjoint for those points. Then using the remark following Fact \ref{fact2} and the indiscernibility of the $\eta$, we show that $\eta(\phi(x_{t_1},...,b_{t_1},...;x_s))$ is constant as $s$ varies in $\mathcal J$. From this, we conclude that the sequences are weakly linked, namely for any $\mathcal I_1,\mathcal I_2$ disjoint subsets of $\mathcal I$, the sequence $\llg b_i:i\in \mathcal I_1\rrg$ is $\mu'$-indiscernible, where $\mu'$ is the restriction of $\mu$ to the variables $(x_i:i\in \mathcal I_2)$.

Finally, we show exactly as in Lemma \ref{declemma} (1), that the sequence $(b_i)_{i<\omega}$ is $\mu$-indiscernible.
\end{proof}

\begin{prop}\label{gentrans}
If $T$ is distal, then all generically stable measures are smooth.
\end{prop}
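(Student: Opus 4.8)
The plan is to prove the contrapositive via the characterization of smoothness from Fact~1 (Lemma 4.1 of \cite{Finding}): a generically stable measure $\mu_x\in\mathcal M_x(M)$ is smooth if and only if $\mu(\partial_M\phi)=0$ for every formula $\phi(x;d)$ with $d\in\monster$. So suppose $\mu$ is generically stable but not smooth; I will derive a violation of $NIP$. Fix $\phi(x;d)$ with $\mu(\partial_M\phi)=\eta>0$. The idea is to use $\mu^{(\omega)}$ together with smooth extensions to manufacture a totally indiscernible measure $\mu_{x_1,x_2,\ldots}$ whose ``realization'' furnishes an indiscernible sequence $(b_i)_{i<\omega}$ that is distal (being, say, inside a model of a distal theory, hence every sequence is distal) but on which $\phi$ alternates infinitely often with positive probability — the contradiction coming from Fact~1.

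More concretely: since $\mu$ is generically stable, $\mu^{(\omega)}$ is totally indiscernible, i.e.\ it is invariant under all permutations of the variables $x_1,x_2,\ldots$. Extend $\mu^{(\omega)}$ to a \emph{smooth} measure $\lambda$ over a bigger model $M'\ni d$ (smooth extensions exist in $NIP$, see \cite{Finding}); smoothness is preserved under adding variables, and a smooth measure's restrictions remain well-behaved. Now realize $\lambda$ by choosing an indiscernible sequence $(b_i)_{i<\omega}$ of realizations lying in an $|M'|^+$-saturated model, arranged so that $\tp(b_i/M'b_{<i})$ is generic for the appropriate marginal; because $\lambda$ extends the totally indiscernible $\mu^{(\omega)}$ and is smooth, the associated ``measure on the $y$-variables'' $\eta$ defined by $\eta(\psi(y_1,\ldots))=\lambda(\psi(x_1,\ldots)$ evaluated at the $b_i)$ is indiscernible in the sense needed for Lemma~4.9 (\texttt{lem\_meas2}). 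Since $T$ is distal, $(b_i)_{i<\omega}$ is a distal indiscernible sequence, so Lemma~\ref{lem_meas2} applies and gives that $(b_i)_{i<\omega}$ is $\mu$-indiscernible: that is, $\mu(\phi(x;b_{i_1},\ldots,b_{i_n}))$ depends only on the order type of $i_1<\cdots<i_n$. But $\partial_M\phi$ having $\mu$-measure $\eta>0$ means that for a positive-measure set of types the formula is not decided, and via the average-measure/limit-type analysis one can choose the $b_i$ so that the values $\mu(\phi(x;b_i))$ jump by at least $\eta/2$ infinitely often along the sequence — contradicting Fact~\ref{fact2} (the total-indiscernibility form following it). Hence no such $\phi$ exists and $\mu$ is smooth.

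The main technical obstacle, and the step I would spend the most care on, is the bridge between ``$\mu$ not smooth'' (a statement about $\partial_M\phi$ having positive measure, i.e.\ about the $x$-variable measure on $S_x(M)$) and the setup required to invoke Lemma~\ref{lem_meas2} (which needs a totally indiscernible measure $\mu$ in many variables $x_1,x_2,\ldots$, a distal sequence $(b_i)$, and the \emph{indiscernibility of the auxiliary measure $\eta$}). Producing the $b_i$ so that $\eta$ is genuinely indiscernible is exactly where smoothness of the extension $\lambda$ of $\mu^{(\omega)}$ is used: one wants that adding the $b_i$ one at a time does not perturb the measure, which is the measure-theoretic analogue of ``realizing an invariant type by a Morley sequence.'' I expect the argument to mirror, almost line for line, the chain Lemma~\ref{defitrans} $\to$ Lemma~\ref{declemma} $\to$ Corollary~\ref{orth}, but with ``indiscernible'' systematically replaced by ``$\mu$-indiscernible'' and the combinatorial $NIP$ input (infinite alternation) replaced by its metric version, Facts~\ref{fact1} and~\ref{fact2}; the earlier proposition (smooth measures imply distality) already did the ``easy direction'' of this correspondence, so the present proof is its mirror image using the newly-established Lemmas~\ref{lem_meas1} and~\ref{lem_meas2}.
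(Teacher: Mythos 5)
Your proposal assembles the right ingredients (smooth extensions, total indiscernibility of $\mu^{(\omega)}$, Lemma \ref{lem_meas2}, Facts \ref{fact1}--\ref{fact2}), and you correctly flag the bridge between ``$\mu$ not smooth'' and the hypotheses of Lemma \ref{lem_meas2} as the crux. But the bridge you sketch does not work, for two reasons. First, one cannot ``realize $\lambda$ by choosing an indiscernible sequence $(b_i)_{i<\omega}$ of realizations'': a Keisler measure, unlike a type, has no realizations, and there is no sequence of points whose type is ``generic for the appropriate marginal'' of $\mu^{(\omega)}$. This is precisely the obstruction the actual proof is designed to circumvent. Second, your intended contradiction is incoherent: if the $b_i$ really formed an indiscernible sequence, then Fact \ref{fact1} alone (no distality needed) already forbids the values $\mu(\phi(x;b_i))$ from jumping by $\eta/2$ infinitely often, so the sequence you want to produce cannot exist, and the positive measure of $\partial_M\phi$ gives you no handle on constructing it.

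The correct assignment of roles in Lemma \ref{lem_meas2} is the reverse of yours. The paper fixes an arbitrary tuple $a$ and an arbitrary extension $\mu'$ of $\mu$ to $Na$, takes a smooth extension $\mu''$ over some $B\supseteq Na$, and then lets the \emph{distal indiscernible sequence} be a coheir sequence $(B_i)_{i<\omega}$ of the parameter set $B$ over $N$ — honest tuples, indiscernible, hence distal since $T$ is distal. The measure side stays a measure throughout: since $\mu$ is definable over $B$, one transports its definition scheme along the coheir sequence to get measures $\mu^i$ over $B_i$, sets $\lambda=\bigotimes_i\mu^i_{x_i}$, and proves (using that coheirs do not fork and that generically stable measures commute with their nonforking extensions) that $\lambda|_N=\mu^{(\omega)}|_N$, whence $\lambda$ is totally indiscernible over $N$. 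Lemma \ref{lem_meas2}, applied with $\lambda$ as the totally indiscernible measure and $(B_i)$ as the distal sequence, then yields $\mu^0|_{Na}=\mu^1|_{Na}$, i.e.\ the arbitrary extension $\mu'$ was forced to equal $\mu|_{Na}$; weak orthogonality to every type over $N$ gives smoothness directly, with no detour through $\partial_M\phi$ or a NIP-alternation contradiction at the top level. To repair your write-up you would need to replace the ``realize $\lambda$'' step by this coheir-sequence-of-parameters construction and redirect the conclusion from ``infinite alternation'' to ``uniqueness of the extension.''
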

\begin{proof}
Assume that $T$ is distal and take $\mu$ a generically stable measure over some $|T|^+$-saturated model $N$. The unique global invariant extension of it will also be denoted by $\mu$. Let $a$ be a tuple. We will show that $\mu$ and $\tp(a/N)$ are weakly orthogonal.

Let $\mu'$ be an extension of $\mu$ to $Na$. Take a smooth extension $\mu''$ of $\mu'$ to some $B\supseteq Na$. Let $(B_i)_{i<\omega}$ be a coheir sequence in $\tp(B/N)$, with $B_0=B$. The measure $\mu$ is definable over $B$, and for each $i<\omega$, we can consider the measure $\mu^i_{x_i}$ which is defined over $B_i$ the same way $\mu$ is defined over $B$ (using the canonical bijection from $B$ to $B_i$).

Consider the measure $\lambda_{\llg x_i, i<\omega\rrg}$ defined as $\bigotimes_{i<\omega} \mu^i_{x_i}$ (this does not depend on the order of the factors since the $\mu^i$'s are generically stable).

\vspace{4pt}
\noindent
\un{Claim}: The measure $\lambda_{x_0,x_1,...}$ is totally indiscernible over $N$, namely for every formula $\phi(x_0,\ldots,x_{n-1})\in L(N)$ and any permutation $\tau$ of $\omega$, we have $\lambda(\phi(x_0,\ldots,x_{n-1}))=\lambda(\phi(x_{\tau(0)},\ldots,x_{\tau(n-1)}))$.

\vspace{2pt}
{\it Proof:} Note that $\tp(B_1/B_0 N)$ is non-forking over $N$. In particular $\mu^1_{x_1}|_{B_0 N}$ does not fork over $N$ (as it is finitely satisfiable in $B_1$) so by Proposition 3.3 of \cite{NIP3} $\mu^1|_{B_0N} = \mu|_{B_0N}$. This implies, as $\mu^0$ is invariant over $B_0$, that $\mu^0_{x_0} \otimes \mu^1_{x_1}|_{B_0N} = \mu^0_{x_0} \otimes \mu_{x_1}|_{B_0N}$. On the other hand, $\mu^0_{x_0}\otimes \mu_{x_1}=\mu_{x_1}\otimes \mu^0_{x_0}$ and as $\mu^0|_N=\mu|_N$, we have $\mu_{x_1}\otimes \mu^0_{x_0}|_N=\mu_{x_1}\otimes \mu_{x_0}|_N$. Putting it all together, we obtain $\mu^0_{x_0} \otimes \mu^1_{x_1}|_N=\mu_{x_0} \otimes \mu_{x_1}|_N$.

Iterating this we get, $\lambda|_N = \mu^{(\omega)}|_N$. As $\mu$ is generically stable, $\lambda_{x_0,x_1,...}$ is totally indiscernible over $N$.

\vspace{4pt}
Now define a measure $\eta_{(x_0,y_0),(x_1,y_1)...}$ over $N$, where $y_i$ is a variable of the same size as $B$, by $\eta(\phi(x_0,x_1,..;y_0,y_1,..))=\lambda(\phi(x_0,x_1,..;B_0,B_1,..))$. By construction, $\eta$ is a measure of an indiscernible sequence. Lemma \ref{lem_meas2} yields that for any increasing $\sigma: \omega \impl \omega$, and any $\phi(x_0,x_1,..;y_0,y_1,..)$, $$\eta(\phi(x_0,x_1,..;y_{\sigma 0},y_{\sigma 1},..))=\eta(\phi(x_0,x_1,..;y_0,y_1,..)).$$ Therefore $\mu^0|_{Na}=\mu^1|_{Na}=\mu|_{Na}$. Thus $\tp(a/N)$ and $\mu|_N$ are weakly orthogonal. This proves that $\mu$ is smooth.
\end{proof}

\subsection{Reduction to dimension 1}\label{constrA}

The goal of this section is to prove the following theorem.

\begin{thm}\label{dim1}
If all sequences of elements of the home sort are distal, then $T$ is distal.
\end{thm}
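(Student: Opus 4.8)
The plan is to reduce distality of an arbitrary indiscernible sequence of tuples to distality of sequences of single elements from the home sort, by coding tuples as sequences of elements. First I would fix an indiscernible sequence $I = (d_i)_{i \in \mathcal I}$ of $n$-tuples (say $d_i = (d_i^1, \dots, d_i^n)$), which we want to show is distal, and use Lemma \ref{defitrans} as the working characterization: it suffices to show that whenever $I' = I_1 + I_2$ is $A$-indiscernible (with the same EM-type as $I$, pieces without endpoints) and $I_1 + b + I_2$ is indiscernible, then it is in fact $A$-indiscernible. The natural idea is to spread the coordinates out: instead of looking at the sequence of $n$-tuples, consider the sequence of single elements obtained by interleaving the coordinates $d_i^1, d_i^2, \dots, d_i^n$ along a refined index set $\mathcal I \times n$ (lexicographically ordered, say). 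One must be careful that this interleaved sequence of elements need not be indiscernible on the nose, so one should instead work with the structure $I_{[d]}$ / the expansion language, or alternatively name suitable parameters so that indiscernibility of the tuple sequence transfers to indiscernibility (over those parameters) of the element sequence.

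The key steps, in order: (1) Given the hypothetical failure of distality for $I$, extract via Lemma \ref{defitrans} a configuration $I_1 + I_2$ $A$-indiscernible, $I_1 + b + I_2$ indiscernible but not $A$-indiscernible, witnessed by a formula $\phi$; by the usual reductions (restricting $I_1, I_2$, absorbing parameters of $\phi$ into $A$, then expanding) assume everything relevant is over $A$ and the sequence is dense. (2) Build from $I' = I_1 + I_2$ a sequence $\widehat{I}$ of elements of the home sort by listing coordinates; to make this a genuine indiscernible sequence, pass to an appropriate base: one can name finitely much of the data, or use that shrinking/sliding lets us arrange each relevant formula to depend on boundedly many points, so that a single-element reduct carries the same information. (3) Apply the dimension-1 hypothesis: $\widehat{I}$ (or the relevant single-element sequence built from $I'$) is distal, hence satisfies the external characterization of Lemma \ref{defitrans}. (4) Transfer back: the point $b$ (an $n$-tuple) gives rise to $n$ elements filling $n$ cuts of $\widehat{I}$; because $I_1 + b + I_2$ is indiscernible, these $n$ elements fill their cuts, and distality of $\widehat{I}$ together with Lemma \ref{twoton} (the $I$-independence of finitely many elements filling distinct cuts in a distal sequence) shows that the full sequence with all $n$ elements inserted is still $A$-indiscernible. (5) Unravel the coding: $A$-indiscernibility of the element sequence with the $n$ new elements inserted implies $A$-indiscernibility of $I_1 + b + I_2$, contradicting the choice of $\phi$.

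The main obstacle I expect is step (2): the interleaved sequence of single elements is not automatically indiscernible, and the cuts of $I'$ need not correspond to \emph{Dedekind} cuts of $\widehat{I}$ in a way that makes Lemma \ref{twoton} directly applicable — one inserts $n$ elements in $n$ distinct but possibly "adjacent" cuts (the coordinates of $b$ sit at essentially the same place in $\mathcal I$). Handling this requires care: either one chooses the refined index set so that each coordinate-cut is genuinely Dedekind and the cuts are distinct (possible after expanding, since the copies of $\mathcal I$ for different coordinates can be separated by infinitely many points), or one proves a mild strengthening of Lemma \ref{twoton} / uses Lemma \ref{defitrans} iteratively, inserting the $n$ elements one at a time and checking at each stage that the enlarged sequence is still of the right form (dense, pieces without endpoints, same EM-type) so the hypothesis applies again. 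A secondary subtlety is ensuring the "same EM-type" and "base change" bookkeeping survives the coding, for which Corollary \ref{basechange} and the sliding lemmas (Corollaries \ref{sliding2}, \ref{sliding3}) should suffice.
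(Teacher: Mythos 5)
Your proposal hinges on step (2), and that step cannot be carried out: there is no way to turn an indiscernible sequence of $n$-tuples into an indiscernible sequence of single elements by interleaving coordinates, even after naming parameters or passing to the structure $I_{[d]}$. The obstruction is not a bookkeeping issue but a positional one: in the interleaved sequence $\widehat I = (d_i^1,\dots,d_i^n,d_{i'}^1,\dots)$ the adjacent pair $(d_i^1,d_i^2)$ (two coordinates of the \emph{same} tuple) and the adjacent pair $(d_i^n,d_{i'}^1)$ (coordinates of \emph{different} tuples) occupy indistinguishable positions, yet they generally satisfy different formulas --- the whole content of being a tuple is that its coordinates are correlated in a way that coordinates of distinct tuples are not. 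No choice of base parameters erases this, and your alternative fix (separating the copies of $\mathcal I$ for different coordinates by infinitely many points so that the cuts become distinct Dedekind cuts) makes things worse: once $d_i^1$ and $d_i^2$ sit in an indiscernible sequence with infinitely many elements between them, indiscernibility would force $\tp(d_i^1,d_i^2)=\tp(d_i^1,d_j^2)$ for $j\neq i$, destroying exactly the correlation you need to recover $I_1+b+I_2$ at step (5). Consequently steps (3)--(5) have nothing to apply to: the hypothesis ``all sequences of \emph{elements} are distal'' never gets a genuine indiscernible sequence of elements to act on, and Lemma \ref{twoton} is never in a position to be used. Inserting the $n$ coordinates of $b$ one at a time does not help for the same reason.

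The paper's proof avoids coding altogether and instead \emph{reverses the roles} of the sequence and the external point. It first reduces the witnessing tuple $b$ to a single element of the home sort (by minimality and absorbing the other coordinates into the base --- note it is $b$, not the sequence $I$, that gets reduced to dimension $1$). It then builds, via repeated use of strong base change (Lemma \ref{limittype}), an array $\langle a_i^n\rangle$ together with a sequence of singletons $\langle b_n\rangle$ such that each $b_n$ relates to the $n$-th row the way $b$ relates to $I$. An alternation/NIP argument shows $(b_n)_{n<\omega}$ is weakly linked to the rows, while the choice $\tp(a/b)\neq\tp(a_i/b)$ makes $(b_n)$ and a diagonal $(a^n_{\eta(n)})$ fail to be mutually indiscernible. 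Since $(b_n)$ is a sequence of elements of the home sort, it is distal by hypothesis, and Lemma \ref{declemma}(1) (weakly linked plus distal implies mutually indiscernible) gives the contradiction. If you want to repair your write-up, this role-reversal is the idea to aim for; the interleaving approach is a dead end.
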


We first give an informal (and incomplete) proof using measures. Assume all sequences of elements are distal and consider a generically stable measure $\mu$. Then looking at the proof of Proposition \ref{gentrans} we see that $\mu$ is weakly orthogonal to all 1-types. Then by induction, adding the points one-by-one, $\mu$ is weakly orthogonal to every $n$-type. One could make this proof rigorous, but it seems to require the fact that no type forks over its base. To avoid this hypothesis and the use of measures, we give a purely combinatorial proof.

\vspace{5pt}
\noindent
So we start with a witness of non-distality of the following form:

\begin{itemize}
\item a base set of parameters $A$, and it what follows we work over $A$ (even when not explicitly mentioned);
\item an indiscernible sequence $I=( a_i)_{i\in \mathcal I}$ with $\mathcal I=(0,1)$ (the usual interval of $\mathbb R$) for simplicity;
\item a tuple $b=(b_j)_{j<n}$, some $l\in (0,1)$ and tuple $a$ such that:\\
-- $a$ fills the cut ``$l^+$": $((a_i:i\leq l),(a_i:i>l))$ of $I$,\\
-- $I$ is $b$-indiscernible,\\
-- $I$ with $a_l$ replaced by $a$ is not indiscernible over $b$.
\end{itemize}

We make some simplifications. First let $m<n$ be the first integer such that $b'=b_{< m}$ satisfies the requirements in place of $b$. We can add $b_{<m-1}$ as parameters to the base (by base change, or equivalently we can replace $a_i$ by $a_i' = a_i\hat{~}b_{<m-1}$) and replace $b$ by $b_{m-1}$. Therefore, we may assume that $|b|=1$. Next, adding again some parameters to the base, we may assume that for $i\in \mathcal I$, $\tp(a/b)\neq \tp(a_i/b)$.

The goal of the construction that follows is to reverse the situation of $a$ and $b$, {\it i.e.}, to construct an indiscernible sequence starting with $b$ that is not distal, the non-distality being witnessed by $a$ (or a conjugate of it).

\vspace{5pt}
\noindent
\un{Step 1}: Derived sequence

\vspace{4pt}
\noindent
Let $r=\tp(a,b)$. We construct a new sequence $( a_i')_{ i\in \mathcal I}$ such that:
\begin{itemize}
\item $a_i'$ fills the cut $i^+$ of $I$;
\item $tp(a_i',b)=r$ for each $i$;
\item The sequence $\llg (a_i,a_i') : i\in \mathcal I \rrg$ is $b$-indiscernible.
\end{itemize}
This is possible by indiscernability of $(a_i)_{i\in \mathcal I}$ over $b$ (by sliding, we may choose the $a_i'$s filling the cuts and then extract).

\vspace{5pt}
\noindent
\un{Step 2}: Constructing an array

\vspace{4pt}
\noindent
Using Lemma \ref{limittype} we can iterate this construction to obtain an array $\langle a_i^n:i\in \mathcal I, n<\omega\rangle$ and sequence $\langle b_n:n<\omega \rangle$ such that:
\begin{itemize}
\item $a_i^0=a_i$ for each $i$;
\item for each $i\in \mathcal I$, $0<n<\omega$, the tuple $a_i^n$ realizes the limit type of the cut $i^+$ of $I$ over $\langle b_k,a_i^k: i\in \mathcal I, k<n\rangle$;
\item for each $0<n<\omega$, $tp(b_n, ( a_i^n)_{i\in \mathcal I }/I)=tp(b, ( a_i')_{i\in \mathcal  I } /I)$.
\end{itemize}
\noindent
\underline{Claim}: For every $\eta: \mathcal I_0\subset \mathcal I \rightarrow \omega$ injective, the sequence $\langle a_i^{\eta(i)}:i\in \mathcal I_0 \rangle$ is indiscernible, of same EM-type as $I$.
\begin{proof}
Easy, by construction.
\end{proof}

Expanding and extracting, we may assume that the sequence of rows $\langle b_n + ( a_i^n)_{i\in \mathcal I }: 0<n<\omega \rangle$ is indiscernible and that $\langle (a_i^n)_{0<n<\omega} :i\in \mathcal I \rangle$ is indiscernible over the sequence $(b_n)_{n<\omega}$.

\vspace{5pt}
\noindent
\un{Step 3}: Conclusion

\vspace{4pt}
\noindent
\un{Claim}: The sequences $(b_n)_{n<\omega}$ and $\langle (a_i^n)_{i \in \mathcal I}: 0<n<\omega \rangle$ are weakly linked (Definition \ref{weaklylinked}).
\begin{proof}
Assume for example that some $\phi(b_n,a_i^k)$ holds for all $i \in \mathcal I$ and any $(k,n)$ such that $k<n$. Take $n$ very large and take $\eta$ as in the first claim such that the truth value of ``$\eta(i) <n$" alternates more times than the alternation number of $\phi$. Then we see that $\phi(b_n,a_i^k)$ must hold also for $k>n$ (otherwise $\phi(b_n,y)$ would alternate too much on the sequence $(a_i^{\eta(i)})$). We can do something similar if the formula $\phi$ has extra parameters from the $b_n$'s or $a_i^n$'s, thus it follows that the sequences are weakly linked.
\end{proof}

\noindent
Choose an increasing map $\eta: \omega \impl \mathcal I$, then the sequences $(b_n)_{n<\omega}$ and $(a_{\eta(n)}^n)_{n<\omega}$ are weakly linked but not mutually indiscernible. This contradicts Lemma \ref{declemma} and finishes the proof of Theorem \ref{dim1}.

\begin{cor}
If all generically stable measures in dimension 1 are smooth, then all generically stable measures are smooth.
\end{cor}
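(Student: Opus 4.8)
The plan is to deduce distality of $T$ from the hypothesis and then close the loop with Proposition \ref{gentrans}. Concretely, I would first show that the assumption forces every indiscernible sequence of elements of the home sort to be distal, then apply Theorem \ref{dim1} to conclude that $T$ is distal, and finally invoke Proposition \ref{gentrans} to get that \emph{all} generically stable measures (in every dimension) are smooth.

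For the first step, I would begin with the observation that whether an indiscernible sequence is distal depends only on its EM-type: the definition of distality quantifies over all dense sequences $J$ of the same EM-type, so two indiscernible sequences with the same EM-type are simultaneously distal or not. Hence, given an arbitrary indiscernible sequence $I$ of elements of the home sort, I may replace it by a sequence $J$ of the same EM-type indexed by $[0,1]$ without affecting whether it is distal. Now let $\mu$ be the average measure of $J$ over some model $M$, as defined just before the lemma computing $S(\mu)$. Since $J$ consists of elements of the home sort, $\mu$ is a measure in a single home-sort variable, i.e.\ a measure in dimension $1$, and it is generically stable (indeed definable and finitely satisfiable over $J$). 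By hypothesis $\mu$ is smooth, so by the proposition ``Smooth measures imply distality'' the sequence $J$ is distal, and therefore so is $I$.

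Putting this together: every indiscernible sequence of elements of the home sort is distal, so Theorem \ref{dim1} gives that $T$ is distal, and then Proposition \ref{gentrans} yields that all generically stable measures are smooth. The argument is essentially an assembly of results already established, so I do not expect a genuine obstacle; the only points that require a line of justification are the invariance of distality under passing to a sequence of the same EM-type indexed by $[0,1]$ (handled by the EM-type remark above) and the matching of ``dimension $1$'' for the measure with ``home sort'' for the underlying sequence.
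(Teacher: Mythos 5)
Your proof is correct and is exactly the route the paper intends (it leaves the corollary without proof precisely because it is this assembly of the average-measure equivalence, Theorem \ref{dim1}, and Proposition \ref{gentrans}). The two justificatory remarks you single out — that distality depends only on the EM-type, so one may re-index by $[0,1]$, and that the average measure of a home-sort sequence is a dimension-$1$ generically stable measure — are the right ones and are handled correctly.
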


This generalizes results of \cite{Finding} where this was proved under additional assumptions.

\begin{cor}\label{omin}
If $T$ is dp-minimal and has no generically stable type (in $M$), then it is distal. In particular o-minimal theories and the p-adics are distal.
\end{cor}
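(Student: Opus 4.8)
\textbf{Proposed proof of Corollary \ref{omin}.}

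The plan is to reduce to one dimension via Theorem \ref{dim1} and then invoke dp-minimality. By Theorem \ref{dim1} it suffices to show that every indiscernible sequence $I$ of elements of the home sort is distal, so fix such an $I$ and split into two cases. If $I$ is \emph{not} totally indiscernible, then Lemma \ref{dpminlemme} applies verbatim (this is exactly where dp-minimality is used) and gives that $I$ is distal. If $I$ is totally indiscernible and constant, it is trivially distal: any dense sequence of the same EM-type is constant, and a tuple filling a cut of a constant sequence is just that same element, so adding such tuples preserves indiscernibility.

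The remaining case is that $I$ is totally indiscernible and nonconstant, and here I would derive a contradiction with the hypothesis by producing a non-realized generically stable type. Work over a base $A$ so that $I$ is totally indiscernible over $A$, and let $p=\lim(I/\monster)$; it is finitely satisfiable in $I$, hence $I$-invariant. Using $NIP$ one checks that for every formula $\phi(x;b)$ the set $\{a\in I:\models\phi(a;b)\}$ is finite or cofinite in $I$ — otherwise, since $I$ is an indiscernible \emph{set}, one can reindex $I$ so that $\phi(x;b)$ alternates arbitrarily often along it, contradicting $NIP$. This simultaneously yields that $p$ is definable over a finite subset of $I$ (by a ``majority'' formula over a sufficiently long finite tuple from $I$) and that $p$ is generically stable; alternatively one may simply quote the standard correspondence between generically stable types and totally indiscernible sequences recalled just before the statement. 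Moreover $p\vdash x\ne a$ for every $a\in I$, and $p$ is non-algebraic, so its restriction to a suitable $\kappa$-saturated $M\supseteq AI$ is a non-realized generically stable type over $M$ — contradicting the assumption. Hence every home-sort indiscernible sequence is distal and $T$ is distal.

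For the ``in particular'' clause, o-minimal theories and the $p$-adics are dp-minimal (already recorded in the preliminaries), so it remains to note they have no non-realized generically stable type: in an o-minimal theory a non-realized $1$-type over a model corresponds to a cut and its Morley sequence is strictly monotone for the definable order, hence not totally indiscernible, and the general case reduces to $1$-types; for $\mathbb{Q}_p$ one argues similarly using the definable order underlying the valuation. The main obstacle is not in the reduction itself — Theorem \ref{dim1} and Lemma \ref{dpminlemme} do the real work — but in the bookkeeping of the totally-indiscernible case (getting an honestly non-realized generically stable type) and in stating cleanly why o-minimal theories and $\mathbb{Q}_p$ carry no generically stable type.
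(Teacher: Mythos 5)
Your proof is correct and follows exactly the route the paper intends: reduce to the home sort via Theorem \ref{dim1}, apply Lemma \ref{dpminlemme} to non-totally-indiscernible sequences, and rule out nonconstant totally indiscernible sequences by extracting a non-realized generically stable type from the limit type. The paper's own proof is just a one-line pointer to Lemma \ref{dpminlemme}, leaving implicit the steps you spell out (including the correspondence between totally indiscernible sequences and generically stable types, and the verification for o-minimal theories and the p-adics), so your write-up is a legitimate expansion of the same argument.
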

\begin{proof}
Recall from \ref{dpminlemme} that in a dp-minimal theory, any indiscernible sequence of elements is either distal or totally indiscernible.
\end{proof}


%
%
%
%

\subsection*{Appendix: strong honest definitions}

In a later work \cite{ExtDef2} with Artem Chernikov, we give yet another characterization of distal theories, which is probably the easiest one to use. In particular, one can obtain with it a much shorter proof of the fact that generically stable measures are smooth. We give only the statement here and refer the reader to \cite{ExtDef2} for more details.

\begin{thm}
\label{thm_stronghonest} A theory $T$ is distal if and only if the following holds:

For any $\phi(x,y)$ there is $\theta(x,z)$ such that: for any finite set $C$ and 
tuple $a$, there is $b\in C$ such that $\models\theta(a,b)$
and $\theta(x,b)\vdash\tp_{\phi}(a/C)$.
\end{thm}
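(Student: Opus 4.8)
The plan is to prove the two implications separately, using the external characterization of distality (Lemma~\ref{defitrans}) as the main device on both sides, after reducing (via base change, Corollary~\ref{basechange}) to configurations living on a single indiscernible sequence.

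For \emph{distality $\Rightarrow$ strong honest definitions}, fix $\phi(x;y)$. I would first isolate a ``bracketing lemma'': if $T$ is distal, then for every $A$-indiscernible sequence $\tilde C$ and tuple $a$, the type $\tp_\phi(a/\tilde C)$ is isolated over $\tilde C$ by the instances of $\phi$ at boundedly many elements of $\tilde C$ that bracket the cuts induced by $(a,\phi)$ on $\tilde C$. Proposition~\ref{shrinking1} supplies these cuts and bounds their number by the alternation number of $\phi$; the real content is that distality forbids ``extra wiggles'' of $\phi(a,\cdot)$ strictly between two consecutive brackets, which one sees by feeding such a putative wiggle (after expanding, Lemma~\ref{expanding}, and sliding, Corollary~\ref{sliding2}) into Lemma~\ref{defitrans} or Lemma~\ref{distalsimpl} to contradict distality of an appropriate sub-sequence. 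Granting the lemma, an arbitrary finite $C$ is handled by Ramsey-embedding it into an indiscernible $\tilde C$, applying the lemma, and pulling the bracketing elements back into $C$; the resulting formula has arity bounded in terms of $\phi$ alone, and since there are, up to the choice of a literal truth-pattern, only finitely many candidates, a compactness argument over all finite $C$ yields a single uniform $\theta(x;z)$. (One can alternatively bootstrap from the non-strong honest definitions available in any NIP theory, using distality only to push the honest-definition parameters from an elementary extension down into $C$.)

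For \emph{strong honest definitions $\Rightarrow$ distality}, I argue contrapositively. If $T$ is not distal, then by Lemma~\ref{defitrans} (absorbing parameters) there is a dense sequence $E=E_1+b+E_2$, indiscernible but not $A$-indiscernible over some base $A$ for some tuple $c$, and by Proposition~\ref{shrinking1} the failure is already witnessed by a single formula $\phi(x;y)\in L(A)$ together with the finite configuration of cuts that $(c,\phi)$ induces on $E_1+b+E_2$ versus on $E_1+E_2$. I would then show that no $\theta(x;z)$ is a strong honest definition for this $\phi$: taking finite $C\subseteq E_1+b+E_2$ larger and larger, the type $\tp_\phi(c/C)$ ``remembers'' the position of $b$ inside the cut in a way that, by the failure of distality, is not isolated over $C$ by a fixed number of instances of $\phi$ — concretely, sliding $c$ along $E_1+E_2$ (indiscernible over $c$) produces a conjugate realizing $\theta(x,\bar b)$ but disagreeing with $\tp_\phi(c/C)$ at an element of $C$ not appearing in $\bar b$, once $|C|$ is large compared with $|z|$. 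The delicate part here is the bookkeeping needed to guarantee a ``movable'' element of $C$ outside $\bar b$ even when $\bar b$ is forced to involve elements near $b$; it is again organized around Lemma~\ref{defitrans}.

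The step I expect to be the main obstacle is the bracketing lemma in the first implication: proving that distality really rules out \emph{all} extra wiggles, including the degenerate one supported on a single point of the sequence, where choosing the right sub-indiscernible-sequence to contradict Lemma~\ref{distalsimpl} requires care. The uniformity of $\theta$ and the transfer of parameters into $C$ should then be formal consequences of NIP and compactness.
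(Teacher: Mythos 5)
First, note that the paper does not actually prove Theorem~\ref{thm_stronghonest}: it is stated in the appendix and explicitly deferred to the later paper with Chernikov, so there is no in-paper proof to compare yours against. Judged on its own, your text is a plan rather than a proof, and its central device for the forward direction --- the ``bracketing lemma'' --- is false as stated. Take $T=$ DLO, $\phi(x;y):x<y$, $\tilde C=(c_q)_{q\in\mathbb Q}$ increasing and $a$ filling an irrational cut. The instances of $\phi$ at any finitely many elements bracketing the cut assert only $c_{q_i}<x<c_{q_j}$ for the nearest brackets, and this is satisfied by elements $c_r\in\tilde C$ with $q_i<r$ and $r$ below the cut, whose $\phi$-type over $\tilde C$ differs from that of $a$. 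So distality does not ``forbid extra wiggles between consecutive brackets'' for a dense indiscernible $\tilde C$, and the route ``prove the lemma for $\tilde C$, then pull back to finite $C$'' collapses. A finite-$C$ version fares no better if $\theta$ is required to be a bounded Boolean combination of $\phi$-instances: already in the $p$-adics with $\phi(x;y):v(x-y)\geq 0$, the condition ``$a$ lies in none of the balls $c+\mathbb Z_p$, $c\in C$'' is not implied by boundedly many instances $\neg\phi(x,c)$, so the isolating formula $\theta$ must genuinely be a \emph{different} formula. That is precisely why the theorem is stated with an arbitrary $\theta(x,z)$.

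The only viable route for the forward direction is the one you relegate to a parenthesis: start from honest definitions (available in any NIP theory for the pair $(M,A)$, with parameters in an elementary extension $A'$ of $A$), and use distality --- via the external characterization, Lemma~\ref{defitrans}, applied to coheir/Morley sequences inside $A'$ over $A$ --- to bring the parameters down into $A$ itself; finite $C$ is then handled by a compactness argument with a pseudo-finite predicate, and the uniformity of $\theta$ in $(C,a)$ is a separate, substantive step (in the published proof it rests on the uniformity of honest definitions, obtained from the $(p,q)$-theorem). None of this is supplied by ``only finitely many candidates up to a truth-pattern plus compactness.'' Your sketch of the converse is closer to the mark --- counting the length of $z$ against a large finite $C\subseteq I_1+b+I_2$ and sliding $b$ to an element of $C$ in the same position relative to the parameter tuple does yield a contradiction with $\theta(x,d)\vdash\tp_{\phi}(a/C)$ --- but you acknowledge yourself that the bookkeeping is not done. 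As it stands, the proposal has a genuine gap in the forward direction and an unfinished argument in the converse.
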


\section{Domination in non-distal theories}

We have now two extreme notions for indiscernible sequences: distality and total indiscernibility. We want to understand the intermediate case. In particular, we want to show that non-distality is witnessed by stable-like phenomena. This part is essentially independent of the previous one but is of course motivated by it. We first concentrate on indiscernible sequences, and then adapt the results to invariant types. A last subsection gives an application to externally definable sets.

The reader might find it useful to have in mind the example of a colored order as defined in the introduction while reading this section.
\\

We will sometimes work with saturated indiscernible sequences, as defined below.

\begin{defi}[Saturated sequence]
An indiscernible sequence of $\alpha$-tuples is saturated if it is indexed by an $(|T|+|\alpha|)^+$-saturated dense linear order without end points.
\end{defi}

In this section, all cuts are implicitly assumed to be Dedekind ({\it i.e.}, of infinite cofinality from both sides).

If $\bar a$ fills a cut $\mathfrak c$ of $I$, an extension $J \supseteq I$ is \emph{compatible with} $\bar a$ if $\bar a$ also fills a cut of $J$.

We fix a global $A$-invariant type $p\in S_{\alpha}(\monster)$, for some small parameter set $A$. The indiscernible sequences we will consider will be Morley sequences of $p$. This is not a real restriction since every indiscernible sequence is a Morley sequence of some invariant type.

The following is the main definition of this section.

\begin{defi}[Domination]\label{stablebase}
Let $I$ be a dense indiscernible Morley sequence of $p$ over $A$, $a\models p|_{AI}$ and $\mathfrak c$ a cut of $I$ filled by a dense sequence $\bar a_* = \llg a_t:t\in \mathcal I\rrg$ of $\alpha$-tuples. We say that $\bar a_*$ dominates $a$ over $(I,A)$ if: For every cut $\mathfrak d$ of $I$ distinct from $\mathfrak c$, and $\bar b$ a dense sequence filling $\mathfrak d$, we have in the sense of $T(A)$: $$\bar b \downfree_I \bar a_*\Rightarrow \bar b \downfree_I a.$$

We say that $\bar a_*$ \emph{strongly dominates} $a$ over $(I,A)$ if for every $I\subseteq J$ compatible with $\bar a_*$ over $A$ and such that $a\models p|_{AJ}$, $\bar a_*$ dominates $\bar a$ over $J$. 
\end{defi}

We use the notation $\bar b \downfree_I a$ introduced after Definition \ref{defi_Iind} which, in this situation, means $a\models p|_{I\bar b}$.

\begin{ex}
Let $T$ be the theory of colored orders, as defined in the introduction. Let $p$ be an $A$-invariant type of an element of a new color. Let $I+a$ be a Morley sequence of $p$ over $A$. Let $\mathfrak c$ be a cut in $I$. If $a_*$ fills $\mathfrak c$, then $a_*$ dominates $a$ over $(I,A)$ if and only if $a$ and $a_*$ have the same color.
\end{ex}

\begin{lemme}\label{domsim}
The fact that $\bar a_*$ strongly dominates $a$ over $(I,A)$ only depends on the similarity class of $\tp(a,\bar a_*/I)$ over $A$.
\end{lemme}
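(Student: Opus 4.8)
The plan is to show that the two clauses defining "$\bar a_*$ strongly dominates $a$ over $(I,A)$" — namely that for every compatible extension $J\supseteq I$ with $a\models p|_{AJ}$ and every cut $\mathfrak d\neq \mathfrak c$ filled by a dense $\bar b$ with $\bar b\downfree_J \bar a_*$ we have $\bar b\downfree_J a$ — are statements whose truth is transferred along any similarity of $\tp(a,\bar a_*/I)$ over $A$, using the sliding and expanding lemmas (\ref{expanding}, \ref{sliding1}, \ref{sliding2}, \ref{sliding3}) to move configurations between $I$ and a similar sequence $I'$. So suppose $\tp(a,\bar a_*/I)$ and $\tp(a',\bar a_*'/I')$ are similar over $A$, and assume $\bar a_*$ strongly dominates $a$ over $(I,A)$; I want to deduce the same for $(a',\bar a_*',I',A)$. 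Unpacking, I fix a witness to the failure on the $I'$ side: a compatible $J'\supseteq I'$ with $a'\models p|_{AJ'}$, a cut $\mathfrak d'\neq\mathfrak c'$ filled by a dense $\bar b'$ with $\bar b'\downfree_{J'}\bar a_*'$ but $\bar b'\ndownfree_{J'} a'$, the last being witnessed by some formula $\phi\in L(A\cup J')$ using finitely many parameters from $J'$.

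The key step is to pull this configuration back to the $I$ side. First, since only finitely many parameters of $J'$ are involved (plus the finitely many parameters appearing in the similarity data we need to preserve), I can shrink $J'$ to a small subsequence $J_0'$ still carrying $\bar a_*'$, $\bar b'$ and the relevant parameters, arranging by \ref{expanding} that it is dense without endpoints and still a Morley sequence of $p$ over $A$ of the right similarity type; replacing $J'$ by $J_0'$ costs nothing because $\ndownfree$ is witnessed by a single formula. Now, using that $\tp(a',\bar a_*'/I')$ is similar to $\tp(a,\bar a_*/I)$ over $A$, and applying the sliding corollaries \ref{sliding2}/\ref{sliding3} to the extra data $\bar b'$ and $J'\setminus I'$, I obtain an extension $J\supseteq I$ and tuples realizing the transported configuration — i.e.\ $\bar a_*$ fills a cut of $J$, $a$ can be taken to realize $p|_{AJ}$ (here I invoke that $I$ and the extensions are Morley sequences of the \emph{same} invariant type $p$ over $A$, so "realizes $p|_{AJ}$" is itself a condition preserved under the transport, since $p$ is $A$-invariant and the relevant type over $AJ$ is determined by the similarity class over $A$), some cut $\mathfrak d\neq\mathfrak c$ of $J$ filled by a dense $\bar b$, with $\bar b\downfree_J\bar a_*$ and $\bar b\ndownfree_J a$ — the independence and dependence statements both transferring because each is expressed by (non-)indiscernibility over $A$ of a configuration whose similarity type we have matched. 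This contradicts that $\bar a_*$ strongly dominates $a$ over $(I,A)$. By symmetry of the similarity relation the converse direction is identical, giving the equivalence.

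The main obstacle I anticipate is bookkeeping: keeping simultaneous track of \emph{all} the similarity-relevant data — the cut $\mathfrak c$, the cut $\mathfrak d$ and its position relative to $\mathfrak c$, the Dedekind-ness and cofinality of each cut, the finite parameter sets of $\phi$ and of the tuples $\bar a_*,\bar b$, and the compatibility of $J$ with $\bar a_*$ — so that a single application of Corollary \ref{sliding3} (with $a'$ there taken to be all of this extra data at once) produces a configuration matching the original one in every respect that $\downfree_J$ and the witnessing formula can see. A secondary subtlety is the clause "$a\models p|_{AJ}$": one must note that because $I\subseteq J$ is a Morley sequence of the $A$-invariant $p$ and $J$ itself is transported with the correct similarity type over $A$, the type $p|_{AJ}$ is recovered on the nose, so "$a$ realizes $p|_{AJ}$" is not an extra hypothesis to be checked but part of the transported data. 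Once the dictionary is set up, each individual transfer is a routine invocation of \ref{expanding} and \ref{sliding1}–\ref{sliding3}, exactly as in the proof of Lemma \ref{distalsimpl}.
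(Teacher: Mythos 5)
Your proposal is correct and follows essentially the same route as the paper: assume a witnessed failure of strong domination on one side, then use Corollary \ref{sliding3} to transport the entire configuration (the extension, the second cut, the filling sequence, and the witnessing formula) to the other side, observing that compatibility, the condition $a\models p|_{AJ}$, and both $\downfree$ statements are all determined by the similarity class over $A$. The extra shrinking step you insert is harmless but unnecessary, since \ref{sliding3} already handles the whole extension at once.
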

\begin{proof}
The statement means that if $J$ is a dense indiscernible sequence, $\bar b_*$ and $b$ are tuples such that $\tp(b,\bar b_*/J)$ is similar to $\tp(a,\bar a_*/I)$ over $A$, then $\bar b_*$ strongly dominates $b$ over $(J,A)$ if and only if $\bar a_*$ strongly dominates $a$ over $(I,A)$. Take such $\bar b_*$, $b$ and $J$. Assume that $\tp(\bar b_*,b/J)$ is similar to $\tp(\bar a_*,a/I)$ over $A$. In particular, $J$ and $I$ have same EM-type over $A$, so $J$ is also a Morley sequence of $p$ over $A$. It also follows that $b\models p|_{JA}$ so its makes sense to ask for domination.

Assume that $\bar b_*$ does not strongly dominate $b$ over $(J,A)$. Then we can find a dense sequence $J'\supseteq J$ compatible with $\bar b_*$ such that $b\models p|_{J'A}$, some cut $\mathfrak d$ of $J'$ and sequence $\bar b'$ filling $\mathfrak d$ such that $\bar b' \downfree_{J'} \bar b_*$, but $\bar b' \ndownfree_{J'} b$ (all over $A$). By Corollary \ref{sliding3} (sliding), we may find $I' \supseteq I$ and $\bar a'$ such that $\tp(\bar b',\bar b_*,b/J')$ is similar to $\tp(\bar a',\bar a_*,a/I')$ over $A$. This implies the following facts:\\
-- $I'$ is compatible with $\bar a_*$ and $a\models p|_{I'A}$;\\
-- $\bar a'$ fills a cut of $I'$ distant from the cut of $\bar a_*$;\\
-- $\bar a' \downfree_{I'} \bar a_*$ and $\bar a' \ndownfree_{I'} a$.\\
Therefore $\bar a_*$ does not strongly dominate $a$ over $(I,A)$.
\end{proof}

\begin{lemme}\label{shrinkdom}
If $\bar a_*$ strongly dominates $a$ over $(I,A)$, then there is a subsequence $I'\subseteq I$ of size at most $|T|+|\alpha|$ such that $\bar a_*$ strongly dominates $a$ over $(I',A)$.
\end{lemme}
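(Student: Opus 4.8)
The plan is to reduce the statement to the previous lemmas about similarity, in particular Lemma~\ref{domsim} together with the shrinking lemma for types (Lemma~\ref{shrinking}). The key observation is that strong domination, by Lemma~\ref{domsim}, depends only on the similarity class of $\tp(a,\bar a_*/I)$ over $A$, so it suffices to find a small $I'\subseteq I$ over which the type of the pair $(a,\bar a_*)$ has the same similarity class. This is essentially what Lemma~\ref{shrinking} provides, applied to the combined tuple consisting of $a$ and finitely many (or $|T|$-many) elements of $\bar a_*$, plus a little care because $\bar a_*$ is itself a \emph{sequence} filling the cut $\mathfrak c$, not a single tuple.

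First I would observe that $\bar a_*$ is a dense sequence filling the cut $\mathfrak c$, and that the domination statement is really about $I\cup\bar a_*$ as a single indiscernible sequence with a distinguished convex piece; so I would set $\widehat I = I$ with $\bar a_*$ inserted into $\mathfrak c$, which is again a dense indiscernible sequence over $A$. Then $a\models p|_{A\widehat I}$, and the claim ``$\bar a_*$ strongly dominates $a$ over $(I,A)$'' is determined by the similarity class over $A$ of $\tp(a/\widehat I)$ (this is what Lemma~\ref{domsim} says in the language of $\widehat I$). Now apply Lemma~\ref{shrinking} to the tuple $a$ and the sequence $\widehat I$: there is $\widehat I'\subseteq \widehat I$ of size at most $|T|+|a|=|T|+|\alpha|$ such that $\tp(a/\widehat I')$ and $\tp(a/\widehat I)$ are similar over $A$. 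The only issue is that $\widehat I'$ might not contain an (infinite) piece of $\bar a_*$ sitting in the cut $\mathfrak c$; so I would instead apply the shrinking to $a$ together with the requirement that the subsequence picked inside the $\bar a_*$-block be an infinite (say countable dense) cofinal/coinitial pair around the cut, so that $\widehat I'$ decomposes as $I'\cup \bar a_*'$ with $I'\subseteq I$ small, $\bar a_*'\subseteq \bar a_*$ a dense subsequence still filling the corresponding cut of $I'$, and $\tp(a,\bar a_*'/I')$ similar to $\tp(a,\bar a_*/I)$ over $A$.

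Having secured such $I'$ and $\bar a_*'$, Lemma~\ref{domsim} immediately gives that $\bar a_*'$ strongly dominates $a$ over $(I',A)$. It remains to replace $\bar a_*'$ by $\bar a_*$: but strong domination of $a$ by a sequence filling the cut should not depend on which dense subsequence of the block we name — more precisely, if $\bar a_*'\subseteq \bar a_*$ are two dense sequences filling the same cut and $\tp(\bar a_*'/\cdots)$ determines $\tp(\bar a_*/\cdots)$ in the relevant sense, then domination by one is equivalent to domination by the other. (Alternatively, one checks directly from Definition~\ref{stablebase} that enlarging the witness sequence $\bar a_*$ within its convex block, or shrinking it to a dense subsequence, does not change whether the implication $\bar b\downfree_I\bar a_*\Rightarrow \bar b\downfree_I a$ holds, since $\bar b\downfree_I\bar a_*\iff \bar b\downfree_I\bar a_*'$ for dense $\bar a_*'\subseteq\bar a_*$ filling the same cut, by indiscernibility and compactness.) This yields that $\bar a_*$ strongly dominates $a$ over $(I',A)$, as required.

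The main obstacle I anticipate is the bookkeeping around the sequence $\bar a_*$: one must make sure the small subsequence extracted from $\widehat I$ genuinely retains an infinite chunk of $\bar a_*$ straddling the cut (so that the phrase ``$\bar a_*'$ fills a cut of $I'$'' still makes sense and Definition~\ref{stablebase} applies), and one must justify that passing between $\bar a_*$ and a dense subsequence $\bar a_*'$ of it is harmless for the domination property. Both points are routine given the sliding/shrinking machinery of Section~1, but they are the places where the argument needs to be written carefully rather than waved through; the size bound $|T|+|\alpha|$ comes out exactly because we are shrinking with respect to the single $\alpha$-tuple $a$.
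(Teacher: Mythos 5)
You have identified exactly the two ingredients the paper uses (Lemma \ref{domsim} plus Lemma \ref{shrinking}), but your execution takes an unnecessary detour that creates real problems. Lemma \ref{domsim} is stated for the similarity class of $\tp(a,\bar a_*/I)$ \emph{with $\bar a_*$ on the tuple side}, i.e.\ for the structure $I_{[a\hat{~}\bar a_*]}$ whose universe is $I$ alone. So the intended argument is simply: apply Lemma \ref{shrinking} to the sequence $I$ and the single tuple $d=a\hat{~}\bar a_*$, obtaining $I'\subseteq I$ with $\tp(a,\bar a_*/I')$ similar to $\tp(a,\bar a_*/I)$ over $A$, and conclude by Lemma \ref{domsim}. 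The sequence $\bar a_*$ is never shrunk and never inserted into $I$. (The size bound one literally gets this way is $|T|+|\alpha|+|\bar a_*|$, which equals $|T|+|\alpha|$ in every use of the lemma since Proposition \ref{existssbase} produces $\bar a_*$ of length at most $|T|+|\alpha|$.)

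Your version, which forms $\widehat I=I\cup\bar a_*$ and shrinks with respect to $a$ alone, has two gaps. First, similarity of $\tp(a/\widehat I')$ with $\tp(a/\widehat I)$ is a statement about the structures $\widehat I_{[a]}$, where $\bar a_*$ sits in the sequence; it is not the hypothesis of Lemma \ref{domsim}, which concerns $I_{[a\hat{~}\bar a_*]}$, and the passage from one to the other is not automatic (the cuts and truth values tracked by the two structures are different). Second, the parenthetical claim $\bar b\downfree_I\bar a_*\iff\bar b\downfree_I\bar a_*'$ is false from right to left: indiscernibility of $I\cup\bar a_*'\cup\bar b$ says nothing about how $\bar b$ interacts with the discarded points of $\bar a_*\setminus\bar a_*'$. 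You are saved only because the final step of your argument uses the true direction (a subsequence of an indiscernible sequence is indiscernible, so $\bar b\downfree_{I'}\bar a_*\Rightarrow\bar b\downfree_{I'}\bar a_*'$, whence domination by the smaller $\bar a_*'$ implies domination by the larger $\bar a_*$). Both issues evaporate if you keep $\bar a_*$ intact as part of the distinguished tuple, which is how the paper's one-line proof should be read.
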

\begin{proof}
This follows from the previous lemma and Lemma \ref{shrinking} (shrinking).
\end{proof}

\begin{prop}\label{existssbase}
Let $I$ be a dense Morley sequence of $p$ over $A$ and $a\models p|_{AI}$, $\mathfrak c$ a cut of $I$ then there is a sequence of $\alpha$-tuples $\bar a_*$ of length at most $|T|+|\alpha|$ such that $\bar a_*$ fills $\mathfrak c$ and $\bar a_*$ strongly dominates $a$ over $(I,A)$.
\end{prop}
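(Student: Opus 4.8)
The plan is to construct a strongly dominating $\bar a_*$ by a transfinite recursion which, at each stage, "absorbs" into the cut $\mathfrak c$ a witness to the current failure of domination, and which is forced to halt after at most $|T|+|\alpha|$ steps by bounded weight.

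In more detail: I would build an increasing chain $\llg \bar a_*^\xi : \xi \le \xi^\ast \rrg$ of dense sequences of $\alpha$-tuples of size $\le |T|+|\alpha|$ filling $\mathfrak c$, starting from any such sequence (which exists by Lemma \ref{expanding}), taking unions at limit stages. Given $\bar a_*^\xi$, if it strongly dominates $a$ over $(I,A)$ we set $\xi^\ast=\xi$ and stop. Otherwise there are $J^\xi \supseteq I$ compatible with $\bar a_*^\xi$ over $A$ with $a\models p|_{AJ^\xi}$, a cut $\mathfrak d^\xi$ of $J^\xi$ other than the one filled by $\bar a_*^\xi$, and a dense $\bar b^\xi$ filling $\mathfrak d^\xi$ with $\bar b^\xi \downfree_{J^\xi} \bar a_*^\xi$ but $\bar b^\xi \ndownfree_{J^\xi} a$. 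By the shrinking lemma (Lemma \ref{shrinking}) we may take $J^\xi$ and $\bar b^\xi$ of size $\le |T|+|\alpha|$, and by the sliding lemmas (Corollaries \ref{sliding2} and \ref{sliding3}), carrying along the relationship of $\bar b^\xi$ with both $a$ and $\bar a_*^\xi$, we produce a small dense sequence $\bar c^\xi$ filling the piece of $\mathfrak c$ just above $\bar a_*^\xi$, with $\bar c^\xi \ndownfree_I a$, so that $\bar a_*^{\xi+1}:=\bar a_*^\xi$ followed by $\bar c^\xi$ (inside $\mathfrak c$) still fills $\mathfrak c$. The slid copies are set up so that a witness independent over $I$ from $\bar a_*^{\xi+1}$ is, after undoing the sliding, independent from $\bar b^\xi$ as well; this is what makes each new block genuinely increase the information of $\bar a_*^\xi$ about $a$.

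The recursion then halts at some $\xi^\ast \le |T|+|\alpha|$. Indeed the blocks $\bar c^\xi$ are pairwise disjoint convex sub-blocks of the single cut $\mathfrak c$ of the $A$-indiscernible sequence $\bar a_*^{\xi^\ast}$-enlarged-$I$, hence are mutually indiscernible over $AI$; each moreover satisfies $\bar c^\xi \ndownfree_I a$, which — using that $\bar c^\xi$ and $a$ are separately generic, so the only possible obstruction is a genuine interaction — means exactly that $\bar c^\xi$ is not indiscernible over $\{a\}\cup I$. Were there $(|T|+|\alpha|)^+$ such blocks, we would have $(|T|+|\alpha|)^+$ mutually indiscernible sequences over $AI$, none indiscernible over $AI\cup\{a\}$, contradicting the obvious variant of Proposition \ref{weight} for the $|\alpha|$-tuple $a$. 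Thus $\bar a_*^{\xi^\ast}$ is a dense sequence filling $\mathfrak c$, of size $\le (|T|+|\alpha|)\cdot(|T|+|\alpha|)=|T|+|\alpha|$, which strongly dominates $a$ over $(I,A)$. (Alternatively one may allow the $\bar c^\xi$ to be large, obtain a large $\bar a_*$, and then shrink at the end: Lemma \ref{shrinkdom} gives a subsequence $I'\subseteq I$ of size $\le |T|+|\alpha|$ over which $\bar a_*$ still strongly dominates, and Lemmas \ref{domsim} and \ref{shrinking} then replace $\bar a_*$ by a small subsequence — using that strong domination over $(I',A)$ entails strong domination over $(I,A)$ since $I'\subseteq I$.)

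I expect the main obstacle to be the sliding step of the recursion: one must transport $\bar b^\xi$ from its cut of $J^\xi$ into the cut $\mathfrak c$ while keeping $\mathfrak c$ a cut of the enlarged sequence, preserving $\bar c^\xi \ndownfree_I a$, and — most delicately — preserving enough of the configuration that future witnesses remain independent from $\bar b^\xi$, so that the blocks $\bar c^\xi$ really are "new" and feed the weight argument. The only other point requiring care is the equivalence, used in the termination argument, between the combinatorial statement $\bar c^\xi \ndownfree_I a$ and non-indiscernibility of the sequence $\bar c^\xi$ over $\{a\}\cup I$. The remaining manipulations are routine applications of the shrinking/expanding/sliding toolkit of Section 1 together with Observation \ref{basicobs}.
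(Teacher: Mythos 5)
Your overall strategy (greedily absorb witnesses to the failure of domination, and bound the length of the recursion) is the same as the paper's, but the step that makes the recursion halt contains a genuine gap. You identify ``$\bar c^\xi\ndownfree_I a$'' with ``$\bar c^\xi$ is not indiscernible over $AI\cup\{a\}$'', and these are not equivalent. By the remark after Definition \ref{stablebase}, $\bar c^\xi\downfree_I a$ means $a\models p|_{AI\bar c^\xi}$; its failure is witnessed by a formula $\phi(x;\bar m)$ with $\bar m$ an increasing tuple meeting $\bar c^\xi$ such that $\models\phi(a;\bar m)$ while $\models\neg\phi(a;\bar m')$ for increasing tuples $\bar m'$ of the same pattern lying entirely in $I$. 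Such a $\phi(a;\cdot)$ can perfectly well be \emph{constant} on increasing tuples from $\bar c^\xi$ --- so that $\bar c^\xi$ remains indiscernible over $AIa$ --- and merely take a different value there than on tuples from $I$ (already in DLO: if $a$ and $\bar c$ both fall in the cut $\mathfrak c$ with $\bar c$ entirely below $a$, then $\bar c$ is indiscernible over $AIa$ yet $a\not\models\lim(\mathfrak c^-)|_{AI\bar c}$). Hence the conclusion of Proposition \ref{weight} --- some block is indiscernible over $AIa$ --- contradicts nothing, and the recursion is not forced to stop. (The type-level analogue, Lemma \ref{existsdom}, does terminate by a weight argument, but via Corollary \ref{weight2} over a saturated model, where independence genuinely is a product of types; importing that here would be circular, since the passage from cuts to types goes through this very proposition.)

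The correct halting mechanism is NIP via shrinking of indiscernibles: for each $\phi$, the number $\textsf{T}_J(a,\phi)$ of cuts induced by $(a,\phi)$ on an $A$-indiscernible $J$ is finite and non-decreasing in $J$. The paper sets $J\lhd J'$ iff $\textsf{T}_J(a,\phi)<\textsf{T}_{J'}(a,\phi)$ for some $\phi$, observes that a failure of strong domination ($\bar b\downfree_{I'}\bar a_*$ but $\bar b\ndownfree_{I'}a$) yields a strict $\lhd$-extension $K=I'\cup\bar a_*\cup\bar b$, and takes a $\lhd$-maximal member of the class of sequences embeddable between two Morley pieces of $p$ over which $a$ still realizes $p$; maximal elements exist because each of the $|T|+|\alpha|$ formulas contributes only finitely many increments along a chain. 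This also dissolves the sliding difficulty you flag as the ``main obstacle'': there is no need to transport each witness $\bar b^\xi$ into $\mathfrak c$ at every stage while preserving its relation to the actual $a$ (Lemma \ref{sliding1} would only hand you a conjugate of the pair $(a,\bar b^\xi)$, not a new partner for the original $a$). One keeps the witnesses wherever they occur and only at the very end moves the maximal sequence into the cut $\mathfrak c$ by an automorphism over $Aa$. Finally, note that in your fallback ``shrink at the end'' route, passing to an arbitrary small subsequence of $\bar a_*$ is not justified: removing part of a dominating sequence may destroy domination, so the shrinking must preserve the similarity/maximality data as in Lemmas \ref{shrinking} and \ref{domsim}.
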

\begin{proof}
Recall the notation $\textsf T_I(a,\phi)$ from Section \ref{sec_shrinking}. If $J\subseteq J'$ are two sequences, indiscernible over $A$, then for any formula $\phi$ for which this is well defined, we have: $\textsf T_J(a,\phi) \leq \textsf T_{J'}(a,\phi)$. We will write $J \lhd J'$ if for some $\phi$, this inequality is strict.

Let $\mathfrak I$ be the class of indiscernible sequences $J$ such that one can find dense sequences $J_1$ and $J_2$ satisfying:\\
-- $J_1+J+J_2$ is a Morley sequence of $p$ over $A$;\\
-- $a \models p|_{AJ_1J_2}$.

If we have a family $(I_i)_{i<\lambda}$ of indiscernible sequences such that $I_i \subseteq I_{j}$ and $I_i \lhd I_j$ hold for all $i<j$, then taking $I_{\lambda}$ to be $\bigcup_{i<\lambda} I_i$, we have $I_i \lhd I_{\lambda}$ for all $i$. Notice in addition that if each $I_i$ belongs to $\mathfrak I$, then it is also the case for $I_{\lambda}$ (we can find $J_1$ and $J_2$ by compactness). As the numbers $\textsf T_{J'}(a,\phi)$ are finite, it follows that we can find some sequence $J$ in the class $\mathfrak I$ such that there is no $J' \supset J$ in this class with $J \lhd J'$. By shrinking, we may assume that $J$ is of size $|T|+|\alpha|$. Take $J_1$ and $J_2$ as in the definition of $\mathfrak I$. Write $\mathfrak c=(I_1,I_2)$. Without loss, $J_1$ and $J_2$ have same order types as $I_1$ and $I_2$ respectively. Composing by an automorphism over $Aa$, we may assume that $J_1=I_1$ and $J_2=I_2$. Then $J$ fits in the cut $\mathfrak c$. Set $\bar a_*=J$. 

Assume that $\bar a_*$ does not strongly dominate $a$ over $(I,A)$. Then there is a dense sequence $I'\supseteq I$ a cut $\mathfrak d$ of $I'$ and a sequence $\bar b$ filling $\mathfrak d$ such that:\\
-- $\bar a_*$ fills a cut $\mathfrak c'$ of $I'$ (over $A$);\\
-- $a\models p|_{AI'}$;\\
-- $\bar b \downfree_{I'} \bar a_*$, and $\bar b \ndownfree_{I'} a$.\\
The sequence $K=I'\cup \bar a_*\cup \bar b$ (where $\bar a_*$ and $\bar b$ are placed in their respective cuts) belongs to $\mathfrak I$. Also $\bar b \ndownfree_{I'} a$ implies that $\bar a_* \lhd K$. This contradicts maximality of $\bar a_*$ and proves that $\bar a_*$ strongly dominates $a$ over $(I,A)$.
\end{proof}

\subsubsection{External characterization and base change}

Similarly to what we did in the distal case, we give an external characterization of domination. 

\begin{prop}[External characterization of domination]\label{extstablebase}
Let $I$ be a dense Morley sequence of $p$ over $A$, $a\models p_{AI}$. Let $\bar a_*$ fill a cut $\mathfrak c$ of $I$ over $A$ such that $\bar a_*$ strongly dominates $a$ over $(I,A)$. Let also $d\in \monster$. Assume: 
\begin{description}
\item[$\boxdot$ ] There is a partition $I=J_1+J_2+J_3+J_4$ such that $J_2$ and $J_4$ are infinite, $\mathfrak c$ in interior to $J_2$, $J_2\cup \{\bar a_*\}$ is indiscernible over $Ad+ J_1+J_3+J_4$ and $J_4$ is a Morley sequence of $p$ over $Ad + J_1+J_2+J_3$.
\end{description}
Then $a \models p|_{AId}$.
\end{prop}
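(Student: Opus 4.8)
The goal is to deduce the "external" statement $a \models p|_{AId}$ from the purely "internal" domination hypothesis together with condition $\boxdot$. The intuition is the Shelah reflection principle announced in the introduction: a property of the indiscernible sequence and its extensions (domination, which is about sequences $\bar b$ filling cuts) should force a property about the outside point $d$. So the plan is to manufacture, \emph{inside} an extension of $I$, a configuration that mirrors the relation between $d$ and $I$, apply domination there, and then transfer back.

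First I would set up the following reduction. By Lemma \ref{shrinkdom} we may assume $I$ (hence $\bar a_*$) has size at most $|T|+|\alpha|$; this lets us treat $d$ as a single tuple and keeps everything small. The key move is to build a Morley-type sequence $\langle d_k : k<\omega\rangle$ of $\tp(d/AI)$, or rather of an appropriate invariant extension, chosen so that each $d_k$ interacts with $I$ exactly as $d$ does; concretely, using condition $\boxdot$ and shrinking of indiscernibles (Proposition \ref{shrinking1}), $d$ together with the finitely many cuts it induces on $I$ (which by $\boxdot$ all lie in $J_1\cup J_3$, away from $\mathfrak c$ and away from the Morley tail $J_4$) gives a "template". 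Then I want to realize a sequence $\bar b = \langle d_t : t\in\mathcal I'\rangle$ filling a cut $\mathfrak d$ of a suitable extension $I'\supseteq I$, with $\mathfrak d$ distinct from $\mathfrak c$, such that (i) $\bar b \downfree_{I'} \bar a_*$ — this should follow because $\bar a_*$ fills the cut $\mathfrak c$ interior to $J_2$, and by $\boxdot$ the $J_2$-part together with $\bar a_*$ is indiscernible over $Ad + (I\setminus J_2)$, so $\bar a_*$ does not "see" the $d$-configuration — and (ii) $\bar b \downfree_{I'} a$ is \emph{exactly} the statement $a\models p|_{AI'\bar b}$, which, once we've arranged $\bar b$ to encode $d$, will specialize to $a\models p|_{AId}$. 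Then strong domination of $a$ by $\bar a_*$ over $(I,A)$ — applied to the extension $I'$ which must be checked to be compatible with $\bar a_*$ and to satisfy $a\models p|_{AI'}$, this last point being where the Morley hypothesis on $J_4$ in $\boxdot$ is used — yields $\bar b \downfree_{I'} \bar a_* \Rightarrow \bar b \downfree_{I'} a$, and we are done.

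The main obstacle, and the step I would spend the most care on, is the construction in the previous paragraph: producing the extension $I'$ and the sequence $\bar b$ filling a genuine Dedekind cut $\mathfrak d\neq\mathfrak c$ of $I'$ such that simultaneously $\bar b$ realizes (the relevant part of) $\tp(d/AI)$ \emph{and} $\bar b \downfree_{I'}\bar a_*$. The delicate point is that $d$ may induce several cuts on $I$ and need not fill a single cut; the fix is to work not with $d$ alone but with $d$ together with the finite piece of $I$ between the extreme cuts it induces — i.e. to slide (Corollary \ref{sliding3}) a whole convex block, using that by $\boxdot$ these cuts avoid $J_2$ so there is room in $I'$ to place a copy of this block into a fresh Dedekind cut disjoint from $\mathfrak c$. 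One then has to verify that after this sliding the copy of $d$ still has the correct type over the copy of $I$ (so that $\bar b\downfree_{I'}a$ really does say $a\models p|_{AId}$ after identifying), which is where the homogeneity/automorphism argument as in the proof of Lemma \ref{domsim} comes in. Everything else — compatibility of $I'$ with $\bar a_*$, $a\models p|_{AI'}$, and $\bar b\downfree_{I'}\bar a_*$ — should follow fairly directly from $\boxdot$ together with the basic "expanding" and "sliding" lemmas of Section 1.
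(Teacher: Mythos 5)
There is a genuine gap, and it is at the exact point you flagged as "the main obstacle": the plan to produce a sequence $\bar b$ that both fills a cut $\mathfrak d$ of $I'$ and "encodes" $d$ cannot be carried out. By definition, a sequence $\bar b$ filling a cut $(I_1,I_2)$ must make $I_1+\bar b+I_2$ indiscernible, so its members are $\alpha$-tuples realizing the EM-type of $I$ (essentially realizations of the limit type of the cut). The tuple $d$ is an arbitrary external point of $\monster$, of arbitrary sort and type; neither $d$ nor "$d$ together with a convex block of $I$" can be inserted into $I'$ while keeping the concatenation indiscernible, so no admissible $\bar b$ in the definition of domination can carry the information of $\tp(d/AI)$. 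This is not a technicality: the entire content of the proposition is that the internal relation (quantified only over cut-filling sequences) controls external points, and your reduction presupposes that bridge rather than building it. There is also a second, independent problem with the final transfer step: even if you could place a copy $d'$ of $d$ with $\tp(d'/AI)=\tp(d/AI)$ into a cut and conclude $a\models p|_{AId'}$, passing from this to $a\models p|_{AId}$ requires $\tp(d'/AIa)=\tp(d/AIa)$, i.e. exactly the kind of uniqueness statement the proposition is meant to establish; an automorphism over $AI$ moving $d'$ to $d$ need not fix $a$.

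For comparison, the paper's proof never internalizes $d$. It argues by contradiction: if $a\not\models p|_{AId}$, fix $\phi(d;x)$ with $\models\phi(d;a)$ but $p\nvdash\phi(d;x)$. Using the hypothesis that $J_2\cup\{\bar a_*\}$ is indiscernible over $Ad+(I\setminus J_2)$, one places $\omega$ copies $\bar a_*^k$ of $\bar a_*$ in distinct cuts of $J_2$ over $Ad$, and by sliding finds for each $k$ a point $a_k$ with $\phi(d;a_k)$ that is strongly dominated by $\bar a_*^k$. Domination is then invoked only with legitimate cut-fillers (namely the $\bar a_*^k$ and the previously constructed $a_j$, which realize $p$ and hence do fill cuts) to show the $a_k$ are $I$-independent, so they can all be inserted simultaneously into a long Morley sequence of $p$ on which $\neg\phi(d;x)$ holds (this is where the $J_4$ clause of $\boxdot$ is used). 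The formula $\phi(d;x)$ then alternates infinitely often, contradicting NIP. The external point $d$ enters only as a parameter of $\phi$, which is what makes the argument go through where a direct encoding cannot.
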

\begin{proof}
Let $I$, $a$, $\bar a_*$, $d$, $J_1,...,J_4$ as in the statement of the proposition. We may freely enlarge the sequence $J_2$, so we may assume that it is saturated (for example, add realizations of limit types of cuts in $J_2$ over everything. This maintains the hypothesis).

Assume $a$ does not realize $p$ over $AId$. Then there is some finite $\bar i\subset I$ and a formula $\phi(y,\bar z;x)\in L(A)$  such that $\models \phi(d,\bar i;a)$, but $p\nvdash \phi(d,\bar i;x)$. Incorporating $\bar i$ in $d$ and changing the partition so that $J_2 \cup J_4$ contains no point from $\bar i$, we may assume that $\bar i=\emptyset$. Pick a sequence of cuts of $J_2$ $\mathfrak c_0 <\mathfrak c_1 <\ldots$. Let $\llg \bar a_*^k:k<\omega\rrg$ fill the polycut $\llg \mathfrak c_k:k<\omega\rrg$ over $Ad\cup \{J_l:l\neq 2\}$, where each $\bar a_*^k$ is a sequence of same order type as $\bar a_*$. Let $I'$ denote the sequence $I$ with the points $\bar a_*^k$, $k>0$, placed in their respective cuts.

Then $\tp(\bar a_*^0,d/I')$ is similar to $\tp(\bar a_*,d/I)$. By sliding (Corollary \ref{sliding2}; note that our sequence is already large enough, so we do not need to increase it), we find $a_0$ such that: $a_0\models p|AI'$, $\phi(d;a_0)$ holds and $\bar a_*^0$ strongly dominates $a_0$ over $(I',A)$.






Let $K_1$ realize an infinite Morley sequence of $p$ over everything considered so far. Let $I_1=I \cup\{ \bar a_*^k : k>1\}+K_1$ (where the tuples $\bar a_*^k$ are placed in their respective cuts). As above, we may find $a_1 \models p|AI_1$ such that $\bar a_*^1$ strongly dominates $a_1$ over $(I_1,A)$ and $\phi(d;a_1)$ holds. Now as $a_0\downfree_{I_1} \bar a_*^1$, by the domination assumption we have $a_0 \downfree_{I_1} a_1$. We iterate this construction building an indiscernible sequence $I_\omega=I+K_1+K_2+....$ and points $\llg a_k: k<\omega\rrg$ filling the cuts between the $K_i$'s and independent over $I_\omega$ such that $\phi(d;a_k)$ holds for each $k$. As by assumption $\neg\phi(d;x)$ holds for every $x\in I_\omega$, $\phi$ has infinite alternation rank, contradicting $NIP$.
\end{proof}

\begin{prop}[Base change]\label{basechangedom}
Let $p$ be $A$ invariant and $A\subset B$. If $I$ is a dense Morley sequence of $p$ over $B$, $a\models p|BI$ and $\bar a_*$ fills a cut of $I$ in the sense of $T(B)$, then if $\bar a_*$ strongly dominates $a$ over $(I,A)$ it does so over $(I,B)$.
\end{prop}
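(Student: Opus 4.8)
The plan is to reduce the statement over $B$ to the statement over $A$ by using the external characterization of domination (Proposition \ref{extstablebase}) to absorb the extra parameters $B\setminus A$ into a single tuple $d$, together with a Morley-sequence extension of $I$ that makes hypothesis $\boxdot$ available. First I would set up the situation: suppose $\bar a_*$ strongly dominates $a$ over $(I,A)$ but not over $(I,B)$. Unwinding the definition, there is a dense sequence $I'\supseteq I$ compatible with $\bar a_*$ in the sense of $T(B)$, with $a\models p|_{BI'}$, a cut $\mathfrak d$ of $I'$ distinct from the cut $\mathfrak c'$ of $\bar a_*$, and a dense sequence $\bar b$ filling $\mathfrak d$ over $B$, such that $\bar b\downfree_{I'}\bar a_*$ and $\bar b\ndownfree_{I'} a$, all in the sense of $T(B)$. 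I want to derive a contradiction with strong domination over $A$; for this I need to turn the ``over $B$'' data into ``over $A$'' data, and the key observation is that $B\setminus A$ plays exactly the role of the external parameter $d$ in Proposition \ref{extstablebase}.

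The key steps, in order: (1) By Lemma \ref{shrinkdom} reduce to $I$ of size $\le |T|+|\alpha|$, and choose a finite (or small) $d\subset B$ containing all the parameters of the formula witnessing $\bar b\ndownfree_{I'} a$; so without loss $B=Ad$. (2) Enlarge $I$ (and correspondingly $\bar a_*$, keeping it filling a cut, using \textbf{expanding} and \textbf{sliding}) to a sequence of the form $J_1+J_2+J_3+J_4$ arranged so that hypothesis $\boxdot$ of Proposition \ref{extstablebase} holds: $\mathfrak c$ interior to $J_2$, the segment around $\bar a_*$ indiscernible over $Ad$ plus the other pieces, and $J_4$ a Morley sequence of $p$ over everything. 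This is possible because $I$ was $B$-indiscernible and $a\models p|_{BI}$, so $I$ is already $Ad$-indiscernible and we only have to split off and, if necessary, append a fresh Morley segment over $Ad+$(the rest). (3) Apply Proposition \ref{extstablebase} to conclude that, whenever $\bar b$ is a sequence filling a cut of an appropriate extension $I''$ with $\bar b\downfree_{I''}\bar a_*$ over $A$, we in fact get the relevant type realized over $I''d$ — i.e. the ``$A$-independence from $\bar a_*$'' upgrades to ``$Ad$-independence from $a$''. The point is that $\boxdot$ is exactly what lets the external parameter $d$ not disturb the domination. (4) Now run the failure of domination over $B=Ad$ through this: from $\bar b\downfree_{I'}\bar a_*$ over $B$ we certainly have it over $A$; strong domination over $A$ (applied to a suitable extension compatible with $\bar a_*$) together with the external characterization forces $\bar b\downfree_{I'}a$ over $Ad=B$, contradicting the choice of $\bar b$.

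The main obstacle I expect is bookkeeping in step (2): arranging the partition $I=J_1+J_2+J_3+J_4$ and the enlargement so that simultaneously (a) $\bar a_*$ still fills a cut interior to $J_2$, (b) $J_2\cup\{\bar a_*\}$ is indiscernible over $Ad$ together with $J_1+J_3+J_4$, and (c) $J_4$ is genuinely a Morley sequence of $p$ over $Ad+J_1+J_2+J_3$ — and doing this without destroying the witness $\bar b$ to non-domination, which lives in some cut $\mathfrak d$ that must be kept distinct from $\mathfrak c$ and placed correctly relative to the $J_i$'s. One has to interleave the construction of the fresh Morley segment $J_4$ (and possibly a copy on the left to symmetrize) with the already-given $\bar b$, using Corollary \ref{sliding3} to slide $\bar b$ into an extension of the enlarged sequence while preserving its similarity type, hence preserving both $\bar b\downfree\bar a_*$ and $\bar b\ndownfree a$. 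Once the hypotheses of Proposition \ref{extstablebase} are honestly in place, the contradiction is immediate; the content is entirely in massaging the sequence so that that proposition applies with $d$ a name for $B\setminus A$.
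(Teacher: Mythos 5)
Your proposal is correct and is essentially the paper's own argument: the proof there likewise feeds the extra parameters $B$ (together with the witnessing sequence $\bar d$ filling the other cut) into the external parameter $d$ of the external characterization of domination (Proposition \ref{extstablebase}) and concludes $a\models p|_{AI\bar dB}$. The only differences are cosmetic: the paper argues directly rather than by contradiction, and since $I$ is already a dense Morley sequence of $p$ over $B$, hypothesis $\boxdot$ holds immediately with $J_4$ a terminal segment of $I$, so the expanding/sliding bookkeeping you anticipate in step (2) is not actually needed.
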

\begin{proof}
Assume that $\bar a_*$ fills a cut $\mathfrak c$ of $I$ in the sense of $T(B)$ and dominates $a$ over $(I,A)$. Then let $\bar d$ fill a cut $\mathfrak c'$ of $I$ over $B$ with $\mathfrak c'$ distinct from $\mathfrak c$. Assume that $\bar d \downfree_I \bar a_*$ over $B$. Then $\boxdot$ holds with $d$ there replaced by $\bar dB$. By domination over $(I,A)$ and the previous proposition, $a \models p| I \cup \bar dB$. This proves that $\bar a_*$ dominates $a$ over $(I,B)$. This remains true if we first increase $I$ so $\bar a_*$ strongly dominates $a$ over $(I,B)$.
\end{proof}

\subsection{Domination for types}

We now have all we need to state domination results for types over $|T|^+$-saturated models, instead of cuts in indiscernible sequences.

We work over a fixed $\kappa$-saturated model $M$. By an \emph{invariant type} we mean here a type over $M$, invariant over some $A\subset M$ of size less than $\kappa$.

For the following definition, recall the construction of $p_x \otimes q_y$ when $q$ is invariant (Lemma \ref{stationnaire} and the paragraph following it).

\begin{defi}[Distant]
Let $p,q\in S(M)$ be two types, assume that at least one of them is invariant, then we say that $p$ and $q$ are distant if they commute: $p_x\otimes q_y=q_y\otimes p_x$\footnote{Recall the definition of commuting for non-invariant given after Lemma \ref{stationnaire}}. If $a,b\in \monster$, we wil say that $a$ and $b$ are distant over $M$ if $\tp(a/M)$ and $\tp(b/M)$ are.
\end{defi}

Keep in mind that the notion ``$a$ and $b$ are distant over $M$" only depends on $\tp(a/M) \cup \tp(b/M)$ and does not say anything more about $\tp(a,b/M)$. In particular, in a stable theory, any $a$ is distant from itself. So distant should not be confused with independent as defined now.

\begin{defi}[Independent]
Given two distant types $p,q\in S(M)$ and $a\models p$, $b\models q$ we say that $a$ and $b$ are independent over $M$ if $\tp(a,b/M)=p\otimes q$. We write $a \downfree_{M}  b$. This is a symmetric relation.
\end{defi}

\begin{defi}[S-domination]
Let $p\in S(M)$ be any type, $a\models p$. A tuple $b$ s-dominates $a$ over $M$ if:
\begin{description}
\item[$\boxminus$ ] For every invariant type $r\in S(M)$ distant from $p$ and $\tp(b/M)$, and $d\models r$, if $d \downfree_{M} b$, then $d \downfree_{M} a$.
\end{description}

\end{defi}

The reader might be concerned by the fact that this definition depends on the choice of $\kappa$ (taking a smaller $\kappa$ we have less invariant types to check). However, we will see later that we get an equivalent definition if we add in $\boxminus$ the condition that $r$ is invariant over a subset of size $\aleph_0$.

\begin{ex}
Taking again the example of a colored order, if $p$ and $q$ are two invariant types (of tuples), $\bar a\models p$ and $\bar b\models q$, then $\bar b$ s-dominates $\bar a$ over $M$ if and only if, for every point $a_0$ in range$(\bar a)$, there is a point $b_0$ in range$(\bar b)\cup M$ of the same color.
\end{ex}

\subsubsection{The moving-away lemma}

\begin{lemme}\label{existsdom}
Let $p\in S(M)$ be any type, and $a\models p$. Then there is some $a_*$ s-dominating $a$ over $M$ and furthermore $a_*$ realizes some invariant type over $M$.
\end{lemme}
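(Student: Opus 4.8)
The strategy is to transport the construction of a strongly-dominating sequence from the indiscernible-sequence setting (Proposition \ref{existssbase}) to the setting of types over $M$, using the external characterization of domination (Proposition \ref{extstablebase}) to read off the type-theoretic consequence. First I would reduce to the invariant case: pick a $|T|^+$-saturated model $N\prec M$ (or rather, work with $\tp(a/M)$ as it sits over $M$) and realize $p=\tp(a/M)$ together with a long Morley-type context. More precisely, since every indiscernible sequence is a Morley sequence of some invariant type, I would first replace $a$ by a realization inside a suitable coheir/Morley configuration: build a coheir sequence $I$ in $\tp(M/\text{(small base)})$, or dually take an invariant extension $\tilde p$ of $p$ to $\monster$ finitely satisfiable in $M$, let $I$ be a long Morley sequence of $\tilde p$ over $M$ lying inside $M$ (this is where $\kappa$-saturation of $M$ is used), and position $a$ as $a\models \tilde p|_{MI}$ — so $M\supseteq$ the relevant data and $I+a$ is a Morley sequence over the small base $A$ with $I\subseteq M$.

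Next I would apply Proposition \ref{existssbase}: in the dense Morley sequence $I$ of $p$ over $A$ with $a\models p|_{AI}$ and a chosen Dedekind cut $\mathfrak c$, there is a short sequence $\bar a_*$ (of length $\leq |T|+|\alpha|$) filling $\mathfrak c$ that strongly dominates $a$ over $(I,A)$. The candidate for $a_*$ in the statement is then $\bar a_*$, viewed as a tuple; I must check two things: (i) $\bar a_*$ realizes an invariant type over $M$, and (ii) $\bar a_*$ s-dominates $a$ over $M$ in the sense of $\boxminus$. For (i): since $\bar a_*$ fills a cut interior to $I$ and $I$ is (the relevant part of) a Morley sequence finitely satisfiable in $M$, the limit type of that cut is finitely satisfiable in $I\subseteq M$, hence invariant over $M$; I would arrange $\bar a_*$ to realize exactly the limit type of its cut over $M$ (using Lemma \ref{limittype}, strong base change, if the short sequence produced by \ref{existssbase} does not already do so — though one should be slightly careful that strong base change is applied before fixing $\bar a_*$, since changing $\bar a_*$ could a priori destroy domination; here Lemma \ref{domsim}, that strong domination depends only on the similarity class, does the bookkeeping).

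For (ii), the s-domination claim: take $r\in S(M)$ invariant, distant (commuting) with both $p$ and $\tp(\bar a_*/M)$, and $d\models r$ with $d\downfree_M \bar a_*$, i.e. $\tp(d,\bar a_*/M)=r\otimes \tp(\bar a_*/M)$. I want $d\downfree_M a$, i.e. $a\models p|_{Md}$. The plan is to verify that hypothesis $\boxdot$ of Proposition \ref{extstablebase} holds: I need a partition $I=J_1+J_2+J_3+J_4$ with $\mathfrak c$ interior to $J_2$, with $J_2\cup\{\bar a_*\}$ indiscernible over $Ad+J_1+J_3+J_4$ and $J_4$ a Morley sequence of $p$ over $Ad+J_1+J_2+J_3$. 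Since $r$ commutes with $\tp(\bar a_*/M)$ and $\bar a_*$'s type is (essentially) $\lim(\mathfrak c)$, commutation translates into: adding $d$ does not disturb the indiscernibility of a cofinal/coinitial piece of $I$ around $\mathfrak c$ together with $\bar a_*$ — this is exactly the "weakly respects the cut" phenomenon, and $J_4$ (a tail of $I$, or a fresh Morley segment appended on the right over $Md$) being a Morley sequence of $p$ over $Ad+\cdots$ follows from $r$ commuting with $p$. Once $\boxdot$ is checked, Proposition \ref{extstablebase} gives $a\models p|_{AId}\supseteq p|_{Md}$ (after checking $M$ is covered by $AI$ up to the invariant type — which holds because $a\models\tilde p|_{MI}$ and everything relevant in $M$ is absorbed), hence $d\downfree_M a$, as desired.

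\textbf{Main obstacle.} The delicate point is the translation between the algebraic "commuting types" hypotheses of $\boxminus$ and the combinatorial hypothesis $\boxdot$ of Proposition \ref{extstablebase}: one must convert "$r$ commutes with $\lim(\mathfrak c)$ and with $p$" plus "$d\downfree_M \bar a_*$" into the existence of the four-block partition with the stated indiscernibility and Morley-sequence properties. Unwinding the definition of $\otimes$ for (possibly non-invariant) types — and in particular handling the fact that $p=\tp(a/M)$ need not be invariant, so that $p\otimes r$ is defined via the inverse $r$-construction — and making sure the cut $\mathfrak c$ can be taken Dedekind and interior to a suitable $J_2$, is where the real work lies; the rest is an application of already-established machinery (Propositions \ref{existssbase} and \ref{extstablebase}, Lemmas \ref{domsim}, \ref{limittype}).
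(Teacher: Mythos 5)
There is a genuine gap: your entire strategy runs through Propositions \ref{existssbase} and \ref{extstablebase}, and that machinery requires a Morley sequence of a \emph{global $A$-invariant type for a small base $A\subset M$}, sitting inside $M$, with $a\models p|_{AI}$. But Lemma \ref{existsdom} is stated for an arbitrary $p\in S(M)$, and your proposed reduction to the invariant case does not work. A global coheir $\tilde p$ of $p$ is $M$-invariant but in general not invariant over any small subset of $M$, so it is not an ``invariant type'' in the sense used throughout Section 3; and a Morley sequence of $\tilde p$ over $M$ cannot ``lie inside $M$'' as you assert, since its first element already realizes $p$ over $M$. Without a small base $A\subseteq M$ over which $p$ is invariant, you cannot place the sequence $I$ inside $M$, you cannot make sense of ``$a\models p|_{AI}$ with $AI$ absorbing $M$'', and the final step ``$a\models p|_{AId}$ hence $d\downfree_M a$'' has no content. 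What you are really sketching is the proof of the moving-away lemma (Lemma \ref{movingaway}), which does exactly this transfer --- but under the explicit hypothesis that $p$ is $A$-invariant for some small $A$. That hypothesis is precisely what is missing here.

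The paper's actual proof of Lemma \ref{existsdom} avoids indiscernible sequences entirely and is much shorter: start with any $a_*$ realizing an invariant type (e.g.\ the empty tuple); if $a_*$ fails to s-dominate $a$, the failure of $\boxminus$ hands you an invariant type $r$ distant from $a_*$ and $a$ and some $b\models r|_{Ma_*}$ with $b\ndownfree_M a$; replace $a_*$ by $a_*b$ (whose type over $M$ is still invariant, being $r\otimes\tp(a_*/M)$) and iterate. If the iteration ran for $(|T|+|a|)^+$ steps one would obtain a family of pairwise commuting invariant types $(r_i)$ and realizations $(b_i)\models\bigotimes r_i$ with $b_i\ndownfree_M a$ for every $i$, contradicting the bounded-weight statement Corollary \ref{weight2}. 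You should either adopt that argument, or restrict your approach to invariant $p$ and recognize that you are then proving the (later, stronger) moving-away lemma rather than this one.
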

\begin{proof}
This is similar to Proposition \ref{existssbase}. Start with some $a_*$ realizing an invariant type. If it does not dominate $a$, there is an invariant type $r$ distant from $a_*$ and $a$ over $M$ and $b\models r|Ma_*$ such that $b \ndownfree_M a$. Replace $a_*$ by $a_*b$ and iterate. By Corollary \ref{weight2}, this construction must stop after less than $(|T| + |a|)^+$ steps.
\end{proof}

For applications we will also need to show that we can find such a dominating tuple distant from any given type.

\begin{lemme}\label{distantcut}
Let $I\subset M$ be a dense indiscernible sequence of $\alpha$-tuples and $(I_i)_{i<\lambda}$ a family of distinct initial segments of $I$, with $\lambda\geq (|T|+|\alpha|)^+$. For $i<\alpha$, let $p_i=\lim(I_i/M)$. Then given a type $q\in S(M)$, there is $i<\lambda$ such that $p_i$ is distant from $q$.
\end{lemme}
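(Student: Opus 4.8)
The plan is to observe that the $p_i$ form a large pairwise commuting family of invariant types and then to invoke the weight estimate of Corollary~\ref{weight2}. To set up, note that each $I_i$ is an initial segment of the dense indiscernible sequence $I\subset M$, so it determines a cut $\mathfrak c_i=(I_i,I\setminus I_i)$, polarized as $\mathfrak c_i^-$; then $p_i=\lim(I_i/M)=\lim(\mathfrak c_i^-/M)$ is finitely satisfiable over the (small) set $I$ and invariant over $I$, hence is an invariant type over $M$ in the sense of this section. Since the $I_i$ are pairwise distinct, the polarized cuts $\mathfrak c_i^-$ are pairwise distinct, so, by the remark on distinct polarized cuts in the Preliminaries, the family $(p_i)_{i<\lambda}$ is pairwise commuting.

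I would then reduce to the case where $q$ is an invariant type. Pass to a model $M'\succ M$ which is $\kappa'$-saturated for some $\kappa'>|M|$, let $q'\in S(M')$ be a coheir of $q$ (so $q'$ is $M$-invariant, hence an invariant type over $M'$), and set $\widetilde p_i=\lim(I_i/M')$; the $\widetilde p_i$ remain pairwise commuting invariant types, now over $M'$. Applying the observation following Corollary~\ref{weight2} to the first $(|T|+|\alpha|)^+$ of the $\widetilde p_i$ and to $q'$ — using the evident strengthening of Proposition~\ref{weight} to tuples of size at most $|T|+|\alpha|$ — we get some $i<\lambda$ with $\widetilde p_i\otimes q'=q'\otimes\widetilde p_i$ in $S_{xy}(M')$. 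Now restrict this equality to $S_{xy}(M)$: we have $q'|_M=q$; both $\widetilde p_i$ and $p_i$ are restrictions of the global type $\lim(I_i)$, so $\widetilde p_i\otimes q'$ restricts to $p_i\otimes q$; and the inverse $(\widetilde p_i)'$ of $\widetilde p_i$ over $M'$ is built from a maximal alternating Morley sequence of $\lim(I_i)$ over the fixed small base $I\subseteq M$, hence coincides, as a global type, with the inverse $p_i'$ of $p_i$ over $M$, so $q'\otimes\widetilde p_i=(\widetilde p_i)'\otimes q'$ restricts to $p_i'\otimes q=q\otimes p_i$. Therefore $p_i\otimes q=q\otimes p_i$, i.e.\ $p_i$ is distant from $q$, which is what we want.

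The step I expect to be the main obstacle is precisely this last piece of bookkeeping: checking that the product $q_y\otimes p_x$ (and hence the notion ``distant'') is insensitive to enlarging the ambient saturated model, which comes down to the observation that the inverse of an invariant type is constructed only from a Morley sequence over a fixed small parameter set already contained in the original model. Granting that, the lemma is a direct application of the weight bound: $q$ — or rather its invariant companion $q'$ — cannot fail to commute with more than $|T|+|\alpha|$ members of a pairwise commuting family of invariant types.
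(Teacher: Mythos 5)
Your overall strategy is the paper's: the $p_i$ form a large pairwise commuting family of invariant types, and Corollary~\ref{weight2} supplies the weight bound. The first half of your restriction argument is also fine: $(\widetilde p_i\otimes q')|_M=p_i\otimes q$. The gap is the claim that the inverse $(\widetilde p_i)'$ computed over $M'$ coincides with the inverse $p_i'$ computed over $M$. The inverse is \emph{not} built merely from ``a maximal alternating Morley sequence over the fixed small base'': the definition requires every term of the sequence to lie \emph{in the model}, and a sequence that is maximal among those contained in $M$ can be properly extended by elements of $M'\setminus M$, because the alternation condition involves the external parameter $b\models q'$ and so is not a condition that $\kappa$-saturation of $M$ lets you realize inside $M$. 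When the maximal length changes parity, the limit truth value flips and $(\widetilde p_i)'|_{Mb}\neq p_i'|_{Mb}$. This really happens for types of the exact form $\lim(I_i/M)$: in the colored order, let $I$ be an increasing sequence of pairwise inequivalent elements, $I_1$ an initial segment with Dedekind cut $\mathfrak c$, and $p=q=\lim(I_1/M)$. A Morley sequence of $p$ over $I$ inside $M$ is a decreasing sequence of elements of $M$ lying in $\mathfrak c$. One can choose a coheir $q'$ of $q$ to $M'$ whose realization $b$ lies in $\mathfrak c$ below every element of $M$ in $\mathfrak c$ but above some $c\in M'\setminus M$ lying in $\mathfrak c$ below all of $M$'s elements of $\mathfrak c$ (the relevant partial type is finitely satisfiable in $M$). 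For $\phi(x;b)=(x<b)$, every Morley sequence inside $M$ is constantly false on $\phi$, so $x>b\in p'$; but inside $M'$ a two-term sequence $(a_1,c)$ alternates and is maximal, so $x<b\in(\widetilde p)'$. Hence $q'\otimes\widetilde p$ does not restrict to $q\otimes p$, and commuting over $M'$ does not pass down to $M$.

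The repair is to stay over $M$ and instead choose the realization fed into Corollary~\ref{weight2} wisely. Take $d\models q$ and realize $(a_i)_{i<\lambda}\models\bigotimes_i(p_i)'$ over $Md$, where $(p_i)'$ is the inverse of $p_i$ over $M$. Since $(p_i)'|_M=p_i$ and, for invariant $r$, the product $(p_i)'_x\otimes r_y$ agrees with $r_y\otimes (p_i)_x$ computed the invariant way, the pairwise commuting of the $p_j$ gives $\tp((a_i)_i/M)=\bigotimes_i p_i$. Corollary~\ref{weight2} then yields some $i$ with $\tp(a_i,d/M)=p_i\otimes q$, while by construction $\tp(a_i,d/M)=(p_i)'\otimes q=q\otimes p_i$; so $p_i$ is distant from $q$, with all products computed over the original $M$.
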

\begin{proof}
Observe that the types $p_i$ pairwise commute. Then use Corollary \ref{weight2} (and the remark after it).
\end{proof}

\begin{lemme}\label{movingaway}
Let $p,q\in S(M)$, be types of $\alpha$-tuples ($|\alpha|<\kappa$) with $p$ invariant over some small $A$. Let $a\models p$. Then there is $r\in S(M)$ invariant over some $B$ of size $\aleph_0$, distant from $p$ and $q$ and $\bar b\models r$ such that $|\bar b|\leq |T|+|\alpha|$ and $\bar b$ s-dominates $a$ over $M$.
\end{lemme}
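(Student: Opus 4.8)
The plan is to combine Proposition~\ref{existssbase} (which produces a strongly dominating filling of a cut), the weight argument of Lemma~\ref{distantcut} (which pushes the limit type of a well‑chosen cut away from $p$ and $q$), and the external characterization of domination, Proposition~\ref{extstablebase} (which converts ``strong domination over $(I,A)$'' into a statement about arbitrary $d\in\monster$, hence about types over $M$).

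Concretely, I would first fix a dense Morley sequence $I=(a_t)_{t\in\mathcal I}\subset M$ of $p$ over $A$, choosing $\mathcal I$, say $\mathcal I=\lambda\times\mathbb Q$ with $\lambda:=(|T|+|\alpha|)^+$ ordered lexicographically, so that $I$ is dense without endpoints and has $\lambda$ pairwise distinct Dedekind cuts $\mathfrak c_i$, each of \emph{countable} cofinality on both sides (the cut inside the $i$-th copy of $\mathbb Q$ at $0$). Since $a\models p$ we automatically have $a\models p|_{AI}$. Picking $b\models q|_{Ma}$ and applying Lemma~\ref{distantcut} to the initial segments $I_i$ and the single type $\tp(a,b/M)$, there is $i$ with $\lim(\mathfrak c_i/M)$ distant from $\tp(a,b/M)$, hence—restricting the commutation identity to the first resp.\ second block of coordinates—distant from both $p$ and $q$. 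Fix this cut $\mathfrak c$; a countable cofinal subset $C\subset I$ of its left part witnesses that $r_0:=\lim(\mathfrak c/M)$ is finitely satisfiable, hence $C$-invariant. Proposition~\ref{existssbase} provides $\bar a_*$ of length $\le|T|+|\alpha|$ filling $\mathfrak c$ and strongly dominating $a$ over $(I,A)$; then, using strong base change (Lemma~\ref{limittype}) to re-insert its components one at a time so that each realizes $\lim(\mathfrak c/M)$ over $M$ and over the earlier ones (after an insertion the left part of the cut acquires a last element, so that left limit type stays $r_0$ and the invariance base stays the fixed countable $C$), and invoking Lemma~\ref{domsim} to keep strong domination of $a$ over $(I,A)$, I obtain $\bar b$ with $|\bar b|\le|T|+|\alpha|$, such that $r:=\tp(\bar b/M)$ is $C$-invariant and distant from $p$ and $q$.

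It remains to show $\bar b$ s-dominates $a$ over $M$. By the remark following the definition of s-domination it suffices to test $\boxminus$ against invariant types $s$ with a countable base $B'\subset M$. Given such $s$, $d\models s$ with $d\downfree_M\bar b$, set $A':=A\cup B'\cup C$ (small, $\subseteq M$); since $\bar b$ is $A'$-indiscernible it re-absorbs into a dense Morley sequence $I'$ of $p$ over $A'$ as the filling of a cut over $A'$, and by Lemma~\ref{domsim} together with Proposition~\ref{basechangedom} $\bar b$ still strongly dominates $a$ over $(I',A')$. Because $d\downfree_M\bar b$ and $s$ is $B'$-invariant, $d$ induces only boundedly many cuts, so a large end part of $I'$ is still a Morley sequence of $p$ over $A'd$ and a large sub-interval of $I'$ around $\mathfrak c$ together with $\bar b$ is still indiscernible over $A'd$ plus the rest; this is the hypothesis $\boxdot$ of Proposition~\ref{extstablebase}, which gives $a\models p|_{A'I'd}$, in particular $a\models p|_{A'd}$. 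Letting $A'$ exhaust $M$ yields $a\models p|_{Md}$, i.e.\ $d\downfree_M a$.

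I expect the main obstacle to be this last transfer: arranging the decomposition $\boxdot$ for an arbitrary $d$ realizing a countably-invariant type independent from $\bar b$, and then pushing the localized conclusion $a\models p|_{A'I'd}$ up to $a\models p|_{Md}$ by exhausting $M$—which forces one to carry strong domination across the various Morley sequences $I'$ using Lemmas~\ref{domsim} and \ref{basechangedom} and to check compatibility of the different cut pictures. A secondary technical point is the step making $\tp(\bar b/M)$ invariant over a genuinely \emph{countable} set rather than over a set of size $|T|+|\alpha|$, which is what dictates choosing $\mathfrak c$ of countable cofinality and re-inserting the components so that the left side of the cut always has a last element.
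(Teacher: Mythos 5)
Your overall architecture matches the paper's: Proposition~\ref{existssbase} to get a strongly dominating filling, Lemma~\ref{distantcut} to choose a cut whose limit type is distant from $p$ and $q$, a base-change step turning the dominating tuple into a Morley sequence of $\lim(\mathfrak c)$ (hence invariant over a countable cofinal set), and Proposition~\ref{extstablebase} to verify $\boxminus$. But the verification of $\boxminus$ --- which is the heart of the lemma, and which you yourself flag as the obstacle --- does not go through as written. The step ``$d$ induces only boundedly many cuts, so a large end part of $I'$ is still a Morley sequence of $p$ over $A'd$'' is a non sequitur: shrinking of indiscernibles only yields that an end segment is \emph{indiscernible} over $d$, which is strictly weaker than being a Morley sequence of $p$ over $A'd$ (in DLO, an increasing Morley sequence of the type at $+\infty$ sitting entirely below $d$ is indiscernible over $d$ but no end segment of it realizes $p$ over $d$). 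The correct mechanism is commutation: after $s$ and $d$ are given, choose $C'\subset M$ containing the invariance bases of $p$, $r$ \emph{and} $s$, take $I'\subset M$ a Morley sequence of $p$ over $C'$, note that $d\hat{~}\bar b\models (s\otimes r)|_{C'I'}$, and use associativity of $\otimes$ (with $p$ distant from both $s$ and $r$) to conclude that all of $I'$ realizes $p^{(\mathcal I)}$ over $C'd\bar b$. In particular $I'$ must be chosen \emph{after} $d$; it cannot be a fixed re-absorption of $\bar b$ over $A'$ as in your sketch, and your appeal to Proposition~\ref{basechangedom} over $A'$ presupposes (without proof) that such an $I'$ exists with $\bar b$ filling a cut of it over $A'$.

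The same problem recurs, more seriously, for the other half of $\boxdot$: you need an infinite convex $J_2$ with $\mathfrak c$ interior to it such that $J_2\cup\{\bar a_*\}$ is indiscernible over $A'd$ plus the rest. A ``large sub-interval of $I'$ around $\mathfrak c$'' cannot be produced by shrinking either, precisely because you arranged $\mathfrak c$ to have countable cofinality on both sides, so the cuts induced by $d$ may accumulate at $\mathfrak c$. This is why the paper takes the dominating tuple to be $\bar b = I_0\cup\bar a_*$ --- the small ambient Morley sequence \emph{together with} the dominating filling --- rather than $\bar a_*$ alone: then $\bar b$ is itself a Morley sequence of $\lim(\mathfrak c)$, the hypothesis $d\downfree_M\bar b$ together with commutation of $r$ with $s$ gives that $\bar b$ realizes $r$ over $C'I'd$, and $\boxdot$ holds with $J_1=J_3=\emptyset$, $J_2=I_0$, $J_4=I'$ and the parameter $d$ replaced by $C'd$ (exhausting $C'$ over $M$ then gives $d\downfree_M a$ directly, with no need for base change). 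With your slimmer $\bar b$ the needed $J_2$ has to be manufactured inside $M$ by a separate commutation argument applied to $I$ itself, which you do not supply. So the gap is concrete: replace both shrinking appeals by the $\otimes$-associativity argument, and either enlarge $\bar b$ to include $I_0$ or prove that $\bar a_*$ fills the cut of $I$ over $C'd$.
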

\begin{proof}
By Proposition \ref{existssbase} (and Lemma \ref{shrinkdom}) we can find $I'_0$ a dense Morley sequence of $p$ over $A$ of size $|T|+|\alpha|$ and $\bar a'_*$ such that $a\models p|AI'_0$, $\bar a'_*$ fills a cut $\mathfrak c$ of $I'_0$ and $\bar a'_*$ strongly dominates $a$ over $(I'_0,A)$. Let $\bar b'$ be the sequence $I'_0\cup \bar a'_*$ where $\bar a'_*$ is placed in its cut.

Let $I\subset M$ be a saturated Morley sequence of $p$ over $A$, let $\mathfrak c$ be a polarized cut of $I$ of cofinality $\aleph_0$ such that $\lim(\mathfrak c)$ is distant from $q$ and $p$ (using Lemma \ref{distantcut}). We may find some $\bar b \equiv_{Aa} \bar b'$ such that $\bar b$ fills the cut $\mathfrak c$ of $I$. Let also $I_0,\bar a_*$ be such that $(\bar b,I_0,\bar a_*)\equiv (\bar b', I'_0,\bar a'_*)$. So $\bar b=I_0 \cup \bar a_*$.

Let $I_{\infty}$ realize an infinite Morley sequence of $p$ over everything. The strong base change lemma (\ref{limittype}) works equally well if instead of considering points $d_i$ filling the cuts $\mathfrak c_i$, we take sequences $\bar d_i$. We apply this modified version with $M$ as set of parameters, $I+I_{\infty}$ as indiscernible sequence, $\bar d_0=\bar b$ and $\bar d_1=a$. We conclude that we may assume that $\bar b$ is a Morley sequence of $\lim(\mathfrak c)$ over $M$.

Set $r=\tp(\bar b/M)$ and let $B\subset M$ be of size $\aleph_0$ such that $r$ is $B$-invariant. Note that $r$ is a power of $\lim(\mathfrak c)$, so it also commutes with $p$ and $q$.

Let $d$ realize any invariant type $s\in S(M)$ distant from $p$ and $r$. Assume that $d\downfree_{M} \bar b$. Let $C\subset M$ be a subset of size $<\kappa$ such that $p,s$ and $r$ are invariant over $C$. Let $I'\subset M$ be a Morley sequence of $p$ over $C$ indexed by some dense order $\mathcal I$. Then $d\hat{~}\bar b$ realizes $s\otimes r$ over $CI'$ (indeed over $M$). As $p$ is distant from both $r$ and $s$, by associativity of $\otimes$, $p^{(\mathcal I)}$ commutes with $s\otimes r$. Therefore, $I'$ realizes $p^{(\mathcal I)}$ over $Cd\bar b$. Similarly, $\bar b$ realizes $r$ over $CI'd$, and in particular, $\bar b$ is indiscernible over $CI'd$.

Furthermore, as $I'\subset M$, $\bar b$ realizes $r$ over $CI'$. As $r$ commutes with $p$, $I'$ realizes $p^{(\mathcal I)}$ over $C\bar b$, a fortiori over $A\bar b$. But $\bar b$ is a Morley sequence of $p$ over $A$. Therefore $\bar b+I'$ is a Morley sequence of $p$ over $A$.

The hypothesis of Proposition \ref{extstablebase} are satisfied with $J_1=J_3=\emptyset$, $J_2=I_0$, $J_4=I'$ and $d$ there equal to $Cd$. We conclude that $a\models p|Cd$. As this is true for every small $C$, $d$ and $a$ are independent over $M$. This proves that $\bar b$ s-dominates $a$ over $M$.
\end{proof}

\begin{rem} The tuple $\bar b$ constructed in the previous lemma has the following additional property:

$(D)$ For every $d\in \monster$ such that $\tp(d/M\bar b)$ does not fork over $M$, and such that $\tp(\bar bd/M)$ commutes with $p$, we have $a \downfree_M d$.

This assumption is satisfied in particular when $d$ is distant from $a$ and $\bar b$, and $\bar b \downfree_M d$ (although $d$ might not realize an invariant type).

\end{rem}
\begin{proof}
We indicate how to modify the proof above. First, we take $C$ such that $p$ and $r$ are invariant over $C$. Next take $C_1$, $C\subseteq C_1 \subset M$, such that for any $J,J' \subset M$ Morley sequences of $p$ over $C_1$ indexed by $\omega$, we have $\tp(J/C\bar b d)=\tp(J'/C\bar bd)$. This is possible using Lemma \ref{stationnaire}. Build $I'$ as a Morley sequence of $p$ over $C_1$. By definition of commuting, $I'$ is a Morley sequence of $p$ over $C\bar bd$. Also because $\tp(d/M\bar b)$ does not fork over $M$, $\bar b$ is indiscernible over $Md$. Finally, the proof that $\bar b+I'$ is a Morley sequence of $p$ over $A$ does not change. So as above, we may apply Proposition \ref{extstablebase} to conclude that $d$ and $a$ are independent over $M$.
\end{proof}

\begin{cor}\label{movingaway2}
Let $p,q\in S(M)$ be any two types of $\alpha$-tuples ($|\alpha|<\kappa$) and let $a\models p$. Then there is $a_*$ a tuple of length $\leq |T|+|\alpha|$, distant from $q$ over $M$ and such that $a_*$ s-dominates $a$ over $M$. Furthermore, we may assume that $\tp(a_*/M)$ is invariant over a subset of size $\aleph_0$.
\end{cor}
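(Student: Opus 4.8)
The plan is to reduce this to Lemma \ref{movingaway}, which already treats the case where $p$ is invariant, by interposing an auxiliary tuple that \emph{does} realize an invariant type.

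First I would apply Lemma \ref{existsdom} to obtain a tuple $a_0$ of length at most $|T|+|\alpha|$, realizing an invariant type $p_0\in S(M)$, which s-dominates $a$ over $M$. Inspecting the (short) proof of Lemma \ref{existsdom}, one can in fact start the inductive construction with a tuple realizing an invariant type distant from $p$ and check that each step preserves this — a $\otimes$-product of invariant types each distant from $p$ is again distant from $p$, by associativity of $\otimes$ — so I may moreover assume that $p_0$ is distant from $p$.

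Next I would apply Lemma \ref{movingaway} to the triple $(p_0,q,a_0)$. This produces an invariant type $r\in S(M)$, invariant over a countable set, distant from $p_0$ and $q$ (and, by enlarging the family of initial segments used inside Lemma \ref{distantcut} so as to avoid finitely many types at once, I may in addition require $r$ distant from $p$), together with $a_*:=\bar b\models r$ of length at most $|T|+|a_0|\le|T|+|\alpha|$ such that $\bar b$ s-dominates $a_0$ over $M$; the remark following Lemma \ref{movingaway} supplies the stronger property $(D)$ of $\bar b$ relative to $a_0$ and $p_0$. At this stage $a_*$ already satisfies every requirement except, possibly, s-dominating $a$: it is distant from $q$, has length $\le|T|+|\alpha|$, and $\tp(a_*/M)$ is invariant over a countable set.

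The remaining step is to check that $\bar b$ s-dominates $a$ over $M$, i.e. to transfer s-domination along $a\leftarrow a_0\leftarrow\bar b$. Given an invariant $r'\in S(M)$ distant from $p$ and $r$, and $d\models r'$ with $d\downfree_M\bar b$, I would first invoke property $(D)$ of $\bar b$ relative to $a_0$ — noting that $\tp(d/M\bar b)$ does not fork over $M$ because $r'$ is invariant, and that $\tp(\bar b d/M)$, being assembled by $\otimes$ from $r$ and $r'$, commutes with $p_0$ — to get $a_0\downfree_M d$, and then feed this into the s-domination of $a$ by $a_0$ to conclude $d\downfree_M a$. I expect the main obstacle to be precisely the bookkeeping of ``distance'' here: $r'$ is only assumed distant from $p$, whereas both property $(D)$ and the s-domination of $a$ by $a_0$ want control over the behaviour of $r'$ relative to $p_0$ as well. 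This is why it matters that $a_0$ was chosen with $p_0$ distant from $p$, and why one keeps the freedom in Lemma \ref{distantcut} to push $r$ off $p$ too; the case where $r'$ fails to be distant from $p_0$ is the delicate one, and I would handle it by unwinding the construction of $\bar b$ as a Morley sequence of a limit type distant from $p$ and reapplying the external characterization Proposition \ref{extstablebase} directly, exactly as in the proof of Lemma \ref{movingaway} itself.
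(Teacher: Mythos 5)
Your first two steps coincide with the paper's: obtain $a_0$ (the paper's $a_{**}$) from Lemma \ref{existsdom}, realizing an invariant type and s-dominating $a$, then obtain $a_*=\bar b$ from Lemma \ref{movingaway}, s-dominating $a_0$, distant from $q$, of the right length and invariant over a countable set. The gap is in the transitivity step, and you have located it yourself without closing it: a test type $r'$ distant from $p$ and from $r=\tp(\bar b/M)$ need not be distant from the intermediate type $p_0=\tp(a_0/M)$, so neither property $(D)$ of $\bar b$ relative to $a_0$ (which requires $\tp(\bar b d/M)$ to commute with $p_0$) nor the s-domination of $a$ by $a_0$ (which requires $r'$ distant from $p_0$) is applicable. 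Your two preparatory moves --- arranging $p_0$ distant from $p$ and $r$ distant from $p$ --- do not help, since the obstruction concerns the relation of $r'$ to $p_0$, not of $p_0$ or $r$ to $p$. The fallback you sketch (``unwind the construction of $\bar b$ and reapply Proposition \ref{extstablebase} directly'') does not go through as described: $\bar b$ was built inside a cut of a Morley sequence of $p_0$, so the external characterization only yields conclusions about realizations of $p_0$, that is, about $a_0$; to reach $a$ you must again pass through the s-domination of $a$ by $a_0$, which is precisely the step you cannot justify for this $d$.

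The paper resolves this by moving the \emph{test element} away rather than trying to control its type in advance. Given an invariant $r'$ distant from $a$ and $\bar b$ and $d\models r'$ with $d\downfree_M\bar b$, apply Lemma \ref{movingaway} to $d$ with target type $\tp(a\hat{~}\bar b\hat{~}a_0/M)$, obtaining $d_*$ which s-dominates $d$, satisfies property $(D)$, and is distant from all of $a$, $\bar b$ and $a_0$ simultaneously; composing with an automorphism over $Md$ one may assume $d_*\downfree_M\bar b$. Now the chain $d_*\downfree_M\bar b\Rightarrow d_*\downfree_M a_0\Rightarrow d_*\downfree_M a$ is legitimate because $\tp(d_*/M)$ is distant from every type it is tested against, and property $(D)$ of $d_*$ relative to $d$ converts $d_*\downfree_M a$ into $d\downfree_M a$. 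This is the same device used in the proof of Lemma \ref{transitivity}; incorporating it would repair your argument.
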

\begin{proof}
By Lemma \ref{existsdom}, there is some $a_{**}$ s-dominating $a$ over $M$ and realizing some invariant type. By Lemma \ref{movingaway}, there is a tuple $a_*$ s-dominating $a_{**}$ over $M$ with the required size, whose type over $M$ is invariant over a subset of size $\aleph_0$ and distant from $q$.

We check that $a_{*}$ s-dominates $a$ over $M$. Let $r\in S(M)$ be an invariant type distant from $a_{*}$ and $a$. Let $b\models r$ with $b \downfree_M a_{*}$. By Lemma \ref{movingaway}, there is $b_*$ s-dominating $b$ and distant from $q=\tp(a\hat{~}a_{*}\hat{~}a_{**}/M)$. Furthermore assume that $b_*$ satisfies property $(D)$. Composing by an automorphism over $Mb$, we may further assume that $b_* \downfree_M a_{*}$. Then as $a_{*}$ s-dominates $a_{**}$ over $M$, we have $b_* \downfree_M a_{**}$ and as $a_{**}$ s-dominates $a$ over $M$, $b_*\downfree_M a$. By property $(D)$ this implies $b\downfree_M a$.
\end{proof}

\begin{lemme}[Transitivity of s-domination]\label{transitivity}
Let $a\in \monster$ and let $a_*$ s-dominate $a$ over $M$. Let also $a_{**}$ s-dominate $a_*$ over $M$. Then $a_{**}$ s-dominates $a$ over $M$.
\end{lemme}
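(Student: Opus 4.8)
The statement is the transitivity of s-domination: if $a_*$ s-dominates $a$ over $M$ and $a_{**}$ s-dominates $a_*$ over $M$, then $a_{**}$ s-dominates $a$ over $M$. The plan is to verify the defining condition $\boxminus$ for $a_{**}$ and $a$ directly. So let $r \in S(M)$ be an invariant type distant from $p = \tp(a/M)$ and from $\tp(a_{**}/M)$, let $d \models r$, and assume $d \downfree_M a_{**}$. The goal is to conclude $d \downfree_M a$. The naive attempt — apply s-domination of $a_*$ by $a_{**}$ to get $d \downfree_M a_*$, then s-domination of $a$ by $a_*$ to get $d \downfree_M a$ — fails for two reasons: first, $r$ might not be distant from $\tp(a_*/M)$, so $\boxminus$ for $a_{**}$, $a_*$ does not directly apply; second, even if we obtained $d \downfree_M a_*$, we would need $r$ distant from $\tp(a/M)$ (which we have) but the chaining still needs the intermediate distance hypothesis.

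The fix, following the pattern already used in the proof of Corollary \ref{movingaway2}, is to pass through a dominating tuple that has been pushed far away from all the relevant types. First I would apply Lemma \ref{movingaway} to find $d_*$ s-dominating $d$ over $M$, with $\tp(d_*/M)$ invariant over a countable set and distant from $q := \tp(a \hat{~} a_* \hat{~} a_{**}/M)$ (so in particular distant from each of $\tp(a/M)$, $\tp(a_*/M)$, $\tp(a_{**}/M)$); moreover I would arrange that $d_*$ satisfies property $(D)$ from the Remark following Lemma \ref{movingaway}. Composing by an automorphism over $Md$ we may assume $d_* \downfree_M a_{**}$. Now the chaining works: since $\tp(d_*/M)$ is distant from $\tp(a_*/M)$ and from $\tp(a_{**}/M)$ and is invariant, $\boxminus$ for $a_{**}$ s-dominating $a_*$ gives $d_* \downfree_M a_*$; then $\boxminus$ for $a_*$ s-dominating $a$ (using that $\tp(d_*/M)$ is distant from $\tp(a/M)$ and from $\tp(a_*/M)$) gives $d_* \downfree_M a$. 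Finally, property $(D)$ of $d_*$ — which says that any $d'$ with $\tp(d'/Md_*)$ non-forking over $M$ and $\tp(d_* d'/M)$ commuting with $\tp(d/M) = r$ is independent from $d$ over $M$, hence (by the symmetric role) lets us transfer independence back from $d_*$ to $d$ — yields $d \downfree_M a$. Here I need that $\tp(d_* a/M)$ commutes with $r$ and that $\tp(a/Md_*)$ does not fork over $M$; the former holds because $r$ is distant from $q \supseteq \tp(a/M)$ and from $\tp(d_*/M)$ and products of commuting invariant types commute, and the latter holds because $d_* \downfree_M a$ means $\tp(a,d_*/M) = \tp(a/M) \otimes \tp(d_*/M)$ with $\tp(d_*/M)$ invariant, so $\tp(a/Md_*)$ is the non-forking (indeed invariant) extension.

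The main obstacle I anticipate is bookkeeping the distance hypotheses: I must be careful that the single tuple $d_*$ is simultaneously distant from $\tp(a/M)$, $\tp(a_*/M)$ and $\tp(a_{**}/M)$, which is exactly why Lemma \ref{movingaway} is invoked with $q$ taken to be the joint type $\tp(a \hat{~} a_* \hat{~} a_{**}/M)$ rather than any one of them, and that property $(D)$ is available (it is, since the Remark after Lemma \ref{movingaway} guarantees the tuple produced there can be taken with property $(D)$). A secondary point to check is that in applying $\boxminus$ for $a_*$ s-dominating $a$ we genuinely have $\tp(d_*/M)$ invariant and distant from both $\tp(a/M)$ and $\tp(a_*/M)$ — but both are built into the choice of $d_*$ — and that the commutation hypothesis needed for property $(D)$ is the product of the distance facts already in hand. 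Everything else is a direct unwinding of the definitions of distant, independent, and s-domination, together with associativity of $\otimes$.
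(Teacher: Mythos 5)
Your proposal is correct and follows essentially the same route as the paper: replace $d$ by a dominating tuple $d_*$ pushed distant from $\tp(a\hat{~}a_*\hat{~}a_{**}/M)$, arrange $d_*\downfree_M a_{**}$ by an automorphism over $Md$, chain through the two dominations to get $d_*\downfree_M a$, and transfer back to $d$. Your explicit appeal to property $(D)$ for the final step $d_*\downfree_M a \Rightarrow d\downfree_M a$ is exactly the right justification (the paper leaves it implicit, citing Corollary \ref{movingaway2} rather than Lemma \ref{movingaway} with the Remark), so if anything you have been more careful than the original.
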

\begin{proof}
Let $d\in \monster$ be distant from $a$ and $a_{**}$ with $d\downfree_M a_{**}$. By Corollary \ref{movingaway2}, let $d_*$ s-dominate $d$ over $M$ and distant from $a\hat{~}a_*\hat{~}a_{**}$. Composing by an automorphism over $Md$, we may assume that $d_* \downfree_M a_{**}$. Then we have $d_* \downfree_M a_*$ and $d_* \downfree_M a$ and finally $d \downfree_M a$.
\end{proof}

\begin{ex}\label{exstabledistal}
If $p\in S(M)$ is generically stable, and $a\models p$, then $a$ is s-dominated by itself. In the opposite situation, if $p$ is invariant and its Morley sequence is distal, then $a$ is s-dominated by the empty set.
\end{ex}

\subsubsection{S-independence}

\begin{defi}[S-independence]
Let $p,q$ be any types over $M$, let $a\models p$ and $b\models q$. We say that $a$ and $b$ are \emph{s-independent} over $M$ and write $a \downfree^s_{M} b$ if there is a tuple $a_*$ realizing an invariant type, s-dominating $a$ and distant from $b$ such that $a_*\downfree_{M} b$.
\end{defi}

Note that if $a$ and $b$ are distant, then $a \downfree^s_{M} b$ if and only if $a \downfree_{M} b$.

\begin{prop}[Existence]
Let $p,q \in S(M)$ be any two types and $a\models p$. Then there is $b\models q$ such that $a \downfree_M^s b$.
\end{prop}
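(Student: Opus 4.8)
The plan is to reduce everything to the ``moving-away'' results already established, in particular Corollary \ref{movingaway2}, and then use the product of an invariant type with $q$ together with homogeneity of $\monster$ to produce the desired realization of $q$.

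First I would apply Corollary \ref{movingaway2} to the given $p$, $q$ and to $a\models p$. This yields a tuple $a_*$ which s-dominates $a$ over $M$, is distant from $q$ over $M$, and whose type $r:=\tp(a_*/M)$ is invariant (over a countable subset of $M$). It is important to fix $a_*$ at this stage and keep it fixed: s-domination of $a$ by $a_*$ is precisely the property we must not disturb, so rather than building $a_*$ and $b$ simultaneously we pin down $a_*$ and afterwards only move $b$.

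Next, since $r$ is invariant, the product $r_x\otimes q_y\in S_{xy}(M)$ is defined: pick $b_0\models q$ and $c\models r|_{Mb_0}$, so that $\tp(c,b_0/M)=r_x\otimes q_y$. Now $\tp(c/M)=r=\tp(a_*/M)$, so by homogeneity of $\monster$ there is an automorphism $\tau$ fixing $M$ pointwise with $\tau(c)=a_*$; set $b:=\tau(b_0)$. Then $b\models q$ and $\tp(a_*,b/M)=\tp(c,b_0/M)=r_x\otimes q_y$. Since $r$ is distant from $q$, this is exactly the assertion $a_*\downfree_M b$ in the sense of the Independent definition. As $a_*$ still s-dominates $a$ over $M$, realizes the invariant type $r$, and is distant from $b$ (because $\tp(b/M)=q$), the definition of s-independence gives $a\downfree^s_M b$.

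The argument is short and I do not foresee a genuine obstacle; the only point requiring care is the bookkeeping — one must extract $a_*$ from Corollary \ref{movingaway2} \emph{before} producing $b$, and then transport $b_0$ (and not $a_*$) by an automorphism over $M$, so that s-domination of $a$ and the independence $a_*\downfree_M b$ refer to one and the same tuple $a_*$.
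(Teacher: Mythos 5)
Your proof is correct and is essentially the paper's own argument: the paper likewise takes $a_*$ s-dominating $a$, realizing an invariant type distant from $q$ (via the moving-away results), and then chooses $b$ so that $\tp(a_*,b/M)$ is the product of that type with $q$. Your extra care about fixing $a_*$ first and transporting $b_0$ by an automorphism over $M$ is just an explicit spelling-out of the paper's phrase ``take $b$ such that $\tp(a_*,b/M)=p_*\otimes q$''.
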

\begin{proof}
Let $a_*$ be s-dominating $a$ such that $a_*$ realizes some invariant type $p_*$ distant from $q$. Take $b$ such that $\tp(a_*,b/M)=p_* \otimes q$. Then by definition $a \downfree^s_M b$.
\end{proof}

\begin{prop}[Symmetry of s-independence]
S-independence is symmetric: if $a$ and $b$ are two tuples, then $a \downfree_{M}^s b$ if and only if $b \downfree_{M}^s a$ if and only if there are $a_*$, $b_*$ s-dominating $a$ and $b$ respectively, distant from each other such that $a_* \downfree_{M} b_*$.
\end{prop}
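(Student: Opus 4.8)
The plan is to prove symmetry by reducing to the symmetric-looking third condition, which will clearly give the equivalence once established. The key observation is that s-domination is transitive (Lemma \ref{transitivity}) and that whenever $a_*$ s-dominates $a$ and realizes an invariant type, we can, by Corollary \ref{movingaway2}, further s-dominate $a_*$ by a tuple that is not only invariant over a countable set but \emph{distant from any prescribed type}. So the strategy is: start from $a \downfree^s_M b$, unpack it to get $a_*$ realizing an invariant type, s-dominating $a$, distant from $b$, with $a_* \downfree_M b$; then produce $b_*$ s-dominating $b$, realizing an invariant type, and distant from $a\,\hat{\ }\,a_*$; move $b_*$ by an automorphism over $Mb$ so that $b_* \downfree_M a_*$ (possible since $b_*$ is distant from $a_*$, so its non-forking extension over $Ma_*$ is the product); and then show $a_* \downfree_M b_*$, which by definition (applied with the roles of the two sides swapped, using $b_* \downfree_M a$) gives $b \downfree^s_M a$.

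Here are the steps in order. \textbf{Step 1.} Assume $a \downfree^s_M b$ and fix $a_*$ as in the definition: $a_*$ realizes an invariant type $p_*$, s-dominates $a$, is distant from $b$, and $a_* \downfree_M b$. \textbf{Step 2.} Apply Corollary \ref{movingaway2} to $\tp(b/M)$ and $q' = \tp(a\,\hat{\ }\,a_*/M)$ to obtain $b_*$ s-dominating $b$ over $M$, of bounded length, distant from $a\,\hat{\ }\,a_*$ over $M$, with $\tp(b_*/M)$ invariant over a countable set. \textbf{Step 3.} Since $b_*$ is distant from $a_*$ and its type over $M$ is invariant, the product $\tp(a_*/M)\otimes\tp(b_*/M)$ is realized; composing by an automorphism over $Mb$ (which fixes $\tp(b/M)$ and hence $\tp(b_*/M)$ up to the non-forking extension) we may assume $b_* \downfree_M a_*$, i.e. $\tp(a_*, b_*/M) = \tp(a_*/M)\otimes\tp(b_*/M)$. \textbf{Step 4.} From $b_* \downfree_M a_*$ and the fact that $a_*$ s-dominates $a$, deduce $b_* \downfree_M a$ (this is the defining property $\boxminus$ of s-domination, noting $\tp(b_*/M)$ is invariant and distant from $p$ and $\tp(a_*/M)$). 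Symmetrically, from $a_* \downfree_M b_*$ and $b_*$ s-dominating $b$, deduce $a_* \downfree_M b$ (already known) and also, using that $a_*$ is invariant and distant from $q$ and $\tp(b_*/M)$, we get what we need for the third formulation. \textbf{Step 5.} Now $a_*$, $b_*$ s-dominate $a$, $b$ respectively, are distant from each other, and $a_* \downfree_M b_*$; this is exactly the symmetric middle condition, and reading the definition of $\downfree^s$ from the $b$ side (with witness $b_*$, using $b_* \downfree_M a$ from Step 4) gives $b \downfree^s_M a$.

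The main obstacle I expect is Step 3–4: making sure that the automorphism adjustment genuinely yields $b_* \downfree_M a_*$ \emph{and} preserves $b_*$'s status as an s-dominating tuple for $b$, and then correctly invoking $\boxminus$ in both directions. One has to be careful that $\boxminus$ requires the \emph{auxiliary} type (here $\tp(b_*/M)$, resp. $\tp(a_*/M)$) to be invariant and distant from both relevant types, which is why Corollary \ref{movingaway2} was set up to deliver distance from $q' = \tp(a\,\hat{\ }\,a_*/M)$ rather than merely from $\tp(a/M)$; this is the point where the careful bookkeeping of which tuples are distant from which pays off. A secondary subtlety is that $a_* \downfree_M b$ was part of the hypothesis but we also want $b_* \downfree_M a$ as a \emph{consequence}, and transitivity of s-domination (Lemma \ref{transitivity}) is lurking here if one instead wants to argue via $a_{**}$, $b_{**}$; but the direct argument above avoids needing it, since all the work is done by $\boxminus$ applied to the invariant witnesses $a_*$ and $b_*$. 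Once the equivalence ``$a\downfree^s_M b \iff$ (middle condition)'' is in hand, the full symmetry is immediate because the middle condition is visibly symmetric in $a$ and $b$.
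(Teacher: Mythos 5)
Your Steps 1--5 are correct and essentially reproduce the paper's argument for the implication ``$a \downfree^s_M b$ implies the third condition'': the witnesses $a_*$ (from the definition of $\downfree^s$) and $b_*$ (from Corollary \ref{movingaway2} applied to $\tp(b/M)$ and $\tp(a\hat{~}a_*/M)$, then adjusted by an automorphism over $Mb$ so that $a_*\downfree_M b_*$) are exactly the paper's $a_*$ and $b_*$. Your Step 4, which feeds $b_*\downfree_M a_*$ into $\boxminus$ for $a_*$ to get $b_*\downfree_M a$ and hence $b\downfree^s_M a$ directly, is a clean shortcut: it gives (1)$\Rightarrow$(2) and (1)$\Rightarrow$(3) in one pass, and by the symmetric argument (2)$\Rightarrow$(1) and (2)$\Rightarrow$(3).

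What is missing is the converse of the last equivalence: nothing in your proposal starts from the third condition, yet your closing sentence claims the equivalence ``$a\downfree^s_M b \iff$ (middle condition)'' is ``in hand'' when only one direction has been proved. The implication is not free, because the witnesses $a_*,b_*$ supplied by the middle condition are strictly weaker than what the definition of $\downfree^s$ demands: $a_*$ need not realize an invariant type, need not be distant from $b$ (only from $b_*$), and one only knows $a_*\downfree_M b_*$, not $a_*\downfree_M b$. The paper closes this by producing $a_{**}$ s-dominating $a_*$, realizing an invariant type, distant from $b_*$ and $b$, with $a_{**}\downfree_M b_*$; Lemma \ref{transitivity} (transitivity of s-domination) then gives that $a_{**}$ s-dominates $a$, and $\boxminus$ applied to $b_*$ upgrades $a_{**}\downfree_M b_*$ to $a_{**}\downfree_M b$, so $a_{**}$ is a legitimate witness for $a\downfree^s_M b$. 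So transitivity is not merely ``lurking'': it is genuinely needed for one of the three claimed equivalences, and that direction must be added to your argument.
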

\begin{proof}
It is enough to prove the last equivalence. To see right to left, let $a_{**}$ s-dominate $a_*$ and be distant from $b_*$ and $b$ over $M$. Assume also that $a_{**} \downfree_M b_*$, then by Lemma \ref{transitivity}, $a_{**}$ s-dominates $a$ over $M$. As it is independent from $b_*$ over $M$, we have $a_{**} \downfree_M b$ as required.

Conversely, assume that $a\downfree_{M}^s b$. Let $a_*$ be a tuple s-dominating $a$, realizing an invariant type over $M$, and distant from $b$ such that $b \downfree_{M} a_*$. We can find a tuple $b'_*$ s-dominating $b$ distant from $a,a_*$ and $b$. As $a_*\downfree_{M} b$, there is $b_* \equiv_{Mb} b'_*$ such that $a_* \downfree_{M} b_*$.
\end{proof}

\begin{prop}[Weight is bounded]\label{bddweight}
Let $(b_i)_{i<|T|^+}$ be a sequence of tuples such that $b_i \downfree_M^s b_{<i}$ for each $i$, and let $a\in \monster$. Then there is $i<|T|^+$ such that $a \downfree^s_{M} b_i$.
\end{prop}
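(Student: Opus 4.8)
The plan is to reduce the statement to Corollary \ref{weight2} (bounded weight for commuting invariant types), just as the existence and symmetry statements for s-independence were reduced to earlier lemmas. First I would use the s-domination machinery to replace each $b_i$ by a tuple realizing an invariant type: by Corollary \ref{movingaway2}, for each $i<|T|^+$ choose $b_i^*$ s-dominating $b_i$ over $M$, distant from everything relevant so far (in particular from $a$ and from all $b_j, b_j^*$ for $j<i$), with $\tp(b_i^*/M)$ invariant over a countable set. The hypothesis $b_i \downfree^s_M b_{<i}$, together with transitivity of s-domination (Lemma \ref{transitivity}) and symmetry, should let me arrange $b_i^* \downfree_M b_{<i}^*$; more precisely, since $b_i^*$ s-dominates $b_i$ and $b_i \downfree^s_M b_{<i}$, there is a witness s-dominating tuple for $b_i$ distant from $b_{<i}$ and independent of it, and by transitivity $b_i^*$ plays that role after composing by an automorphism over $M b_i$. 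Doing this by induction on $i$ (at stage $i$ we only need independence from the finitely-or-boundedly-many earlier tuples, which is fine since $\otimes$-independence is witnessed by finite data and we can absorb the earlier $b_j^*$'s into the base), I get a family $(b_i^*)_{i<|T|^+}$ of realizations of invariant types that are mutually $\downfree_M$-independent, i.e. $\tp((b_i^*)_{i<|T|^+}/M) = \bigotimes_{i<|T|^+} p_i^*$ where $p_i^* = \tp(b_i^*/M)$, and these types pairwise commute.

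Next I would apply Corollary \ref{weight2} to the family $(p_i^*)_{i<|T|^+}$ of pairwise commuting invariant types and the type $q = \tp(a/M)$: this yields some $i<|T|^+$ with $(b_i^*, a) \models p_i^* \otimes q$, i.e. $b_i^* \downfree_M a$ and $b_i^*$ is distant from $a$. The subtlety is that $q$ itself need not be invariant, but by the remark after Corollary \ref{weight2} (take $a' \models q$ restricted to the $b_i^*$'s, or rather use that one only needs one side invariant) one still gets an index $i$ with $b_i^* \downfree_M a$; alternatively, apply Lemma \ref{movingaway} to produce $a_*$ s-dominating $a$ and realizing an invariant type, run the argument with $q$ replaced by $\tp(a_*/M)$, and then pull back using that $a_*$ s-dominates $a$. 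Either way, for that index $i$ we have: $b_i^*$ realizes an invariant type over $M$, $b_i^*$ s-dominates $b_i$, $b_i^*$ is distant from $a$, and $b_i^* \downfree_M a$. By symmetry of s-independence and the definition of $\downfree^s$ (with $b_i^*$ as the required s-dominating, invariant, distant, independent witness), this is exactly $b_i \downfree^s_M a$, i.e. $a \downfree^s_M b_i$.

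The main obstacle I anticipate is the bookkeeping in the inductive construction of the $b_i^*$: I need the whole family $(b_i^*)$ to be simultaneously mutually independent over $M$ (not just pairwise), and I need each $b_i^*$ to remain distant from $a$ and from all the other data, which requires choosing the distant invariant type carefully at each stage (using Lemma \ref{distantcut} / Corollary \ref{weight2} and property $(D)$ from the remark after Lemma \ref{movingaway}) and composing by automorphisms over the appropriate base without destroying previously arranged independences. The key point that makes this go through is that $\otimes$-independence and commutation are detected by finite subconfigurations, so at stage $i$ it suffices to handle finitely many earlier coordinates at a time, and the limit family is then automatically a Morley-type product by compactness; I would phrase this the way the proof of Corollary \ref{movingaway2} and Lemma \ref{transitivity} handle analogous chains.
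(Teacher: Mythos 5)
Your proposal is correct and follows essentially the same route as the paper: apply the moving-away lemma (Corollary \ref{movingaway2}) to produce invariant, pairwise distant, mutually independent tuples $b_i^*$ s-dominating the $b_i$, then invoke Corollary \ref{weight2} against $q=\tp(a/M)$ (which, as stated there, need not be invariant) and unwind the definition of s-independence. The extra bookkeeping you describe for arranging $b_i^*\downfree_M b_{<i}^*$ from the hypothesis $b_i\downfree^s_M b_{<i}$ is exactly the detail the paper leaves implicit.
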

\begin{proof}
By Lemma \ref{movingaway2}, we can find a family $(b^*_i)_{i<|T|^+}$ such that: For each $i<|T|^+$, $b^*_i$ realizes an invariant type $r_i$ distant from $q:=\tp(a/M)$ and $r_j$, $j\neq i$, $b^*_i$ s-dominates $b_i$ over $M$ and $b^*_i\downfree_M b^*_{<i}$. By Corollary \ref{weight2}, there is $i<|T|^+$ such that $\tp(b^*_i,a/M)=r_i \otimes q$. By definition, $a \downfree^s_{M} b_i$.
\end{proof}

The following special case of this proposition makes no reference to s-domination.

\begin{cor}\label{bddweight2}
Let $q\in S(M)$ be $A$-invariant and, for $i<|T|^+$, let $p_i\in S(M)$ be an invariant type. Assume that $p_i$ commutes with $q$, for each $i$. Let $(b_i) \models \bigotimes p_i$ and $a \models q$. Then there is $i<|T|^+$ such that $\tp(b_i,a/N)=p_i \otimes q$.
\end{cor}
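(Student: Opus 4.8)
The plan is to derive this statement as a direct special case of Proposition \ref{bddweight}, by converting the hypotheses about products of invariant types into the language of s-independence and s-domination. First I would observe that since each $p_i$ is an invariant type and $p_i$ commutes with $q$, realizations of $p_i$ are distant from $q$; moreover, an element realizing an invariant type s-dominates itself is \emph{not} true in general, so the key point is rather that $(b_i)\models\bigotimes p_i$ gives us $b_i\downfree_M b_{<i}$ in the sense of Definition [Independent], because $\tp((b_j)_{j\le i}/M)=\bigotimes_{j\le i}p_j$ and $\otimes$ is associative. Each $b_i$ realizes the invariant type $p_i$, which is distant from $\tp(b_{<i}/M)$ (a product of invariant types commuting with each $p_j$), so $b_i\downfree_M b_{<i}$ literally witnesses $b_i\downfree^s_M b_{<i}$ with the choice ``$(b_i)_*=b_i$'' — here we use that a tuple realizing an invariant type trivially s-dominates itself in the weak sense needed: taking $a_*=b_i$ in the definition of s-independence, we need $b_i$ to s-dominate itself, which is immediate from the definition of s-domination (the conclusion $d\downfree_M b_i$ is exactly the hypothesis).

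Next I would apply Proposition \ref{bddweight} to the family $(b_i)_{i<|T|^+}$ together with the tuple $a\models q$. It yields some $i<|T|^+$ with $a\downfree^s_M b_i$. Now I must unwind this: by symmetry of s-independence and the remark that for \emph{distant} tuples s-independence coincides with ordinary independence, and since $b_i$ (realizing $p_i$) is distant from $a$ (realizing $q$) because $p_i$ commutes with $q$, we get $a\downfree_M b_i$, i.e. $\tp(a,b_i/M)=q\otimes p_i=p_i\otimes q$ (the last equality by commutativity). This is exactly the desired conclusion, modulo the cosmetic point that the corollary is stated with ``$/N$'' where $N$ should read $M$ (apparently a typo in the source, since no $N$ is in scope).

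The main obstacle I anticipate is making precise the claim that a realization of an invariant type s-dominates itself and that this lets us invoke Proposition \ref{bddweight} cleanly — one has to check against Definition [S-domination] that $\boxminus$ holds trivially when $b=a$, which it does since the hypothesis and conclusion of $\boxminus$ become identical. A secondary point requiring care is verifying distantness: one needs that $\tp(b_{<i}/M)=\bigotimes_{j<i}p_j$ commutes with $p_i$, which follows from pairwise commutativity of the $p_j$'s (each commutes with $p_i$ by hypothesis, hence so does their product, by associativity of $\otimes$ and the remark after Corollary \ref{weight2}). Once these bookkeeping facts are in place, the corollary is immediate. In fact, one could alternatively give a completely self-contained proof by mimicking the proof of Corollary \ref{weight2} directly — build a Morley sequence of $\bigotimes p_i$ over everything, use commutativity with $q$ to get mutual indiscernibility of the rows, and apply Proposition \ref{weight} — which avoids s-domination entirely; I would mention this as the ``honest'' route if the reader prefers not to route through Section 3's machinery, but the one-line deduction from Proposition \ref{bddweight} is the intended reading given its placement.
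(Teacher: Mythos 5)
Your overall route is the intended one: the paper gives no separate argument for Corollary \ref{bddweight2} and clearly means for it to be read off from Proposition \ref{bddweight}, and your endgame is fine (every tuple s-dominates itself since $\boxminus$ with $b=a$ is a tautology; $a$ and $b_i$ are distant because $p_i$ and $q$ commute, so $a\downfree^s_M b_i$ collapses to $a\downfree_M b_i$, i.e.\ $\tp(a,b_i/M)=q\otimes p_x=p_i\otimes q$; and yes, the $N$ is a typo for $M$). The gap is in your verification of the hypothesis $b_i\downfree^s_M b_{<i}$ of Proposition \ref{bddweight}. You assert that each $p_j$ ``commutes with $p_i$ by hypothesis''; but the hypothesis only says that each $p_i$ commutes with $q$ --- nothing is assumed about the $p_i$'s commuting with one another (contrast with Corollary \ref{weight2}, where pairwise commutation is explicitly assumed; dropping it is precisely what distinguishes \ref{bddweight2}). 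Without pairwise commutation, $p_i$ need not be distant from $\tp(b_{<i}/M)=\bigotimes_{j<i}p_j$, so $b_i\downfree_M b_{<i}$ is not even defined, and the tuple $a_*=b_i$ fails the ``distant from $b_{<i}$'' clause in the definition of s-independence. Your fallback suggestion of mimicking the proof of Corollary \ref{weight2} directly hits the same wall: the mutual indiscernibility of the rows of the array built there is exactly where pairwise commutation of the $p_i$ is used.

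The step can be repaired without pairwise commutation. From $(b_j)\models\bigotimes p_j$ one gets that $\tp(b_{<i}/Mb_{\geq i})$ is the restriction of the global $M$-invariant type $\bigotimes_{j<i}p_j$; in particular $\tp(b_{<i}/Mb_i)$ extends to a global $M$-invariant type and hence does not fork over $M$. The corollary stated immediately after \ref{bddweight2} (whose proof relies only on Proposition \ref{bddweight}, so there is no circularity) says that $a\ndownfree^s_M b$ implies that $\tp(b/Ma)$ forks over $M$; its contrapositive yields $b_i\downfree^s_M b_{<i}$. With that substitution, the rest of your deduction from Proposition \ref{bddweight} goes through.
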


\begin{cor}
Let $a,b\in \monster$ such that $a \ndownfree^s_{M} b$, then $\tp(b/Ma)$ forks over $M$.
\end{cor}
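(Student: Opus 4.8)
The statement is the contrapositive of a standard kind of fact: s-independence is implied by non-forking independence. So the plan is to prove: if $\tp(b/Ma)$ does not fork over $M$, then $a \downfree^s_M b$. This immediately gives the corollary. First I would unwind what needs to be produced. By the definition of s-independence, I need to find a tuple $a_*$ realizing an invariant type over $M$, s-dominating $a$, distant from $b$, and with $a_* \downfree_M b$. The natural candidate for $a_*$ is the one produced by Corollary \ref{movingaway2} applied to $p = \tp(a/M)$ and $q = \tp(b/M)$: this gives $a_*$ of bounded length, distant from $q = \tp(b/M)$ over $M$, s-dominating $a$, and with $\tp(a_*/M)$ invariant over a countable set. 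The only thing still to arrange is $a_* \downfree_M b$, i.e. that $\tp(a_*, b/M)$ is the product $\tp(a_*/M) \otimes \tp(b/M)$.

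The key step is therefore to upgrade "$\tp(b/Ma)$ does not fork over $M$" into "$a_* \downfree_M b$". Here is the idea. Since $\tp(b/Ma)$ does not fork over $M$, by NIP (and the standard equivalence of non-forking and non-dividing, plus the characterization of forking via coheirs/invariant extensions — all available in the ambient NIP machinery the paper assumes) we may extend $\tp(b/Ma)$ to a global type $\tilde q$ that is invariant over $M$, or at least finitely satisfiable in $M$ after moving $b$ by an automorphism over $Ma$. Then, since $a_*$ realizes an $M$-invariant type $r$ which is distant (commutes) with $\tp(a/M)$, and $a_*$ was built as (a Morley-sequence power of) a limit type of a cut, $r$ commutes with $\tilde q$ as well — or I can invoke Lemma \ref{distantcut}/Corollary \ref{weight2} to ensure the cut I used is distant from $\tilde q$ too, choosing it at the outset. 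Commutativity of $r$ with $\tilde q$ together with $b \models \tilde q|_{Ma}$ lets me compute: realizing $a_*$ as $r|_{Mb}$ over $b$, I get $\tp(a_*,b/M) = r \otimes \tilde q|_M = \tilde q \otimes r|_M$, and since $b \models \tilde q|_{Ma}$ and $a_* \in \tp(a/M)$-side is controlled, I can match this with the configuration where $a_* \downfree_M b$ holds. Concretely: build $a_*$ so that it realizes $r$ over $M b$; then $a_* \downfree_M b$ by definition, and I only need $\tp(a_*, a/M)$ to still witness s-domination — which holds because s-domination of $a$ by $a_*$ was established over $M$ and is preserved under moving $b$ away (the remark with property $(D)$ after Lemma \ref{movingaway}, or simply re-running \ref{movingaway2}).

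The cleanest route, avoiding delicate surgery: apply Corollary \ref{movingaway2} with $q := \tp(b/M)$ to get $a_*$ s-dominating $a$, distant from $\tp(b/M)$, with $r := \tp(a_*/M)$ invariant over a countable set. Now I want $a_* \downfree_M b$; since $\tp(b/Ma)$ does not fork over $M$ and $r$ commutes with $\tp(b/M)$, I claim I can find $b' \equiv_{Ma} b$ with $a_* \downfree_M b'$. Indeed, non-forking of $\tp(b/Ma)$ over $M$ means $\tp(b/Ma)$ extends to some global $M$-invariant (by NIP) type $\tilde q$; realize $a_* \models r|_{M \tilde q}$... rather, take $b' \models \tilde q|_{M a_*}$; then $b' \equiv_{Ma} b$ (as $\tilde q \supseteq \tp(b/Ma)$) and, using that $r$ commutes with $\tilde q$ (arrange this when choosing the cut, via Lemma \ref{distantcut}), $\tp(a_*, b'/M) = r \otimes \tilde q|_M$ which is exactly $a_* \downfree_M b'$. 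Replacing $b$ by $b'$ via an automorphism fixing $Ma$ (hence preserving s-domination of $a$ by $a_*$ and distance), we conclude $a \downfree^s_M b$.

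**Expected main obstacle.** The genuinely delicate point is the passage from "$\tp(b/Ma)$ does not fork over $M$" to a *global* extension that is $M$-invariant *and* commutes with $r = \tp(a_*/M)$. Non-forking in NIP gives an invariant global extension (by the theory of forking in NIP, e.g. as in \cite{NIP2}), but commutativity with $r$ is not automatic; I must secure it by choosing, back in the construction of $a_*$, a cut whose limit type is distant not just from $\tp(b/M)$ but from the chosen global extension $\tilde q$ — which is fine since there are $\geq (|T|+|\alpha|)^+$ candidate cuts and Corollary \ref{weight2} forbids all but $\leq |T|$ of them from commuting with $\tilde q$. So the proof should be organized by first fixing $\tilde q \supseteq \tp(b/Ma)$ global $M$-invariant, *then* invoking Corollary \ref{movingaway2} with the target type taken to incorporate (a realization of) $\tilde q$, so that the resulting $a_*$ is distant from $\tilde q$ and commutes with it. Everything else is bookkeeping with $\otimes$ and automorphisms over $Ma$.
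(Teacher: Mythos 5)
Your argument is correct in substance, but it takes a genuinely different route from the paper's. You prove the contrapositive by directly exhibiting the witness for $a \downfree^s_M b$: fix a global $M$-invariant extension $\tilde q$ of $\tp(b/Ma)$, build $a_*$ s-dominating $a$ whose $M$-invariant type $r$ commutes with $\tilde q$, realize $b'\models \tilde q|_{Maa_*}$ so that $\tp(a_*,b'/M)=(\tilde q\otimes r)|_M=(r\otimes\tilde q)|_M=r\otimes\tp(b/M)$, and transport back to $b$ by an automorphism over $Ma$ (legitimate, since s-domination and distance depend only on types over $M$). The paper instead reduces to Proposition \ref{bddweight}: take $|T|^+$ pairwise s-independent conjugates $(a_i)$ of $a$ with $a_0=a$ and $b_*$ realizing $\tilde q$ over everything; $M$-invariance forces $\tp(b_*,a_i/M)=\tp(b,a/M)$ for every $i$, so $b_*\ndownfree^s_M a_i$ for all $i$, contradicting bounded weight. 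The paper's argument is shorter and uses only facts already on the table; yours buys an explicit witness, but at the cost of re-entering the proof of Lemma \ref{movingaway}: the published Corollary \ref{movingaway2} only makes $\tp(a_*/M)$ distant from a prescribed type \emph{over $M$}, whereas you need commutation with the prescribed \emph{global} invariant type $\tilde q$. You correctly identify this as the delicate point, and your first fix is the right one (choose the cut in Lemma \ref{movingaway} so that its limit type commutes with $\tilde q$, via the observation following Corollary \ref{weight2}; only a small set of the $(|T|+|\alpha|)^+$ candidate cuts can fail). Your closing alternative of ``incorporating a realization of $\tilde q$ into the target type'' would not suffice, since that only controls the product with $\tilde q|_M=\tp(b/M)$ and not with the global type, so stick with the cut-selection fix; also take $b'\models\tilde q|_{Maa_*}$ rather than $\tilde q|_{Ma_*}$ to guarantee $b'\equiv_{Ma}b$.
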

\begin{proof}
Otherwise, we could find a global $M$-invariant extension $\tilde p$ of $\tp(b/Ma)$. Take $(a_i)_{i<|T|^+}$ to be a sequence of realizations of $\tp(a/M)$ with $a_0=a$ and $a_i \downfree^s_M a_{<i}$ for each $i$. By invariance, if $b_* \models \tilde p$ over everything, for each $i<|T|^+$, $\tp(b_*,a_i/M)=\tp(b_*,a/M)$ and $b_* \ndownfree^s_{M} a_i$. This contradicts Proposition \ref{bddweight}.
\end{proof}

\begin{cor}
Let $a$ and $b$ be distant over $M$, then $\tp(a/Mb)$ forks over $M$ if and only if $\tp(b/Ma)$ forks over $M$ if and only if $a \ndownfree_{M} b$.
\end{cor}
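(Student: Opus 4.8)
The plan is to prove the cycle of implications. The two directions ``$a\ndownfree_M b$ $\Rightarrow$ forking'' come essentially for free from the previous corollary (that $a\ndownfree^s_M b$ implies $\tp(b/Ma)$ forks over $M$) together with the symmetry of s-independence, while the converse rests on a standard reflection fact about invariant types in NIP theories.

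Since $a$ and $b$ are distant, the remark following the definition of s-independence gives $a\downfree^s_M b\iff a\downfree_M b$, and by the symmetry of s-independence $a\downfree^s_M b\iff b\downfree^s_M a$. Hence if $a\ndownfree_M b$, then both $a\ndownfree^s_M b$ and $b\ndownfree^s_M a$ hold; applying the previous corollary to the pair $(a,b)$ and then to the pair $(b,a)$ shows that $\tp(b/Ma)$ and $\tp(a/Mb)$ both fork over $M$. For the converse, assume $a\downfree_M b$; it suffices to show that $\tp(a/Mb)$ and $\tp(b/Ma)$ both fail to fork over $M$, as the corollary then follows by contraposition. Since $a$ and $b$ are distant, at least one of $p:=\tp(a/M)$, $q:=\tp(b/M)$ is invariant; say $p$ is (the case that $q$ is invariant is symmetric, exchanging the roles of $a$ and $b$). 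From $a\downfree_M b$ we have $\tp(a,b/M)=p\otimes q$, so by the definition of the product $\tp(a/Mb)=p|_{\monster}\!\!\restriction_{Mb}$, the restriction to $Mb$ of the global invariant type $p|_{\monster}$; since $p|_{\monster}$ does not fork over $M$, neither does $\tp(a/Mb)$. To see that $\tp(b/Ma)$ also does not fork over $M$, I would invoke the fact that in an NIP theory, if $\tp(a/Mb)$ extends to a global $M$-invariant type then $\tp(b/Ma)$ does not fork over $M$; we have just exhibited such an extension, so this applies. The symmetric argument when $q$ is the invariant type handles the remaining case, and in all cases both conditional types are non-forking.

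The only step that is not pure bookkeeping is this last NIP fact, which is also the only place NIP is genuinely used. It can be proved by taking $(a_i)_{i<\omega}$ a Morley sequence over $Mb$ of the global $M$-invariant extension of $\tp(a/Mb)$ with $a_0=a$: this sequence is $Mb$-indiscernible and satisfies $a_i\equiv_{Mb}a$ for all $i$, so $b$ realizes $\bigcup_{i<\omega}\tp(b/Ma_i)$, and since $(a_i)$ is a Morley sequence of an $M$-invariant type one concludes (using NIP, and forking $=$ dividing over the model $M$) that no formula of $\tp(b/Ma)$ divides over $M$. I expect this reflection fact to be the main obstacle, everything else being routine manipulation of the definitions and the earlier corollaries.
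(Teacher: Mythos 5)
Your proof is correct, and since the paper states this corollary without proof, your argument is essentially the intended one: the implications from $a \ndownfree_M b$ to forking on both sides are exactly the previous corollary combined with the symmetry of $\downfree^s_M$ and the remark that $\downfree^s_M$ and $\downfree_M$ coincide for distant tuples, and the converse reduces to showing that $a \downfree_M b$ forbids forking on either side. You correctly isolate the one ingredient not contained in the paper: when only one of $\tp(a/M)$, $\tp(b/M)$ is invariant, say $p=\tp(a/M)$, independence gives $\tp(a/Mb)=p|_{Mb}$ directly, but non-forking of $\tp(b/Ma)$ genuinely requires the standard NIP fact that an $M$-invariant extension on one side implies non-forking on the other. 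That fact is external to the paper but on the same footing as the paper's own use, in the preceding corollary, of ``non-forking over $M$ implies a global $M$-invariant extension exists''; the only caveat is that your sketch of it compresses a nontrivial step — passing from ``the Morley sequence of the invariant extension of $\tp(a/Mb)$ does not witness dividing'' to ``no $M$-indiscernible sequence starting with $a$ witnesses dividing'' is Kim's lemma for NIP, and one then still needs forking $=$ dividing over models — but you flag both NIP and that equality explicitly, so this is a citation rather than a gap.
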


\begin{prop}\label{reflexion}
Let $p\in S(M)$ be an invariant type and $q\in S(M)$ be distant from $p$. Let $I=(a_i)_{i<\omega}$ be a Morley sequence of $p$ over $M$ and $b\models q$. Then $\lim(I/Mb)=p|_{Mb}$.
\end{prop}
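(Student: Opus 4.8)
Write $q=\tp(b/M)$, and let $p$ be invariant over $A\subseteq M$, with $\widetilde p$ its global $A$-invariant extension and $p|_{Mb}=\widetilde p|_{Mb}$. Since $\lim(I/Mb)$ depends only on a final segment of $I$, the conclusion is equivalent to the equality of types $\lim(I/Mb)=p|_{Mb}$ over $Mb$. I would first set up the following reduction. Extend $I$ on the right by a Morley sequence $J$ of $p$ over $MIb$. Then $I+J$ is a Morley sequence of $p$ over $M$, hence $M$-indiscernible, while $J$ is a Morley sequence of $p$ over $Mb$; so $\lim(J/Mb)=p|_{Mb}$, every element of $J$ realizes $p|_{Mb}$, and $\lim(I+J/Mb)=\lim(J/Mb)=p|_{Mb}$. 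Moreover $\lim((\mathfrak c)^{+}/Mb)=\lim(J^{*}/Mb)=p|_{Mb}$ for the cut $\mathfrak c=(I,J)$. Thus it suffices to prove that $Mb$ weakly respects $\mathfrak c$, i.e.\ that $\lim(I/Mb)=\lim((\mathfrak c)^{-}/Mb)$ equals $p|_{Mb}$; equivalently, by the shrinking lemma applied to $b$ over $M(I+J)$ (and using that $J$ is indiscernible over $MIb$, so $b$ induces no cut interior to $J$), it is enough to rule out that $b$ induces the cut $\mathfrak c$, or any cut interior to $I$, on $I+J$.

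The heart of the proof is to rule this out, and this is where the hypothesis that $q$ is distant from $p$ — i.e.\ that $p$ and $q$ commute — must be used; without it the statement is false (for instance in $DLO$ with $p=q$ the $1$-type ``$+\infty$'' and $b$ lying above $I$, where $\lim(I/Mb)$ records ``$x<b$'' while $p|_{Mb}$ records ``$x>b$''). So suppose $b$ does induce such a cut. After absorbing the finitely many parameters and passing to a cofinal part of $I$, one obtains a formula $\phi(x;m,b)\in L(Mb)$ with $\lnot\phi(a;m,b)$ for every $a\in I$ while $\phi(x;m,b)\in p|_{Mb}$ — hence $\phi(x;m,b)\in\widetilde p$, so $\phi$ holds on $J$ and on every realization of $\widetilde p$ over a set containing $m,b$. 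The plan is then to contradict $NIP$ by building an $M$-indiscernible sequence along which $\phi(x;m,b)$ alternates infinitely often, by alternately grafting on ``$\phi$-segments'' (Morley segments of $p$, on which $\phi$ holds) and ``$\lnot\phi$-segments'' (obtained from limit types of the sequence built so far, on which $\lnot\phi$ holds). Commutativity of $p$ and $q$, and — by associativity of $\otimes$ — commutativity of every Morley power of $p$ and of the inverse $p'$ of $p$ over $M$ with $q$, is exactly what is needed to make this grafting iterable without bound: it pins down the possible positions of $b$ relative to Morley sequences of $p$ so that the two kinds of segments can be interleaved inside a single $M$-indiscernible sequence. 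I expect this interleaving step to be the main obstacle; the rest is bookkeeping with the shrinking/expanding/sliding machinery of Section~1.

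A cleaner way to package the use of commutativity is via the inverse type. By definition of distant, $p_x\otimes q_y=q_y\otimes p_x=p'_x\otimes q_y$, where $p'$ is the inverse of $p$ over $M$; since $p'$ is $M$-invariant and both products agree in $S_{xy}(M)$, this forces $p|_{Mb}=p'|_{Mb}$. On the other hand one wants $\lim(I/Mb)=p'|_{Mb}$, relating the eventual behaviour of $\phi$ along the Morley sequence $I$ to the inverse type; this equality is the substantive point — it is precisely what fails when commutativity is dropped, and it is forced by the configuration above. Granting it, $\lim(I/Mb)=p'|_{Mb}=p|_{Mb}$, which is the assertion.
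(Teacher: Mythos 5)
Your write-up sets the problem up correctly, and the two things you actually establish are fine: the reduction to producing a formula $\phi(x;m,b)\in p|_{Mb}$ with $\neg\phi$ holding on a tail of $I$, and the observation that distantness forces $p|_{Mb}=p'|_{Mb}$, where $p'$ is the inverse of $p$ over $M$. But the heart of the proposition is never proved. In your second paragraph you describe a \emph{plan} to contradict NIP by interleaving $\phi$-segments and $\neg\phi$-segments, and you yourself flag the interleaving as ``the main obstacle''; in your third paragraph you restate the missing content as the equality $\lim(I/Mb)=p'|_{Mb}$ and then write ``granting it''. Given $p|_{Mb}=p'|_{Mb}$, that equality is literally equivalent to the proposition, so nothing has been reduced: the entire mathematical content is deferred. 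Moreover the plan, as described, does not work: once you graft a $\phi$-segment $J$ (a Morley segment of $p$ over $MIb$ and whatever has been built so far), the limit type of the enlarged sequence over $Mb$ is $p|_{Mb}$, in which $\phi$ \emph{holds}; so realizing ``limit types of the sequence built so far'' yields further $\phi$-points, not $\neg\phi$-points. To obtain a $\neg\phi$-point extending the sequence indiscernibly over $M$ you must re-create, at the new position past $J$, the relation that $b$ bears to a tail of $I$ --- that is, show $\neg\phi(x;m,b)$ is consistent with $p|_{MIJ}$ --- and this is exactly where commutativity must enter with an actual argument (say, via commutation of $q$ with $p^{(\omega)}$ and the inverse type, or an automorphism over $Mb$ moving a fresh copy of $I$ past $J$). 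No such argument appears, and ``it pins down the possible positions of $b$'' is not a substitute for one.

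Two smaller points. First, ``it is enough to rule out \dots any cut interior to $I$'' is neither needed nor achievable: nothing prevents $b$ from inducing cuts inside $I$ (think of the colored order, where $b$ may be $E$-related to one particular $a_i$); only the cut $(I,J)$ matters, and that is in effect the only case your argument addresses, so this is harmless but should be cleaned up. Second, be aware that the paper's proof runs along entirely different lines: it deduces the statement from the bounded weight of s-independence (Proposition \ref{bddweight}) after extending $I$ to a Morley sequence of large cardinality, with no direct alternation construction. So even once completed, your sketch would be a genuinely different argument --- but as it stands it is a plan, not a proof.
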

\begin{proof}
This follows easily from  Proposition \ref{bddweight} by making the sequence $I$ of large cardinality.
\end{proof}

\begin{ex}[ACVF]\label{exacvf}
Take $T$ to be ACVF, and $M$ a model of $T$. Let $p\in S(M)$ be an invariant type of a field element. By \cite{HHM2}, Corollary 12.14, there are definable functions $f$ and $g$ respectively into the residue field $k$ and the value group $\Gamma$ such that letting $p_k = f_*(p)$ and $p_{\Gamma}= g_*(p)$, we have:

For any $a\models p$ and $b\in \monster$, $\tp(a/Mb)=p|_{Mb}$ if and only if $\tp(f(a)/Mb)=p_k|_{Mb}$ and $\tp(g(a)/Mb)=p_{\Gamma}|_{Mb}$.

Take such an invariant type $p$ and $a\models p$. Then $a$ is s-dominated by $f(a)$ since if $b\in \monster$ is distant from $a$ over $M$, then by distality of $\Gamma$, $\tp(b/M)$ and $\tp(g(a)/M)$ are weakly orthogonal.
\end{ex}

\subsection{The finite-co-finite theorem and application}

We prove now an analog of Proposition \ref{bddweight} which does not require to work over a model. We prove it by reproducing the proof of that proposition in the context of domination for indiscernible sequences.

\begin{prop}\label{fcof}
Let $A$ be any set of parameters and let $p$ be some global $A$-invariant type. Let $a\in \monster$. Let $I$ be an infinite Morley sequence of $p$ over $Aa$ and $J$ be an infinite Morley sequence of $p$ over $AI$. Let $\phi(x;y)\in L(A)$, then the set $\{b\in J : \models \phi(b,a)\}$ is finite or co-finite in $J$.
\end{prop}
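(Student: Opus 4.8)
The plan is to imitate the proof of Proposition \ref{bddweight} (bounded weight) in the setting of domination for indiscernible sequences, replacing the model $M$ by the Morley sequence $I$ and the type $q$ by the external data $(\phi, a)$. Suppose for contradiction that $B = \{b \in J : \models \phi(b,a)\}$ is neither finite nor cofinite in $J$. Then there are infinitely many alternations of $\phi(x;a)$ along $J$. The first step is to extract from $J$ a subsequence $J_0 = \llg b_i : i<\omega\rrg$ such that $\phi(b_i;a)$ holds if and only if $i$ is even (say); by indiscernibility of $J$ over $AI$ and compactness we may in fact take $J_0$ to be any order type we like, so we arrange $J_0$ to be dense, and then view it as sitting inside a larger Morley sequence of $p$ over $AI$ whose restriction to $J_0$ realizes alternating truth values — more precisely, we pick a dense set of Dedekind cuts $(\mathfrak c_k)_{k<\omega}$ of $J$, each interior to $J$, and find realizations $b_k$ of $p$ filling those cuts with $\phi(b_k;a)$ alternating.

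The heart of the argument is to show that these $b_k$ can be chosen mutually independent over $I$ (equivalently, so that $I$ together with all the $b_k$ inserted in their cuts remains indiscernible), while keeping $\phi(b_k;a)$ with the prescribed alternating truth values; this would make $\phi(x;a)$ alternate infinitely often along an indiscernible sequence, contradicting $NIP$. To get the independence I would use Proposition \ref{existssbase}: for each realization $c$ of $p$ filling a cut of $I$ we have a short sub-sequence $\bar c_*$ (of size $|T|+|\alpha|$) strongly dominating $c$ over $(I,A)$, and the external characterization of domination (Proposition \ref{extstablebase}) lets us transfer independence over $I$ of the $\bar c_*$'s to independence of the $c$'s even in the presence of the parameter $a$ — this is exactly the mechanism used in the moving-away lemma. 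Concretely, I would build the $b_k$ one at a time: having constructed $b_0,\dots,b_{k-1}$ independent over $I$ and realizing $p|_{AI}$ (with the right truth values of $\phi(\cdot;a)$), I want $b_k$ filling a fresh cut, realizing $p$ over $A \cup I \cup \{b_j : j<k\}$, with $\bar b_{k*}$ strongly dominating it; then since $b_j \downfree_I \bar b_{k*}$ for $j<k$ (the cuts are distinct and everything is a Morley sequence of $p$), domination gives $b_j \downfree_I b_k$, hence all of $b_0,\dots,b_k$ are $I$-independent by an inductive use of the polycut formalism as in Lemma \ref{twoton}. The subtle point is that I need $a$ to play the role of the external parameter $d$ in $\boxdot$: since $I$ is a Morley sequence of $p$ over $Aa$, every cut of $I$ has its limit type realized, and an appropriate partition $I = J_1+J_2+J_3+J_4$ with $J_2$ around the cut of $\bar b_{k*}$ and $J_4$ a Morley sequence of $p$ over $Aa + J_1J_2J_3$ is available; feeding this into Proposition \ref{extstablebase} yields $b_k \models p|_{AId'}$ where $d'$ collects $a$ and the previously chosen $b_j$'s, which is precisely the independence I want, and it does not disturb $\phi(b_k;a)$ since I can choose the truth value by placing $b_k$ in a cut that is interior to a block of $J$ on which $\phi(\cdot;a)$ is constant with the desired value.

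The main obstacle I expect is bookkeeping the interplay between (i) the requirement that each newly added $b_k$ be a Morley sequence of $p$ over everything so far (so that $I$-independence is automatic), (ii) the requirement that $\bar b_{k*}$ strongly dominate $b_k$, which by Proposition \ref{existssbase} forces $b_k$ to be chosen by that maximality construction rather than freely, and (iii) the need to keep the truth value of $\phi(b_k;a)$ controlled. The right way to reconcile (i)–(iii) is probably to first fix, for a single generic realization $c$ of $p|_{AI}$ filling a cut interior to $J$, a dominating sequence $\bar c_*$, and then \emph{slide} $c$ and $\bar c_*$ along $J$ (Corollary \ref{sliding2}, using that $J$ is a Morley sequence of $p$ over $AI$ hence has lots of room and the right EM-type) to produce infinitely many copies $b_k, \bar b_{k*}$ in distinct cuts with alternating $\phi(\cdot;a)$-values and preserved domination; the strong form of domination (invariance under compatible extensions) guarantees domination survives the sliding. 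Once this is set up, the contradiction with $NIP$ via infinite alternation of $\phi(x;a)$ along the indiscernible sequence $I \cup \{b_k\}_{k<\omega}$ is immediate.
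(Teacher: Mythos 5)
Your opening inference is false, and it is the crux of the matter: ``$B$ is neither finite nor co-finite in $J$'' does \emph{not} yield infinitely many alternations of $\phi(x;a)$ along $J$. The only genuinely problematic configuration is $J=J'+J''$ with $\phi(x;a)$ true on all of $J'$ and false on all of $J''$, both infinite --- a single alternation. Indeed, if infinitely many alternations were available you would be done in one line, since $I+J$ is already indiscernible and $NIP$ bounds the alternation number of $\phi$; the whole content of the proposition is to exclude the block configuration, which $NIP$ alone does not see. So the alternating subsequence $\llg b_i : i<\omega\rrg$ you want to extract need not exist, and the rest of your construction has nothing to start from. There is also an internal inconsistency later on: you invoke Proposition \ref{extstablebase} with $a$ among the external parameters to get $b_k\models p|_{AId'}$ with $a\in d'$, and simultaneously claim you can still ``choose the truth value'' of $\phi(b_k;a)$. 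You cannot: $p|_{AIa}$ is a complete type which decides $\phi(x;a)$ (in fact $\phi(x;a)\in p|_{Aa}$, since $I$ is a Morley sequence of $p$ over $Aa$ on which $\phi(x;a)$ is eventually constant), so every $b_k$ realizing it gets the \emph{same} truth value and no alternation can survive.

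That second observation points to the correct, shorter argument: no alternation is needed, because exhibiting a \emph{single} $b\in J$ with $\neg\phi(b;a)$ and $b\models p|_{AIa}$ is already a contradiction. What your sketch then still lacks is the verification of $\boxdot$ with $d=a$: you need $J_2\cup\{\bar b_*\}$ indiscernible over $Aa$ (plus the rest of $I$), and while indiscernibility of $I$ over $Aa$ takes care of $J_2$, there is no reason a priori that $a$ does not ``see'' the inserted dominating tuple $\bar b_*$; you assert the partition ``is available'' without justification. The paper supplies this by a pigeonhole: take $|T|^+$ many $b_i\in J$ with $\neg\phi(b_i;a)$, dominate each by a tuple $\bar b_*^i$ placed in pairwise distinct cuts of $I$ and $I$-independently, and note that shrinking of indiscernibles over $Aa$ can destroy at most $|T|$ of the $\bar b_*^i$; some $\bar b_*^0$ survives, $\boxdot$ holds for it with $d=a$, and Proposition \ref{extstablebase} forces $\phi(b_0;a)$, a contradiction. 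Replacing your ``infinitely many alternating points'' step by this counting argument is what is needed to make the proof go through.
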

\begin{proof}
Assume not. Then we may expand $I$ to a saturated sequence. Without loss, the formula $\phi(x,b)$ is true for $x\in I$ and pruning $J$, we may assume that it is false for $x\in J$. Finally, we may expand $J$ so that $J=\llg b_i : i<|T|^+ \rrg$.

We can find sequences $\llg \bar b_*^i : i <|T|^+ \rrg$ such that :
\\
-- Each $\bar b_*^i$ fills some cut of $I$, the cuts being distinct from one another, and the $\bar b_*^i$ are placed independently over $I$;
\\
-- for each index $i$, $\bar b_*^i$ strongly dominates $b_i$ over $(I,A)$.
\\
(Why ? First take $\bar d_*^0$ strongly dominating $b_0$ over $(I,A)$. Let $\llg b'_i : 0<i<|T|^+\rrg$ be a Morley sequence of $p$ over everything. There is an automorphism $\sigma$ fixing $AIb_0$ sending $\llg b'_i : 0<i<|T|^+\rrg$ to $\llg b_i : 0<i<|T|^+\rrg$. Let $\bar b_*^0 = \sigma(\bar d_*^0)$.  Then take $\bar d_*^1$ strongly dominating $b_1$ over $(I,A)$ with $\bar d_*^1 \downfree_I \bar b_*^0$. And iterate.)

Let $I'$ be the sequence $I$ with all the $\bar b_*^i$ added in their respective cuts. It is an $A$-indiscernible sequence. By shrinking of indiscernibles, there is $I'' \subseteq I$ obtained by removing at most $|T|$ of the tuples $\bar b_*^i$ from $I'$ such that $I''$ is indiscernible over $Aa$. Without loss, assume we have not removed the tuple $\bar b_*^0$. Then by Proposition \ref{extstablebase} (External characterization), $b_0\models p|_{Aa}$. This contradicts the hypothesis.
\end{proof}

\begin{thm}[Finite-co-finite theorem]\label{whatcanhappen}
Let $I=I_1+I_2+I_3$ be indiscernible, $I_1$ and $I_3$ being infinite. Assume that $I_1+I_3$ is $A$-indiscernible and take $\phi(x;a)\in L(A)$, then the set $B=\{ b \in I_2 : \models \phi(b;a)\}$ is finite or co-finite.
\end{thm}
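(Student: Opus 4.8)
The plan is to argue by contradiction and to reduce the situation to one covered by Proposition~\ref{fcof}. Suppose $B$ is infinite and co-infinite in $I_2$. Since $\phi(x;a)$ uses only finitely many coordinates of $a$, we may take $a$ finite. Applying shrinking of indiscernibles (Proposition~\ref{shrinking1}) to the $\emptyset$-indiscernible sequence $I$ and to $a$ together with the parameters of $\phi$, the truth value of $\phi(b;a)$ for $b\in I$ depends only on which of finitely many convex classes $b$ lies in; hence the trace of $\phi(x;a)$ on $I_2$ is a finite union of convex subsets of $I_2$. As it is assumed infinite and co-infinite, after replacing $I_2$ by a convex subinterval — which keeps $I=I_1+I_2+I_3$ indiscernible and leaves $I_1+I_3$ untouched — and possibly replacing $\phi$ by $\neg\phi$, we may assume $I_2=K_1+K_2$ with $K_1,K_2$ infinite, $\models\phi(b;a)$ for all $b\in K_1$ and $\models\neg\phi(b;a)$ for all $b\in K_2$.

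We may further expand $I_1,K_1,K_2,I_3$ to be dense without endpoints and of large cofinality on all relevant sides, keeping $I$ indiscernible over $\emptyset$ and $I_1+I_3$ indiscernible over $A$: this is a routine compactness argument, the point being that a short decreasing tuple taken near the top of $I_1$ simultaneously realises the limit-type constraints coming from $I$ and from $I_1+I_3$, and similarly for the other cuts. Then, by Lemma~\ref{cofinal} applied to the $A$-indiscernible sequence $I_1$ and to $I_3^*$, we may pass to an end segment of $I_1$ and an initial segment of $I_3$ that are indiscernible over $Aa$; so from now on $I_1$ and $I_3$ are $Aa$-indiscernible, and in particular $\phi(x;a)$ is constant on each of them. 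Up to reversing $I$ and replacing $\phi$ by $\neg\phi$ we may assume $\phi(x;a)$ holds on $I_1$. Now $I_1$ is an endless sequence indiscernible over $Aa$, so it plays the role of a Morley sequence over $Aa$ of the invariant type $p:=\lim(I_1/\monster)$, $\phi(x;a)$ lies in $p$, and $I_1+K_1$ is a $\phi(x;a)$-positive indiscernible segment continuing $I_1$; using strong base change (Lemma~\ref{limittype}, in the version for sequences used in the proof of Lemma~\ref{movingaway}) one brings $K_1+K_2$ into the form of a Morley sequence of $p$ over $AI_1$ while preserving the split $K_1$ (on which $\phi(x;a)$ holds) $+$ $K_2$ (on which it fails). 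Proposition~\ref{fcof}, applied with its ``$I$'' equal to $I_1$, its ``$J$'' equal to $K_1+K_2$ and its ``$a$'' equal to $a$, then says that $\{b\in I_2:\models\phi(b;a)\}$ is finite or co-finite in $I_2$, contradicting that $K_1$ and $K_2$ are infinite. Alternatively — and more in the spirit of the proof of Proposition~\ref{fcof} — one re-runs that proof here: fill $\omega$ distinct cuts of a saturated copy of $I_1$, independently, by small sequences $\bar b^k_*$ with $\bar b^k_*$ strongly dominating (a realization of $p$ over $AI_1$ conjugate to) a point $b_k\in K_2$ over $(I_1,A)$ (Proposition~\ref{existssbase}); add them all to $I_1$, use shrinking (Proposition~\ref{shrinking1}) to discard at most $|T|$ of them so that the resulting sequence is indiscernible over $Aa$, keep $\bar b^0_*$, and apply the external characterisation of domination (Proposition~\ref{extstablebase}, with ``$d$'' $=a$) to obtain $b_0\models p|_{Aa}$; but then $\models\phi(b_0;a)$, contradicting $b_0\in K_2$.

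The step I expect to be the real obstacle is exactly this passage from the merely $\emptyset$-indiscernible data $I_1,I_2,I_3$ to the Morley-sequence-of-an-$A$-invariant-type format needed to invoke Proposition~\ref{fcof} (and the domination machinery behind it): $\lim(I_1/\monster)$ need not be $A$-invariant, and $I_1+I_2$ need not be $A$-indiscernible, so one must genuinely use that $a$ has first been slid off every cut of $I_1$ and of $I_3$ — so that a Morley sequence over $Aa$, rather than over $A$, is enough, which is precisely the form in which Proposition~\ref{fcof} is stated — and one must be careful that the partition $K_1+K_2$ of $I_2$, equivalently (in the direct argument) that the points $b_k$ really sit on the ``false'' side of $\phi(x;a)$, survives the base-change manipulations. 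Everything else is bookkeeping with the shrinking, expanding and sliding lemmas of Section~1.
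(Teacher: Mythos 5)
Your overall plan---reducing to Proposition~\ref{fcof}---is the paper's, but the instantiation you choose does not satisfy the hypotheses of that proposition, and this is a genuine gap rather than bookkeeping. You set $p:=\lim(I_1/\monster)$ and want $I_1$ itself to be the protective Morley sequence of $p$ over $Aa$. But $\lim(I_1)$ is only known to be invariant over $I_1$ (it is finitely satisfiable there), not over $A$ or $Aa$; so the base of Proposition~\ref{fcof} must contain $I_1$, and then $I_1$ cannot also be a Morley sequence of $p$ over that base (no element of $I_1$ realizes $\lim(I_1)|_{I_1}$, which contains $x\neq d$ for every $d\in I_1$). Worse, for the target sequence you propose to ``bring $K_1+K_2$ into the form of a Morley sequence of $p$ over $AI_1$ while preserving the split''. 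Since $a\in A$, every element of a Morley sequence of $p$ over $AI_1$ realizes $p|_{AI_1}$, which decides $\phi(x;a)$ (indeed $\phi(x;a)\in p$ by your own normalization), so $\phi(x;a)$ is constant on any such sequence: the split cannot survive, and what you need is exactly what is impossible. Your alternative route has the same defect: Propositions~\ref{existssbase} and~\ref{extstablebase} require the dominated point to realize $p|_{AI_1}$, which the points of $K_2$ do not (wrong $\phi$-value), and replacing them by conjugates that do realize it destroys the witness $\neg\phi(b_0;a)$ on which your final contradiction depends.

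The paper's one-line proof avoids all of this by taking $p:=\lim(I_3^*)$ with base $I_3$, over which $p$ is invariant, and writing $\phi$ as an $L$-formula with all parameters absorbed into the tuple $a$. Then (i) $I_1$ is a Morley sequence of $p$ over $I_3a$: this is precisely what the $A$-indiscernibility of $I_1+I_3$ gives, since $d\in I_1$ and any sufficiently coinitial $c\in I_3$ occupy the same position, relative to any finite subset of $a\cup I_3\cup\{d'\in I_1 : d'<d\}$, inside the $a$-indiscernible sequence $I_1+I_3$; and (ii) $I_2$ is a Morley sequence of $p$ over $I_3I_1$, by the plain indiscernibility of $I_1+I_2+I_3$. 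Proposition~\ref{fcof} then applies verbatim, with no expansion, no base change and no case analysis, so all of your preliminary reductions are unnecessary. (Symmetrically one could take $p=\lim(I_1)$ with base $I_1$, but then the protective sequence must be $I_3^*$ and the target $I_2^*$, read right to left; using $I_1$ both as the invariance base and as the Morley sequence over it, as you do, cannot work.) You correctly flagged the passage to the Morley-sequence format as the real obstacle; the resolution is to compute the limit type from the opposite end of the sequence, not to modify $I_2$.
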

\begin{proof}
This follows from the previous proposition by setting $p$ to be the limit type of $I_3 ^*$ ($I_3$ in reverse order).
\end{proof}

Note that necessarily, $B$ in the statement of the theorem is finite if $\neg \phi(b;a)$ holds for $b\in I_1+I_3$ and co-finite otherwise (because you can incorporate some parts of $I_1$ and $I_3$ to $I_2$, also it follows from the proof). This will be used implicitly in applications.

\begin{cor}\label{fcofcor1}
Let $I=I_1+I_2+I_3$ be indiscernible, $I_1$ and $I_3$ being infinite with no endpoints and $I_2$ densely ordered. Assume that $I_1+I_3$ is $A$-indiscernible. Write $I_2 = (a_i)_{i\in \mathcal I}$. Then given some linear order $\mathcal J \supseteq \mathcal I$, one can find tuples $a_i$, $i \in \mathcal J\setminus \mathcal I$ such that:
\\
-- $I_1 + \llg a_i : i\in \mathcal J \setminus \mathcal I\rrg + I_3$ is indiscernible over $A$,
\\
-- $I_1 + \llg a_i : i\in \mathcal J \rrg + I_3$ is indiscernible.
\end{cor}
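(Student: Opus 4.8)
The idea is to reduce, by compactness, to the consistency of a partial type over $A\cup I$, and to verify this using the finite--co-finite theorem (Theorem \ref{whatcanhappen}). First some preliminary reductions. Applying Lemma \ref{expanding} to the indiscernible sequence $I=I_1+I_2+I_3$, we may enlarge $I_2$ so that $\mathcal I$ is, say, a saturated dense linear order; in particular every cut of $\mathcal I$ then has infinite cofinality from both sides. The new points are required to satisfy nothing over $A$, and since enlarging $\mathcal J$ only makes the statement stronger, this is harmless. Now each $j\in\mathcal J\setminus\mathcal I$ determines a Dedekind cut $\mathfrak c_j$ of $I_2$, hence a Dedekind cut of $I$ (with $I_1+I_2^{<j}$ on the left). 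Let $q\big((x_i)_{i\in\mathcal J\setminus\mathcal I}\big)$ be the partial type over $A\cup I$ asserting that (a) the $x_i$, inserted into $I$ at their positions, form with $I$ an indiscernible sequence, and (b) the $x_i$, together with $I_1$ and $I_3$, form an $A$-indiscernible sequence. It suffices to show $q$ is consistent, so by compactness we may assume $\mathcal J\setminus\mathcal I$ is finite.

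To deal with (b), fix once and for all an $A$-indiscernible extension $I_1+\hat I_2+I_3$ of $I_1+I_3$, with $\hat I_2=\langle\hat a_i:i\in\mathcal J\setminus\mathcal I\rangle$, obtained by placing at each cut of $\mathcal I$ hit by $\mathcal J\setminus\mathcal I$ a Morley sequence of the limit type of that cut over $AI_1I_3$; thus $\tp(\hat I_2/AI_1I_3)$ is a product of powers of such limit types. If $(x_i)_i$ realizes $p_1\cup\tp(\hat I_2/AI_1I_3)$, where $p_1$ says the $x_i$ fill the cuts $\mathfrak c_i$ of $I$ independently (i.e. fill the polycut), then $p_1$ gives (a), while the second part forces $(x_i)_i\equiv_{AI_1I_3}\hat I_2$, hence $I_1+\langle x_i\rangle+I_3\equiv_A I_1+\hat I_2+I_3$ is $A$-indiscernible, giving (b). Note also that over $L(I_1I_3)$ (forgetting $A$) the type $\tp(\hat I_2/AI_1I_3)$ is already implied by $p_1$, since $\hat I_2$ lies in an indiscernible extension of $I_1+I_3$.

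\emph{The key step: a single point at a single cut.} Fix a Dedekind cut $\mathfrak c$ of $I_2$. By Theorem \ref{whatcanhappen} applied to $I_1+I_2+I_3$ with base $A$ — after absorbing any given finitely many parameters from $I_1,I_3$ into $A$ by passing, via shrinking of indiscernibles, to a cofinal subsequence of $I_1$ and a coinitial subsequence of $I_3$ on which $I_1+I_3$ stays indiscernible over the enlarged base — for every $\psi(x)\in L(A)$ the set $\{b\in I_2:\models\psi(b)\}$ is finite or co-finite, and (by the remark following that theorem) it is finite iff $\models\neg\psi$ on the relevant ends of $I_1$ and $I_3$. Hence $\lim(\mathfrak c^{-}/AI_1I_3)=\lim(\mathfrak c^{+}/AI_1I_3)=:q^{*}$ (the ``co-finite type'' of $I_2$ over $AI_1I_3$), this is independent of $\mathfrak c$, and $q^{*}=\lim(\mathfrak c/I)|_{AI_1I_3}$. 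Two consequences: (i) if $b\models q^{*}$ then $I_1+b+I_3$ is $A$-indiscernible (for any finite $D_1\subset I_1$, $D_3\subset I_3$, $b\equiv_{AD_1D_3}c$ for $c$ in the relevant ends, which is exactly the insertion condition); (ii) the partial type ``$b$ fills $\mathfrak c$ in $I$'' (over $\emptyset$) is consistent with ``$\tp(b/AI_1I_3)=q^{*}$''. For (ii), if $\psi_1(x;\bar a,D_1,D_3)$ ($\bar a\subset A$) is forced by $q^{*}$ and $\psi_2(x;\bar e,\dots)$ ($\bar e\subset I_2$) is forced by the filling condition, then $\psi_1$ is co-finite on $I_2$ while $\psi_2$ holds at points of $I_2$ arbitrarily close to $\mathfrak c$; hence $\psi_1\wedge\psi_2$ is realized in $I_2$, so consistent. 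A realization $b$ of ``$b$ fills $\mathfrak c$ in $I$'' $\cup\,q^{*}$ then settles the case of one new point at one cut.

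\emph{The general finite case.} One adds the points $(x_i)_{i\in\mathcal J\setminus\mathcal I}$ one at a time from left to right, at each step incorporating the points already placed into ``$I_2$'' and adding the next one by the single-point argument, while using strong base change (Lemma \ref{limittype}) to keep the already-placed points realizing the appropriate limit types over $A$ together with everything named so far; this matches the product structure of $\tp(\hat I_2/AI_1I_3)$, and $q^{*}$ being independent of the cut guarantees compatibility. The one thing to monitor is that Theorem \ref{whatcanhappen} remains applicable at each step, i.e. that after turning the new points into parameters there are still infinite pieces, $A$-indiscernible over the current base, flanking the cut being filled; this is arranged by expanding the sequences with Lemma \ref{expanding} whenever needed. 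I expect this bookkeeping — tracking Dedekind cuts, infinite flanking pieces, and the order of insertions — to be the main obstacle in a clean write-up; the genuine mathematical content is the single-point argument above.
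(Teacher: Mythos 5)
Your single-point step is correct and is essentially the first step of the paper's own proof (the paper phrases it via global limit types of the cuts restricted to $IA$, and uses the finite--co-finite theorem exactly as you do to identify the type of such a point over $A$ plus finite pieces of $I_1,I_3$ with that of elements of $I_1$). The gap is in the passage to two or more new points, which you dismiss as bookkeeping; it is in fact where the remaining mathematical content lies. Consistency of your partial type in the variables $x_1,x_2$ requires the following: any formula $\theta(x_1,x_2)$ over $AI_1I_3$ belonging to $\tp(\hat a_1,\hat a_2/AI_1I_3)$ (equivalently, holding on increasing pairs from $I_1$) must be realized by increasing pairs $(a_s,a_t)$ from $I_2$ positioned correctly relative to the cuts $\mathfrak c_1,\mathfrak c_2$. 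Your type $q^{*}$ controls only $1$-types, so it says nothing about $\tp(a_s,a_t/A)$, and the statement for pairs is not a formal consequence of the statement for single elements. The paper proves it by a second, genuinely new application of Theorem \ref{whatcanhappen}: assuming $\neg\phi(a_i,a_j)$ for the new pair while $\phi$ holds on pairs from $I_1\times I_3$, one extracts disjoint increasing pairs $s_1<t_1<s_2<t_2<\dots$ in $I_2$ with $\neg\phi(a_{s_k},a_{t_k})$, forms the indiscernible sequence of pairs $L_1+L_2+L_3$ with $L_2=\llg a_{s_k}\hat{~}a_{t_k}:k<\omega\rrg$ and $L_1,L_3$ increasing pairs from $I_1,I_3$, and applies the finite--co-finite theorem to that derived sequence. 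Nothing in your proposal performs or replaces this step.

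The fallback you offer --- inserting the points one at a time using strong base change and expansion --- does not repair this. After placing $b_1$, applying Theorem \ref{whatcanhappen} over the base $Ab_1$ requires infinite sequences $J_1,J_3$ flanking the next cut with $J_1+J_3$ indiscernible over $Ab_1$, and you do not have them: $I_1+I_3$ is not indiscernible over $Ab_1$, since in the $A$-indiscernible sequence $I_1+b_1+I_3$ the point $b_1$ sits between $I_1$ and $I_3$, so a pair drawn entirely from $I_1$ and a pair straddling $I_1$ and $I_3$ occupy different positions relative to $b_1$ and need not have the same type over $Ab_1$. Lemma \ref{expanding} produces abstract extensions of $I_1+b_1+I_3$, not points located in the correct cuts of $I$, so it cannot manufacture the missing flanking pieces. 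To close the argument you must control the joint type of the new points over $A$ directly, via the finite--co-finite theorem applied to derived sequences of $n$-tuples from $I_2$ (and, for points lying in distinct cuts, the observation that the block at the right-hand cut realizes a power of its limit type over everything else, which reduces to the single-cut case).
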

\begin{proof}
We construct the points $a_i$, $i\in \mathcal J \setminus \mathcal I$ simply by realizing limit types of cuts of $I_2$ over everything. More precisely, given $\mathfrak c$ a cut of $\mathcal I$, identify $\mathfrak c$ with the corresponding cut of $I_2$. Assume for simplicity that $\mathfrak c$ has infinite cofinality from the right and let $p_{\mathfrak c}$ be $\lim(\mathfrak c^+)$ (seen a global type). Note that if $\mathfrak c\neq \mathfrak c'$, then the types $p_{\mathfrak c}$ and $p_{\mathfrak c'}$ commute. Let $\mathcal J_{\mathfrak c}$ be the convex subset of $\mathcal J$ formed by elements falling in the cut $\mathfrak c$. Finally take $\llg a_i : i\in \mathcal J \setminus \mathcal I \rrg$ to realize $\bigotimes_{\mathfrak c} p_{\mathfrak c}^{(\mathcal J_{\mathfrak c})}$ over $IA$.
 
The second condition is obviously satisfied, so we have to check the first one. We start by considering a cut $\mathfrak c$, and show that $I_1 + \llg a_i: i\in \mathcal J_{\mathfrak c}\rrg + I_3$ is indiscernible over $A$. The fact that for $i\in \mathcal J_{\mathfrak c}$, and $a\in I_1$, $\tp(a_i/A)=\tp(a/A)$ follows immediately from the finite-co-finite theorem \ref{whatcanhappen}. Now consider $i<j \in \mathcal J_{\mathfrak c}$ and $\phi(x_1,x_2)\in L(A)$ a formula. Assume that for $a\in I_1$, $b\in I_3$ we have $\models \phi(a,b)$. Write $c=(\mathcal K_1,\mathcal K_2)$, seen as a cut of $\mathcal I$. By construction of $(a_i)_{i\in \mathcal J_{\mathfrak c}}$ and shrinking of indiscernibles (Proposition \ref{shrinking1}), we have:
$$\models \phi(a_i,a_j) \iff \text{for some coinitial }\mathcal K\subset \mathcal K_2, \phi(a_s,a_t)\text{ holds for }s<t \in \mathcal K.$$

Assume we have $\neg \phi(a_i,a_j)$. So easily, we can find points $s_1<t_1<s_2<t_2<... \in \mathcal K_2$ such that $\neg \phi(a_{s_k},a_{t_k})$ holds for each $k<\omega$. Let $L_2=\llg a_{s_k}\hat{~} a_{t_k}:k< \omega\rrg$. Take also $L_1$ to be any sequence of increasing pairs of members of $I_1$, so that $L_1+L_2$ is indiscernible, and pick similarly $L_3$. Then the finite-co-finite theorem applied to the sequence $L_1+L_2+L_3$ gives us a contradiction.

We can do the same reasoning if $\phi(x_1,x_2)$ has parameters in $AI_1I_2$ (by adding parts of $I_1I_2$ to $A$ and decreasing them). Also one sees at once that the construction generalizes to formulas $\phi(x_1,...,x_n)$ with more variables and we obtain than $I_1+\llg a_i : i\in \mathcal J_{\mathfrak c} \rrg + I_3$ is indiscernible over $A$.

Next, we look at two cuts $\mathfrak c_1 < \mathfrak c_2$ and we want to see that $I_1+\llg a_i : i\in \mathcal J_{\mathfrak c_1}+\mathcal J_{\mathfrak c_2} \rrg + I_3$ is indiscernible over $A$. We know that $\llg a_i : i\in \mathcal J_{\mathfrak c_2} \rrg$ realizes $p_{\mathfrak c_2}^{(\mathcal J_{\mathfrak c_2})}$ over everything else. We may assume that $\mathcal J_{\mathfrak c_1}$ is without endpoints. Take some finite $\mathcal K_0\subset \mathcal J_{\mathfrak c_1}$ and let $\mathcal K_1$ be $\{i\in \mathcal J_{\mathfrak c_1} : i>\mathcal K_0\}$. Then the sequence $\llg a_i: i\in \mathcal K_1\rrg+I_3$ is indiscernible over $A\cup \{a_i : i\in \mathcal K_0\}$. The same reasoning as above shows that the sequence $\llg a_i : i\in \mathcal K_1 \rrg+\llg a_i : i\in \mathcal J_{\mathfrak c_2} \rrg +I_3$ is indiscernible over $A\cup \{a_i : i\in \mathcal K_0\}$. It follows that $I_1+\llg a_i:i\in \mathcal J_{\mathfrak c_1}+\mathcal J_{\mathfrak c_2}\rrg+I_3$ is indiscernible over $A$.

Iteratively, we prove that $I_1+\llg a_i :i \in \mathcal J_{\mathfrak c_1}+... +\mathcal J_{\mathfrak c_n} \rrg +I_3$ is indiscernible over $A$ and finally, that $I_1+\llg a_i : i \in \mathcal J\setminus \mathcal I \rrg +I_3$ is indiscernible over $A$.
\end{proof}

\begin{cor}\label{fcofcor2}
Let $I_1 + I_2+ I_3$ be an indiscernible sequence of finite tuples, with $I_1$ and $I_3$ infinite without endpoints. Assume that $I_1+I_3$ is indiscernible over $A$. Then we can find some subsequence $I_2'\subset I_2$ with $I_2 \setminus I_2'$ of size at most $|T|+|A|$ such that $I_1+I_2'+I_3$ is indiscernible over $A$.
\end{cor}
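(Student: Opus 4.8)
Here is a plan, in the spirit of the preceding arguments of this section.

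The plan is to deduce the corollary from the Finite-Co-finite Theorem~\ref{whatcanhappen}, handling one formula at a time and taking a union at the end; recall $I=I_1+I_2+I_3$ is $\emptyset$-indiscernible while $I_1+I_3$ is $A$-indiscernible. For each formula $\phi(x_0,\dots,x_{n-1};\bar z)$ and each finite tuple $\bar c\in A$, let $\varepsilon_{\phi,\bar c}$ be the common truth value of $\phi(a_{t_0},\dots,a_{t_{n-1}};\bar c)$ on the increasing $n$-tuples of $I_1+I_3$ (well defined, by $A$-indiscernibility of $I_1+I_3$). I claim one can attach to $(\phi,\bar c)$ a \emph{finite} set $E_{\phi,\bar c}\subseteq I_2$ such that $\phi(a_{\bar t};\bar c)=\varepsilon_{\phi,\bar c}$ for every increasing $n$-tuple $\bar t$ from $I_1+(I_2\setminus E_{\phi,\bar c})+I_3$. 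Granting this, set $E=\bigcup_{\phi,\bar c}E_{\phi,\bar c}$; there are at most $|L|\cdot|A^{<\omega}|$ pairs $(\phi,\bar c)$, so $|E|\le|L|\cdot|A^{<\omega}|\le|T|+|A|$, and $I_2':=I_2\setminus E$ works: $I_1+I_2'+I_3$ is a subsequence of $I$, hence $\emptyset$-indiscernible, and on its increasing tuples every $L(A)$-formula takes its generic value, so it is $A$-indiscernible.

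To build $E_{\phi,\bar c}$, apply shrinking of indiscernibles (Proposition~\ref{shrinking1}) to $I$, $\bar c$ and $\phi$: this produces a finite convex partition of the index set, and hence a partition $I_2=J_0+\cdots+J_s$ of $I_2$ into convex pieces, each contained in a single class, such that the truth value of $\phi(a_{\bar t};\bar c)$ depends only on which classes the entries of $\bar t$ lie in. Take $E_{\phi,\bar c}$ to be the union of the \emph{finite} pieces $J_m$; it is finite, and it remains to see that $\phi(a_{\bar t};\bar c)=\varepsilon_{\phi,\bar c}$ whenever $\bar t$ is an increasing $n$-tuple each of whose entries lies in $I_1$, in $I_3$, or in an \emph{infinite} piece of $I_2$.

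Suppose this fails for some such $\bar t$, and write $\bar t=\bar u\hat{~}\bar w\hat{~}\bar v$ according to $I_1,I_2,I_3$; since an $n$-tuple inside $I_1+I_3$ has value $\varepsilon_{\phi,\bar c}$, necessarily $\bar w\neq\emptyset$. The argument now follows the pattern of the proofs of Proposition~\ref{fcof} and Corollary~\ref{fcofcor1}. Single out the part $\bar w_m$ of $\bar w$ lying in one infinite piece $J_m$; push the remaining entries of $\bar t$ into $I_1$ or $I_3$, folding them into an enlarged base $A^{*}$ over which a cofinal $I_1''\subseteq I_1$ and a coinitial $I_3''\subseteq I_3$ stay indiscernible; replace $\bar w_m$ by $\omega$ many pairwise disjoint, in-a-row copies inside $J_m$ -- all of which, by the shrinking property, give the value $\neg\varepsilon_{\phi,\bar c}$ -- flank that $\omega$-sequence of $|\bar w_m|$-tuples by $|\bar w_m|$-tuples from $I_1''$ and $I_3''$ (on which the value is $\varepsilon_{\phi,\bar c}$, because the resulting $n$-tuple then sits inside $I_1+I_3$), and apply the Finite-Co-finite Theorem~\ref{whatcanhappen}, in the multi-variable form obtained by viewing a $|\bar w_m|$-tuple as a single variable exactly as in~\ref{fcofcor1}. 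This forces $\phi$ to have infinite alternation, contradicting $NIP$.

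I expect the folding step to be the main obstacle, precisely when $\bar w$ meets more than one piece of $I_2$: the entries of $\bar w$ in pieces other than $J_m$ cannot simply be thrown into $A^{*}$, since $A$-indiscernibility is only assumed for $I_1+I_3$ and an element of $I_2$ may, together with $\bar c$, see a cut of $I_1''$ or $I_3''$. This should be dealt with by a preliminary reduction: after deleting the finite pieces, any block of $\bar w$ sitting in a piece that shares a class with $I_1$ (resp. $I_3$) may be slid back into $I_1$ (resp. $I_3$) without changing the value, and one then inducts on the number of ``genuinely interior'' infinite pieces that $\bar w$ meets, the base case $\bar w\subseteq J_m$ being the one treated above. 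Alternatively -- and this is in the spirit of the whole section -- one can run the proof of Proposition~\ref{fcof} directly, with $p=\lim(I_3^{*})$ and the tuple $\bar w_m$ in the role of the dominated point, invoking the external characterization of domination (Proposition~\ref{extstablebase}) to pass from the sequence to arbitrary parameters.
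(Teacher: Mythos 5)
Your overall skeleton is right (reduce to one $L(A)$-formula at a time, show the exceptional set is finite per formula, take the union), and your treatment of the case where the interior entries of $\bar t$ all lie in a \emph{single} infinite convex class $J_m$ is sound: fold the $I_1$- and $I_3$-entries into the parameters, replicate the block by $\omega$ consecutive disjoint copies inside $J_m$, flank by consecutive blocks from a cofinal piece of $I_1$ and a coinitial piece of $I_3$, and apply Theorem \ref{whatcanhappen} to the resulting indiscernible sequence of blocks. But the step you yourself flag as "the main obstacle" is a genuine gap, not a technicality, and your proposed fixes do not close it. If $\bar w$ meets two distinct infinite interior classes $J_m < J_{m'}$, the replication trick breaks down: copies of the super-block must have their first part in $J_m$ and their second part in $J_{m'}$, so they are necessarily interleaved ($\bar w_1<\bar w_2<\cdots<\bar w'_1<\bar w'_2<\cdots$), whereas the flanking super-blocks from $I_1$ (or $I_3$) are consecutive; the order type of (flank, middle) is then $x_1<x_2<y_1<y_2$ while that of (middle, middle) is $x_1<y_1<x_2<y_2$, so the flanked sequence of super-blocks is not indiscernible and Theorem \ref{whatcanhappen} does not apply. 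Nor can you fix the $J_m$-entries as parameters and run the one-class argument on the $J_{m'}$-entries, because $I_1+I_3$ is not assumed indiscernible over an element of $I_2$. The same obstruction defeats the alternative route via Proposition \ref{extstablebase}: condition $\boxdot$ requires the parameter $d$ to leave a suitable piece of the sequence Morley/indiscernible, which an arbitrary element of another interior class of $I_2$ will not do.

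The paper avoids this entirely by \emph{not} working with the original points of $I_2$ in the multi-class analysis: it first applies Corollary \ref{fcofcor1} to embed $I_2$ into a sequence indexed by a saturated order $\mathcal J\supset\mathcal I$ whose new points realize tensor products of limit types of the cuts, so that $I_1+\llg a_i:i\in\mathcal J\setminus\mathcal I\rrg+I_3$ is $A$-indiscernible; the mutual genericity of these fillers is exactly what makes the two-cut interaction tractable (see the paragraph on $\mathfrak c_1<\mathfrak c_2$ in the proof of \ref{fcofcor1}). One then applies shrinking to the enlarged sequence: only $|T|+|A|$ many indices fail to share a convex class with a new (generic) point, and deleting those from $\mathcal I$ gives $I_2'$. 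So either quote \ref{fcofcor1} and finish in two lines as the paper does, or be prepared to reprove its two-cut argument; the formula-by-formula attack on the raw points of $I_2$ cannot be completed with the tools you invoke.
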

\begin{proof}
Without loss, we may assume that $I_2$ is densely ordered. Write $I_2=\llg a_i:i\in \mathcal I\rrg$ and take some $|\mathcal I|^+$-saturated linear order $\mathcal J\supset \mathcal I$. By Corollary \ref{fcofcor1} we can find tuples $\llg a_i : i\in \mathcal J \setminus \mathcal I\rrg$ such that :
\\
-- $I_1 + \llg a_i : i\in \mathcal J \setminus \mathcal I\rrg + I_3$ is indiscernible over $A$,
\\
-- $I_1 + \llg a_i : i\in \mathcal J \rrg + I_3$ is indiscernible.

By shrinking of indiscernibles, there is $\mathcal J_0\subset \mathcal J$ of size at most $|T|+|A|$ such that $I_1 + \llg a_i:i\in \mathcal J \setminus \mathcal J_0\rrg + I_3$ is indiscernible. Then set $I_2' = \llg a_i : i\in \mathcal I \setminus \mathcal J_0 \rrg$.
\end{proof}

We now give an application of this result to externally definable sets.


We will use the following notation: if $M\models T$, $M\prec N$ is an elementary extension and $A\subseteq N$ containing $M$, then $M_{[A]}$ is the structure with universe $M$ with language composed of a predicate for every subset of $M^l$ (any $l$) of the form $\phi(M;\bar c)$, $\bar c\in A^k$ for any $\phi(\bar x;\bar y)\in L(M)$, interpreted the obvious way.

Shelah proved in \cite{Sh783} that $M_{[\monster]}$ eliminates quantifiers. We refer the reader to \cite{DepPairs} for a slightly different approach, that we will use (and recall) here. If $p\in S(M)$ is any type and $a \models p$, then it is not true in general that $M_{[a]}$ eliminates quantifiers (see \cite{DepPairs}, Example 1.8 for a counterexample). However it is conjectured in \cite{DepPairs} that $M_{[I]}$ does, where $I$ is a coheir sequence starting with $a$. We prove a special case of this when $p$ is interior to $M$. See the definition below.

We will need some notions from \cite{DepPairs} that we recall now. If $X$ is an externally definable subset of $X$ ({\it i.e.}, a subset of the form $\phi(M,c)$ for some tuple $c\in \monster$), then an \emph{honest definition} of $X$ is a formula $\theta(x,d) \in L(\monster)$ such that (1) $\theta(M,d)=X$ and (2) for every formula $\psi(x)\in L(M)$ such that $X \subseteq \psi(M)$ then $\monster \models \theta(x) \impl \psi(x)$.

\begin{lemme}
If $A\subset \monster$ containing $M$ is such that for every formula $\phi(x;c)\in L(A)$, $\phi(M;c)$ has an honest definition with parameters in $A$, then $M_{[A]}$ eliminates quantifiers.
\end{lemme}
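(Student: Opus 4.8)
The plan is to prove quantifier elimination by the standard induction on formula complexity; the only substantive case is an existential quantifier, and that case reduces to a single fact: the coordinate projection of an $A$-externally definable subset of a power of $M$ is again $A$-externally definable.

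First I would record the bookkeeping observation that a quantifier-free $L(M_{[A]})$-formula $R(\bar x)$ is $M_{[A]}$-equivalent to an atomic one. Indeed, by construction each atomic predicate of $M_{[A]}$ is a set of the form $\phi(M;\bar c)$ with $\phi\in L(M)$ and $\bar c\in A$; the class of such sets is closed under Boolean combinations (absorbing the extra parameters into $A$, using $\phi(M;\bar c)\cap\psi(M;\bar d)=(\phi\wedge\psi)(M;\bar c\bar d)$ etc.), so it coincides with the class of sets defined by quantifier-free $L(M_{[A]})$-formulas; and since $M_{[A]}$ carries a predicate for each such set, each of them is already atomic. In particular, since $A\supseteq M$, every such set $X$ is of the form $\phi(M;\bar c)$ for a formula $\phi(x;\bar c)\in L(A)$, so the hypothesis applies to it.

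The key step is then: if $X$ is an $A$-externally definable subset of $M^{|x|}\times M^{|y|}$, then $\pi(X)=\{a\in M^{|x|}:(\exists b\in M)\ (a,b)\in X\}$ is $A$-externally definable. Write $X=\phi(M;\bar c)$ with $\phi(x,y;\bar c)\in L(A)$, apply the hypothesis to get an honest definition $\theta(x,y;d)$ of $X$ with $d\in A$, and claim $\pi(X)=\chi(M;d)$ where $\chi(x;d):=\exists y\,\theta(x,y;d)\in L(A)$. The inclusion $\pi(X)\subseteq\chi(M;d)$ is immediate from $\theta(M;d)=X$. For the converse, let $a\in M^{|x|}$ with $\monster\models\theta(a,b';d)$ for some $b'\in\monster$, and suppose toward a contradiction that $(a,b)\notin X$ for every $b\in M$; since $X\subseteq M^{|x|+|y|}$, this says precisely that the $L(M)$-formula $\psi(x,y):=\bigvee_i x_i\neq a_i$ satisfies $X\subseteq\psi(M)$. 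The honest definition property then gives $\monster\models\forall x\,\forall y\,(\theta(x,y;d)\to\psi(x,y))$, so $\monster\models\psi(a,b')$, which is absurd. Hence $\pi(X)=\chi(M;d)$ is $A$-externally definable. (Iterating handles projection onto any sub-block of coordinates.)

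Finally I would run the induction: atomic formulas and Boolean combinations are covered by the first observation, and for $\exists y\,\Phi(\bar x,y)$ with $y$ a single variable the inductive hypothesis gives a quantifier-free, hence atomic, formula $R_X(\bar x,y)$ that is $M_{[A]}$-equivalent to $\Phi$, where $X\subseteq M^{|\bar x|+1}$ is $A$-externally definable; by the key step $\pi(X)$ (projection onto the $\bar x$-coordinates) is $A$-externally definable, so $M_{[A]}$ has an atomic predicate $R_{\pi(X)}$ for it, and $\exists y\,\Phi(\bar x,y)$ is $M_{[A]}$-equivalent to $R_{\pi(X)}(\bar x)$; universal quantifiers are dual. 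The one place requiring care is the converse inclusion in the key step: the trick is the slightly unusual use of the honest definition against the separating formula $x\neq a$, which is an $L(M)$-formula only because the quantifier there ranges over $M$, so that $a$ is an available parameter.
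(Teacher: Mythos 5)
Your proof is correct and follows essentially the same route as the paper: the whole content is the key step that the projection of an honestly-definable set is cut out by $\exists y\,\theta(x,y;d)$, proved by testing the honest definition against the $L(M)$-formula $x\neq a$ for $a$ outside the projection. The paper states only this key step (with the two inclusions' labels inadvertently swapped and a typo $y\neq a$ for $x\neq a$, both of which you silently correct) and leaves the surrounding induction implicit; your write-up just makes that bookkeeping explicit.
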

\begin{proof}
Let $\phi(x,y;c)\in L(A)$ and let $\theta(x,y;d)\in L(A)$ be an honest definition of $X:=\phi(M;c)$. Let $\pi : M^{|x|+|y|} \impl M$ be the projection on the first $|x|$ coordinates. Let $\psi(x;d) = \exists y\theta(x,y;d)$. Then $\psi(M;d)=\pi(X)$: it is clear that $\psi(M;d)\subseteq \pi(X)$, and if $a\in M^{|x|}\setminus \pi(X)$, then the set $\{(x,y)\in M^{|x|+|y|} : y \neq a\}$ contains $X$ and by honesty $\monster \models \theta(x,y;d) \impl y\neq a$ which gives the reverse inclusion.
\end{proof}

\begin{defi}
Let $p$ be an $M$-invariant global type. We say that $p$ is \emph{interior} to $M$ if $p^{(\omega)}$ is both an heir and a co-heir of its restriction to $M$.
\end{defi}

An example of an interior type is given by the following situation: let $I\subset M$ be indiscernible and $\mathfrak c$ a cut interior to $I$ such that $M$ respects $\mathfrak c$. Then the type $p=\lim(\mathfrak c^+)$ is interior to $M$.

\begin{lemme}\label{intlemma}
Let $p$ be a global $M$-invariant type interior to $M$. Let $I_0+I_1+I_2$ be a Morley sequence of $p$ over $M$. For $i<3$ let $\bar a_i \subset I_i$ be a finite tuple. Assume that $\bar a_1 \models \phi(\bar x;\bar a_0,\bar a_2)$, $\phi\in L(M)$, then there are two tuples $\bar b_0, \bar b_2 \subset M$ such that $\bar a_1 \models \phi(\bar x;\bar b_0, \bar b_2)$.
\end{lemme}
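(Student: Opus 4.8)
## Proof plan for Lemma \ref{intlemma}

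The plan is to exploit the fact that $p$ is a coheir of $p|_M$ to "slide" the parameters $\bar a_0$ and $\bar a_2$ off the Morley sequence and down into $M$, while keeping track of the formula $\phi$ witnessed by $\bar a_1$. The key point is that since $I_0+I_1+I_2$ is a Morley sequence of the $M$-invariant type $p$, each initial segment realizes $p$ over $M$ together with what came before, and interior-ness gives us heir/coheir control over the whole $\omega$-sequence relative to its restriction to $M$.

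First I would set up the coheir argument on the $\bar a_0$ part. Since $p^{(\omega)}$ is a coheir of $p^{(\omega)}|_M$, and $\bar a_0 \subset I_0$ sits at the beginning of the Morley sequence, the type $\tp(\bar a_0 / M \bar a_1 \bar a_2)$ is finitely satisfiable in $M$ — one has to be slightly careful here about the order, so I would instead use that $I_0$ is a Morley sequence over $M I_1 I_2$ read in the appropriate direction, or reindex so that $\bar a_0$ comes last; concretely, because $p$ is $M$-invariant, $\tp(I_1 I_2 / M I_0)$ is determined, and coheir-ness of $p^{(\omega)}$ over $M$ lets us realize $\tp(\bar a_0/M\bar a_1\bar a_2)$ by a tuple $\bar b_0 \in M$. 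The formula $\phi(\bar x;\bar a_0,\bar a_2)$ holds of $\bar a_1$, and $\phi(\bar a_1;\bar x_0,\bar a_2)\in \tp(\bar a_0/M\bar a_1\bar a_2)$, so by finite satisfiability in $M$ there is $\bar b_0\in M$ with $\models \phi(\bar a_1;\bar b_0,\bar a_2)$, i.e. $\bar a_1\models \phi(\bar x;\bar b_0,\bar a_2)$.

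Now I would repeat the same move for $\bar a_2$, but over the enlarged base $M\bar b_0$: since $\bar b_0\in M$, the sequence $I_1+I_2$ is still a Morley sequence of $p$ over $M$, and $\bar a_2 \subset I_2$ lies at the end, so $\tp(\bar a_2/M\bar a_1)$ (which now contains $\phi(\bar a_1;\bar b_0,\bar x_2)$, with parameters $\bar b_0\in M$) is finitely satisfiable in $M$ by coheir-ness. Hence there is $\bar b_2\in M$ with $\models \phi(\bar a_1;\bar b_0,\bar b_2)$, which is exactly $\bar a_1\models \phi(\bar x;\bar b_0,\bar b_2)$, as desired.

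The main obstacle I anticipate is bookkeeping about directions and which restriction of $p^{(\omega)}$ is a coheir of what: coheir-ness of $p^{(\omega)}$ over $M$ a priori tells us that $\tp((I_n)_{n<\omega}/M)$ is finitely satisfiable in $M$, and to extract "$\tp(\bar a_0/M\bar a_1\bar a_2)$ finitely satisfiable in $M$" I need the later points $I_1, I_2$ to not interfere — this is where I would use that $p$ is $M$-invariant to first fix $\tp(I_1 I_2/M I_0)$ and then apply coheir. If the direct extraction is awkward, the fallback is to extend the Morley sequence to $I_0' + I_0 + I_1 + I_2$ (still Morley over $M$), realize limit types, and use shrinking/sliding (Lemmas \ref{shrinking}, \ref{sliding1}) to move $\bar a_0,\bar a_2$ into an interior copy whose behaviour over $M$ is controlled by the heir property. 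The heir half of interior-ness is the natural tool if one prefers to push $\phi$ the other way (from $M$ outward), but for this statement the coheir half applied twice is the cleanest route.
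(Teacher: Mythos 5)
There is a genuine gap in your first step. You claim that $\tp(\bar a_0/M\bar a_1\bar a_2)$ is finitely satisfiable in $M$ and that this comes from the coheir half of interiority. It does not: coheir-ness of $p^{(\omega)}$ tells you that the type of a \emph{later} block of the Morley sequence over $M$ together with an \emph{earlier} block is a restriction of a global type finitely satisfiable in $M$; it says nothing about the type of $\bar a_0$ over the later tuples $\bar a_1\bar a_2$. To see that this direction genuinely fails without the heir hypothesis, take $T=\mathrm{DLO}$, $M=\mathbb Q$, and $p$ the global coheir at $+\infty$ determined by the ultrafilter of end segments of $M$: then $p^{(\omega)}$ is finitely satisfiable in $M$, its Morley sequence is a decreasing sequence $c_0>c_1>\cdots$ above $M$, and $\tp(c_0/Mc_1)$ contains $x>c_1$ with $c_1>M$, so it is not finitely satisfiable in $M$ --- and indeed the conclusion of the lemma fails for $\phi(x;y_0,y_2):= y_2<x<y_0$. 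Your proposed repairs do not close this: the reverse of $I_0$ is \emph{not} a Morley sequence of $p$ over $MI_1I_2$ (it is a Morley sequence of the inverse type, which need not be finitely satisfiable in $M$), and sliding/shrinking do not change which side of the sequence the parameter sits on. Since your argument never actually uses the heir half of interiority, it would prove the lemma for every finitely satisfiable invariant type, which the example above refutes.

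The fix is exactly where you relegated the heir hypothesis to a footnote: it is not an optional alternative but the tool that handles $\bar a_0$. The paper's proof is: first use coheir-ness on $\tp(\bar a_2/M\bar a_0\bar a_1)$ (a restriction of the global finitely satisfiable type $p^{(k)}$, since $\bar a_2$ comes last) to find $\bar b_2\in M$ with $\models\phi(\bar a_1;\bar a_0,\bar b_2)$; then, since $\tp(\bar a_1/M\bar a_0)$ is a restriction of the global heir $p^{(k)}$ and all remaining parameters besides $\bar a_0$ now lie in $M$, the heir property yields $\bar b_0\in M$ with $\phi(\bar x;\bar b_0,\bar b_2)\in\tp(\bar a_1/M)$. (Your intermediate claim that $\tp(\bar a_0/M\bar a_1\bar a_2)$ is finitely satisfiable in $M$ is in fact true for interior types, but only because it is dual --- via the standard equivalence ``$\tp(a/Mb)$ is an heir of $\tp(a/M)$ iff $\tp(b/Ma)$ is finitely satisfiable in $M$'' --- to $\tp(\bar a_1\bar a_2/M\bar a_0)$ being an heir of its restriction to $M$, i.e.\ to the heir half of the hypothesis.) Your second step, replacing $\bar a_2$ over $M\bar a_1$ by coheir-ness, is correct.
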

\begin{proof}
First find $\bar b_2$ such that $\bar a_1 \models \phi(\bar x;\bar a_0,\bar b_2)$ by the coheir hypothesis. Then find $\bar b_0$ by the heir hypothesis.
\end{proof}

\begin{thm}[Shelah expansion for interior types]\label{shexpansion}
Let $p$ be a global $M$-invariant type interior to $M$. Let $I$ be a Morley sequence of $p$ over $M$. Then $M_{[I]}$ eliminates quantifiers.
\end{thm}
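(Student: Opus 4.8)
The plan is to invoke the quantifier‑elimination criterion proved just above (the unlabelled Lemma stating that $M_{[A]}$ eliminates quantifiers as soon as every externally definable set defined over $A$ has an honest definition over $A$). Thus it suffices to show: for every $\phi(x;\bar c)\in L(MI)$ the set $X:=\phi(M;\bar c)$ admits an honest definition with parameters in $MI$. Folding the $M$‑part of $\bar c$ into the formula, we may assume $\phi\in L(M)$ and $\bar c=\bar a$ is a finite tuple from $I$. Since $I$ is $M$‑indiscernible, for any $\psi\in L(M)$ the assertion ``$X\subseteq\psi(M)$'' depends only on $\tp(\bar a/M)$ (given $m\in M$, $\phi(m;\bar a)\leftrightarrow\phi(m;\bar a')$ whenever $\bar a'\equiv_M\bar a$); in particular $X=\phi(M;\bar a')$, and $X\subseteq\psi(M)\Rightarrow X\subseteq\psi(M)$ is witnessed identically, for every finite subtuple $\bar a'$ of $I$ with the same gap‑pattern as $\bar a$. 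Writing $I=I_1+\bar a+I_2$, I would first reduce to the case $I_1,I_2$ infinite without endpoints: if only the right (resp. left) side is infinite one uses only the coheir (resp. heir) half of Lemma \ref{intlemma} below, and the case where $I$ is finite is dealt with separately (it is governed directly by the coheir property, since then $\tp(I/M)$ is finitely satisfiable in $M$).

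The core of the argument is to produce the honest definition with parameters inside $MI$. Recall from \cite{DepPairs} that honest definitions are obtained from coheir types: taking $N\succ M$ large with $\bar a\in N$ and a complete $q\in S_x(N)$ finitely satisfiable in $X$, one has $q\vdash\phi(x;\bar a)$ and $\psi(x)\in q$ for every $\psi\in L(M)$ with $X\subseteq\psi(M)$; then a fixed formula $\theta(x;z_0,\dots,z_{k-1})$ (with $k$ above the relevant alternation numbers, depending only on $\phi$) evaluated at an initial segment of a Morley sequence of the coheir $q$ is an honest definition of $X$. The task is therefore to relocate these $k$ parameters into $MI$. Here the hypothesis that $p$ is \emph{interior} to $M$ enters: because $p^{(\omega)}$ is a coheir of $p^{(\omega)}\restriction M$, the tail $I_2$ is finitely satisfiable in $M$ over $MI_1\bar a$, and because it is also an heir, a formula over $MI_1\bar a$ satisfied ``between'' the pieces can be pulled back to one over $M$ (this is exactly Lemma \ref{intlemma}). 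The plan is to run the coheir construction of $\theta$ against the sequence $I_2$: using finite satisfiability of $\tp(I_2/MI_1\bar a)$ in $M$, together with the finite‑co‑finite theorem (Theorem \ref{whatcanhappen}) and Corollary \ref{fcofcor1} to keep the relevant configurations indiscernible, one manufactures a finite tuple $\bar d$ from $M\cup I_2$ for which $\theta(M;\bar d)=X$ and $\theta(\monster;\bar d)\subseteq\psi(\monster)$ for every $M$‑definable superset $\psi$ of $X$. The second clause is the delicate one: a failure would produce $b\in\monster$ with $\neg\psi(b)$ and $b$ agreeing with arbitrarily long patterns read off $I_2$; the coheir property moves such a pattern into $M$, the heir property removes any residual external parameters, and the resulting configuration contradicts $NIP$ via the alternation bound built into $\theta$ — this is the same mechanism as in the proof of Proposition \ref{fcof}.

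Granting the relocation, every $\phi(M;\bar c)$ with $\bar c\in MI$ has an honest definition over $MI$, and the quantifier‑elimination criterion gives that $M_{[I]}$ eliminates quantifiers. The main obstacle I expect is precisely the relocation step: one must carry out the Chernikov--Simon honest‑definition construction \emph{inside} $M$ and a finite chunk of $I$ rather than in an arbitrary saturated extension, and make the interiority of $p$ (heir on the left, coheir on the right) interact cleanly with the alternation‑number bookkeeping that certifies honesty. The remaining ingredients — the reduction via the quantifier‑elimination criterion, the $M$‑indiscernibility remarks, and the finite/one‑sided degenerate cases — are routine.
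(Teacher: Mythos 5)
Your high-level framing is right (reduce to honest definitions via the quantifier-elimination lemma, use interiority through Lemma \ref{intlemma}, drive the honesty check with the finite-co-finite theorem), but the step you yourself flag as "the main obstacle" — producing an honest definition whose parameters sit in $M\cup I$ and verifying clause (2) of honesty in $\monster$ — is a genuine gap, and I do not think it can be closed in the form you describe. The difficulty is twofold. First, $\theta$ is a single formula with a \emph{fixed finite} tuple of parameters $I_1'+I_2'$ taken from $I$ on either side of $\bar a$; a point $b\in\monster$ with $\models\theta(b)\wedge\neg\psi(b)$ therefore only matches that one fixed finite pattern, not "arbitrarily long patterns read off $I_2$", so no alternation/NIP contradiction materializes from such a $b$ alone. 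Second, even when the finite-co-finite bound applies, it only says that \emph{at most $N$} elements sitting between $I_1'$ and $I_2'$ can fail $\phi(\cdot;b)$ — and nothing prevents your distinguished tuple $\bar a$ from being one of those exceptions, unless the whole of $I$ is indiscernible over $b$, which is false for a general $b\in\monster$. Finally, even granting $\phi(\bar a;b)$, honesty asks for $\psi(b)$ for every $M$-definable $\psi\supseteq X$, and that passage is not addressed.

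The paper resolves exactly these points by a construction your proposal omits: it first takes a saturated elementary extension $N^*\succ M_{[I]}$, realizes it as a reduct of $N_{[I]}$ with $M\prec N$ and $I$ indiscernible over $N$, and then (this is the Claim in the paper's proof, using Lemma \ref{intlemma} plus saturation — i.e., interiority is consumed \emph{here}, not in a relocation step) builds an auxiliary sequence $I_1+I_2\subset N$ such that $N$ respects the cut $(I_1,I_2)$ and $I\models\lim(\mathfrak c^+)^{(\omega)}$. The honest definition's parameters are finite pieces $I_1'+I_2'$ of \emph{that} sequence, and honesty is checked for $b\in N$: there, $I_1+I+I_2$ is genuinely indiscernible, $I_1'+I_2'$ being $\delta$-indiscernible over $b$ activates the compactness form of Theorem \ref{whatcanhappen}, and indiscernibility of $I$ over $N\ni b$ forces all of $I$ (in particular $\bar a$) to behave uniformly, ruling out the "exceptional element" problem. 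So the missing idea is not a bookkeeping refinement of the relocation: it is the construction of the ambient pair $(M,N)$ and of the sequences $I_1,I_2$ inside $N$ straddling $I$, without which the honesty verification does not go through.
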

\begin{proof}
Take a saturated extension $M_{[I]} \prec N^*$ of size $\kappa > |M|$. The model $N^*$ can be seen as a reduct to the language of $M_{[I]}$ of some $N_{[J]}$ for $M\prec N$ and $J \equiv_M I$, $J$ indiscernible over $N$. Without loss $I=J$. Notice that $N^*$ and $N_{[I]}$ have the same definable sets.

\vspace{5pt}
\noindent
\underline{Claim}: There is an indiscernible sequence $I_1+I_2 \subset N$ such that $N$ respects the cut $c=(I_1,I_2)$ and $I\models \lim(c^+)^{(\omega)}$.
\vspace{4pt}

Proof: Write $N=\bigcup_{i<\kappa} A_i$ with $|A_i|<\kappa$. Let $i<\kappa$. By Lemma \ref{intlemma} and saturation, we can find sequences $K_i, L_i \subset N$ of order type $\omega$ such that $K_i+I+L_i$ is indiscernible over $A_i$.
Let $I_1 = K_1+K_2+...$ and $I_2 = ... + L_2 +L_1$, the sums ranging over $i<\kappa$. The required property is then easy to check.

\vspace{4pt}

Let $\phi(x;y)$ be a formula and $a_0\models p$, $a_0 \in I$. We consider the pair $(M,N)$ and show that $\phi(a_0;M)$ has an honest definition with parameters in $M +I_1 + I_2$.

By the Theorem \ref{whatcanhappen} and compactness, there are integers $n,N$ and a finite set of formulas $\delta$ such that for every finite sequence $J_1+J_3+J_2$, satisfying:\\
-- $J_1$ and $J_2$ are of size at least $n$,\\
-- $J_1+J_3+J_2$ is indiscernible,\\
-- $J_1+J_2$ is $\delta$-indiscernible over $b$ and \\
-- $\phi(x;b)$ holds on all elements of $J_1$ and $J_2$,\\
then $\neg \phi(x;b)$ holds on at most $N$ elements of $J_3$.

Let $I_1' \subset I_1$ and $I_2' \subset I_2$ be finite of size $n$ such that $I_1'+I_2'$ is $M$-indiscernible. Consider the formula $\theta(y) \in L(MI)$ such that if $b \models \theta(y)$, then $I_1'+I_2'$ is $\delta$-indiscernible over $a$, and $\phi(\bar a_0;y)$ holds on all elements of $I_1'+I_2'$. Define analogously $\theta_1(y)$ using $\neg \phi$ instead of $\phi$.

Then, for every $b\in M$, $\theta(b)$ holds if and only if $\phi(a_0;b)$ holds. Also, if $b\in N$, and $\theta(b)$ holds, then $\phi(a_0;b)$ holds (Why ? Only finitely many elements $a$ from $I_1+I_2$, with $I_1'<a<I_2'$ can satisfy $\phi(a;b)$). This easily implies that $\theta$ is an honest definition of $\phi(a_0;M)$.

To conclude the theorem, notice that we can do the same thing replacing $p$ by $p^{(n)}$ for any $n$, which takes care of formulas $\phi(\bar a;y)$ with $\bar a$ a finite subset of $I$ instead of one element.
\end{proof}
%
%

\section{Sharp theories}

In this last section, we study theories in which types are s-dominated by generical\-ly stable types. We show that this is implied by the existence of some form of decomposition of indiscernible sequences into ``stable by distal". Our goal is to give a criterion which we can check in dimension 1 and conclude that dp-minimal theories are sharp. One could probably introduce stronger notions, and ask for example that types are s-dominates by types living in a stable sort, but we do not pursue this here.

\begin{defi}
The theory $T$ is sharp if for every $|T|^+$-saturated model $M$ and $p\in S(M)$ an invariant type realized by $a$, there is some generically stable type $q\in S(M)$ and $a_*\models q$ such that $a_*$ s-dominates $a$ over $M$.
\end{defi}

\begin{defi}
Let $I=\llg a_i:i\in \mathcal I\rrg$ be a dense indiscernible sequence. A decomposition of $I$ is an indiscernible sequence $K=\llg a_i\hat{~}b_i : i\in \mathcal I\rrg$ where the sequence $J=(b_i)_{i\in \mathcal I}$ is totally indiscernible and such that:

For every $K'$ of same EM-type as $K$, $\mathfrak c$ a Dedekind cut of $K'$, $d\in \monster$ such that $K'$ is indiscernible over $d$ and $a\hat{~}b$ filling $\mathfrak c$; if there is $a'$ such that $a'\hat{~}b$ fills $\mathfrak c$ over $dK'$, then $a\hat{~}b$ fills $\mathfrak c$ over $dK'$.
\end{defi}

By usual sliding argument, if $K$ is dense and contains some Dedekind cut $\mathfrak c$, it is enough to check the condition for $K'=K$.

An indiscernible sequence $I=\llg a_i:i\in \mathcal I\rrg$ is \emph{decomposable} if it admits a decomposition $K=\llg a_i\hat{~}b_i:i\in \mathcal I\rrg$. In this case, we will say that $I$ is decomposable over $\llg b_i:i\in \mathcal I\rrg$.

\begin{rem}
There are two trivial cases of decomposability: If $I$ is distal, then it is decomposable over the sequence of empty tuples, if $I$ is totally indiscernible, it is decomposable over itself.
\end{rem}

\begin{lemme}[Internal characterization]
An indiscernible sequence of pairs $I=(a_i\hat{~}b_i)_{i\in \mathcal I}$ is a decomposition, if and only if the following holds:

$\boxtimes$ For every $J, K, L$ infinite indiscernible sequences without endpoints of same EM-type as $I$ and $a\hat{~}b$, $a'\hat{~}b'$, if $J+a\hat{~}b+K+L$, $J+K+a'\hat{~}b'+L$ are indiscernible, and there exist $a_0,a_0'$ such that $J+a_0\hat{~}b+K+a'_0\hat{~}b'+L$ is indiscernible, \un{then} $J+a\hat{~}b+K+a'\hat{~}b'+L$ is indiscernible.
\end{lemme}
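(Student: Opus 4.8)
The statement is an equivalence; write ($\Rightarrow$) for ``decomposition $\Rightarrow \boxtimes$'' and ($\Leftarrow$) for the converse. I keep the standing half of being a decomposition — that $(b_i)_{i\in\mathcal I}$ is totally indiscernible — as part of the input throughout: $\boxtimes$ is vacuous whenever the two second coordinates cannot simultaneously fill two cuts, so it cannot by itself detect failure of total indiscernibility of $(b_i)$, and this is why it is recorded in the definition rather than derived. So the content to prove is that, with $(b_i)$ totally indiscernible, the external filling clause of the definition is equivalent to $\boxtimes$. In both directions I freely pass to dense sequences of the right EM-type, extract, and slide points between cuts, invoking Corollaries \ref{sliding2} and \ref{sliding3} and the remark following the definition of decomposition without further comment.

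For ($\Rightarrow$): given $J,K,L$ of EM-type $I$ and pairs $a\hat{~}b$, $a'\hat{~}b'$ with $J+a\hat{~}b+K+L$ and $J+K+a'\hat{~}b'+L$ indiscernible, and $a_0,a_0'$ with $J+a_0\hat{~}b+K+a_0'\hat{~}b'+L$ indiscernible, I want $J+a\hat{~}b+K+a'\hat{~}b'+L$ indiscernible. I apply the external clause with $K'=K+L$, the cut $\mathfrak c$ between $K$ and $L$, parameter set $d=J\cup\{a\hat{~}b\}$ (legitimate since $J+a\hat{~}b+K+L$ is indiscernible, so $K+L$ is indiscernible over $d$), and filler $a'\hat{~}b'$ (which fills $\mathfrak c$ over $\emptyset$ by the second hypothesis). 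The existence witness demanded by the clause is some $\tilde a$ with $J+a\hat{~}b+K+\tilde a\hat{~}b'+L$ indiscernible: this is where the hypothesis on $a_0,a_0'$ enters — it shows, together with total indiscernibility of $(b_i)$, that $b'$ realizes the second-coordinate marginal of the limit type of $J+a\hat{~}b+K+L$ at $\mathfrak c$ (the marginal is unchanged when $a_0\hat{~}b$ is replaced by $a\hat{~}b$, both with second coordinate $b$), and one then lifts $b'$ to a full filler by an automorphism fixing the sequence. The external clause then yields $a'\hat{~}b'$ filling $\mathfrak c$ over $J\cup\{a\hat{~}b\}$, which is the conclusion.

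For ($\Leftarrow$) — the Shelah-reflection direction — I follow the pattern of Lemma \ref{defitrans}, Lemma \ref{twoton} and Proposition \ref{extstablebase}. Assume $\boxtimes$ and suppose the external clause fails: $K'$ of EM-type $I$, a Dedekind cut $\mathfrak c$, $d$ with $K'$ indiscernible over $d$, a filler $a\hat{~}b$ of $\mathfrak c$ over $\emptyset$, and $a'$ with $a'\hat{~}b$ filling $\mathfrak c$ over $dK'$ but $a\hat{~}b$ not; fix a formula $\phi$ over $d$ witnessing this (after enlarging $K'$ and re-partitioning, I absorb its finitely many $K'$-parameters into the base and take $K'=I$ dense). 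Using $d$-indiscernibility of $I$, I reproduce the configuration at an increasing $\omega$-chain of interior cuts $\mathfrak c_0<\mathfrak c_1<\cdots$: at each $\mathfrak c_k$ there is a ``good'' filler over $dI$ (on which $\phi$ holds) and a ``bad'' filler over $\emptyset$ (on which $\phi$ fails). Using $\boxtimes$ for two cuts, then — exactly as Lemma \ref{twoton} bootstraps its $n=2$ case — for any finite number of cuts, and then by compactness for all of them, I insert at the $\mathfrak c_k$ a sequence of fillers whose first coordinates alternate between the good and bad kind while the inserted sequence stays indiscernible; the auxiliary hypothesis of $\boxtimes$ at each step is furnished by the $dI$-indiscernible good fillers together with total indiscernibility of $(b_i)$. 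Then $\phi$ alternates infinitely often along the inserted points, contradicting NIP.

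The main obstacle is this last reflection step: keeping the inserted pairs indiscernible while insisting their first coordinates alternate between two prescribed completions of the common second coordinate, and verifying that the auxiliary hypothesis of $\boxtimes$ is still available at every stage of the iteration — it is precisely that hypothesis that lets the insertion continue. By comparison, the verifications in ($\Rightarrow$) — that $K+L$ is indiscernible over the chosen parameter set, and that the existence witness of the external clause is present — are routine bookkeeping.
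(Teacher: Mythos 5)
Your converse direction ($\boxtimes$ implies decomposition) follows the paper's argument: reduce to a single formula $\phi$ over $d$, reproduce the configuration at an $\omega$-chain of cuts by similarity, upgrade $\boxtimes$ to $n$ cuts as in Lemma \ref{twoton}, insert the fillers, and contradict NIP via alternation. That half is fine. Your treatment of total indiscernibility of $(b_i)$ as a standing hypothesis is also a reasonable reading of the statement.

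The forward direction has a genuine gap, and it sits exactly where you dismiss the work as ``routine bookkeeping.'' You apply the external clause once, with $K'=K+L$, $d=J\cup\{a\hat{~}b\}$ and filler $a'\hat{~}b'$, and you must produce the witness: some $\tilde a$ with $K+\tilde a\hat{~}b'+L$ indiscernible over $J\cup\{a\hat{~}b\}$, equivalently $\tp(b'/Ja\hat{~}bKL)$ equal to the second-coordinate restriction of the filler type of the cut $(K,L)$. Your justification only shows that the \emph{target} type is the same whether computed over $Ja\hat{~}bKL$ or over $Ja_0\hat{~}bKL$ (both cuts have second coordinate $b$); it does not show that $b'$ \emph{realizes} it over the set containing $a$. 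What the hypotheses give is $\tp(b'/Ja_0\hat{~}bKL)$ (from the indiscernibility of $J+a_0\hat{~}b+K+a_0'\hat{~}b'+L$); nothing in the hypotheses, nor in total indiscernibility of the second coordinates, controls formulas $\psi(y';a,\ldots)$ linking $b'$ to the first coordinate $a$ — and the filler type does decide such formulas. Indeed, pinning down exactly this interaction is the content of the lemma, so it cannot come for free. The fix is the paper's missing first step: apply the decomposition property once more, with the roles of the two cuts reversed — sequence $J+K$, cut $(J,K)$, parameter set $d=a_0'\hat{~}b'+L$, filler $a\hat{~}b$, and witness $a_0$ (supplied directly by the hypothesis that $J+a_0\hat{~}b+K+a_0'\hat{~}b'+L$ is indiscernible). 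This yields that $J+a\hat{~}b+K$ is indiscernible over $a_0'\hat{~}b'+L$, hence that $J+a\hat{~}b+K+a_0'\hat{~}b'+L$ is indiscernible, and $a_0'$ is then the witness you need for your application. So the forward direction requires two applications of the decomposition clause, not one.
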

\begin{proof}
Assume that $I$ is a decomposition. Then taking $d=a'_0\hat{~}b'+L$ in the definition, we see that $J+a\hat{~}b+K$ is indiscernible over $a'_0\hat{~}b'+L$. Then taking $d=J+a\hat{~}b$, we get that $K+a'\hat{~}b'+L$ is indiscernible over $J+a\hat{~}b$, so $J+a\hat{~}b+K+a'\hat{~}b'+L$ is indiscernible.

Conversely, assume $\boxtimes$ holds and without loss $\mathcal I$ is a dense order. Notice that the analog of $\boxtimes$ where we fill $n$ cuts instead of 2 follows from $\boxtimes$ by easy induction (as in Lemma \ref{twoton}). Let $d \in \monster$, $\mathfrak c$, $a\hat{~}b$ and $a'$ be as in the definition of decomposition. Assume that $a\hat{~}b$ does not fill $\mathfrak c$ over $Ad$. Adding parameters to $d$ if necessary, we may assume that for some formula $\phi(x,y)$, and all $a_*\hat{~}b_*\in I$, we have $\phi(a_*\hat{~}b_*,d)\wedge \neg \phi(a\hat{~}b,d)$. Fix some increasing sequence $(\mathfrak c_k)_{k<\omega}$ of Dedekind cuts of $I$. For each $k<\omega$, we can find $a_k,a_k',b_k$ such that $\tp(a_k,a_k',b_k,d/I)$ is similar to $\tp(a,a',b,d/I)$ and $a_k \hat{~} b_k$ fills the cut $\mathfrak c_k$. By $\boxtimes$ and the remark above, the sequence obtained by adding all the tuples $a_k \hat{~} b_k$ to $I$ in their respective cuts is indiscernible. Then the formula $\phi(x,y)$ has infinite alternation rank.
\end{proof}

We will need the following strengthening of Lemma \ref{limittype}.

\begin{lemme}[Strong base change 2]\label{limittype2}
Let $I=(a_i\hat{~}b_i)_{i \in \mathcal I}$ be an indiscernible sequence and $A\supset I$ a set of parameters. Let $(\mathfrak c_i)_{i \in \mathcal J}$ be a sequence of pairwise distinct polarized Dedekind cuts in $I$. Call $\mathfrak c'_i$ the corresponding cut in the sequence $(b_i)_{i\in \mathcal I}$. For each $i$ let $d_i\hat{~}e_i$ fill the cut $\mathfrak c_i$. Assume also that the sequence $(e_i)_{i \in J}$ realizes $\bigotimes \lim(\mathfrak c'_i)$ over $I$.
Then there exist $(d_i'\hat{~}e_i')_{i \in \mathcal J}$ such that 
\\
-- $\tp(\langle d_i'\hat{~}e_i':i\in \mathcal J\rangle/I)=\tp(\langle d_i\hat{~}e_i:i\in \mathcal J\rangle/I)$,
\\
-- for each $i$, $\tp(d_i'\hat{~}e_i'/A) = \lim(\mathfrak c_i/A)$,
\\
-- $(e_i')_{i\in \mathcal J}$ realizes $\bigotimes_i \lim(\mathfrak c'_i)$ over $A$.
\end{lemme}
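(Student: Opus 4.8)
The plan is to imitate the proof of Lemma~\ref{limittype}, with the extra condition on the $e$-coordinates built into the partial type one tries to realise. Observe first that Lemma~\ref{limittype}, applied with the pairs $d_i\hat{~}e_i$ in place of the points $d_i$ (that lemma applies equally to tuples, as was used in the proof of Lemma~\ref{movingaway}), gives exactly the first two clauses of the present statement; the content is therefore to obtain these \emph{together} with the third clause, that $\langle e_i' : i\in\mathcal J\rangle$ realises $\bigotimes_i\lim(\mathfrak c_i')$ over $A$. So I would state the goal as: the partial type $\Sigma$ over $A$ in the variables $\langle x_i\hat{~}y_i : i\in\mathcal J\rangle$, consisting of $\tp(\langle d_i\hat{~}e_i\rangle/I)$, of $\lim(\mathfrak c_i/A)$ in the variables $x_i\hat{~}y_i$ for every $i$, and of $\bigotimes_i\lim(\mathfrak c_i'/A)$ in the variables $\langle y_i\rangle$, is consistent. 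Since consistency is a finitary matter and all the types just listed are complete, it suffices to treat $\mathcal J=n$ finite with each $\mathfrak c_i$ polarised as $\mathfrak c_i^-$ (the other polarisation being symmetric), and to assume for contradiction that there are a formula $\phi(\bar x,\bar y)\in\tp(\langle d_i\hat{~}e_i\rangle/I)$ with parameters $I_0\subseteq I$, formulas $\psi_i(x_i,y_i)\in\lim(\mathfrak c_i/m)$ and a formula $\rho(y_0,\ldots,y_{n-1})\in\bigotimes_i\lim(\mathfrak c_i'/m)$, with $m$ a finite subset of $A$ containing $I_0$, such that $\phi\wedge\bigwedge_i\psi_i\wedge\rho$ is inconsistent.

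The iteration is then the one of Lemma~\ref{limittype}, run with one additional bookkeeping constraint. Recalling that $\rho\in\bigotimes_i\lim(\mathfrak c_i'^-/m)$ means that $\rho$ holds of $b$-coordinates taken far enough to the left in the respective cuts $\mathfrak c_i'$ (with the correct order pattern), I would, after reordering the cuts, choose $(J_i,J_i')\unlhd\mathfrak c_i$ with $J_i\cup J_i'$ disjoint from $m\cap I$ and so far to the left that $\psi_i$ holds on every pair of $J_i$ \emph{and} the $b$-coordinates of the pairs of $J_i$ all lie in the region where $\rho$ is guaranteed. Since the cuts are pairwise distinct, the segments $J_i+J_i'$ are mutually indiscernible over $I_0$. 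Now, choosing cuts $\mathfrak d_i$ interior to $J_i$, the same argument as in Lemma~\ref{limittype} (each $d_i\hat{~}e_i$ fills $\mathfrak c_i$, hence $(J_i,J_i')$, and $\phi$ has its $I$-parameters in $I_0$) produces pairs $g_i\hat{~}h_i$ genuinely filling the $\mathfrak d_i$ with $\phi(\bar g,\bar h)$; and I would arrange in addition that the $h_i$ are taken as successive realisations of the limit types $\lim(\mathfrak c_i'^-)$ inside the $J_i$, which is compatible with forcing $\phi$ because in the original data $\langle e_i\rangle$ is itself such a sequence of realisations, so the whole configuration can be slid from the cuts $\mathfrak c_i$ to the cuts $\mathfrak d_i$ inside the mutually indiscernible segments. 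Consequently $\rho(\bar h)$ holds.

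Since $\phi(\bar g,\bar h)\wedge\rho(\bar h)$ holds while $\phi\wedge\bigwedge_i\psi_i\wedge\rho$ is inconsistent, some $\psi_j(g_j,h_j)$ must fail. One then forgets the other coordinates, adjoins $g_j\hat{~}h_j$ to $I$ (still indiscernible, the cut having been genuinely filled), replaces $J_j$ by an initial segment of it lying strictly to the left of $\mathfrak d_j$, re-chooses the corresponding $\rho$-region, and repeats. After $\omega$ steps one obtains pairs $g_j^k\hat{~}h_j^k$ filling a strictly decreasing sequence of cuts interior to $J_j$, all of which can be adjoined to $I$ keeping it indiscernible, with $\neg\psi_j(g_j^k,h_j^k)$ for every $k$ while $\psi_j$ holds on the elements of $J_j$ separating consecutive cuts; hence $\psi_j$ has infinite alternation rank, contradicting $NIP$.

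The routine parts of this plan are precisely those of Lemma~\ref{limittype}. The one genuinely new point, and the main obstacle, is to carry the auxiliary formula $\rho$ along the construction: this forces the segments $J_i$ to be shrunk into the $\rho$-region at the outset and the region to be refreshed after each step, and it forces the $e$-coordinates of the tuples produced at each stage to be chosen as initial pieces of Morley sequences of the $\lim(\mathfrak c_i')$. Checking that this last requirement is compatible with simultaneously forcing $\phi$ — so that the slide from $\langle d_i\hat{~}e_i\rangle$ to $\langle g_i\hat{~}h_i\rangle$ can be carried out inside the mutually indiscernible segments — is where the care lies; it rests on the description of the tensor product of limit types recalled in the preliminaries and on the hypothesis that $\langle e_i\rangle$ already realises $\bigotimes_i\lim(\mathfrak c_i')$ over $I$, so that the target configuration is non-empty to begin with.
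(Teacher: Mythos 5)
Your setup coincides with the paper's: the compactness reduction to finite $\mathcal J$, a formula $\phi\in\tp(\langle d_i\hat{~}e_i\rangle/I)$, formulas $\psi_i\in\lim(\mathfrak c_i/m)$ and $\rho\in\bigotimes_i\lim(\mathfrak c_i'/m)$ with $\phi\wedge\bigwedge_i\psi_i\wedge\rho$ inconsistent, and segments $(J_i,J_i')\unlhd\mathfrak c_i$ on which $\psi_i$ and $\rho$ are guaranteed. The gap is the step ``Consequently $\rho(\bar h)$ holds.'' Sliding the configuration $\langle d_i\hat{~}e_i\rangle$ inside the mutually indiscernible segments preserves its type over $I$, but $\rho$ has parameters $m\subset A$ that need not lie in $I$; the hypothesis only says that $\langle e_i\rangle$ realises $\bigotimes_i\lim(\mathfrak c_i')$ \emph{over $I$} (if $m\subseteq I$ there would be nothing to prove for the third clause), so the slid coordinates $\bar h$ realise the limit types of the new cuts over $I$ and nothing more. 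Arranging instead that the $h_i$ be successive realisations of the limit types over $m$ --- which is what $\rho(\bar h)$ requires --- while simultaneously satisfying $\phi$ with the correct position relative to the cuts is exactly the statement being proved; assuming it here is circular. Nor does Lemma \ref{limittype} supply it: that lemma would give each $e_i'$ realising $\lim(\mathfrak c_i'/A)$ individually, not the tuple realising the product, and the product is precisely what the third clause (and $\rho$) is about.

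The paper's proof does not force $\rho$ at all. After producing $\bar g\hat{~}\bar h$ filling the cuts $\mathfrak d_i$ with $\phi(\bar g,\bar h)$, the inconsistency yields a trichotomy: some $\psi_j(g_j,h_j)$ fails, or $\rho(\bar h)$ fails. In the first case one iterates as in Lemma \ref{limittype} and some $\psi_j$ acquires infinite alternation. In the second case one keeps the $e$-coordinates $\bar h$ --- which do genuinely fill the cuts $\mathfrak c_i'$ of the $b$-sequence, by the hypothesis over $I$ --- replaces the $d$-coordinates by ones making the pairs fill cuts of $I$, adjoins these pairs to $I$, and iterates; if this branch occurs infinitely often, $\rho$ itself alternates infinitely on the $b$-sequence, holding on tuples from the $J_i$ but failing on the interleaved added points. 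Your argument needs this second branch of the iteration (with the accompanying alternation count for $\rho$) in place of the unjustified claim that $\rho(\bar h)$ can be arranged.
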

\begin{proof}
The proof is essentially the same as that of Lemma \ref{limittype}.

Assume the result does not hold. Then by compactness, we may assume that $\mathcal J=\{1,..,n\}$ and that there is a formula $\phi(x_1\hat{~}y_1,..,x_n\hat{~}y_n) \in \tp(\langle d_i\hat{~}e_i:i \rangle/I)$, a formula $\theta(y_1,..,y_n) \in \bigotimes \lim(t'_i/m)$ and formulas $\psi_i(x_i,y_i) \in \lim(\mathfrak c_i/m)$ for some finite $m \in A$ such that $\phi(x_1\hat{~}y_1,..,x_n\hat{~}y_n)\wedge \theta(y_1,..,y_n) \bigwedge_i \psi_i(x_i,y_i)$ is inconsistent. Let $I_0$ denote the parameters of $\phi$.

Assume for simplicity that $n=2$ (the proof for $n>2$ is the same) and without loss $\mathfrak c_i$ is polarized as $\mathfrak c_i^-$. For $i=1,2$  take $(I_i,I_i') \unlhd \mathfrak c_i$ such that $\psi_i$ holds on all elements of $I_i$, $\theta(y_1,y_2)$ holds for each $(x_1\hat{~}y_1,x_2\hat{~}y_2)\in I_1 \times I_2$, and $I_i \cup I_i'$ contains no element of $I_0$. Then $I_1+I_1'$ and $I_2+I_2'$ are mutually indiscernible over $I_0$. So for every two cuts $\mathfrak d_1$ and $\mathfrak d_2$ respectively from $I_1+I_1'$ and $I_2+I_2'$, we can find points $d_1\hat{~}e_1$ and $d_2\hat{~}e_2$ filling those cuts (even seen as cuts of $I$) such that $\phi(d_1,e_1,d_2,e_2)$ holds and there are $d_1',d_2'$ such that $(d_1'\hat{~}e_1,d_2'\hat{~}e_2)$ fills the polycut $(\mathfrak d_1,\mathfrak d_2)$ over $I$.

Take a cut $\mathfrak d_1$ inside $I_1$ and $\mathfrak d_2$ inside $I_2$ and see them as cuts of $I$. Fill $\mathfrak d_1$ by $d_1\hat{~}e_1$ and $\mathfrak d_2$ by $d_2\hat{~}e_2$ as above. By hypothesis, either $\neg \theta(e_1,e_2)$, $\neg \psi_1(d_1,e_1)$ or $\neg \psi_2(d_2,e_2)$ holds. In one of the latter two cases, proceed as in Lemma \ref{limittype}. In the first case, keep $e_1$ and $e_2$ and add points $(d_1',d_2')$ such that $I$ with $d_1'\hat{~}e_1$ and $d_2'\hat{~}e_2$ added is indiscernible. Then iterate with $I\cup \{d_1'\hat{~}e_1,d_2'\hat{~} e_2\}$ instead of $I$.

After iterating this $\omega$ times, either $\psi_1$, $\psi_2$ or $\theta$ has infinite alternation rank.
\end{proof}

\begin{lemme}[Base change]
The notion of being a decomposition is stable both ways under base change: If $(a_i\hat{~}b_i)_{i\in \mathcal I}$ is $A$-indiscernible, then it is a decomposition in $T$ if and only if it is a decomposition in $T(A)$.
\end{lemme}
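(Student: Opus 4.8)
The plan is to follow the scheme of Corollary~\ref{basechange}: the direction from $T$ to $T(A)$ by absorbing the new parameters into the external tuple of the definition, and the converse by running the proof of the internal characterisation $\boxtimes$ over $A$, with Lemma~\ref{limittype2} (strong base change~2) playing the role that Lemma~\ref{limittype} plays in the distal case. Write $K=(a_i\hat{~}b_i)_{i\in\mathcal I}$ throughout.

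\emph{From $T$ to $T(A)$.} As the projection of the $A$-indiscernible sequence $K$, the sequence $(b_i)_{i\in\mathcal I}$ is $A$-indiscernible; being totally indiscernible it is then totally indiscernible over $A$ (a standard fact: the global limit type of a totally indiscernible sequence is generically stable, and generic stability is preserved under naming parameters over which the sequence is already indiscernible), so the first requirement for a decomposition holds in $T(A)$. For the second, take $K'$ $A$-indiscernible with the $A$-EM-type of $K$, a Dedekind cut $\mathfrak c$ of $K'$, a tuple $d$ with $K'$ indiscernible over $Ad$, $a\hat{~}b$ filling $\mathfrak c$ over $A$ and $a'$ with $a'\hat{~}b$ filling $\mathfrak c$ over $Ad$. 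Then $K'$ has the $\emptyset$-EM-type of $K$, is indiscernible over the set $A\cup d$, and $a\hat{~}b$, $a'\hat{~}b$ fill $\mathfrak c$ over $\emptyset$, resp. over $A\cup d$; applying the decomposition property of $K$ in $T$ with external parameter $A\cup d$ gives that $a\hat{~}b$ fills $\mathfrak c$ over $A\cup d$, i.e. over $Ad$. Hence $K$ is a decomposition in $T(A)$.

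\emph{From $T(A)$ to $T$.} We argue the contrapositive. Suppose $K$ is not a decomposition in $T$. If $(b_i)$ is not totally indiscernible we are done, so by the Internal characterisation we may assume $\boxtimes$ fails for $K$ in $T$. Enlarge $K$ to a saturated dense $A$-indiscernible sequence $\widehat K$ by filling each of its cuts with a realisation of the limit type over $A$ (legitimate since $A$ respects every cut of the $A$-indiscernible $K$); then $\widehat K$ has the $A$-EM-type of $K$, hence the $\emptyset$-EM-type, so $\boxtimes$ fails for $\widehat K$ in $T$, and by the standard sliding and expanding manipulations we may locate the witness inside $\widehat K$: write $\widehat K=J+K_0+L$ with $J,K_0,L$ infinite and without endpoints, let $\mathfrak c_1<\mathfrak c_2$ be the two cuts, and let $a\hat{~}b,a'\hat{~}b'$ be such that $J+a\hat{~}b+K_0+L$ and $J+K_0+a'\hat{~}b'+L$ are indiscernible, some $J+a_0\hat{~}b+K_0+a_0'\hat{~}b'+L$ is indiscernible, but $J+a\hat{~}b+K_0+a'\hat{~}b'+L$ is not. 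Write $\mathfrak c_1',\mathfrak c_2'$ for the images of $\mathfrak c_1,\mathfrak c_2$ in $(b_i)$. Projecting $J+a_0\hat{~}b+K_0+a_0'\hat{~}b'+L$ onto its second coordinate, $(b,b')$ fills the polycut $(\mathfrak c_1',\mathfrak c_2')$ of $(b_i)$, hence realises $\lim(\mathfrak c_1')\otimes\lim(\mathfrak c_2')$ over $\widehat K$. So Lemma~\ref{limittype2} applies, with sequence $\widehat K$, parameter set $A\widehat K$, cuts $\mathfrak c_1,\mathfrak c_2$ and fillers $a\hat{~}b,a'\hat{~}b'$, yielding $\tilde a\hat{~}\tilde b,\tilde a'\hat{~}\tilde b'$ that have the same type over $\widehat K$ as $a\hat{~}b,a'\hat{~}b'$, realise $\lim(\mathfrak c_1/A\widehat K),\lim(\mathfrak c_2/A\widehat K)$ respectively, and whose $b$-parts $(\tilde b,\tilde b')$ realise $\lim(\mathfrak c_1')\otimes\lim(\mathfrak c_2')$ over $A\widehat K$.

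It remains to see that $J,K_0,L$ with $\tilde a\hat{~}\tilde b,\tilde a'\hat{~}\tilde b'$ witness the failure of $\boxtimes$ in $T(A)$, so that $\widehat K$ — hence $K$, having the same $A$-EM-type — is not a decomposition in $T(A)$. As $\tilde a\hat{~}\tilde b\models\lim(\mathfrak c_1/A\widehat K)$ and a Dedekind cut of the $A$-indiscernible $\widehat K$ is filled over $A$ by any realisation of its limit type, $J+\tilde a\hat{~}\tilde b+K_0+L$ is $A$-indiscernible, and similarly $J+K_0+\tilde a'\hat{~}\tilde b'+L$. For the intermediate insertion, pick $(\beta_1\hat{~}\gamma_1,\beta_2\hat{~}\gamma_2)\models\lim(\mathfrak c_1/A\widehat K)\otimes\lim(\mathfrak c_2/A\widehat K)$; since these two limit types commute, this fills the polycut $(\mathfrak c_1,\mathfrak c_2)$ over $A$, so $J+\beta_1\hat{~}\gamma_1+K_0+\beta_2\hat{~}\gamma_2+L$ is $A$-indiscernible, and $(\gamma_1,\gamma_2)$ realises $\lim(\mathfrak c_1')\otimes\lim(\mathfrak c_2')$ over $A\widehat K$, the same type as $(\tilde b,\tilde b')$; an automorphism over $A\widehat K$ sending $(\gamma_1,\gamma_2)$ to $(\tilde b,\tilde b')$ carries $\beta_1,\beta_2$ to tuples $\tilde a_0,\tilde a_0'$ with $J+\tilde a_0\hat{~}\tilde b+K_0+\tilde a_0'\hat{~}\tilde b'+L$ $A$-indiscernible. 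Finally $J+\tilde a\hat{~}\tilde b+K_0+\tilde a'\hat{~}\tilde b'+L$ has the same type over $\widehat K$, a fortiori the same $\emptyset$-type, as $J+a\hat{~}b+K_0+a'\hat{~}b'+L$, which is not indiscernible, so it is not $A$-indiscernible. I expect the main difficulty to be exactly the passage ``locate the witness inside $\widehat K$'': one must reproduce the $\boxtimes$-failure — including the clause $J+a_0\hat{~}b+K_0+a_0'\hat{~}b'+L$ indiscernible, whose projection to the stable coordinate is what makes Lemma~\ref{limittype2} applicable — using convex pieces of the fixed $A$-indiscernible sequence $\widehat K$, and it is precisely in order to move the two candidate fillers while keeping a coherent common stable part that one needs the strengthened strong base change rather than Lemma~\ref{limittype}.
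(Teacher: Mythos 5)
Your proof is correct and follows exactly the route the paper takes (which it only sketches): the forward direction by absorbing $A$ into the external tuple $d$ of the definition, and the converse via the internal characterisation $\boxtimes$ together with strong base change~2 (Lemma~\ref{limittype2}), run as in Corollary~\ref{basechange}. Your write-up correctly identifies why the third clause of Lemma~\ref{limittype2} is what lets you recover the witnessing $\tilde a_0,\tilde a_0'$ over $A$ with the same stable parts $\tilde b,\tilde b'$.
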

\begin{proof}
Assume $I=(a_i\hat{~}b_i)_{i\in \mathcal I}$ is a decomposition, then it follows immediately from the definition that it is a decomposition from the point of view of $T(A)$.

For the converse, use the internal characterization and strong base change 2 (Lemma \ref{limittype2}) as in the proof of Cororllary \ref{basechange}.
\end{proof}

\begin{lemme}\label{decsequence}
Assume that $K=(a_i\hat{~}b_i)_{i\in \mathcal I}$ is a decomposition of $I=(a_i)_{i\in \mathcal I}$. Let $\mathfrak c$ be a cut of $K$ filled by a sequence $L$ and denote by $\mathfrak c'$ the corresponding cut in $(b_i)_{i\in \mathcal I}$. Let $L_2$ the projection on $L$ on the second factor (so $L_2$ is a totally indiscernible sequence). Let $d\in \monster$ be such that $K$ is indiscernible over $d$ and $L_2$ is a Morley sequence of the limit type of $\mathfrak c'$ over $Kd$. Then $K \cup L$ is indiscernible over $d$ (where $L$ is placed in the cut $\mathfrak c$).
\end{lemme}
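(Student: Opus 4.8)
The plan is to reduce the statement to the defining property of a decomposition, applied along an $\omega$-indexed family of cuts, and then to invoke $NIP$ exactly as in the proof of Lemma \ref{defitrans}. First I would set up notation: write $L = \llg a'_t\hat{~}b'_t : t\in \mathcal T\rrg$ filling the cut $\mathfrak c$ of $K$, so that $L_2 = \llg b'_t : t\in\mathcal T\rrg$, and recall that by hypothesis $K$ is $d$-indiscernible and $L_2$ is a Morley sequence of $\lim(\mathfrak c'/Kd)$. We may freely enlarge things: extend $K$ (and correspondingly $L$) so that $\mathcal I$ is a saturated dense linear order and $\mathfrak c$ is Dedekind, using expanding (Lemma \ref{expanding}) and the fact that $L_2$ being Morley over $Kd$ is preserved; this puts us in a situation where we can apply the ``it is enough to check $K'=K$'' remark following the definition of decomposition.

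Next I would argue that $K\cup L$ (with $L$ in the cut $\mathfrak c$) is at least indiscernible (not yet over $d$): this follows because $L$ fills $\mathfrak c$ in $K$ by assumption. The real content is $d$-indiscernibility. Suppose it fails. Then there is a formula $\phi(\bar x;d)\in L(d)$ with parameters from $K\cup L$ witnessing a failure of indiscernibility across $L$; by shrinking the parameter set and restricting $L$ to a convex piece that is $d$-indiscernible, and pulling the finitely many parameters from $K$ into the base (base change), I may assume the witnessing formula is $\phi(x\hat{~}y;d)$ applied to single elements of $L$, with $\phi(a'_t\hat{~}b'_t;d)$ holding for $t$ in one part of $\mathcal T$ and failing for $t$ in another — equivalently, after shrinking, $\neg\phi(a'_t\hat{~}b'_t;d)$ holds for cofinally many $t$ while $L$ is otherwise $d$-indiscernible. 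The key point is that, since $L_2$ is Morley over $Kd$, for \emph{each} $b'_t$ there is an alternative $a''$ with $a''\hat{~}b'_t$ filling the corresponding cut over $Kd$ (take $a''$ from an element of $L$ where $\phi$ holds, or from $K$ itself, using that the $b$-coordinate of $L$ realizes the limit type over $Kd$); so the decomposition hypothesis $\boxtimes$ — in its $n$-cut form noted in the internal characterization lemma — applies.

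Concretely I would fix an increasing $\omega$-sequence of Dedekind cuts $(\mathfrak c_k)_{k<\omega}$ interior to (the image of) $L$ in $K$, and for each $k$ choose $a_k\hat{~}b_k$ filling $\mathfrak c_k$ with $\tp(a_k\hat{~}b_k\hat{~}d/K)$ similar to that of the chosen witnessing tuple over $K$, arranged so that $\neg\phi(a_k\hat{~}b_k;d)$ holds, while also ensuring the $b$-coordinates $\llg b_k\rrg$ extend the Morley sequence structure (this is where Strong base change 2, Lemma \ref{limittype2}, is needed: it lets us keep the $b_k$ realizing the product of limit types over $Kd$ while prescribing their similarity type over $K$). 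Since $K$ is a decomposition, $\boxtimes$ (iterated to $\omega$ cuts, as in Lemma \ref{twoton}) guarantees that $K$ with all the $a_k\hat{~}b_k$ inserted in their cuts is indiscernible — and it remains $d$-indiscernible off the new points — so $\phi(x\hat{~}y;d)$ alternates infinitely often along an indiscernible sequence, contradicting $NIP$. Hence no such $\phi$ exists and $K\cup L$ is $d$-indiscernible. The main obstacle I anticipate is the bookkeeping in the previous step: making precise ``similar over $K$ while the $b$-coordinates remain jointly Morley over $Kd$'', so that inserting the $a_k\hat{~}b_k$ genuinely triggers $\boxtimes$ rather than some weaker configuration; this is exactly the role of Lemma \ref{limittype2}, and one must check its hypothesis (the $e_i$'s realizing $\bigotimes\lim(\mathfrak c'_i)$ over $I$) is met here, which it is because $L_2$ is Morley over $Kd$.
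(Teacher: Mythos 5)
Your overall strategy (reduce to a bad formula, replicate it along $\omega$ many cuts, contradict $NIP$) is not the paper's; the paper instead first uses Corollary \ref{fcofcor1} to pad $L$ out to a saturated $L'$ whose new points keep $K$ $d$-indiscernible, and then inserts the original points of $L$ \emph{one at a time}, each insertion being an immediate application of the defining property of a decomposition (each point of $L$ then fills a Dedekind cut of the already-built $d$-indiscernible sequence, and its $b$-coordinate realizes the relevant limit type over it, which supplies the required $a'$). The difference matters, because the step on which your route hinges is exactly the one you pass over: the reduction to ``$\phi(x\hat{~}y;d)$ applied to single elements of $L$''. A failure of $d$-indiscernibility of $K\cup L$ may be witnessed by a formula $\phi(x_1\hat{~}y_1,x_2\hat{~}y_2;d)$ evaluated at \emph{two} elements of $L$, and knowing that $K\cup\{\ell\}$ is $d$-indiscernible for every single $\ell\in L$ says nothing about such pairs. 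You cannot absorb the second element of $L$ into the base either: $K$ is not indiscernible over a point sitting in the cut $\mathfrak c$, so the ``base change'' you invoke does not apply. The only way to make the reduction honest is the inductive one, where previously inserted points of $L$ become part of the ambient sequence $K_1, K_2,\dots$ — and then you must guarantee that the next point of $L$ still fills a \emph{Dedekind} cut of $K_k$ (it need not: two points of $L$ can be adjacent). That is precisely what the Corollary \ref{fcofcor1} padding buys, and it is absent from your plan.

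Two smaller points. First, once you are down to a single pair $a\hat{~}b$ with $b\models\lim(\mathfrak c'/Kd)$, the alternation machinery is unnecessary: extend $b$ to a realization $a'\hat{~}b$ of $\lim(\mathfrak c/Kd)$ and quote the definition of decomposition verbatim — your $\omega$-cut argument is essentially re-proving that the internal characterization $\boxtimes$ implies the external one, which is already Lemma 4.4 of the paper. Second, ``an increasing $\omega$-sequence of Dedekind cuts interior to the image of $L$ in $K$'' does not parse: $L$ occupies a single cut of $K$, so the replicating cuts must be spread over $K$ (as in the proof of Lemma \ref{defitrans}), not confined to $\mathfrak c$.
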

\begin{proof}
Assume $L$ is dense of size $|T|$ and using Corollary \ref{fcofcor1}, increase $L$ to some saturated sequence $L'$ filling $c$ and such that the sequence $K_0=K\cup (L'\setminus L)$ is indiscernible over $d$. Let now $a_1\hat{~}b_1 \in L$. This tuple fills a Dedekind cut of $K_0$. By domination in the sequence $K_0$, we see that $K_1=K_0\cup \{a_1\hat{~}b_1\}$ is indiscernible over $d$. Then we can take some other $a_2 \hat{~}b_2 \in L$. It fills a Dedekind cut of $K_1$ and by domination in $K_1$, $K_2 = K_1 \cup \{a_1 \hat{~} b_1\}$ is indiscernible over $d$. Iterating, we see that $K\cup L'$ is indiscernible over $d$ and therefore $K\cup L$ is indiscernible over $d$.
\end{proof}

\begin{lemme}\label{inverselemme}
Let $M$ be a $|T|^+$-saturated model and $p,q\in S(M)$ be two commuting invariant types. Take $I\models p^{(\omega)}$ and any $b\models q$. Then we may find two sequences $I_1,I_2$ such that $I_1+I+I_2$ is a Morley sequence of $p$ over $M$ and $I_1+I_2$ is a Morley sequence of $p$ over $Mb$.
\end{lemme}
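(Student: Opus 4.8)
Here is how I would approach the proof.

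The plan is to isolate the one nontrivial point — constructing the left block $I_1$ — and reduce everything else to bookkeeping with Morley sequences. Write $\tilde p,\tilde q$ for the global invariant extensions of $p,q$ (invariant over some $A\subseteq M$ with $|A|\le|T|$). Since $p$ and $q$ commute, $\tilde p^{(\mathcal J)}$ commutes with $\tilde q$ for every linear order $\mathcal J$, by associativity of $\otimes$. I claim it suffices to produce a sequence $I_1$ (of whatever fixed order type $\mathcal I_1$ is convenient) with two properties: $I_1+I$ is a Morley sequence of $p$ over $M$, and $I_1$ is a Morley sequence of $p$ over $Mb$. Granting this, take $I_2$ to be any Morley sequence of $p$ over $M\cup I_1\cup I\cup\{b\}$. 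Then $I_1+I+I_2$ is a Morley sequence of $p$ over $M$ — each element realizes $\tilde p$ over $M$ together with its predecessors, for $I_1$ by the first property, for $I$ because $I_1+I$ is Morley over $M$, and for $I_2$ by construction — and $I_1+I_2$ is a Morley sequence of $p$ over $Mb$, since $I_1$ is by the second property and $I_2$ is Morley even over $Mb\cup I_1$. So this reduction costs nothing.

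Now for $I_1$, where the commutativity is genuinely needed. First extend $I$ to the left: realizing $\langle c_t\rangle_{t\in\mathcal I_1}+I'\models\tilde p^{(\mathcal I_1+\omega)}|_M$ and using $\tp(I'/M)=\tilde p^{(\omega)}|_M=\tp(I/M)$, pull back by an automorphism of $\monster$ over $M$ sending $I'$ to $I$ to get $\langle c_t\rangle_{t\in\mathcal I_1}+I$ a Morley sequence of $p$ over $M$; arrange $\mathcal I_1$ to be dense with the cut abutting $I$ of cofinality $\ge|T|^+$. Every convex sub-block of this sequence that still has $I$ as a final block is again a Morley sequence of $p$ over $M$, so it is enough to carve out of the long final region of $\langle c_t\rangle$ a cofinal sub-piece $I_1$ that is moreover a Morley sequence of $p$ over $Mb$. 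By shrinking of indiscernibles (Proposition \ref{shrinking1}) applied to the $M$-indiscernible $\langle c_t\rangle$ and to $b$, the cuts $b$ induces on $\langle c_t\rangle$ are at most $|T|$ in number; as the final region has cofinality $\ge|T|^+$, a cofinal piece $I_1$ of it lies above all of them, hence $I_1$ is $Mb$-indiscernible, $I_1+I$ is still Morley over $M$, and, $I_1$ being a Morley sequence of $p$ over $M$ and $q=\tp(b/M)$ being distant from $p$, Proposition \ref{reflexion} gives $\lim(I_1/Mb)=\tilde p|_{Mb}$.

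The main obstacle is the remaining upgrade: an $Mb$-indiscernible $I_1$ which is a Morley sequence of $p$ over $M$ and has $\lim(I_1/Mb)=\tilde p|_{Mb}$ is a Morley sequence of $p$ over $Mb$, i.e.\ $\tp(a_t/Mb\,a_{<t})=\tilde p|_{Mb\,a_{<t}}$ for every $t$, equivalently $b\models q|_{MI_1}$. I expect this to be the part that needs care, and to be handled by a bounded-weight/NIP argument in the spirit of Proposition \ref{bddweight}: were it to fail, $Mb$-indiscernibility would make a single separating formula $\psi(y;\bar a,\bar m)$ (with $\bar a\subset I_1$, $\bar m\in M$) true of $b$ against all increasing tuples of the appropriate length from $I_1$; interleaving $I_1$ with a Morley sequence of $\tilde p$ over $MbI_1$ realizing $\neg\psi$ — the interleaving being possible, while staying $Mb$-indiscernible, precisely because $\tilde p$ commutes with $\tilde q$ — would force $\psi(y;\ldots)$ to have unbounded alternation against $b$, contradicting NIP. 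Feeding the resulting $I_1$ into the first paragraph completes the proof.
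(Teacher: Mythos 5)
Your overall architecture — reduce to building the left block $I_1$, then append $I_2$ generically — is fine, and so are the intermediate steps: extending $I$ to the left by a long Morley sequence of $p$ over $M$, extracting an $Mb$-indiscernible end segment by shrinking, and invoking Proposition \ref{reflexion} to get that each single element of that segment realizes $p|_{Mb}$. But the step you yourself identify as the main obstacle — upgrading ``$I_1$ is a Morley sequence of $p$ over $M$, is $Mb$-indiscernible, and $\lim(I_1/Mb)=p|_{Mb}$'' to ``$I_1$ is a Morley sequence of $p$ over $Mb$'' — is exactly the content of the lemma, and your sketch for it does not close. Two problems. First, the phrase ``interleaving \ldots while staying $Mb$-indiscernible'' is self-defeating: on an $Mb$-indiscernible sequence the formula $\psi(b;\cdot,\bar m)$ has the same truth value on every increasing tuple, so such a sequence cannot witness any alternation. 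Second, if you drop $Mb$-indiscernibility and only ask the interleaved sequence to be indiscernible, the construction still fails: the natural object $I_1+J$, with $J$ a Morley sequence of $p$ over $MbI_1$, is indiscernible, but all the ``$\psi$'' pairs (from $I_1$) precede all the ``$\neg\psi$'' pairs (from $J$), giving exactly one alternation. To genuinely interleave you would have to insert, into \emph{interior} cuts of $I_1$, blocks that both keep the whole sequence indiscernible (hence realize the limit types of those cuts over $I_1$) and realize $p^{(2)}|_{Mb}$ — and the assertion that this is possible is essentially the statement you are trying to prove. If instead you try to build the alternating sequence element by element, you can control $\tp(d_{2i},d_{2i+1}/Mb)$ or $\tp(d_{2i+1}/M d_{<2i+1})$, but not both simultaneously, for the same reason.

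The paper avoids this entirely by constructing $I_1$ with the correct type from the start, using the \emph{inverse} type $r$ of $p$ over $M$ (defined after Lemma \ref{stationnaire}), whose characteristic property is $r_x\otimes s_y|_M=s_y\otimes p_x|_M$ for every invariant $s$. One takes $I_2$ a Morley sequence of $p$ over $MIb$ and then $I_1$ a Morley sequence of $r$ over $MII_2b$, indexed in reverse. Prepending a realization of $r$ over $M$ turns a Morley sequence of $p$ over $M$ into a longer one, which gives that $I_1+I+I_2$ is Morley over $M$; and the computation $r_x\otimes(q_y\otimes p^{(n)})|_M=(q_y\otimes p^{(n)})\otimes p_x|_M=q_y\otimes p^{(n+1)}|_M$, together with commutativity of $p$ and $q$, gives $\tp(I_1+I_2,b/M)=q\otimes p^{(\ldots)}=p^{(\ldots)}\otimes q$, i.e.\ $I_1+I_2$ is Morley over $Mb$. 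I would recommend replacing your tail argument by this construction; alternatively, if you want to keep your route, the tail claim needs a genuine proof (it does not follow from Proposition \ref{reflexion} plus NIP alternation as sketched).
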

\begin{proof}
Let $r$ be the inverse of $p$ over $M$ (recall the definition as stated after Lemma \ref{stationnaire}). We take $I_2$ to be a Morley sequence of $p$ over $MIb$ and then $I_1$ to be a Morley sequence of $r$, indexed in the opposite order, over $MII_2b$. Over $M$, the Morley sequence of $r$ is the opposite of the Morley sequence of $p$ so the first statement follows. To see the second statement, recall that if $s\in S(M)$ is any invariant type, then $r_x \otimes s_y|_M = s_y \otimes p_x|_M$. In particular, $$r_x \otimes (q_y \otimes p^{(n)}_{x_1,...,x_n})|_M = (q_y \otimes p^{(n)}_{x_1,...,x_n}) \otimes p_x |_M = q_y \otimes p^{(n+1)}_{x,x_1,...,x_n}|_M.$$
The result follows.

\end{proof}

\begin{prop}
Assume that all sequences are decomposable, then $T$ is sharp.
\end{prop}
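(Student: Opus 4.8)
The plan is to mimic the proof of Lemma \ref{movingaway}, with a decomposition playing the role of the strongly dominating cut, and with Lemma \ref{decsequence} and Lemma \ref{limittype2} (strong base change 2) taking over the roles played there by Proposition \ref{extstablebase} and Lemma \ref{limittype}. Fix a $|T|^+$-saturated $M$, an invariant $p\in S(M)$ (invariant over a small $A\subseteq M$) and $a\models p$. First I would place $a$ inside an interior cut of a Morley sequence of $p$: as in Lemma \ref{inverselemme} one arranges a Morley sequence $I^+=I_1+a+I_2$ of $p$ over $M$ with $I_1,I_2$ dense and endless and $a$ filling the cut $\mathfrak c_a=(I_1,I_2)$. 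By the hypothesis, $I^+$ is decomposable: fix a totally indiscernible $(b_i)$ indexed like $I^+$ such that $K^+=\langle a_i\hat{~}b_i\rangle$ is a decomposition, and write $K^+=K_1+(a\hat{~}b)+K_2$, so that $b$ is the ``stable part'' of $a$; let $\mathfrak c_b$ be the cut of $J:=(b_i)_{i\neq i_0}$ at which $b$ sits.

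Next, after shrinking $K^+$ (Lemma \ref{shrinking}) so that the tuples $b_i$ are small, bringing a copy of $J$ inside $M$, and applying strong base change 2 (Lemma \ref{limittype2}) over $M$, I would arrange that $a\hat{~}b$ realizes $\lim(\mathfrak c_a/M)$ and that $b$ realizes a power of $q_0:=\lim(\mathfrak c_b/M)$ over $M$, all while keeping $K^+=K_1+(a\hat{~}b)+K_2$ a decomposition and $a$ the generic filling of $\mathfrak c_a$ over $MK^+$. Since $J\subseteq M$ is totally indiscernible, $q_0$ is a generically stable type over $M$, and hence so is $q:=\tp(b/M)$, a power of $q_0$. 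I will take $a_*:=b$ and show that it s-dominates $a$ over $M$; together with $\tp(a_*/M)=q$ generically stable this proves that $T$ is sharp.

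To see that $b$ s-dominates $a$ over $M$ I would replay the final argument of Lemma \ref{movingaway}. Let $r\in S(M)$ be invariant, distant from $p$ and from $q$, and let $d\models r$ with $d\downfree_M b$. By assumption $r$ commutes with $p$ and with $q$ (hence with $q_0$); and since $q_0$ is generically stable it commutes with the invariant type $p$ as well. Using these commutations, the independence $d\downfree_M b$, and a Morley sequence of $p$ over a suitable $C\subseteq M$ (applying Lemma \ref{limittype2} once more, now over $Md$, to manufacture the configuration), one builds a sequence $L$ filling the cut $\mathfrak c_a$ of $K^+$ such that: (i) the stable parts of $L$ form a Morley sequence of $q_0$ over $K^+d$ beginning with $b$; (ii) the first coordinates of $L$ lying to the left of $a$ form a Morley sequence of $p$ over $Md$; and (iii) $K^+$ is indiscernible over $d$. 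Lemma \ref{decsequence} then yields that $K^+\cup L$ is indiscernible over $d$; reading off first coordinates, $a$ extends a Morley sequence of $p$ over $Md$ indiscernibly, so $a\models p|_{Md}$, i.e.\ $d\downfree_M a$. As $r$ and $d$ were arbitrary, $b$ s-dominates $a$ over $M$.

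The main obstacle is the construction of $L$ in the last step: as already in Lemma \ref{movingaway}, it requires careful juggling of the commuting and distance relations between $p$, $q_0$, $r$ and the various Morley sequences, now with the extra constraint that the whole configuration respect the decomposition structure so that Lemma \ref{decsequence} applies. The remaining points --- arranging that the decomposition $K^+$ can be taken with small tuples $b_i$ and realized inside $M$, so that $q_0$ and $q$ really are generically stable types over $M$ --- are routine once Lemma \ref{shrinking} and Lemma \ref{limittype2} are in hand.
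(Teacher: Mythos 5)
Your overall strategy --- extract the ``stable part'' of a decomposition as the generically stable dominator and use Lemma \ref{decsequence} to transfer independence --- is the right one, but the way you anchor the decomposition creates a gap that the paper's proof is specifically designed to avoid. You decompose $I_1+a+I_2$, a Morley sequence of $p$ \emph{over $M$}, so the whole sequence, including the totally indiscernible part $J=(b_i)$ and the tuple $b$ attached to $a$, lives outside $M$. Sharpness requires $q=\tp(b/M)$ to be a generically stable type of $S(M)$, i.e.\ invariant over, and definable and finitely satisfiable in, a small subset of $M$; for a limit type of a cut of $J$ this forces (a copy of) $J$ to sit inside $M$. But these two requirements are incompatible as you have set things up: if $J\subset M$, then $\lim(\mathfrak c_b/M)$ and indeed $\lim(\mathfrak c_a/M)$ are finitely satisfiable in $M$, whereas $a$ realizes the invariant type $p|_M$, which is not such a limit type in general --- so $a\hat{~}b$ cannot fill a cut of a decomposition lying in $M$ in the sense of realizing its limit type over $M$. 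Your proposed fix (``bring a copy of $J$ inside $M$ and apply strong base change 2'') does not repair this: Lemma \ref{limittype2} \emph{replaces} the cut-filling tuples by conjugates realizing the limit types, and you cannot replace $a$, which is the fixed realization you must dominate.

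This is exactly why the paper keeps the decomposition $K$ of a Morley sequence $I$ entirely inside $M$ (so that the limit types of cuts of its stable part genuinely are generically stable over $M$) and then connects $K$ to $a$ through the moving-away lemma (Lemma \ref{movingaway}): it first produces $\bar d$ filling a cut of $I$ with $\bar d$ s-dominating $a$ --- a fact whose proof rests on the external characterization of domination (Proposition \ref{extstablebase}) --- extends $\bar d$ to $\bar c$ filling the corresponding cut of $K$, and takes the dominator to be the stable part $\bar e$ of $\bar c$. The independence transfer then factors as $u\downfree_M\bar e\Rightarrow u\downfree_M\bar d$ (via Lemma \ref{decsequence}) $\Rightarrow u\downfree_M a$ (via s-domination by $\bar d$). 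Your argument omits this intermediate dominator entirely and asks Lemma \ref{decsequence} to deliver $a\models p|_{Md}$ directly; but that lemma only controls tuples filling cuts of the decomposition, and without Proposition \ref{extstablebase} or the moving-away lemma there is no bridge from indiscernibility statements about the sequence to the type of the given $a$ over $Md$. To repair the proof you should reinstate that intermediate step rather than try to make $a$ itself a member of the decomposed sequence.
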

\begin{proof}
Let $M$ be $|T|^+$-saturated and $p\in S(M)$ be an $A$-invariant type. Let $a\models p$. Let $I\subset M$ be a small dense Morley sequence of $p$ over $A$ and let $K\subset M$ be a decomposition of $I$. Let $\mathfrak c$ be a Dedekind cut of $K$ and $\mathfrak c_1$ the corresponding cut of $I$. As in the proof of the moving away lemma \ref{movingaway}, construct some dense sequence $\bar d$ realizing a power of $\lim(\mathfrak c_1^+/M)$ and such that $\bar d$ s-dominates $a$ over $M$. Extend $\bar d$ to $\bar c$ realizing a power of $\lim(\mathfrak c^+/M)$. So $\bar c$ is the union of $\bar d$ and some totally indiscernible sequence $\bar e$. The type of $\bar e$ over $M$ is generically stable.
\\
\underline{Claim :} $\bar e$ s-dominates $a$ over $M$.
\\
Proof : Let $u\in \monster$ be distant from $a$ and independent from $\bar e$ over $M$. Let $u_*$ realize an invariant type distant from $a\bar c$ over $M$ such that $u_*$ s-dominates $u$ and is independent from $\bar c$ over $M$. If we show that $u_* \downfree_M \bar d$, then as $\bar d$ s-dominates $a$ it will follow that $u_* \downfree_M a$ and therefore $u \downfree_M a$. Replacing $u$ by $u_*$, we may now assume that $u$ is distant from $a\bar c$ over $M$ and realizes an invariant type.

Call $r= \lim(\mathfrak c^+)$ (a global invariant type). By Lemma \ref{inverselemme}, let $I_1$ and $I_2$ be two sequences such that $I_1+I_2$ is indiscernible over $Mu$ and $I_1+\bar c+I_2$ is indiscernible over $M$. Also as $u$ is independent from $\bar e$ over $M$, the hypothesis of Lemma \ref{decsequence} are satisfied (where $L_2$ there is $\bar e$ here). We conclude that $u$ is independent from $\bar d$ over $M$ and therefore $u$ is independent from $a$ over $M$.
\end{proof}

\subsection{Reduction to dimension 1}

We prove here the following proposition.

\begin{prop}\label{dim1dec}
Assume that all sequences of elements are decomposable, then every sequence is decomposable.
\end{prop}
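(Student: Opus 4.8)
The statement to prove is Proposition \ref{dim1dec}: if all indiscernible sequences of elements (singletons from the home sort) are decomposable, then every indiscernible sequence is decomposable. The natural strategy mirrors the proof of Theorem \ref{dim1} (reduction to dimension 1 for distality): we argue by induction on the length of the tuples appearing in the sequence, peeling off one coordinate at a time, and we use a combinatorial ``reversing'' construction together with the internal characterization of decomposition ($\boxtimes$) and the strong base change lemma (\ref{limittype2}) as the engine. The key conceptual point is that decomposability, like distality, is an external/internal-stable property controlled by alternation ranks, so a failure of decomposition of a long-tuple sequence can be pushed down to a failure witnessed with a single extra coordinate, and then reversed into a failure of decomposition for a sequence of elements.

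\textbf{First steps.} Let $I = \llg \bar a_i : i \in \mathcal I\rrg$ be an indiscernible sequence of $\alpha$-tuples, $\mathcal I$ dense. By the internal characterization, $I$ is decomposable iff there is a totally indiscernible sequence $J = (b_i)_{i\in\mathcal I}$ with $\llg \bar a_i\hat{~}b_i : i\in\mathcal I\rrg$ satisfying $\boxtimes$. First I would handle the base of the induction: sequences of $1$-tuples, which is exactly the hypothesis. For the inductive step, suppose $\alpha = \beta + 1$ and write $\bar a_i = \bar a_i'\hat{~}c_i$ with $\bar a_i'$ a $\beta$-tuple. By the inductive hypothesis applied to $\llg \bar a_i' : i\in\mathcal I\rrg$ (or rather to a suitable expansion of it — see below), we get a totally indiscernible companion sequence $J' = (b_i')_{i\in\mathcal I}$ decomposing it. We then need to extend $J'$ to a companion sequence decomposing all of $I$, i.e. to absorb the extra coordinate $c_i$ into either the ``distal part'' or the ``stable part''. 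The technically correct move, as in the distality proof, is to work relative to the already-obtained decomposition: set $\bar a_i'' = \bar a_i'\hat{~}b_i'$ (so $\llg \bar a_i'' : i\in\mathcal I\rrg$ is ``distal modulo $b'$'', meaning the $\bar a'$-part is dominated), and then we must decompose the sequence of pairs $\llg \bar a_i''\hat{~}c_i\rrg$ — but here the base case does not immediately apply since $\bar a_i''\hat{~}c_i$ is not a singleton. So instead I would run the ``reversing'' argument directly.

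\textbf{The reversing construction (main work).} Suppose for contradiction that $I$ is not decomposable over any totally indiscernible sequence; in particular it is not decomposable over $J'$-then-anything. Unwinding via the internal characterization and strong base change 2, a failure produces: a cut $\mathfrak c$, tuples $a\hat{~}b$ and $a'\hat{~}b'$ filling it (for the pair sequence $\llg \bar a_i\hat{~}b_i\rrg$), realizing appropriate limit types over a base $A \supseteq I$, with $J+a\hat{~}b+K+L$ and $J+K+a'\hat{~}b'+L$ indiscernible, and $a_0,a_0'$ witnessing the hypothesis of $\boxtimes$, but $J+a\hat{~}b+K+a'\hat{~}b'+L$ not indiscernible — witnessed by some formula $\phi$ with a coordinate $c$ among $a$ (or $a'$) that one of the coordinates of $\bar a$ that is responsible. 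As in the construction of Theorem \ref{dim1}: choose the first coordinate of the extra tuple whose addition breaks things, add the rest to the base $A$, and reduce to the extra tuple being a single element $c$. Now build the ``derived'' array and the array-over-$\omega$-columns exactly as in Step 1 and Step 2 of Section \ref{constrA}, using Lemma \ref{limittype2} (strong base change 2) in place of Lemma \ref{limittype} so that the totally indiscernible companion coordinates are preserved (the $\bigotimes\lim(\mathfrak c_i')$ condition is exactly what makes this work). The weakly-linked/mutually-indiscernible dichotomy of Lemma \ref{declemma}, applied to the sequence of $c$-elements against the array rows, then yields a contradiction: the sequence of $c$'s is a sequence of elements of the home sort, hence decomposable by hypothesis, and one checks (as in Step 3) that weak linkedness plus this decomposability forces mutual indiscernibility, contradicting the chosen $\phi$ having large alternation rank.

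\textbf{Expected obstacle.} The main difficulty is bookkeeping in the reversing construction: making sure that when we drop to a single extra coordinate $c$ and build the array, the already-present totally indiscernible companion sequence $J$ (the ``stable parts'' $b_i$) is carried along correctly and stays totally indiscernible — this is precisely why Lemma \ref{limittype2} was stated, and the plan is to invoke it with the cuts $\mathfrak c_i$ and their projections $\mathfrak c_i'$ to the $b$-coordinates, realizing $\bigotimes\lim(\mathfrak c_i')$ at each stage. A secondary subtlety is that the final appeal to decomposability-in-dimension-1 must be to the sequence of $c$'s \emph{together with} an appropriate totally-indiscernible companion extracted from the array, not to the bare $c$'s; one must check that weak linkedness against the $\bar a$-rows gives the analogue of Lemma \ref{declemma}(1) in the decomposition setting, i.e. that ``$c$-sequence decomposable'' $+$ ``weakly linked with the rows'' $\Rightarrow$ ``the rows stay indiscernible over the $c$'s'', which is what contradicts the alternation rank bound. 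I expect that step to require an argument parallel to Lemma \ref{declemma}(1) but phrased for the condition $\boxtimes$ via repeated application of Lemma \ref{decsequence} (the analogue of Lemma \ref{defitrans} for decompositions).
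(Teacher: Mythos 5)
Your final step contains a genuine gap, and it is precisely the point where decomposability differs from distality. In the proof of Theorem \ref{dim1} the one-shot contradiction works because the witness sequence $(b_n)_{n<\omega}$ consists of single elements, hence is \emph{distal} by hypothesis, and Lemma \ref{declemma}(1) then upgrades ``weakly linked'' to ``mutually indiscernible''. Here the hypothesis only gives you that $(b_n)_{n<\omega}$ is \emph{decomposable}, and a decomposable sequence that is weakly linked with another sequence need \emph{not} be mutually indiscernible with it: the totally indiscernible companion $(\beta_n)$ produced by decomposing $(b_n)$ can genuinely interact with the other sequence. So your claim that ``weak linkedness plus this decomposability forces mutual indiscernibility, contradicting the alternation rank of $\phi$'' is false, and no contradiction is available at this stage. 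What the paper extracts at this point is not a contradiction but \emph{information}: after passing to the diagonal $(c_n\hat{~}\gamma_n)=(a^n_{1-1/n}\hat{~}\alpha^n_{1-1/n})$, the definition of decomposition applied to $(b_n\hat{~}\beta_n)$ shows that $\beta_{n_0}$ fails to realize the relevant limit type over $c_{n_0}\hat{~}\gamma_{n_0}$, i.e.\ the new stable part $\beta$ detects the failure. One then \emph{adjoins} $(\beta_n)$ as an additional totally indiscernible companion and restarts the whole construction with $(c_n\hat{~}\gamma_n\hat{~}\beta_n)$ in place of $I$.

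This reveals the second missing ingredient: a termination argument. The paper's proof is a transfinite recursion, adjoining one new totally indiscernible companion per stage, and it must be shown to stop in fewer than $|T|^+$ steps. This is done by observing that each stage produces a sequence $(\alpha^i_n)_{n<\omega}$ totally indiscernible over the other companions but not indiscernible over the base together with the $c_n$'s; after an application of Fodor's lemma this contradicts Proposition \ref{weight} if there are $|T|^+$ stages. Your proposal has no analogue of this, because it is structured as a single reductio rather than as a construction. (Your opening framing as an induction on tuple length is also not how the paper proceeds, but you abandon that yourself; the real defect is the final step and the absence of the iterate-and-bound-the-weight mechanism.)
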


Assume from now on that all indiscernible sequences of elements of $\monster$ are decomposable. We will take an arbitrary indiscernible sequence and build a decomposition for it adjoining totally indiscernible sequences to it one-by-one. The proof is an adaptation of the one from Section \ref{constrA}. We start with a base set of parameters $A$ that we allow to grow freely during the construction. In what follows, we work over $A$, even when not explicitly mentioned. We have an indiscernible sequence $I=\llg a_i\hat{~}\alpha_i: i\in \mathcal I\rrg$, where $\mathcal I=(0,1)$ for simplicity and such that the sequence $\llg \alpha_i : i\in \mathcal I\rrg$ is totally indiscernible.

For every $i\in \mathcal I$, call $\mathfrak c_i$ the cut ``$i^+$" of $I$ and $\mathfrak c_i'$ the associated cut in the sequence $\llg \alpha_i : i\in \mathcal I\rrg$.

\vspace{5pt}
\noindent
\un{Step 1} : Derived sequence

\vspace{4pt}
\noindent
Assume we have a witness of non-decomposition in the following form :
\begin{itemize}
\item A tuple $b \in \monster$, some $j \in (0,1)$ and a pair $(a,\alpha)$ such that :
\item $a\hat{~}\alpha$ fills the cut $\mathfrak c_j$ of $I$,
\item $I$ is $b$-indiscernible,
\item $\alpha$ realizes the type $\lim(\mathfrak c_j')$ over $Ib$,
\item $I$ with $a_j\hat{~}\alpha_j$ replaced by $a\hat{~}\alpha$ is not indiscernible over $b$.
\end{itemize}

As in Section \ref{constrA}, adding parameters to the base, we may assume that $b$ is a single point, and that $\tp(a\hat{~}\alpha/b) \neq \tp(a_j\hat{~}\alpha_j/b)$. Let $r=\tp(a\hat{~}\alpha,b)$.

We construct a new sequence $\langle a_i'\hat{~}\alpha_i': i\in \mathcal I\rangle$ such that :
\begin{itemize}
\item $a_i'\hat{~}\alpha_i'$ fills the cut $\mathfrak c_i$ of $I$,
\item $tp(a_i'\hat{~}\alpha_i',b)=r$ for each $i$,
\item The sequence $(\alpha_i')_{i\in \mathcal I}$ realizes $\bigotimes_{i\in \mathcal I} \lim(\mathfrak c_i')$ over $Ib$,
\item The sequence $\llg a_i\hat{~}\alpha_i\hat{~}a_i'\hat{~}\alpha_i' : i\in \mathcal I\rrg$ is $b$-indiscernible.
\end{itemize}
This is possible by indiscernibility of $(a_i\hat{~}\alpha_i)_i$ over $b$ (first pick the points $\alpha_i'$ then choose the $a_i$ filling the cuts and then extract).

\vspace{5pt}
\noindent
\un{Step 2} : Constructing an array

\vspace{4pt}
\noindent
Using Lemma \ref{limittype2}, iterate this construction to obtain an array $\langle a_i^n\hat{~}\alpha_i^n:i\in \mathcal I, n<\omega\rangle$ and sequence $\langle b_n:n<\omega \rangle$ such that :
\begin{itemize}
\item $a_i^0\hat{~}\alpha_i^0=a_i\hat{~}\alpha_i$ for each $i$,
\item For each $i\in \mathcal I$, $0<n<\omega$, the tuple $a_i^n\hat{~}\alpha_i^n$ realizes $\lim(\mathfrak c_i)$ over $\langle b_k,a_i^k\hat{~}\alpha_i^k : i\in \mathcal I, k<n\rangle$,
\item For each $0<n<\omega$, the sequence $(\alpha_i^n)_{i\in \mathcal I}$ realizes the type $\bigotimes_{i\in \mathcal I} \lim(\mathfrak c_i')$ over $\langle b_k,a_i^k\hat{~}\alpha_i^k : i\in \mathcal I, k<n\rangle$,
\item For each $0<n<\omega$, $tp(b_n, \langle a_i^n\hat{~}\alpha_i^n : i\in\mathcal I \rangle/I)=tp(b, \langle a_i'\hat{~}\alpha_i':i\in \mathcal I \rangle /I)$.
\end{itemize}
\noindent
\underline{Claim} : For every $\eta : \mathcal I_0\subset \mathcal I \rightarrow \omega$ injective, the sequence $\langle a_i^{\eta(i)}\hat{~}\alpha_i^{\eta(i)}:i\in \mathcal I_0 \rangle$ is indiscernible, of same EM-type as $I$.

The sequence $U = \langle \alpha_i^n : (i,n)\in \mathcal I\times \omega\rangle$, where $\mathcal I \times \omega$ is ordered lexicographically, is totally indiscernible.
\begin{proof}
Easy, by construction.
\end{proof}

Expanding and extracting, we may assume that the sequence of rows $\langle b_n + ( a_i^n\hat{~}\alpha_i^n )_{i\in \mathcal I}: 0<n<\omega \rangle$ is indiscernible. By assumption all sequences of points are decomposable. So let $(b_n\hat{~}\beta_n)_{n <\omega}$ be an decomposition of $(b_n)_{n<\omega}$. Expanding and extracting again, we may assume that the new sequence of rows $\langle b_n\hat{~}\beta_n + ( a_i^n\hat{~}\alpha_i^n )_{i\in \mathcal I}: 0<n<\omega \rangle$ is indiscernible and that the sequence of columns $\langle( a_i^n\hat{~}\alpha_i^n)_{0<n<\omega}:i\in \mathcal I\rangle$ is indiscernible over $\{b_n\hat{~}\beta_n:n<\omega\}$.

\vspace{5pt}
\noindent
\un{Step 3} : Conclusion

\vspace{4pt}
\noindent

\un{Claim} : The sequences $(b_n\hat{~}\beta_n)_{n<\omega}$ and $\langle (a_i^n\hat{~}\alpha_i^n)_{i \in \mathcal  I} : 0<n<\omega \rangle$ are weakly linked.

The sequences $(b_n\hat{~}\beta_n)_{n<\omega}$ and $U$ are mutually indiscernible.
\begin{proof}
For the first statement, the proof is the same is in Section \ref{constrA}.

The second statement is similar. If for example we have $\phi(b_n,\beta_n,\alpha_i^n)$, then $\phi(b_n,\beta_n,\alpha_j^n)$ must hold for all $j\in \mathcal I$, and therefore by total indiscernibility of $U$ and $NIP$, $\phi(b_n,\beta_n,\alpha_j^m)$ must hold for every $(j,m)\in \mathcal I \times \omega$.
\end{proof}

Let $(c_n,\gamma_n)=(a_{1-\frac 1 n}^n,\alpha_{1-\frac 1 n}^n)$, then :
\begin{enumerate}
\item The sequence $(c_n\hat{~}\gamma_n)_{n<\omega}$ is indiscernible, with same EM-type as $I$;
\item The sequences $(\gamma_n)_{n<\omega}$ and $(b_n\hat{~}\beta_n)_{n<\omega}$ are mutually indiscernible;
\item The sequences $(c_n\hat{~}\gamma_n)_{n<\omega}$ and $(b_n\hat{~}\beta_n)_{n<\omega}$ are weakly linked;
\item We have $\tp(c_n\hat{~}\delta_n,b_m)=r$ if and only if $n=m$.
\end{enumerate}

Consider the indiscernible sequence $(c_n\hat{~}\gamma_n\hat{~}b_n\hat{~}\beta_n)_{n<\omega}$. We may increase it to an indiscernible sequence $(c_n\hat{~}\gamma_n\hat{~}b_n\hat{~}\beta_n)_{n\in \mathcal I}$. Take some $n_0 \in \mathcal I$ and set $\mathcal I = \mathcal I_1+\{n_0\}+\mathcal I_2$. Then by point 3 above, the sequence $\llg b_n\hat{~}\beta_n:n\in \mathcal I_1+\mathcal I_2\rrg$ is indiscernible over $c_{n_0}\hat{~}\gamma_{n_0}$. Therefore point 4 and the definition of decomposition imply that $\beta_{n_0}$ does not realize the limit type of $\llg \beta_n : n\in \mathcal I_1\rrg$ over $\{ b_n\hat{~}\beta_n:n\in \mathcal I_1+\mathcal I_2\} \cup \{c_{n_0}\hat{~}\gamma_{n_0}\}$. Adding parameters to the base, we may assume that it does not realize that limit type over $c_{n_0}\hat{~}\gamma_{n_0}$.

We then iterate the construction, starting with the sequence $(c_n\hat{~}\gamma_n\hat{~}\beta_n)_{n\in \mathcal I}$. Assume that we can do this $|T|^+$ steps. We have at the end some base set of parameters $A$, an $A$-indiscernible sequence $\llg c_n\hat{~}(\alpha^i_n:i<|T|^+):n<\omega\rrg$ (we replaced the index set $\mathcal I$ by $\omega$ for convenience) such that for each $i<|T|^+$, the sequence $(\alpha^i_n)_{n<\omega}$ is totally indiscernible over $A\cup \{ \alpha^j_n : n<\omega, j\neq i\}$ but not indiscernible over $A\cup \{c_n, n<\omega\} \cup \{\alpha^j_n : n<\omega,j< i\}$. By Fodor's lemma, removing some sequences $(\alpha^i_n)_{n<\omega}$ and adding them to $A$, we may assume that for every $i$, $(\alpha^i_n)_{n<\omega}$ is not indiscernible over $A\cup \{d_n, n<\omega\}$. But this contradicts Proposition \ref{weight}.

Therefore this construction must stop after less than $|T|^+$ stages. At the end, we obtain a decomposition of the sequence we started with. This proves Proposition \ref{dim1dec}.

\begin{cor}
Every dp-minimal theory is sharp.
\end{cor}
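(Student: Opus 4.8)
The plan is simply to feed the dp-minimality hypothesis into the reduction machinery built up in this section. First I would recall Lemma~\ref{dpminlemme}: if $T$ is dp-minimal and $I$ is an indiscernible sequence of elements of the home sort, then $I$ is either totally indiscernible or distal. By the Remark following the definition of decomposition, a totally indiscernible sequence is decomposable over itself and a distal sequence is decomposable over the sequence of empty tuples. Hence every indiscernible sequence of elements of the home sort is decomposable.

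Next I would apply Proposition~\ref{dim1dec}, whose hypothesis is precisely that all sequences of elements are decomposable: it upgrades this to the statement that every indiscernible sequence, of tuples of arbitrary length, is decomposable. Finally, the Proposition preceding the present subsection (``Assume that all sequences are decomposable, then $T$ is sharp'') delivers the conclusion that $T$ is sharp, i.e.\ that over any $|T|^+$-saturated model $M$, every invariant $p\in S(M)$ is s-dominated by a realization of a generically stable type.

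There is no real obstacle here: the mathematical content has already been distributed over Lemma~\ref{dpminlemme}, the Remark, Proposition~\ref{dim1dec} and the preceding Proposition, and what remains is to observe that the classes of sequences involved match up. The one point worth checking carefully is that ``sequences of elements of the home sort'' in Lemma~\ref{dpminlemme} is exactly the input required by Proposition~\ref{dim1dec}, so that the dichotomy ``totally indiscernible or distal'' — which is only available for such one-element sequences — is enough to start the induction. If one wanted to be extra cautious about base change (decomposability over a named parameter set), one could additionally note that dp-minimality passes to $T(A)$ and that the base change lemma for decompositions makes the choice of base immaterial.
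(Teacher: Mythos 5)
Your proof is correct and is exactly the intended argument: the paper states the corollary without proof precisely because it follows by chaining Lemma \ref{dpminlemme} (dichotomy totally indiscernible/distal for sequences of elements), the Remark on the two trivial cases of decomposability, Proposition \ref{dim1dec}, and the proposition that decomposability of all sequences implies sharpness. No further comment is needed.
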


\begin{ex}[Non-sharp theory]
Let $L_0$ be the language $\{R_n(x,y) :n<\omega\}$ and construct an $L_0$ structure $M_0$ as follows: the universe of $M_0$ is $\mathbb Q$, the ordinary rational numbers, and for every $x,y \in M_0$, $M_0\models R_n(x,y)$ if and only if $x<y \wedge |x-y| < n$ holds in $\mathbb Q$. Let $T_0=Th(M_0)$. Non-realized 1-types over $M_0$ satisfying $R_n(x,a)$ for some $n<\omega$ and $a\in M$ are in natural bijection with cuts of $(\mathbb Q,<)$. In addition to these, there is just one non-realized type $p\in S_1(M_0)$ which satisfies $\neg R_n(x,a)$ for every $n<\omega$ and $a\in M$. This type $p$ is generically stable (and $\emptyset$-invariant). One can check easily that $T_0$ is dp-minimal.

Now consider $L_1 = L_0\cup \{\prec\}$ where $\prec$ is a new binary relation. We expand $M_0$ to an $L_1$-structure $M_1$ by making $\prec$ into a generic order ({\it i.e.}, every $L_0$-infinite definable set of $M_1$ is dense co-dense with respect to $\prec$. See for example \cite{ShSim}). A 1-type over $M_1$ is determined by its reduct to $L_0$ plus its $\prec$-cut. Let $T_1=Th(M_1)$. Easily, $T_1$ eliminates imaginaries so there are no generically stable types (because the structure is linearly ordered). However $T_1$ is not distal: consider $I=(a_i)_{i\in \mathcal I}$ to be a dense $\prec$-increasing sequence of points such that $\neg R_n(a_i,a_j)$ holds for every $n<\omega$ and $i, j\in \mathcal I$. Then this sequence is indiscernible and not distal. To see this, take two cuts $\mathfrak c_1$ and $\mathfrak c_2$ of $I$. Then there is $a$ filling $\mathfrak c_1$ and $b$ filling $\mathfrak c_2$ such that $R_1(a,b)$ holds. The generically stable type $p$ in the reduct is detected by the non-distality of $I$.

We see however, that there is a natural ultra-imaginary stable sort: the quotient of $M$ by the $\bigvee$-definable relation $E=\bigvee_{n<\omega} R_n$. And every point is in some sense s-dominated by its definable closure in that sort. It would be interesting to know if something like this is always true.

 \end{ex}

\bibliographystyle{model1b-num-names}

\bibliography{distal}

\end{document}